\newtheorem{thm}{Theorem}
\newtheorem{lem}[thm]{Lemma}
\newtheorem{cor}[thm]{Corollary}
\newtheorem{defi}[thm]{Definition}
\newtheorem{rem}[thm]{Remark}
\newtheorem{prop}[thm]{Proposition}
\newtheorem{quest}[thm]{Question}
\newtheorem{rema}[thm]{Remark}
\begin{document}

\title[Embedding of operator ideals into $\mathcal{L}_p-$spaces]{Embeddings of operator ideals into $\mathcal{L}_p-$spaces on finite von Neumann algebras}

\author{M.\ Junge}
\address{1409 West Green Street, Urbana, Illinois, 61801,USA}
\email{mjunge@illinois.edu}

\author{F.\ Sukochev}
\address{School of Mathematics and Statistics, UNSW, Kensington, NSW 2052,
Australia} 
\email{f.sukochev@unsw.edu.au}
%\email{f.sukochev@unsw.edu.au}

\author{D.\ Zanin} 
\address{School of Mathematics and Statistics, UNSW, Kensington, NSW 2052,
Australia} 
\email{d.zanin@unsw.edu.au}

\thanks{The first author is partially supported by NSF grant DMS-1201886. The second and third authors are supported by ARC}

\begin{abstract}
Let  $\mathcal{L}(H)$ be the $*$-algebra of all bounded operators on an infinite dimensional Hilbert space $H$ and let $(\mathcal{I}, \|\cdot\|_{\mathcal{I}})$ be an ideal in $\mathcal{L}(H)$ equipped with a Banach norm which is distinct from the Schatten-von Neumann ideal $\mathcal{L}_p(\mathcal{H})$, $1\leq p<2$. We prove that $\mathcal{I}$  isomorphically embeds into an $L_p$-space $\mathcal{L}_p(\mathcal{R}),$ $1\leq p<2,$ (here, $\mathcal{R}$ is the hyperfinite II$_1$-factor) if its commutative core (that is, Calkin space for $\mathcal{I}$) isomorphically embeds into $L_p(0,1).$ Furthermore, we prove that an Orlicz ideal $\mathcal{L}_M(H)\neq\mathcal{L}_p(H)$ isomorphically embeds into $\mathcal{L}_p(\mathcal{R}),$ $1\leq p<2,$ if and only if it is an interpolation space for the Banach couple $(\mathcal{L}_p(H),\mathcal{L}_2(H)).$
Finally, we consider isomorphic embeddings of $(\mathcal{I}, \|\cdot\|_{\mathcal{I}})$ into $L_p$-spaces associated with arbitrary finite von Neumann algebras.
\end{abstract}

\subjclass{Primary 46L53; secondary 81S25}

\keywords{Noncommutative independence, Banach space embeddings}

\maketitle

\section{Introduction}

Classifying subspaces of Banach spaces is in general a very difficult task, and so far can only be achieved for very special examples of classical function spaces. A beautiful example is Dacuhna-Castelle characterization of subspaces of $L_1(0,1)$ with a symmetric basis, as average of Orlicz spaces. This characterization is genuinely based on the analysis of independent copies and, hence, is probabilistic in nature. Noncommutative function spaces appear naturally in operator theory and noncommutative geometry, and hence it is reasonable to expect a similar striking result for symmetric subspaces of noncommutative $L_1-$spaces or even noncommutative $L_p-$spaces in the interesting range for $1\leq p<2.$ Again M. Kadec discovery of $q-$stable random variables resulting in embedding $l_q$ into $L_p(0,1)$ serves as inspiration for our results. However, the noncommutative notion of independence is much more ambiguous in the noncommutative setting. Therefore
we prefer to use more functional analytic tools, in particular (several versions) of the Kruglov operator introduced in \cite{AS} (see \cite{LT2} for an alternative approach using Poisson processes going back to \cite{JMST}).

% The main theme of this article is best explained through references to the classical themes. It is well-known that the class of all subspaces of the Lebesgue space $L_1=L_1(0,1)$ is very rich and yet does not have any reasonable description. If we consider only symmetric subspaces of $L_1$, that is subspaces with a symmetric basis (symmetric sequence spaces),
% %or isomorphs of some r.i. function spaces,
% then these subspaces are known to be the averages of Orlicz spaces \cite{B3}. Far more information is available on subspaces of $L_1$ isomorphic to Orlicz sequence  spaces. It is important to note that an isomorph of an Orlicz sequence space $l_M\neq l_p$  in  $L_p=L_p(0,1)$ always can be  given \cite{ASorlicz} by the span of a sequence of independent identically distributed random variables. In the special case $l_M=l_q,$ $1\leq p<q\leq 2,$ this was discovered by  M.I. Kadec \cite{Kad} in 1958.

A detailed study of symmetric subspaces of $L_1$ was done by J. Bretagnolle and  D. Dacunha-Castelle (see \cite{BD, BD2, B3}). They proved that, for every given mean zero $f\in L_p$, the sequence $\{f_k\}_{k=1}^\infty$ of its independent copies is equivalent, in $L_p$, to the standard basis of some Orlicz sequence space $l_M$ \cite[Theorem 1, p.X.8]{B3}. %Moreover, J. Bretagnolle and  D. Dacunha-Castelle proved also that an Orlicz function space $L_M=L_M[0,1]$ can be isomorphically embedded into the space $L_p$, $1\leq p<2$, if and only if $M$ is equivalent to a $p$-convex and $2$-concave Orlicz function on $[0,\infty)$ \cite[Theorem IV.3]{BD2}.
Later some of these results were independently rediscovered by M. Braverman \cite{Br2,Br3}. Note that the methods used in \cite{BD, BD2, B3, Br2, Br3} depend heavily on the techniques related to the theory of random processes. In a recent paper \cite{ASorlicz}, a different approach based on methods and ideas from the interpolation theory of operators and the usage of so-called Kruglov operator \cite{as_zam,AS,as1,AS_UMN} is suggested.

The main topic of this paper is the study of (symmetric) subspaces of the noncommutative analogue of the space $L_p(0,1),$ i.e. the space $\mathcal{L}_p(\mathcal{R},\tau)$ of $\tau$-measurable affiliated with the hyperfinite II$_1$-factor $\mathcal{R}$ of finite $p$-norm. Indeed, the hyperfinite and finite von Neumann algebra $\mathcal{R}$ is equipped with a unique tracial state $\tau$ and the couple $(\mathcal{R},\tau)$ may be considered as a natural noncommutative analogue of the pair  $(L_{\infty}(0,1),dm)$ (here, $dm$ is the Lebesgue measure). In particular, the trace $\tau$  is normal and faithful (see next section for details). More generally, closed $\tau$-measurable operators affiliated to $\mathcal{R}$ form a $*$-algebra $\mathcal{S}(\mathcal{R},\tau)$ which is analogous to the algebra $L_0(0,1)$ of all (unbounded) Lebesgue measurable functions on $(0,1)$. The precise definition of the $L_p$-spaces associated with $\mathcal{R}$ mimics the classical one
$$\mathcal{L}_p(\mathcal{R}):=\{A\in \mathcal{S}(\mathcal{R},\tau):\ \tau(|A|^p)<\infty\}.$$
The separable space  $\mathcal{L}_p(\mathcal{R})$ plays a role in the class of noncommutative $L_p$-spaces associated with semifinite von Neumann algebras (see e.g. \cite{PX})  similar to that of the role of the space $L_p(0,1)$ in the class of all $L_p$-spaces on $\sigma$-finite measure spaces.

The class of symmetric subspaces of the space $\mathcal{L}_p(\mathcal{R})$ is naturally understood as subspaces isomorphic to Banach ideals in $\mathcal{L}(H),$ the only von Neumann factor of the type $I_\infty.$ Indeed, the latter class corresponds to that of symmetric sequence spaces \cite{KScrelle, LSZ}. For example, when symmetric sequence space in question is $l_q$, then the corresponding Banach ideal is the classical Schatten-von Neumann class $\mathcal{L}_q(H)$. Thus, the precise form of the main question studied in this article is as follows.

\begin{quest} Which Banach ideals in $\mathcal{L}(H)$ admit an isomorphic embedding into $\mathcal{L}_p(\mathcal{R})?$
\end{quest}

The following result taken from  \cite[Theorem 3.1]{sukp2}, \cite[Theorem 1.1 and Corollary 1.2]{hrs}, \cite{junge} encapsulates all available information to date.

\begin{thm}\label{cont} Let $\mathcal{R}$ be the hyperfinite II$_1$-factor. We have
\begin{enumerate}[{\rm (a)}]
\item\label{conta}  $\mathcal{L}_2(H)$ is the only Banach ideal in $\mathcal{L}(H)$ isomorphically embeddable into $\mathcal{L}_p(\mathcal{R})$ for $p\geq2.$
\item\label{contb} $\mathcal{L}_p(H)$ does not isomorphically embed into $\mathcal{L}_p(\mathcal{R})$ for $1\leq p<2.$
\item\label{contc} $\mathcal{L}_q(H)$ isomorphically embeds into $\mathcal{L}_p(\mathcal{R})$ for $1\leq p<q<2.$
\end{enumerate}
\end{thm}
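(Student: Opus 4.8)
The plan is to establish the three assertions separately: (a) and (b) are non-embedding results, while (c) is a construction.

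\emph{Reduction of (a).} The case $p=2$ is immediate --- $\mathcal{L}_2(\mathcal{R})$ is a separable Hilbert space, so any Banach ideal embedding into it is isomorphic to a separable Hilbert space, hence isomorphic to $\mathcal{L}_2(H)$. For $p>2$ the conditional expectation $\mathbb{E}\colon\mathcal{L}(H)\to D$ onto a diagonal masa $D$ is a contractive projection carrying any Banach ideal $\mathcal{I}$ onto its Calkin space $E$, a symmetric sequence space; so an embedding $\mathcal{I}\hookrightarrow\mathcal{L}_p(\mathcal{R})$ yields $E\hookrightarrow\mathcal{L}_p(\mathcal{R})$, and the classification of symmetric sequence spaces embeddable into $\mathcal{L}_p(\mathcal{R})$ for $p\ge 2$ --- only $\ell_2$ and $\ell_p$, exactly as in the commutative Lindenstrauss--Tzafriri theory --- forces $\mathcal{I}$ to be isomorphic to $\mathcal{L}_2(H)$ or to $\mathcal{L}_p(H)$ (a unitarily invariant norm being recovered from its Calkin sequence norm on singular values). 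Together with (b), everything therefore comes down to the single non-embedding
\[
  \mathcal{L}_p(H)\ \not\hookrightarrow\ \mathcal{L}_p(\mathcal{R}),\qquad 1\le p<2\ \text{ or }\ p>2.
\]

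\emph{The main obstacle.} The crucial feature is that this non-embedding cannot be seen locally: every block $\mathcal{L}_p(M_n)$ embeds \emph{isometrically} into $\mathcal{L}_p(\mathcal{R})$, because $M_n\subset\mathcal{R}$ trace-preservingly and rescaling the trace does not affect embeddability. Hence the obstruction is infinitary. Informally, $\mathcal{L}_p(H)$ supports infinitely many matrix-unit systems all of the same $\mathcal{L}_p$-size, with every matrix unit --- including those joining far-separated blocks --- still of norm one, whereas inside a \emph{finite} algebra disjointly supported pieces necessarily shrink, and so do the matrix units connecting them; thus no block-type embedding can succeed and all embeddings must be excluded. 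To do this I would follow the strategy of \cite{hrs}: starting from a hypothetical embedding, reconstruct inside $\mathcal{R}$ an $\mathcal{L}_p$-configuration reproducing a whole scale of almost-free row/column systems with controlled norms, and contradict a quantitative invariant of $\mathcal{L}_p$ of a finite von Neumann algebra. When $p>2$ one may in addition invoke the refined structure of finite-dimensional subspaces of $L_p$-spaces (their mixed $\ell_2$- and $\ell_p$-behaviour), which is incompatible with the combined row/column geometry of the Schatten blocks; the range $1\le p<2$, where such tools are unavailable and the finiteness of the trace must be used directly, is the genuinely hard case and, in my estimation, the principal technical difficulty of the theorem (compare \cite{sukp2,hrs}).

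\emph{Part (c): the embedding $\mathcal{L}_q(H)\hookrightarrow\mathcal{L}_p(\mathcal{R})$ for $1\le p<q<2$.} The template is Kadec's commutative embedding $\ell_q\hookrightarrow L_p(0,1)$ through a sequence $(g_k)$ of independent $q$-stable random variables, which lie in $L_p$ precisely because $p<q$ and satisfy $\|\sum_k a_kg_k\|_p\asymp\|(a_k)\|_{\ell_q}$. To lift this I would work inside an infinite von Neumann tensor product $\mathcal{R}\cong\bigotimes_k\mathcal{R}_k$ (or with a free family): first realise a selfadjoint $q$-stable element $x\in\mathcal{L}_p(\mathcal{R})$, then, using a trace-preserving copy of $M_n\subset\mathcal{R}$ to carry the matrix structure on one tensor leg while placing independent copies of $x$ on the others, define elements $X_{ij}$ $(1\le i,j\le n)$ playing the role of the matrix units $e_{ij}\in\mathcal{L}_q(H)$. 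The point to verify is the two-sided estimate
\[
  \Big\|\sum_{i,j}a_{ij}X_{ij}\Big\|_{\mathcal{L}_p(\mathcal{R})}\ \asymp\ \|(a_{ij})\|_{\mathcal{L}_q(M_n)}\qquad\text{uniformly in }n,
\]
for which the engine is the noncommutative Rosenthal inequality --- equivalently the boundedness on $\mathcal{L}_p(\mathcal{R})$ of the Kruglov-type operator of \cite{AS} --- which converts the $\ell_q$-behaviour of the scalar coefficients into $\mathcal{L}_q(M_n)$-behaviour after amalgamation with the matrix block. A standard coherence and limiting argument then upgrades these uniform finite-dimensional embeddings to an isomorphic embedding of all of $\mathcal{L}_q(H)$ into $\mathcal{L}_p(\mathcal{R})$; as indicated in the introduction, this is the special case of the paper's general positive mechanism in which the Calkin space happens to be $\ell_q$.
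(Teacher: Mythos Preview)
The paper does not prove this theorem: it is quoted as background, with attribution to \cite[Theorem~3.1]{sukp2}, \cite[Theorem~1.1 and Corollary~1.2]{hrs}, and \cite{junge}. There is therefore no proof in the paper for your proposal to be compared against. What you have written is an outline of the literature arguments, and as an outline it points in the right directions --- Kadec--Pe\l czy\'nski-type dichotomy for $p>2$, the finite-trace obstruction of \cite{hrs} for the non-embedding, and $q$-stable/Poisson constructions for~(c) --- but it is not a proof, and you are candid about that.

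Two substantive gaps deserve mention. First, your reduction of (a) to the Calkin core assumes as a black box that the only symmetric sequence spaces embedding into $\mathcal{L}_p(\mathcal{R})$ for $p>2$ are $\ell_2$ and $\ell_p$; in the noncommutative setting this is not lighter than (a) itself and is essentially what \cite{sukp2} establishes, so the reduction is close to circular unless you supply that input independently. Second, and more seriously, the heart of both (a) (the case $p>2$) and (b) is the non-embedding $\mathcal{L}_p(H)\not\hookrightarrow\mathcal{L}_p(\mathcal{R})$, and here your text is purely heuristic: ``disjoint pieces shrink in a finite algebra'' captures the intuition, but the actual argument in \cite{hrs} requires a delicate gliding-hump/ultraproduct analysis to pass from an arbitrary embedding to a structured one before any contradiction can be extracted. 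Your sketch of (c) via $q$-stable elements and operator-valued Rosenthal inequalities is in the spirit of \cite{junge}, and is indeed the special case $M(t)=t^q$ of the machinery developed later in the present paper; again, the genuine work (constructing the stable element, verifying the two-sided bound uniformly in $n$) is deferred rather than done.
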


In this paper, we investigate this question in the setting $1\leq p<2$. To state our main results, we need a notion of commutative core of an ideal, which is also frequently called a Calkin space (see e.g. \cite{LSZ}). Fix an orthonormal basis in $H$ (the particular choice of a basis is inessential) and identify the algebra $l_{\infty}$ of all bounded sequences with the subalgebra of all diagonal operators on $H$ with respect to this basis. Given an ideal $\mathcal{I}\in\mathcal{L}(H),$ we define its commutative core by $$I:=\mathcal{I}\cap l_{\infty}$$  (that is, the commutative core $I$ corresponding to $\mathcal{I}$ is the collection of all diagonal operators from $\mathcal{L}(H)$ belonging to $\mathcal{I}$). If $(\mathcal{I}, \|\cdot\|_{\mathcal{I}})$ is a Banach ideal, then its commutative core $I$ is equipped with the induced norm $\|x\|_{{I}}:=\|x\|_{\mathcal{I}}$, $x\in {I}.$ It can be shown that $(I, \|\cdot\|_I)$ is a symmetric sequence space. We are now able to state the main results of the paper.

\begin{thm}\label{main theorem} A separable Banach ideal $\mathcal{I}\neq\mathcal{L}_p(H)$ in $\mathcal{L}(H)$ admits an isomorphic embedding into $\mathcal{L}_p(\mathcal{R}),$ $1\leq p<2,$ provided that its commutative core $I$ admits an isomorphic embedding into $L_p(0,1).$
\end{thm}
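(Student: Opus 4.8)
The plan is to reduce the statement to the existing positive result in Theorem~1.3(c) together with the structure theory of symmetric sequence spaces, and then to transfer a commutative embedding $I\hookrightarrow L_p(0,1)$ to a noncommutative embedding $\mathcal I\hookrightarrow\mathcal L_p(\mathcal R)$ by a Kruglov-type functor. First I would use the Bretagnolle--Dacunha-Castelle / Braverman circle of ideas, in the form developed in \cite{ASorlicz}, to understand what it means for a symmetric sequence space $I$ to embed into $L_p(0,1)$: after discarding the exceptional case $I=\ell_p$ (which corresponds to the excluded ideal $\mathcal L_p(H)$), an embedding of $I$ into $L_p(0,1)$ forces $I$ to be an interpolation space for the couple $(\ell_p,\ell_2)$, equivalently $I$ lies "strictly between" $\ell_p$ and $\ell_2$ in a quantitative sense (its upper Boyd index is $<2$ or $I\subseteq \ell_q$ for some $q<2$ with a norm estimate). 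The key point extracted from the commutative theory is that the embedding of such an $I$ is realized by independent copies of a single mean-zero random variable, i.e.\ through a Kruglov-type operator acting on the diagonal.

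Next I would lift this to the operator level. The natural device is the noncommutative analogue of the Kruglov operator, built inside $\mathcal R$ from a free or tensor-independent family of copies of $\mathcal L(H)$ (or rather of a fixed matrix block structure), so that the diagonal restriction of the resulting map reproduces the classical Kruglov operator used in the commutative step. Concretely: realize $\mathcal R$ as (a piece of) $\overline{\bigotimes}_{k} M_{n_k}$ or as a free product, embed $\mathcal L(H)=\overline{\bigcup_k M_{n_k}}$ block-diagonally, and define $T\colon \mathcal S(\mathcal R_0)\to \mathcal S(\mathcal R)$ sending an operator $x$ in the $k$-th block to (a symmetrized sum of) independent copies of $x$ sitting in mutually commuting or free sub-factors. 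One then checks two things: (i) on the diagonal subalgebra $\ell_\infty\subset \mathcal L(H)$, $T$ restricts to the classical embedding witnessing $I\hookrightarrow L_p(0,1)$, hence is bounded below on $I$; (ii) the $\mathcal L_p(\mathcal R)\to\mathcal L_p(\mathcal R)$ norm of $T$ and the lower bound are controlled by \emph{singular value / distributional} data only, via the fact that $\|\cdot\|_{\mathcal I}$ depends only on singular values and the norm of $T$ on any symmetric ideal is governed by its norm on the corresponding symmetric sequence space. For step (ii) the engine is a noncommutative Rosenthal / Rosenthal--Khintchine type inequality in $\mathcal L_p(\mathcal R)$ comparing $\|\sum_k x_k\|_p$ for an independent (freely independent) mean-zero family with a max of a "diagonal" term $(\sum\|x_k\|_p^p)^{1/p}$ and "column/row" square-function terms; combined with the hypothesis $I\subsetneq \ell_p$ these terms collapse to the sequence-space norm.

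A cleaner packaging of the same idea, which I would actually pursue, is to factor the embedding through $\mathcal L_q(H)$ for a suitable $q\in(p,2)$: since $I\hookrightarrow L_p(0,1)$ and $I\ne\ell_p$, the structure theory gives a $q<2$ and a symmetric space $I'$ with $I\hookrightarrow I'$ where $I'$ is "close to" $\ell_q$, so one first embeds $\mathcal I$ into $\mathcal L_q(H)$ using the commutative core inclusion (this is the place one must be careful that an embedding of sequence spaces upgrades to an embedding of ideals — this uses that the ambient ideal norm is determined by singular values and a disjointification/blocking argument), and then invokes Theorem~1.3(c), $\mathcal L_q(H)\hookrightarrow \mathcal L_p(\mathcal R)$, to conclude. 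The composition $\mathcal I\hookrightarrow\mathcal L_q(H)\hookrightarrow\mathcal L_p(\mathcal R)$ is the desired embedding, and the exclusion $\mathcal I\ne\mathcal L_p(H)$ is exactly what guarantees the first arrow exists.

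The main obstacle I expect is step (i)--(ii) above, i.e.\ passing from the commutative embedding of the Calkin space to a \emph{noncommutative} embedding of the full ideal: the difficulty is that an operator on $\mathcal L(H)$ which does the right thing on diagonal operators need not even be bounded on $\mathcal I$, because off-diagonal entries can blow up under "taking independent copies" unless one sets up the noncommutative Kruglov operator so that it respects matrix amplification uniformly in the matrix size. This uniform-in-dimension control — essentially a noncommutative analogue of the fact that the Kruglov operator is bounded on every symmetric space strictly contained in $L_p$ — together with the matching lower estimate, is where the real work lies; the noncommutative Rosenthal inequality (for freely or tensor independent families, with values in $\mathcal L_p(\mathcal R)$) and the interpolation description of $\mathcal I$ as a space for the couple $(\mathcal L_p(H),\mathcal L_2(H))$ are the tools I would use to carry it out.
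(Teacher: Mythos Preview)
Your ``cleaner packaging'' through a single $\mathcal L_q(H)$ does not work. If $I\hookrightarrow L_p(0,1)$ and $I\ne\ell_p$, there is in general no $q\in(p,2)$ with $I\hookrightarrow\ell_q$: already any Orlicz space $\ell_M$ with $M$ strictly $p$-convex and $2$-concave and $M\not\sim t^q$ embeds into $L_p(0,1)$ but is not isomorphic to a subspace of any $\ell_q$. Moreover, the step ``an embedding of sequence spaces upgrades to an embedding of ideals'' is false as stated; there is no functor sending an arbitrary Banach embedding $I\hookrightarrow I'$ to an embedding $\mathcal I\hookrightarrow\mathcal I'$. So Theorem~\ref{cont}\eqref{contc} cannot be used as a black box here.

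Your first approach is closer in spirit, but it is missing the structural input that actually does the work in the paper. The paper does \emph{not} try to build a single Kruglov-type map on $\mathcal I$ and then prove it is bounded above and below by ideal-theoretic arguments. Instead it first proves a $p$-version of the Raynaud--Sch\"utt theorem (Theorem~\ref{rs theorem}): for any symmetric sequence space $I$ embedding into $L_p(0,1)$ there is a Radon probability measure $\nu$ on the compact set $\mathcal O_{p,2}$ of $p$-convex, $2$-concave Orlicz functions such that
\[
\|x\|_I^p \sim \int_{\mathcal O_{p,2}} \|x\|_{\ell_M}^p\,d\nu(M).
\]
The hypothesis $I\ne\ell_p$ is used precisely to show that $\nu$-a.e.\ $M$ satisfies $M(t)/t^p\to 0$ as $t\to 0$. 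For each such $M$ individually, the paper has already built (via the semifinite noncommutative Kruglov operator, Lemmas~\ref{lpl2 embed} and~\ref{ms} together with the explicit construction of Lemma~\ref{bk lemma}) an element $\xi(M)\in\mathcal L_p(\mathcal R)$ with $\|\xi(M)\otimes A\|_{\mathcal L_p+\mathcal L_2}\sim\|A\|_{\mathcal L_M}$ uniformly in $M$, and then $\|\mathscr K(\xi(M)\otimes A\otimes r)\|_p\sim\|A\|_{\mathcal L_M}$. The final step is to check that $M\mapsto\xi(M)$ is Bochner measurable (Lemma~\ref{bochner}), so that one can assemble these embeddings into a single map $A\mapsto\big(M\mapsto\mathscr K(\xi(M)\otimes A\otimes r)\big)$ landing in the Bochner space $L_p(\mathcal O_{p,2},\nu;\mathcal L_p(\mathcal R))\cong\mathcal L_p(\mathcal R\bar\otimes L_\infty(\mathcal O_{p,2},\nu))\hookrightarrow\mathcal L_p(\mathcal R)$. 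The noncommutative Rosenthal/Johnson--Schechtman inequality you mention does appear, but only inside the proof for a \emph{single} Orlicz function (Lemma~\ref{js lemma}); it is the Raynaud--Sch\"utt decomposition and the measurable-family construction that glue everything together, and these are absent from your outline.
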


The separability condition, as well as the condition $\mathcal{I}\neq\mathcal{L}_p(H)$ are obviously essential (see Theorem \ref{cont} \eqref{contb} above).

The result of Theorem \ref{main theorem} heavily relies on the classification of Orlicz ideals which admit an isomorphic embedding into $\mathcal{L}_p(\mathcal{R})$ and which is a perfect noncommutative analogue of results from \cite{BD, BD2, B3, ASorlicz}.

\begin{thm}\label{orlicz embedding theorem} Let $\mathcal{R}$ be the hyperfinite II$_1$-factor, let $\mathcal{L}_M(H)\neq\mathcal{L}_p(H)$ be an Orlicz ideal and let $1\leq p<2.$ The following conditions are equivalent.
\begin{enumerate}[{\rm (i)}]
\item\label{Oeti} The space $\mathcal{L}_M(H)$ isomorphically embeds into $\mathcal{L}_p(\mathcal{R}).$
\item\label{Oetii} The space $\mathcal{L}_M(H)$ is $p$-convex and $2$-concave.
\item\label{Oetiii} The Orlicz function $M$ on $(0,\infty)$ is equivalent on $[0,1]$ to some $p$-convex and $2$-concave Orlicz function.
\item\label{Oetiv} The space $\mathcal{L}_M(H)$ is an interpolation space for the couple $(\mathcal{L}_p(H),\mathcal{L}_2(H)).$
\end{enumerate}
\end{thm}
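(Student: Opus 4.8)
\emph{Strategy.} I would prove the four conditions equivalent by running the cycle (i) $\Rightarrow$ (ii) $\Rightarrow$ (iii) $\Rightarrow$ (iv) $\Rightarrow$ (i), with the warning that the last implication must be established directly: it cannot be deduced from Theorem \ref{main theorem}, since the proof of Theorem \ref{main theorem} itself rests on the present classification. The two implications (i) $\Rightarrow$ (ii) and (ii) $\Leftrightarrow$ (iii) are ``soft''. For $1\le p<2$ the space $\mathcal{L}_p(\mathcal{R})$ has type $p$ and cotype $2$ (noncommutative Clarkson/Khintchine inequalities), and type and cotype pass to subspaces; hence, under (i), $\mathcal{L}_M(H)$ and therefore its commutative core $\ell_M=\mathcal{L}_M(H)\cap\ell_\infty$ have type $p$ and cotype $2$. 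For an Orlicz sequence space this forces, via the Lindenstrauss--Tzafriri description of type and cotype through the Matuszewska--Orlicz indices (equivalently the Maurey--Krivine theory of Banach lattices), that $\ell_M$ is $p$-convex and $2$-concave, and these properties transfer back to the ideal $\mathcal{L}_M(H)$ through the standard correspondence between the convexity/concavity of a symmetric operator space and of its commutative core; this is (ii). The equivalence (ii) $\Leftrightarrow$ (iii) is then purely about Orlicz functions: $p$-convexity and $2$-concavity of $\ell_M$ say exactly that $t\mapsto M(t^{1/p})$ is equivalent near $0$ to a convex function and $t\mapsto M(t^{1/2})$ to a concave function, and a routine gluing/extension argument produces a globally $p$-convex and $2$-concave $\widetilde M$ agreeing with $M$ up to equivalence on $[0,1]$, the converse being immediate since then $\ell_M=\ell_{\widetilde M}$ with equivalent norms. (As a byproduct, (ii) forces $\ell_M\in\Delta_2$ at $0$, so $\mathcal{L}_M(H)$ is automatically separable.)

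\emph{The implication (iii) $\Rightarrow$ (iv).} Replacing $M$ by $\widetilde M$, we may assume $M$ itself is $p$-convex and $2$-concave, so that $\ell_p\subseteq\ell_M\subseteq\ell_2$. A $p$-convex and $2$-concave separable symmetric sequence space is an exact interpolation space for the Calder\'on couple $(\ell_p,\ell_2)$; I would obtain this from the Calder\'on--Mityagin description of relative orbits in $(\ell_p,\ell_2)$ together with the Lozanovskii/Sparr factorisation made available by $p$-convexity and $2$-concavity. Since the couple of Schatten-type ideals $(\mathcal{L}_p(H),\mathcal{L}_2(H))$ is likewise a Calder\'on couple, and the passage from a symmetric sequence space to the associated ideal preserves the relevant $K$-functional estimates (diagonalisation of singular values), the interpolation property lifts from $(\ell_p,\ell_2)$ to $(\mathcal{L}_p(H),\mathcal{L}_2(H))$, which is (iv). (The special cases $M(t)\sim t^q$, $p<q<2$, are of course already covered by Theorem \ref{cont}\,(c).)

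\emph{The implication (iv) $\Rightarrow$ (i): the heart of the proof.} Here the Kruglov operator of \cite{AS} enters. By the previous steps we may assume $M$ is $p$-convex and $2$-concave, and since $\mathcal{L}_M(H)\ne\mathcal{L}_p(H)$ the function $M$ is not equivalent to $t^p$ near $0$ --- exactly the non-degenerate regime of the probabilistic construction. The plan has three steps. (a) Invoke the Bretagnolle--Dacunha-Castelle/Astashkin--Sukochev construction \cite{BD,BD2,B3,ASorlicz} to choose a mean-zero $f\in L_p(0,1)$ whose independent copies span $\ell_M$ inside $L_p(0,1)$. (b) Promote this to an operator-valued construction: for each $n$ build a finite hyperfinite von Neumann algebra $\mathcal{N}_n$ out of noncommutatively independent copies of $M_n(\mathbb{C})$-blocks together with a uniformly (in $n$) isomorphic embedding of the $n\times n$ Orlicz matrix ideal into $\mathcal{L}_p(\mathcal{N}_n)$, and assemble these into a single embedding of $\mathcal{L}_M(H)$ into $\mathcal{L}_p(\mathcal{N})$ with $\mathcal{N}$ finite and hyperfinite. (c) Conclude via $\mathcal{L}_p(\mathcal{N})\hookrightarrow\mathcal{L}_p(\mathcal{R})$, valid for every finite hyperfinite $\mathcal{N}$. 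The two-sided norm estimate underlying (b) is a noncommutative Rosenthal-type inequality: the relevant (matricial) Kruglov operator is bounded on $\mathcal{L}_p(\mathcal{R})$ and, after renormalisation, on $\mathcal{L}_2(\mathcal{R})$, hence --- and this is where (iv) is used decisively --- on the interpolation space $\mathcal{L}_M(H)$, while comparison with disjoint (freely/tensor independent) copies gives the reverse inequality.

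\emph{Main obstacle.} The crux is step (b): carrying the scalar independent-copies argument up to the level of the full operator ideal. One must control the off-diagonal entries, verify that the algebra generated by the noncommutatively independent matrix copies is simultaneously finite and hyperfinite, and establish the sharp two-sided bound for the matricial Kruglov operator on $\mathcal{L}_p(\mathcal{R})$ uniformly in the matrix dimension. The endpoint case $p=1$, where type is vacuous and the Kruglov estimates are most delicate, will require separate care.
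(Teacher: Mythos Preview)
Your soft implications are broadly aligned with the paper, though the paper organises them differently: it proves (i)$\Rightarrow$(ii) by a direct Rademacher-averaging argument (Theorem~\ref{lp theorem}) yielding upper $p$- and lower $2$-estimates rather than invoking type/cotype abstractly, and handles (iii)$\Leftrightarrow$(iv) via Lorentz--Shimogaki and Arazy--Cwikel rather than Calder\'on--Mityagin/Lozanovskii. These differences are cosmetic.

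The real divergence is in the hard implication. The paper proves (iii)$\Rightarrow$(i) directly and \emph{does not use} (iv) in that argument; your claim that (iv) is ``used decisively'' via interpolation of Kruglov-operator bounds between $\mathcal{L}_p$ and $\mathcal{L}_2$ is not how it goes, and as stated it does not make sense: the Kruglov operator does not act on $\mathcal{L}_M(H)$, and boundedness on an interpolation space would not by itself produce an isomorphic embedding. Your step (b) is also too vague --- ``noncommutatively independent copies of $M_n(\mathbb{C})$-blocks'' does not specify a linear map $A\mapsto T(A)$, and nothing in your outline explains how off-diagonal entries are controlled uniformly in $n$.

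The paper's route is concrete and bypasses these difficulties through an intermediate space. First (Lemma~\ref{lpl2 embed}), $A\mapsto\mathscr{K}(A\otimes r)$ is an isomorphic embedding of $(\mathcal{L}_p+\mathcal{L}_2)(\mathcal{R}\bar\otimes\mathcal{L}(H))$ into $\mathcal{L}_p(\mathcal{R})$; the two-sided estimate comes from the Johnson--Schechtman inequality (Lemma~\ref{js lemma}) applied to the \emph{commuting} independent images $\mathscr{K}(A_k\otimes r)$, combined with a submajorization estimate for $K$ (Lemma~\ref{modified ms}). Second, the Orlicz norm is reduced to an $(\mathcal{L}_p+\mathcal{L}_2)$-norm: from the $p$-convex, $2$-concave $M$ with $M(t)/t^p\to 0$ one forms the quasi-concave function $\varphi(t)=M(t^{1/(2-p)})/t^{p/(2-p)}$, applies a representation lemma (Lemma~\ref{bk lemma}) to produce $x\in L_1(0,1)$ with $\int_0^1\min\{x,tx^{2/p}\}\,ds\sim\varphi(t)$, and takes $A_0$ with $\mu(A_0)=x^{1/p}$. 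Then $\|A\|_{\mathcal{L}_M}=\|A_0\otimes A\|_{\mathcal{L}_p+\mathcal{L}_2}$ (Lemma~\ref{ms}), and composing with the first step gives the \emph{linear} embedding $A\mapsto\mathscr{K}(A_0\otimes A\otimes r)$. The key idea you are missing is this factorisation through $(\mathcal{L}_p+\mathcal{L}_2)(\mathcal{R}\bar\otimes\mathcal{L}(H))$: tensoring with a fixed $A_0$ turns the scalar construction into a genuine linear map on the full ideal, handling off-diagonals automatically without any separate ``promotion'' step or dimension-dependent assembly.
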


\begin{rema}
The condition $\mathcal{L}_M(H)\neq\mathcal{L}_p(H)$ guarantees that $M(t)\not\sim t^p$ as $t\to0.$
\end{rema}

% {\cre \begin{cor}\label{as theorem} Let $1\leq p<2$ and let $M$ be an Orlicz function on $(0,\infty)$ such that and $M(t)\not\sim t^p$ as $t\to0$. The following conditions are equivalent.
% \begin{enumerate}[{\rm (i)}]
% \item $M$ is $p$-convex, $2$-concave.
% \item {\cre $l_M$} is isomorphic to a subspace of $L_p(0,1).$
% \end{enumerate}
% \end{cor}}

%{\cre Another consequence of Theorem \ref{main theorem} yields a similar characterization of Lorents ideals $\Lambda_{\psi}^p(H)$ admitting isomorphic copies in  $\mathcal{L}_p(\mathcal{R})$. This may be seen as a noncommutative extension of Sch\"utt's result from \cite{Schutt} established there for isomorphs of Lorentz sequence spaces in $L_p(0,1)$.
%
%\begin{thm}\label{schutt} Let $\Lambda_{\psi}^p(H)\neq\mathcal{L}_p(H)$ be a Lorentz ideal and let $1\leq p<2.$ The following conditions are equivalent.
%\begin{enumerate}[{\rm (i)}]
%\item The space $\Lambda_{\psi}^p(H)$ isomorphically embeds into $\mathcal{L}_p(\mathcal{R}).$
%\item The space $\Lambda_{\psi}^p(H)$ is an interpolation space for the couple $(\mathcal{L}_p(H),\mathcal{L}_r(H))$ for some $r\in(p,2).$
%\end{enumerate}
%\end{thm}}

Our second main result may be viewed as a converse to Theorem \ref{main theorem} under an additional assumption that the hyperfinite factor $\mathcal{R}$ is replaced with an arbitrary finite von Neumann algebra $\mathcal{N}.$

\begin{thm}\label{second main theorem} For a separable Banach ideal $\mathcal{I}\neq\mathcal{L}_p(H)$ in $\mathcal{L}(H),$ the following conditions are equivalent
\begin{enumerate}[{\rm (i)}]
\item\label{ji} $\mathcal{I}$ admits an isomorphic embedding into $\mathcal{L}_p(\mathcal{M},\sigma),$ $1\leq p<2,$ for some finite von Neumann algebra $(\mathcal{M},\sigma).$
\item\label{jii} The commutative core $I$ of the ideal $\mathcal{I}$ admits an isomorphic embedding into $\mathcal{L}_p(\mathcal{N},\tau),$ $1\leq p<2,$ for some finite von Neumann algebra $(\mathcal{N},\tau).$
\end{enumerate}
\end{thm}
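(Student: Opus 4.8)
\emph{Proof proposal.}

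\textbf{The trivial implication \eqref{ji}$\Rightarrow$\eqref{jii}.} By construction $I=\mathcal{I}\cap l_\infty$ is a Banach subspace of $\mathcal{I}$ carrying the norm induced from $\|\cdot\|_{\mathcal{I}}$, so the restriction to $I$ of any isomorphic embedding of $\mathcal{I}$ into $\mathcal{L}_p(\mathcal{M},\sigma)$ is again an isomorphic embedding; thus \eqref{jii} holds with $(\mathcal{N},\tau)=(\mathcal{M},\sigma)$.

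\textbf{Reducing \eqref{jii}$\Rightarrow$\eqref{ji} to Theorem \ref{main theorem}.} The whole content is the converse, and I would deduce it from Theorem \ref{main theorem} through the following claim, to be isolated as a lemma: \emph{for $1\le p<2$, a separable symmetric sequence space $E$ admits an isomorphic embedding into $\mathcal{L}_p(\mathcal{N},\tau)$ for some finite von Neumann algebra $(\mathcal{N},\tau)$ if and only if it admits an isomorphic embedding into $L_p(0,1)$.} The "if'' part is trivial, as $L_p(0,1)=\mathcal{L}_p(L_\infty(0,1),m)$. Granting the claim and applying it to $E=I$, hypothesis \eqref{jii} produces an isomorphic embedding of $I$ into $L_p(0,1)$; since $\mathcal{I}\neq\mathcal{L}_p(H)$, Theorem \ref{main theorem} then yields an isomorphic embedding of $\mathcal{I}$ into $\mathcal{L}_p(\mathcal{R})$, and $\mathcal{R}$, being equipped with a tracial state, is a finite von Neumann algebra; this is precisely \eqref{ji} with $(\mathcal{M},\sigma)=(\mathcal{R},\tau)$.

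\textbf{Sketch of the claim (nontrivial direction).} This is a noncommutative analogue of the Bretagnolle--Dacunha-Castelle description of symmetric basic sequences in $L_p(0,1)$, and I would prove it by the functional-analytic methods of \cite{BD,B3,ASorlicz} (Kruglov operators) rather than through random processes. Let $\{e_n\}$ be a symmetric basis of $E$ and let $x_n\in\mathcal{L}_p(\mathcal{N},\tau)$ be its images under a fixed isomorphic embedding. Passing to the von Neumann subalgebra generated by the spectral projections of all $x_n$ and $x_n^{*}$, we may assume $\mathcal{N}$ has separable predual; via the standard $2\times2$ self-adjointization trick carried out in $\mathcal{L}_p(M_2(\mathcal{N}))$, we may assume each $x_n$ is self-adjoint; and, since $E$ is $1$-symmetric and therefore isomorphic to its own spreading model, after replacing $\mathcal{L}_p(\mathcal{N},\tau)$ by a tracial ultrapower (still the $L_p$-space of a finite von Neumann algebra) we may assume $\{x_n\}$ is exchangeable. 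A noncommutative de~Finetti--type argument then exhibits $\{x_n\}$, after subtraction of its common conditional mean over the tail algebra $\mathcal{T}=\bigcap_n W^{*}(x_n,x_{n+1},\dots)$, as $\mathcal{T}$-conditionally independent and $\mathcal{T}$-conditionally identically distributed. Combining noncommutative Rosenthal/Burkholder-type inequalities for conditionally independent sequences with a truncation argument, one evaluates $\|\sum_n a_n x_n\|_p$, up to universal constants and uniformly in $(a_n)$, in terms of distributional data alone: the $\mathcal{T}$-valued randomness averages out and one is left with an Orlicz functional $(a_n)\mapsto\bigl(\sum_n M(a_n)\bigr)^{1/p}$ for a single Orlicz function $M$ that is $p$-convex and $2$-concave. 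Hence $E$ is isomorphic to the Orlicz sequence space $l_M$, which --- $M$ being $p$-convex and $2$-concave, cf.\ condition \eqref{Oetiii} of Theorem \ref{orlicz embedding theorem} --- embeds isomorphically into $L_p(0,1)$ by the classical results of \cite{BD,B3} (see also \cite{ASorlicz}). This proves the claim.

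\textbf{Where the difficulty lies.} The heart of the matter is the noncommutative Bretagnolle--Dacunha-Castelle step: producing from an abstract symmetric basic sequence in $\mathcal{L}_p(\mathcal{N},\tau)$ first an exchangeable and then a conditionally i.i.d.\ model, and --- the genuinely new point --- showing that the possibly noncommutative conditioning (tail) algebra does not obstruct the commutative, Orlicz-space realisation of the span. I note finally an alternative route that avoids the claim: if the argument underlying Theorem \ref{main theorem} manufactures, from an isomorphic embedding $I\hookrightarrow\mathcal{L}_p(\mathcal{A})$ into the $L_p$-space of an \emph{arbitrary} von Neumann algebra $\mathcal{A}$, an isomorphic embedding $\mathcal{I}\hookrightarrow\mathcal{L}_p(\mathcal{A}\,\overline{\otimes}\,\mathcal{R})$, then \eqref{jii}$\Rightarrow$\eqref{ji} follows at once with $\mathcal{M}=\mathcal{N}\,\overline{\otimes}\,\mathcal{R}$, which is finite whenever $\mathcal{N}$ is.
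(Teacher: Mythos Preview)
Your trivial implication is fine and matches the paper. For \eqref{jii}$\Rightarrow$\eqref{ji}, however, your main route rests on a ``claim'' --- that a symmetric sequence space embedding into $\mathcal{L}_p(\mathcal{N},\tau)$ for a finite $(\mathcal{N},\tau)$ must embed into $L_p(0,1)$ --- which is precisely the open Question stated immediately after Theorem~\ref{second main theorem} in the paper. Your sketch of this claim contains a concrete error: even in the commutative case $\mathcal{N}=L_\infty(0,1)$, a symmetric basic sequence in $L_p(0,1)$ need \emph{not} span an Orlicz space. The correct description (Raynaud--Sch\"utt, see Theorem~\ref{rs theorem}) is an $L_p$-average $\|x\|_I^p\sim\int_{\mathcal{O}_{p,2}}\|x\|_{l_M}^p\,d\nu(M)$ over a measure on $p$-convex $2$-concave Orlicz functions. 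In your exchangeable/de~Finetti picture that measure is exactly the contribution of the tail algebra $\mathcal{T}$; the step ``the $\mathcal{T}$-valued randomness averages out and one is left with \dots\ a single Orlicz function $M$'' is false already when $\mathcal{T}$ is commutative and nontrivial. Your alternative route (generalising the proof of Theorem~\ref{main theorem} to arbitrary $\mathcal{A}$) runs into the same obstacle: that proof starts from the Raynaud--Sch\"utt representation, which is only available once one already knows $I\hookrightarrow L_p(0,1)$.

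The paper avoids the open question entirely by a direct ultraproduct construction that never passes through $L_p(0,1)$. Given $T:I\to\mathcal{L}_p(\mathcal{N},\tau)$, set $F=\{T(e_n)\}_{n\ge0}$; the hypothesis $I\neq l_p$ forces $\{|T(e_n)|^p\}$ to be uniformly integrable (Lemma~\ref{equi1}), so $F\in\mathcal{L}_p(\mathcal{N}^{\omega},\tau^{\omega})$ rather than merely in the Banach ultraproduct (Lemma~\ref{equi2}). For each $m$ one defines $T_m(A)=\sum_{k=0}^{m-1}\theta_k(p_mAp_m)\otimes\varrho_k(F)$ in $\mathcal{L}_p(M_{m^m}(\mathbb{C})\otimes\mathcal{N}_m)$, where $\theta_k$ are tensor-slot embeddings and $\varrho_k$ are coordinate evaluations into an iterated ultrapower $\mathcal{N}_m$. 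A combinatorial estimate of Kwapien--Sch\"utt type together with the result of \cite{Junge-pos} (Lemma~\ref{junge pos lemma}) gives $\|T_m(A)\|_p\stackrel{C(I)}{\sim}\|A\|_{\mathcal{I}}$ uniformly in $m$ (Lemma~\ref{prelim embedding lemma}); a separate uniform-integrability bound via a ``subsymmetric Kruglov'' operator $\mathscr{L}$ (Lemmas~\ref{l bound} and \ref{bad to good}) shows that $T(A)=\{T_m(A)\}_m$ lands in $\mathcal{L}_p$ of the von Neumann ultraproduct $(\mathcal{M}_m)^{\omega}_m$, which is finite. Thus $\mathcal{I}$ embeds into $\mathcal{L}_p$ of a finite algebra without ever comparing $I$ to $L_p(0,1)$.
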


\begin{rema}
We do not claim that the von Neumann algebras in \eqref{ji} and \eqref{jii} are the same.
\end{rema}

The following question remains open.

\begin{quest} Suppose $\mathcal{I}$ admits an isomorphic embedding into $\mathcal{L}_p(\mathcal{M},\sigma),$ $1\leq p<2,$ for some finite von Neumann algebra $(\mathcal{M},\sigma).$ Is it true that the commutative core $I$ of the ideal $\mathcal{I}$ admits an isomorphic embedding into $L_p(0,1)?$
\end{quest}

Finally, we are compelled to say a few words about our techniques and methods, which we believe are of wider applicability and of interest in their own right. Our main technical instrument is the Kruglov operator (used, e.g. in the proof of the most difficult implication \eqref{Oetiii}$\Rightarrow$\eqref{Oeti} in Theorem \ref{orlicz embedding theorem}) which is a replacement of integration with respect to the Poisson process (see e.g. \cite{LT2}). In the special setting of the commutative von Neumann algebra $(L_{\infty}(0,1),dm)$, such an operator was firstly introduced in \cite{as_zam,AS} (see also the survey \cite{AS_UMN}). In the
noncommutative setting, this operator is defined and studied in this paper. In Section \ref{krug fin}, we introduce the Kruglov operator affiliated with finite von Neumann algebras, basically mimicking the approach from \cite{AS}. In Section \ref{junge section}, we introduce such an operator affiliated already with a semifinite non-finite von Neumann algebra and this is an important extension from our viewpoint. We emphasize that the germ of our approach to this definition can be found already in \cite{junge}, although our present presentation and approach are quite different from those given in \cite{junge}. Theorem \eqref{second main theorem} makes crucial, and extends, the work of Kwapien and Sch\"utt \cite{KwS}.

\section{Preliminaries}

Let $H$ be a complex infinite-dimensional separable Hilbert space equipped with an inner product $\langle\cdot,\cdot\rangle_H$ and $\mathcal{L}(H)$ be the $C^*$-algebra of all bounded linear operators on $H$ equipped with the operator norm $\|\cdot\|_\infty$. We denote by ${\rm Tr}$ the classical trace on $\mathcal{L}(H)$.
For every compact operator $A\in\mathcal{L}(H),$ let $\mu(A)=\{\mu(k,A)\}_{k\geq0}$ be the decreasing sequence of singular values of the operator $A,$ counted with multiplicities.

 For $A\in\mathcal{L}(H)$ we use the following standard notations
\begin{equation*}
|A|:=\sqrt{A^*A},\quad\Re A:=\frac{A+A^*}{2},\quad\Im A:=\frac{A-A^*}{2i},
\end{equation*}
and for $A=A^*\in\mathcal{L}(H)$ we set
\begin{equation*}
A_+:=\frac{|A|+A}2,\quad A_-:=\frac{|A|-A}{2}.
\end{equation*}
The notations $\mathcal M$ and $\mathcal N$ usually stand for finite or semifinite von Neumann algebras. By $\mathbbm 1_{\mathcal M}$ (resp. $\mathbbm 1_{\mathcal N}$) we denote the unit element of the algebra $\mathcal M$ (resp. $\mathcal N$). For brevity we use simply $\mathbbm 1$ instead of both $\mathbbm 1_{\mathcal M}$ or $\mathbbm 1_{\mathcal N}.$ This should not lead to any confusion.

For positive real numbers $a,b$ and some constant $C>0$ we write $a {\sim} b$, if the inequality $Ca\le b\le a/C$ holds. When we need to specify the constant $C$, we will use the notation $a \stackrel{C}{\sim} b.$

\subsection{Calkin correspondence}

Fix an orthonormal basis in $H$ (the particular choice of a basis is inessential). We identify the algebra $l_{\infty}$ of bounded complex sequences with the subalgebra of all diagonal operators in $\mathcal{L}(H)$ with respect to the chosen basis. Given an ideal $\mathcal{I}\in\mathcal{L}(H),$ we define its \textit{commutative core} $I=\mathcal{I}\cap l_{\infty}$ (that is, the commutative core is the collection of all diagonal operators in $\mathcal{I}$). A famous result due to Calkin \cite{Calkin} asserts that one can recover the original ideal $\mathcal{I}$ by the formula
$$\mathcal{I}=\{A\in\mathcal{L}(H):\ \mu(A)\in I\}.$$

The crucial property of the commutative core $I$ is that,
\begin{equation}\label{eq_ideal}x\in I,\ \ \mu(y)=\mu(x)\Longrightarrow y\in I.\end{equation}
In fact, Calkin theorem \cite{Calkin} states that every ideal in $l_{\infty}$ with property \eqref{eq_ideal} is a commutative core of some ideal in $\mathcal{L}(H).$

Whenever an ideal $\mathcal{I}$ in $\mathcal{L}(H)$ is equipped with a Banach norm\footnote{For definiteness, we always assume that $\|A\|_{\infty}\leq\|A\|_{\mathcal{I}}$ for every $A\in\mathcal{I}.$} $\|\cdot\|$ such that the multiplication $A\to AB$ is continuous mapping on $\mathcal{I}$ for every $B\in\mathcal{I},$ we call $(\mathcal{I},\|\cdot\|)$ a \textit{Banach ideal}.
For every Banach ideal $\mathcal{I}$ in $\mathcal{L}(H),$ there exists an equivalent unitarily invariant norm on $\mathcal{I}$ given by the formula
$$\|A\|_{\mathcal{I}}=\sup_{\substack{B,C\in\mathcal{L}(H)\\ \|B\|_{\infty},\|C\|_{\infty}\leq1}}\|BAC\|.$$
That is, we have $\|AU\|_{\mathcal{I}}=\|UA\|_{\mathcal{I}}=\|A\|_{\mathcal{I}}$ for all $A\in\mathcal{I}$ and for all unitary $U\in\mathcal{L}(H).$ In particular, we have
$$A\in\mathcal{I},\ \mu(B)=\mu(A)\Longrightarrow B\in\mathcal{I},\ \|B\|_{\mathcal{I}}=\|A\|_{\mathcal{I}}.$$
It follows immediately that
\begin{equation}\label{eq_ideal1}
x\in I,\ \ \mu(y)=\mu(x)\Longrightarrow y\in I,\ \ \|y\|_I=\|x\|_I,
\end{equation}
where  $\|\cdot\|_I$  is the norm on $I$ induced by $\|\cdot\|_{\mathcal{I}}.$ An arbitrary Banach ideal $I$ in $l_{\infty}$ equipped with a norm $\|\cdot\|_I$ satisfying \eqref{eq_ideal1} is called \textit{symmetric sequence space} (see \cite{LT1,LSZ}).

The question whether Calkin correspondence specializes to symmetric sequence spaces and Banach ideals in $\mathcal{L}(H)$ was fully resolved in \cite{KScrelle} (see also \cite{LSZ}).

\begin{thm}\cite[Theorem 8.11]{KScrelle}\label{KStheorem} If $(I,\|\cdot\|_I)$ is a symmetric sequence space, then the corresponding ideal $\mathcal{I}$ in $\mathcal{L}(H)$ equipped with a functional
$$\|\cdot\|_{\mathcal{I}}:A\to\|\mu(A)\|_I,\quad A\in\mathcal{I}$$
is a Banach ideal in $\mathcal{L}(H).$
\end{thm}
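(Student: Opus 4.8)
The plan is the following. A symmetric sequence space $I$ is in particular a solid, rearrangement--invariant Banach ideal of $l_\infty$ with property \eqref{eq_ideal} (solidity: if $|b|\le|a|$ coordinatewise and $a\in I$, then $b=ca$ with $c\in l_\infty$, $\|c\|_\infty\le1$, so $b\in I$), so Calkin's theorem quoted above already tells us that $\mathcal{I}:=\{A\in\mathcal{L}(H):\mu(A)\in I\}$ is a two--sided ideal of $\mathcal{L}(H)$ containing all finite rank operators (since $c_{00}\subseteq I$). Hence the whole content of the statement is analytic: that the homogeneous functional $A\mapsto\|\mu(A)\|_I$ is a complete norm with $\|BAC\|_{\mathcal{I}}\le\|B\|_\infty\|A\|_{\mathcal{I}}\|C\|_\infty$. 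I would reduce every such statement to two classical facts at the level of scalar sequences: (a) the singular--value inequalities of Ky Fan --- $\mu(A+B)\prec\prec\mu(A)+\mu(B)$ and, more generally, $\mu(\sum_jB_j)\prec\prec\sum_j\mu(B_j)$ for finite sums; $\mu(k,BAC)\le\|B\|_\infty\|C\|_\infty\,\mu(k,A)$ for every $k$; and $|\mu(k,A)-\mu(k,B)|\le\|A-B\|_\infty$ --- and (b) the \emph{submajorization principle}: if $x,y\in l_\infty$, $y\in I$ and $x\prec\prec y$ (meaning $\sum_{k=0}^{n}\mu(k,x)\le\sum_{k=0}^{n}\mu(k,y)$ for all $n$), then $x\in I$ and $\|x\|_I\le\|y\|_I$.

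The hard part is (b); everything else is bookkeeping built on top of it. Its proof is classical (see \cite{LT1,LSZ}): after replacing $x,y$ by $\mu(x),\mu(y)$ one may assume both are non--negative and non--increasing; when they are moreover finitely supported, the Hardy--Littlewood--P\'olya theorem (and its substochastic refinement) writes $x=Dy$ for a doubly substochastic matrix $D$, a convex combination of sub--permutation matrices, each of which sends $y$ to a rearrangement of a sub--collection of its entries, so solidity, rearrangement invariance and convexity of $\|\cdot\|_I$ give $\|x\|_I\le\|y\|_I$; the passage to general $x,y$ is an approximation argument using the completeness of $(I,\|\cdot\|_I)$ and the Fatou--type lower semicontinuity of its norm with respect to convergence in $l_\infty$. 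This step is the only place where the defining properties of a symmetric sequence space are exploited in full strength.

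Granting (a) and (b), the norm axioms and the ideal inequality are formal. Homogeneity and positivity are clear, and, normalising $\|e_0\|_I=1$, from $\mu(0,A)e_0\le\mu(A)$ and solidity one gets $\|A\|_\infty=\mu(0,A)\le\|\mu(A)\|_I=\|A\|_{\mathcal{I}}$, consistent with the standing convention. For the triangle inequality, Fan's inequality gives $\mu(A+B)\prec\prec\mu(A)+\mu(B)$, so by (b) and the triangle inequality in the Banach space $I$,
\[
\|A+B\|_{\mathcal{I}}=\|\mu(A+B)\|_I\le\|\mu(A)+\mu(B)\|_I\le\|\mu(A)\|_I+\|\mu(B)\|_I=\|A\|_{\mathcal{I}}+\|B\|_{\mathcal{I}}.
\]
For the ideal inequality, $\mu(k,BAC)\le\|B\|_\infty\|C\|_\infty\,\mu(k,A)$ for all $k$, so monotonicity of $\|\cdot\|_I$ yields $\|BAC\|_{\mathcal{I}}\le\|B\|_\infty\|C\|_\infty\|A\|_{\mathcal{I}}$; in particular $A\mapsto AB$ is bounded on $\mathcal{I}$.

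Finally, completeness. Let $(A_n)$ be $\|\cdot\|_{\mathcal{I}}$--Cauchy and, passing to a subsequence, assume $\|A_{n+1}-A_n\|_{\mathcal{I}}\le2^{-n}$. Since $\|\cdot\|_\infty\le\|\cdot\|_{\mathcal{I}}$, $(A_n)$ is $\|\cdot\|_\infty$--Cauchy, so $A_n\to A$ in $\|\cdot\|_\infty$ for some $A\in\mathcal{L}(H)$, and then $\mu(A_k-A_n)\to\mu(A-A_n)$ in $l_\infty$ as $k\to\infty$, for each fixed $n$, by the Lipschitz bound in (a). For $k>n$, the finite--sum Fan inequality gives $\mu(A_k-A_n)\prec\prec\sum_{j=n}^{k-1}\mu(A_{j+1}-A_j)\in I$, so by (b)
\[
\|\mu(A_k-A_n)\|_I\le\Big\|\sum_{j=n}^{k-1}\mu(A_{j+1}-A_j)\Big\|_I\le\sum_{j=n}^{k-1}\|A_{j+1}-A_j\|_{\mathcal{I}}\le2^{-n+1}.
\]
Letting $k\to\infty$ and using completeness of $(I,\|\cdot\|_I)$ together with the Fatou--type lower semicontinuity invoked in (b), we obtain $\mu(A-A_n)\in I$ with $\|A-A_n\|_{\mathcal{I}}\le2^{-n+1}$; hence $A=A_n+(A-A_n)\in\mathcal{I}$ and $A_n\to A$ in $\|\cdot\|_{\mathcal{I}}$, and since a Cauchy sequence with a convergent subsequence converges, this establishes completeness of $(\mathcal{I},\|\cdot\|_{\mathcal{I}})$.
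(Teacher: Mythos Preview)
The paper does not give its own proof of this statement; it is quoted verbatim from \cite[Theorem~8.11]{KScrelle} (see also \cite{LSZ}) and used as a black box. So there is no argument in the paper to compare your sketch against, only the authors' decision to cite the result rather than reproduce it.

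Your reduction to the two ingredients (a) Ky~Fan--type singular value inequalities and (b) the submajorization principle is the natural strategy, and your treatment of (a), of the norm and ideal axioms, and of the finitely supported case of (b) via Hardy--Littlewood--P\'olya and doubly substochastic matrices is correct. The gap is the passage in (b) from finitely supported to arbitrary $x\in c_0$, where you invoke ``Fatou--type lower semicontinuity of $\|\cdot\|_I$ with respect to convergence in $l_\infty$''. That property is \emph{not} part of the definition of a symmetric sequence space used in this paper: a symmetric sequence space is merely a Banach ideal in $l_\infty$ whose norm satisfies \eqref{eq_ideal1}, with no Fatou hypothesis. Indeed, the paper singles out Fatou as an \emph{extra} feature of Orlicz spaces in Remark~\ref{rem_Orl_mon_submaj} precisely because it is needed there to get submajorization monotonicity. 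The same unjustified lower semicontinuity reappears in your completeness argument when you let $k\to\infty$ in $\|\mu(A_k-A_n)\|_I\le 2^{-n+1}$ to conclude $\mu(A-A_n)\in I$ with the same bound.

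This is not a technicality one can patch by a one--line remark: showing that $A\mapsto\|\mu(A)\|_I$ is a complete norm for \emph{every} symmetric sequence space, without assuming Fatou or full symmetry, is exactly the content of the Kalton--Sukochev theorem and is why the present paper cites it instead of proving it. To repair your sketch you would have to either prove directly that every symmetric sequence space is fully symmetric (which is essentially the Kalton--Sukochev result), or find a route to the triangle inequality and to completeness that does not pass through (b) in its full strength.
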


A variant of Calkin correspondence exists for arbitrary semifinite von Neumann algebra and symmetric function spaces on $(0,\infty)$ (see \cite{LT2,LSZ}). Theorem \ref{KStheorem} also works in this generality (see \cite{KScrelle,LSZ}). However, in this paper, we deal only with $\mathcal{L}_p-$spaces on semifinite von Neumann algebras, which can be defined without resorting  to the general machinery of \cite{KScrelle,LSZ} (see Subsection \ref{type subsect} below).

Let $m\ge 1$ and let $\alpha\in\mathbb{C}^m$. We shall frequently view a finite sequence $\alpha=(\alpha(0),\dots, \alpha({m-1}))$ as an element of symmetric space $I$ by identifying $\alpha$ with  an element $(\alpha(0),\dots, \alpha({m-1}),0,\dots)$.

\subsection{Banach ideals in $\mathcal{L}(H):$ examples}

The best known example of a Banach ideal in $\mathcal{L}(H)$ is a
Schatten-von Neumann ideal $\mathcal{L}_p(H),$ $1\leq p<\infty,$
defined by setting
$$\mathcal{L}_p(H)=\{A\in\mathcal{L}(H):\ \mu(A)\in l_p\},\quad \|A\|_p=\|\mu(A)\|_p,$$
where $l_p$ is a Banach space of all $p$-summable sequences and $\|\cdot\|_p$ is the standard norm on $l_p.$

Given an Orlicz function $M$ (that is, an even convex function on $\mathbb{R}$ such that $M(0)=0$), we define an Orlicz sequence ideal $l_M$ in $l_{\infty}$ as the space of all bounded sequences $x$ such that
$$M\left(\frac{x}{\lambda}\right)\in l_1$$
for some $\lambda>0.$ An Orlicz sequence ideal equipped with the norm
$$\|x\|_{l_M}=\inf\left\{\lambda:\ \left\|M\left(\frac{x}{\lambda}\right)\right\|_1\leq 1\right\}$$
is a symmetric sequence space. An Orlicz ideal $\mathcal{L}_M(H)$ in $\mathcal{L}(H)$ is now defined by setting
$$\mathcal{L}_M(H)=\{A\in\mathcal{L}(H):\ \mu(A)\in l_M\},\quad \|A\|_{\mathcal{L}_M}=\|\mu(A)\|_{l_M}.$$

%Let $\psi:(0,\infty)\to(0,\infty)$ be a concave increasing function such that $\psi(+0)=0.$ A Lorentz sequence ideal $\Lambda_{\psi}^p$ in $l_{\infty}$ is defined by setting
%$$\Lambda_{\psi}^p=\{x\in l_{\infty}:\ \sum_{k=0}^{\infty}\mu^p(k,x)(\psi(k+1)-\psi(k))<\infty\}.$$
%A Lorentz sequence ideal $\Lambda_{\psi}^p$ equipped with a norm
%$$\|x\|_{\Lambda_{\psi}^{\cb p}}=\Big(\sum_{k=0}^{\infty}\mu^p(k,x)(\psi(k+1)-\psi(k))\Big)^{1/p}$$
%is a symmetric sequence space. A Lorentz ideal $\Lambda_{\psi}^{\cb p}(H)$ is defined as follows.
%$$\Lambda_{\psi}^{\cb p}(H)=\{A\in\mathcal{L}(H):\ \mu(A)\in\Lambda_{\psi}^{\cb p}\},\quad \|A\|_{\Lambda_{\psi}^p(H)}=\|\mu(A)\|_{\Lambda_{\psi}^p}.$$

\subsection{Dilation operator}

let us recall that  for $x=\{x(k)\}_{k\ge 0}\in l_\infty,$ the discrete \textit{
dilation operators} is defined as follows
\begin{equation}\label{eq_D_discrete}D_n(x):=\{\underbrace{x(0),\ldots,x(0)}_{\mbox{$n$ times}},\underbrace{x(1),\ldots,x(1)}_{\mbox{$n$ times}},\ldots\},\quad D_{1/n}(x):=\Big\{\frac1n\sum_{l=kn}^{(k+1)n-1}x(l)\Big\}_{k\geq0}\end{equation}
and satisfies
\begin{equation}\label{eq_D1}\|D_n\|_{l_p\to l_p}=n^{1/p},\ \  \|D_{1/n}\|_{l_p\to l_p}=n^{-1/p}, \ \ \ 1\le p<\infty.\end{equation}

The continuous version of dilation operator is defined below.
For every $u\in(0,\infty),$ the dilation operator $D_u$ on $L_1(0,\infty)$  is defined by
\begin{equation}\label{eq_D_continuous}(D_ux)(s):=x\Big(\frac{s}{u}\Big),\quad s\in(0,\infty),\quad x\in L_1(0,\infty).\end{equation}

For $u\in(0,1),$ the dilation operator $D_u$ on $L_1(0,1)$  is defined by
$$(D_ux)(s):=\left\{\begin{array}{cl}x(\frac{s}{u}),& s\in(0,u)\\ 0,& s\in [u,1)\end{array}\right., \quad x\in L_1(0,1).$$
For the general properties of dilation operator see \cite{LT2}.

\subsection{Singular value function}

In this paper, $\mathcal{M}$ always stands for a semifinite von Neumann algebra equipped with a faithful normal semifinite trace $\tau.$ Denote by $\mathcal{P(M)}$ the lattice of all projections from $\mathcal{M}$ and by $\mathcal{P}_{fin}(\mathcal{M})$ the set of all $\tau$-finite projections. An unbounded densely defined operator is said to be affiliated with $\mathcal{M}$ if it commutes with all elements in the commutant of $\mathcal{M}.$ A closed densely defined operator $A$ affiliated with $\mathcal{M}$ is called \textit{$\tau$-measurable} if, for every $n\geq0,$ we have $\tau(E_{|A|}(n,\infty))\to 0$ as $n\to\infty$, where by $E_{|A|}(n,\infty)$ we denote the spectral projection of the operator $|A|.$ If, in particular, $\tau(\mathbbm{1})<\infty,$ then every closed operator affiliated with $\mathcal{M}$ is $\tau$-measurable. We denote the $^*$-algebra of all $\tau$-measurable operators by $\mathcal{S}(\mathcal{M},\tau)$ with the strong sum and the strong product (see \cite{FackKosaki,LSZ}). By $t_\tau$ we denote the measure topology on $\mathcal{S}(\mathcal{M},\tau).$ Observe that the algebra $\mathcal N$ is dense in $\mathcal{S}(\mathcal{M},\tau)$ with respect to the measure topology $t_\tau$ (see~e.g.~\cite{ddp} and references therein). For $x=x^*\in\mathcal{S}(\mathcal{M},\tau)$ by
$\exp(ix)$ we denote the series $\sum_{j=0}^\infty\frac{(ix)^j}{j!},$ which converges in the measure topology.

For every $A\in\mathcal{S}(\mathcal{M},\tau),$ its singular value function $\mu(A):t\to\mu(t,A),$ $t>0,$ is defined by the formula (see e.g. \cite{FackKosaki})
$$\mu(t,A):=\inf\{\|A(\mathbbm{1}-p)\|_{\infty}:\ \tau(p)\leq t\}.$$
Equivalently, the function $\mu(A)$ can be defined in terms of the distribution function $d_{|A|}$ of $|A|.$ For an arbitrary self-adjoint $B\in\mathcal{S}(\mathcal{M},\tau),$ its distribution function $d_B$ is defined by setting
$$d_B(s):=\tau(E_B(s,\infty)),\quad s\in\mathbb{R},$$
where $E_B$ denotes the spectral measure of the operator $B.$ One can define $\mu(A)$ as the right inverse of the function $d_{|A|},$ i.e.
$$\mu(t;A)=\inf\{s: d_{|A|}(s)\leq t\},\quad t\ge0.$$
In addition, if $A_0,\dots,A_{n-1}\in S(\mathcal{M},\tau)$, then (see e.g. \cite{LSZ})
\begin{equation}\label{kps dilate ineq}
\mu\Big(\sum_{i=0}^{n-1}\mu(A_i)\Big)\leq D_n\sum_{i=0}^{n-1}\mu(A_i).
\end{equation}
When $\mathcal{M}=\mathcal{L}(H)$ and $\tau$ is the standard trace ${\rm Tr},$ and when the operator $A$ is compact, the function $\mu(A)$ is a step function whose values are the singular values of the operator $A.$

We call the operators $A=A^*,B=B^*\in\mathcal{S}(\mathcal{M},\tau)$ \textit{equimeasurable} if $d_{A_+}=d_{B_+}$ and $d_{A_-}=d_{B_-}.$ Observe that if $\tau(\mathbbm{1})<\infty,$ then operators $A=A^*,B=B^*\in\mathcal{S}(\mathcal{M},\tau)$ are equimeasurable if and only if $d_A=d_B.$ For equimeasurable operators $A=A^*,B=B^*\in\mathcal{S}(\mathcal{M},\tau),$ we also have $\mu(A)=\mu(B).$
The notion of equimeasurability for the operators from different algebras is defined similarly.
Finally, we call the operator $A=A^*\in\mathcal{S}(\mathcal{M},\tau)$ \textit{symmetrically distributed} if it is equimeasurable with the operator $-A.$

For every $A\in\mathcal{S}(\mathcal{M},\tau),$ the spectral projection $E_{|A|}(0,\infty)$ is called the \textit{support projection of $A$} (written ${\rm supp}(A)$). If $\tau({\rm supp}(A))<\infty,$ then we call $A$ \textit{finitely supported}.

In the special case when $\mathcal{M}=L_{\infty}(0,1)$ and $\tau$ is a Lebesgue integral, the definition of distribution function (and related definition of equimeasurability) coincides with the classical one. In this case the singular valued function $\mu(A)$ defined above coincides with a classic notion of decreasing rearrangement (see e.g. \cite{LT1}).

The following notion is a fundamental
concept in classical symmetric function space theory (we refer the interested reader to the book \cite[Chapter II]{LSZ} for detailed discussion of this important notion).
\begin{defi} \label{hlp majorization def first} Let $\mathcal{M}$ be a semifinite von Neumann algebra and let $A,B\in \mathcal{S}(\mathcal{M},\tau).$ The operator $B$ is said to be submajorized (in the sense of Hardy, Littlewood and Polya) by $A$ and written $B\prec\prec A$ if
$$\int_0^t\mu(s,B)ds\leq\int_0^t\mu(s,A)ds,\quad t\geq0.$$
\end{defi}
When $\mathcal{M}$ is the
algebra $l_\infty$ ($L_\infty(0,1),$ $L_\infty(0,\infty)$) the  definition above reduces to the
classical notion of submajorization of sequences (functions).

The following property of submajorization (see
e.g.~\cite[II.6.1]{KPS}) is used below
\begin{equation}\label{sum maj}
\sum_{k=1}^{\infty}x_k\prec\prec\sum_{k=1}^{\infty}\mu(x_k),\quad x_k\in L_{\infty}(0,\infty) \ (x_k\in L_{\infty}(0,1)), \ \ k\ge 1.
\end{equation}
Here, it is assumed that the series on the right hand side converges in $L_{\infty}(0,\infty)$ (in $L_{\infty}(0,1)$). This guarantees that the series in the left hand side converges in $L_{\infty}(0,\infty)$ (in $L_{\infty}(0,1)$).

%The connection of this definition with classical submajorization theory (that is, for functions or sequences) is immediate: we have
%$$B\prec\prec A\Leftrightarrow\mu(B)\prec\prec\mu(A).$$

\subsection{Properties of the Orlicz norm}

For the general theory of Orlicz spaces we refer to \cite{R-R}. Let us present the following simple property that is used frequently in the text. If two Orlicz functions $M_1$ and $M_2$ satisfies
$$M_1(t)\le M_2(t),\quad t\in(0,\infty),$$
then for any $x\in l_{M_2}$, we have that $x\in l_{M_1}$ and $\|x\|_{l_{M_1}}\le \|x\|_{l_{M_2}}.$

Two Orlicz functions $M_1$ and $M_2$ on $(0,\infty)$ are said to
be equivalent on $[0,1]$ ($M_1\sim M_2$), if there exists a
positive finite constant $C$ such that
$$C^{-1}M_1(t)\le M_2(t)\le CM_1(t), \ \ \ t\in[0,1].$$

An Orlicz function $M$ on $(0,\infty)$ is said to be
\begin{enumerate}[{\rm (i)}]
\item $p$-convex if the function $t\to M(t^{1/p}),$ $t>0,$ is convex.
\item $q$-concave if the function $t\to M(t^{1/q}),$ $t>0,$ is concave.
\end{enumerate}

%The following assertion is proved in \cite{ASorlicz}. For convenience of the reader we provide a short explanation.
The proof of lemma below follows immediately from \cite[Lemma
5]{ASorlicz} by substituting $st=s_1$ and $t=s_2$ in that lemma.

\begin{lem}\label{as lemma} Let $1\le p<q<\infty.$ An Orlicz function $M$ on $(0,\infty)$ is equivalent on  $[0,1]$ to a $p$-convex and $q$-concave Orlicz function  if and only if it satisfies the conditions
\begin{equation}\label{pconv crit}
\frac{M(s_1)}{s_1^p}\leq{\rm const}\cdot\frac{M(s_2)}{s_2^p},\quad
0< s_1\leq s_2\leq 1.
\end{equation}
\begin{equation}\label{2conc crit}
\frac{M(s_1)}{s_1^q}\geq{\rm const}\cdot\frac{M(s_2)}{s_2^q},\quad
0< s_1\leq s_2\leq 1.
\end{equation}
If $M$ itself is $p$-convex and $q$-concave, then the constants in
\eqref{pconv crit} and \eqref{2conc crit} are equal to $1.$
\end{lem}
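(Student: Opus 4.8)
The plan is to deduce the lemma from \cite[Lemma 5]{ASorlicz} via the change of variables indicated just before the statement. Recall that \cite[Lemma 5]{ASorlicz} characterizes the Orlicz functions $M$ on $(0,\infty)$ that are equivalent on $[0,1]$ to a $p$-convex and $q$-concave Orlicz function by a pair of dilation estimates of the shape
\begin{equation*}
{\rm const}^{-1}\,s^{q}M(t)\;\leq\;M(st)\;\leq\;{\rm const}\cdot s^{p}M(t),\qquad 0<s\leq1,\ 0<t\leq1,
\end{equation*}
with the constant replaceable by $1$ when $M$ is itself $p$-convex and $q$-concave. First I would recall this statement (or simply cite it). Then, observing that $(s,t)\mapsto(st,t)=:(s_{1},s_{2})$ is a bijection of $\{0<s\leq1,\ 0<t\leq1\}$ onto $\{0<s_{1}\leq s_{2}\leq1\}$, with inverse $s=s_{1}/s_{2}$, $t=s_{2}$, I would divide the right-hand inequality by $s_{1}^{p}=(st)^{p}$ and the left-hand one by $s_{1}^{q}=(st)^{q}$; this turns them, respectively, into $M(s_{1})/s_{1}^{p}\leq{\rm const}\cdot M(s_{2})/s_{2}^{p}$ and $M(s_{1})/s_{1}^{q}\geq{\rm const}^{-1}M(s_{2})/s_{2}^{q}$, i.e.\ exactly \eqref{pconv crit} and \eqref{2conc crit}. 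Since the substitution and its inverse are both admissible, the two pairs of conditions are literally one and the same pair of conditions written in different variables, so the equivalence asserted in the lemma is immediate, and the sharpening of the constants to $1$ transfers along the same route.

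The elementary half of this --- in particular the last sentence of the lemma --- can also be seen directly, and it is worth recording the short argument. If $M$ is $p$-convex then $\Phi(u):=M(u^{1/p})$ is convex on $[0,\infty)$ with $\Phi(0)=0$, whence $\Phi(\lambda u)\leq\lambda\Phi(u)$ for all $\lambda\in[0,1]$; taking $\lambda=(s_{1}/s_{2})^{p}$ and $u=s_{2}^{p}$ gives $M(s_{1})\leq(s_{1}/s_{2})^{p}M(s_{2})$, which is \eqref{pconv crit} with constant $1$. Dually, $q$-concavity of $M$ makes $\Psi(u):=M(u^{1/q})$ concave with $\Psi(0)=0$, so $\Psi(\lambda u)\geq\lambda\Psi(u)$ for $\lambda\in[0,1]$, and with $\lambda=(s_{1}/s_{2})^{q}$, $u=s_{2}^{q}$ this yields \eqref{2conc crit} with constant $1$. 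The necessity direction of the lemma for a general $M\sim N$ with $N$ being $p$-convex and $q$-concave then follows by applying these two monotonicities to $N$ and absorbing the equivalence constant.

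The genuine content therefore sits entirely inside the sufficiency direction of \cite[Lemma 5]{ASorlicz}: manufacturing, out of the two one-sided ``almost monotonicity'' conditions \eqref{pconv crit}--\eqref{2conc crit}, an Orlicz function that is honestly $p$-convex and $q$-concave and still $\sim M$ on $[0,1]$; this is the step I expect to be the main obstacle in any self-contained treatment. Passing to $s\mapsto s^{p}\sup_{0<u\leq s}M(u)/u^{p}$ makes $M(s)/s^{p}$ genuinely non-decreasing and, by a short case analysis using \eqref{2conc crit}, keeps the concavity estimate intact; but the symmetric fix on the $q$-side re-weakens the $p$-side back to ``almost'', and a naive integral smoothing $s\mapsto\int_{0}^{s}M(u)\,u^{-1}\,du$ merely re-encodes the same monotonicities on the $p$-th and $q$-th power scales, so closing both sides simultaneously requires a slightly more careful construction (e.g.\ an iteration with controlled constants, or a joint regularization, followed by linear extension to $[1,\infty)$). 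Since \cite[Lemma 5]{ASorlicz} is already available, I would simply invoke it rather than reprove this.
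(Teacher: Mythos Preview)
Your proposal is correct and follows exactly the paper's own approach: the paper's entire proof is the one-line remark that the lemma ``follows immediately from \cite[Lemma 5]{ASorlicz} by substituting $st=s_{1}$ and $t=s_{2}$,'' which is precisely the change of variables you carry out. Your additional direct verification of the constant-$1$ case and your commentary on the sufficiency direction go beyond what the paper records, but are consistent with it.
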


\begin{rem}\label{rem_Orl_mon_submaj}
Every Orlicz space has the Fatou property (see p.64 in
\cite{KPS}). Every symmetric sequence space with the Fatou
property coincides with its second K\"othe dual (see p.65 in
\cite{KPS}). In particular, the norm in such a space is monotone
with respect to the submajorization.
\end{rem}

\subsection{$p$-concavity, $q$-convexity, upper $p$-estimate and lower $q$-estimate of Banach ideals}

 Let $1\leq p,q<\infty$.  The Banach ideal
$\mathcal I\subseteq \mathcal L(H)$ is said to be {\it $p$-convex}
(respectively, {\it $q$-concave}) if there exists a constant $M>0$
such that, for any finite sequence $\{x_k\}_{k=0}^n\subseteq
\mathcal I$,
\begin{equation}
\label{eqnpconvex} \Big \Vert \Big(\sum_{k=0}^n\vert x_k\vert
^p\Big)^{1/p}\Big \Vert_{\mathcal I} \leq M\Big(\sum_{k=0}^n\Vert
x_k\Vert _{\mathcal I}^p\Big)^{1/p},
\end{equation}
respectively,
\begin{equation}
\label{eqnqconcave} \Big(\sum_{k=0}^n\Vert x_k\Vert _{\mathcal
I}^q\Big)^{1/q} \leq M\Big \Vert \Big(\sum_{k=0}^n\vert x_k\vert
^q\Big)^{1/q}\Big \Vert_{\mathcal I}.
\end{equation}
These definitions are similar to
 the usual notions of $p$-convexity and $q$-concavity for Banach lattices as given in ~\cite{LT2}.

 %However, the definitions given there
% do not make sense in the non-commutative setting in the case that $p=q=\infty$.

 If $x\in \mathcal L(H)$, then the {\it
right} and {\it left support projections} of $x$, denoted by
$r(x)$ and $l(x)$ respectively, are the projections onto the
orthogonal complements of the kernels of $x,x^*$ respectively. The
operators $x,y\in \mathcal L(H)$ are said  to be {\it left}
(respectively, {\it right}) {\it disjointly supported} if
$r(x)r(y)=0$ (respectively, $l(x)l(y)=0$).

We now make the following definition. Let $1\leq p,q<\infty$.  The
Banach ideal $\mathcal I\subseteq \mathcal L(H)$ is said to
satisfy an {\it upper} $p$-estimate (respectively, {\it lower}
$q$-estimate) if there exists a constant $K>0$ such that, for
every finite sequence $x_0,\dots,x_n$ of pairwise left disjointly
supported elements of $\mathcal I$, it follows that
\begin{equation}\label{eq__up_p_est}
\Big\Vert \sum_{k=0}^nx_i\Big\Vert _{\mathcal I} \leq
K\Big(\sum_{k=0}^n \Vert x_k\Vert _{\mathcal I}^p\Big)^{1/p}
\end{equation}
respectively,
\begin{equation}\label{eq_low_q_est}
K\Big\Vert \sum_{k=0}^nx_i\Big\Vert _{\mathcal I} \geq
\Big(\sum_{k=0}^n \Vert x_k\Vert _{\mathcal I}^q\Big)^{1/q}.
\end{equation}
We remark that \lq\lq  left\rq\rq\ can be replaced throughout by
\lq\lq right\rq\rq\  since $x,y$ are left disjointly supported if
and only if $x^*,y^*$ are right disjointly supported. For
classical (commutative) counterparts of these notions for Banach
lattices we again refer to \cite{LT2}. For the study of
connections between just defined properties of $\mathcal I$ and

corresponding classical properties of its commutative core $I$ we
refer to  \cite{AL} and \cite{dds}.

\subsection{Type and cotype inequalities in the noncommutative $\mathcal{L}_p-$spaces}\label{type subsect}

A noncommutative $L_p$-space associated with an algebra $(\mathcal{M},\tau)$ (written $\mathcal{L}_p(\mathcal{M})$) is defined by setting
$$\mathcal{L}_p(\mathcal{M})=\{A\in\mathcal{S}(\mathcal{M},\tau):\ \tau(|A|^p)<\infty\},\quad \|A\|_p=\tau(|A|^p)^{1/p}.$$
Observe that $A\in\mathcal{L}_p(\mathcal{M})$ if and only if $\mu(A)$ belongs to the Lebesgue space $L_{p}(0,\infty).$ We also have $\|A\|_p=\|\mu(A)\|_p$ for every $A\in\mathcal{L}_p(\mathcal{M}).$

For reader's convenience, we show the elementary right hand side estimate of \eqref{eq_Th8} below. The (substantially harder) proof of the left hand side estimate is due to Tomczak-Jaegermann and can be found in \cite{tj} (see also \cite{Fack}). For brevity, we state this result using the language and notations from the theory of tensor products of von Neumann algebras. Recall that the Rademacher functions are defined as follows
\begin{equation}\label{eq_Rad}r_k(t):=\mathrm{sign} \sin (2^{k+1}\pi t), \ \ \ t\in [0,1], \ \ \ k=0,1,2, \ldots\end{equation}
(see also \cite{LT1,LT2}).

\begin{thm}\label{lp theorem} Let $n\ge 0,$ let $\mathcal{M}$ be a semifinite von Neumann algebra and let $A_k\in\mathcal{L}_p(\mathcal{M}),$ $0\leq k\leq n.$ If $1\leq p<2,$ then
\begin{equation}\label{eq_Th8}{\rm const}\cdot\Big(\sum_{k=0}^{n}\|A_k\|_p^2\Big)^{1/2}\leq\Big\|\sum_{k=0}^{n}A_k\otimes r_k\Big\|_p\leq\Big(\sum_{k=0}^{n}\|A_k\|_p^p\Big)^{1/p}.\end{equation}
Here $r_k,$ $k\ge 0,$ are Rademacher functions and the norm in the
middle is the usual $L_p$-norm in the $L_p$-space
$\mathcal{L}_p(\mathcal{M}\bar{\otimes}L_{\infty}(0,1),\tau\otimes
dm).$
\end{thm}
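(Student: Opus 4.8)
The plan is to establish only the right-hand (upper) estimate in \eqref{eq_Th8}; the lower estimate is the substantially deeper result of Tomczak--Jaegermann and is quoted from \cite{tj}.

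\emph{Step 1: reduction to a sum over signs.} The Rademacher vector $(r_0,\dots,r_n)$ is constant on each of the $2^{n+1}$ dyadic subintervals of $(0,1)$ of length $2^{-(n+1)}$, and on these it assumes every value $\varepsilon=(\varepsilon_0,\dots,\varepsilon_n)\in\{-1,1\}^{n+1}$ exactly once. Using the Fubini-type identification of $\mathcal{L}_p(\mathcal{M}\bar{\otimes}L_\infty(0,1),\tau\otimes dm)$ with the vector-valued space $L_p((0,1);\mathcal{L}_p(\mathcal{M}))$, we obtain
\[
\Big\|\sum_{k=0}^n A_k\otimes r_k\Big\|_p^p=\int_0^1\Big\|\sum_{k=0}^n r_k(t)A_k\Big\|_p^p\,dt=\frac{1}{2^{n+1}}\sum_{\varepsilon\in\{-1,1\}^{n+1}}\Big\|\sum_{k=0}^n\varepsilon_k A_k\Big\|_p^p .
\]
Thus it suffices to prove that $2^{-(n+1)}\sum_{\varepsilon}\big\|\sum_{k=0}^n\varepsilon_k A_k\big\|_p^p\le\sum_{k=0}^n\|A_k\|_p^p$.

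\emph{Step 2: a two-point Clarkson inequality.} I claim that for $1\le p\le2$ and $x,y\in\mathcal{L}_p(\mathcal{M})$,
\[
\tfrac12\big(\|x+y\|_p^p+\|x-y\|_p^p\big)\le\|x\|_p^p+\|y\|_p^p .
\]
Since $0<p/2\le1$, the function $t\mapsto t^{p/2}$ is operator concave on $[0,\infty)$ (L\"owner--Heinz). Applying it to the positive operators $(x+y)^*(x+y)$ and $(x-y)^*(x-y)$, averaging, and noting that the cross terms cancel, we obtain the operator inequality
\[
\tfrac12|x+y|^p+\tfrac12|x-y|^p\le\Big(\tfrac12(x+y)^*(x+y)+\tfrac12(x-y)^*(x-y)\Big)^{p/2}=(x^*x+y^*y)^{p/2}.
\]
Applying the positive trace $\tau$, and then the inequality $\tau\big((a+b)^{r}\big)\le\tau\big(a^{r}\big)+\tau\big(b^{r}\big)$ valid for positive $a,b\in\mathcal S(\mathcal M,\tau)$ and $0<r\le1$ (the $r$-triangle inequality in $\mathcal{L}_{r}$; see \cite{FackKosaki,LSZ}) with $r=p/2$, $a=x^*x$, $b=y^*y$, yields
\[
\tfrac12\big(\tau(|x+y|^p)+\tau(|x-y|^p)\big)\le\tau\big((x^*x+y^*y)^{p/2}\big)\le\tau\big((x^*x)^{p/2}\big)+\tau\big((y^*y)^{p/2}\big)=\|x\|_p^p+\|y\|_p^p,
\]
which is the claim.

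\emph{Step 3: induction on $n$.} For $n=0$ the desired inequality is the equality $\|A_0\otimes r_0\|_p=\|A_0\|_p$. For the inductive step, write $\varepsilon=(\varepsilon',\varepsilon_n)$ with $\varepsilon'\in\{-1,1\}^n$, average first over $\varepsilon_n$ using Step 2 with $x=\sum_{k<n}\varepsilon_k A_k$ and $y=A_n$, and then invoke the inductive hypothesis:
\[
\frac1{2^{n+1}}\sum_{\varepsilon}\Big\|\sum_{k=0}^n\varepsilon_k A_k\Big\|_p^p\le\frac1{2^{n}}\sum_{\varepsilon'}\Big(\Big\|\sum_{k<n}\varepsilon_k A_k\Big\|_p^p+\|A_n\|_p^p\Big)\le\sum_{k<n}\|A_k\|_p^p+\|A_n\|_p^p .
\]
Together with Step 1 this proves the right-hand estimate in \eqref{eq_Th8}. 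Since this is the easy half of the theorem, I do not anticipate any serious obstacle: the only points deserving care are the Fubini identification of $\mathcal{L}_p(\mathcal{M}\bar{\otimes}L_\infty(0,1))$ with a vector-valued $L_p$-space and the correct invocation of operator concavity of $t\mapsto t^{p/2}$ together with the $r$-triangle inequality in $\mathcal{L}_{p/2}$.
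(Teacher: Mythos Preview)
Your proof of the upper estimate is correct, but it follows a genuinely different route from the paper's. The paper proves the two endpoint cases directly---the triangle inequality at $p=1$ and the orthogonality identity at $p=2$---and then invokes complex interpolation (via \cite{Kosaki}) for the linear map $(A_0,\dots,A_n)\mapsto\sum_k A_k\otimes r_k$, viewed as acting between $\ell_p^{n+1}(\mathcal{L}_p(\mathcal{M}))$ and $\mathcal{L}_p(\mathcal{M}\bar\otimes L_\infty(0,1))$. Your argument instead establishes the noncommutative two-point Clarkson inequality (via operator concavity of $t\mapsto t^{p/2}$ and the Rotfel'd-type trace inequality $\tau((a+b)^r)\le\tau(a^r)+\tau(b^r)$ for positive $a,b$ and $0<r\le1$) and then inducts over the Rademacher coordinates. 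The interpolation proof is shorter once one accepts the interpolation machinery for noncommutative $L_p$-spaces; your direct approach has the virtue of being self-contained and elementary, avoiding that machinery entirely at the cost of needing the Rotfel'd inequality (which is standard and available in the references you cite). Both arguments yield the sharp constant $1$.
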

\begin{proof}[Proof of the right hand side of \eqref{eq_Th8}]  For $p=1,$ it follows from the triangle inequality in $\mathcal{L}_1(\mathcal{M}\bar{\otimes}L_{\infty}(0,1))$ that
$$\Big\|\sum_{k=0}^nA_k\otimes r_k\Big\|_1\leq\sum_{k=0}^n\|A_k\|_1,\quad n\geq 0.$$
For $p=2,$ it is immediate that
$$\Big\|\sum_{k=0}^nA_k\otimes r_k\Big\|_2=\Big(\sum_{k=0}^n\|A_k\|_2^2\Big)^{1/2},\quad n\geq 0.$$
The right hand side inequality of \eqref{eq_Th8} now follows by applying complex interpolation method (see e.g. \cite{Kosaki}).% We refer the reader to \cite{tj,Fack} for the left hand side inequality.
\end{proof}

\subsection{Independent (noncommutative) random variables}\label{indep subsect} Let $\mathcal{M}$ be a finite von Neumann algebra equipped with a faithful normal trace such that $\tau(\mathbbm{1})=1,$ where we denote the unit element of $\mathcal{M}$ by $\mathbbm{1}.$ Self-adjoint operators affiliated with $\mathcal{M}$ are called (noncommutative) \textit{random variables}.

Commuting unital subalgebras $\mathcal{A}_k,$ $1\leq k\leq n,$ of
$\mathcal{M}$ are called \footnote{This definition is taken from
\cite{NSp} (see Definition 5.1 there).} \textit{independent} if
$$\tau(A_1A_2\cdots A_n)=\tau(A_1)\tau(A_2)\cdots\tau(A_n)$$
whenever $A_k\in\mathcal{A}_k,$ $1\leq k\leq n.$ Random variables $A_k\in\mathcal{S}(\mathcal{M},\tau),$ $1\leq k\leq n,$ are called \textit{independent} if the von Neumann subalgebras $\mathcal{A}_k,$ $1\leq k\leq n,$ generated by spectral projections of $A_k$ are independent.

%Later, we also need a much weaker notion of conditional independence. Unital subalgebras (not necessarily commuting) $\mathcal{A}_k,$ $1\leq k\leq n$ of $\mathcal{M}$ are called \textit{conditionally independent} over {\cb a} von Neumann subalgebra $\mathcal{M}_0$ if
%$$\mathscr{E}_{\mathcal{M}_0}(xy)=\mathscr{E}_{\mathcal{M}_0}(x)\mathscr{E}_{\mathcal{M}_0}(y),\quad \mathscr{E}_{\mathcal{M}_0}(yx)=\mathscr{E}_{\mathcal{M}_0}(y)\mathscr{E}_{\mathcal{M}_0}(x)$$
%for every $x\in\mathcal{A}_k$ and for every $y$ in the subalgebra generated by all $\mathcal{A}_l,$ $l\neq k.$ Here, $\mathscr{E}_{\mathcal{M}_0}$ denotes the conditional expectation onto the von Neumann subalgebra $\mathcal{M}_0.$
%{\cb For the definition of conditional expectation we refer to \cite{U}.}

\subsection{Quantization of the algebra $\mathcal{M}$}\label{quant subsect}

For a finite (semifinite) von Neumann algebra we use the following notations.
$$\mathcal{M}^{\otimes k}=\underbrace{\mathcal{M}\bar{\otimes}\cdots\bar{\otimes}\mathcal{M}}_{\mbox{$k$ times}},\quad \tau^{\otimes k}=\underbrace{\tau\otimes\cdots\otimes\tau}_{\mbox{$k$ times}},\quad k\in\mathbb{N},$$
for von Neumann algebra tensor products. It is a standard fact (see e.g. Proposition 11.2.20 and Proposition 11.2.21 in \cite{KR}) that $\mathcal M^{\otimes k}$ is a finite (semifinite) von Neumann algebra and $\tau^{\otimes k}$ is a faithful normal finite (semifinite) trace on $\mathcal M^{\otimes k}$. For convenience of notations, we set $\mathcal{M}^{\otimes 0}$ to be $\mathbb{C}$ and $\tau^{\otimes 0}:\mathbb{C}\to\mathbb{C}$ to be identity mapping. Define the positive linear mapping
\begin{equation}\label{eq_E_k}
E_k:\mathcal{M}^{\otimes k}\to\mathcal{M}^{\otimes k},\quad E_k(x_1\otimes\cdots\otimes x_k):=\frac1{k!}\sum_{\rho}x_{\rho(1)}\otimes\cdots\otimes x_{\rho(k)},\quad k\in\mathbb{N},
\end{equation}
where $\rho$ runs over the set $\mathfrak{S}_k$ of all permutations of $\{1,\cdots,k\}.$ It can be directly verified that
$$\mathcal{M}_s^{\otimes k}:=E_k(\mathcal{M}^{\otimes k})$$
is a von Neumann subalgebra and that $E_k:\mathcal{M}^{\otimes k}\to\mathcal{M}^{\otimes k}_s$ is the conditional expectation. For convenience of notations, we set $\mathcal{M}^{\otimes 0}_s$ to be $\mathbb{C}.$

Define the \textit{(symmetric or bosonic) quantized von Neumann algebra}
 $M_s(\mathcal{M})$ by setting
\begin{equation}\label{eq_M_s}
M_s(\mathcal{M}):=\bigoplus_{k=0}^{\infty}\mathcal{M}^{\otimes k}_s.
\end{equation}
if $\tau(\mathbbm{1})<\infty,$ then the expression
\begin{equation}\label{eq_sigma}\sigma:=\exp(-\tau(\mathbbm{1}))\bigoplus_{k=0}^{\infty}\frac1{k!}\tau^{\otimes k}\end{equation}
defines a trace on $\bigoplus_{k=0}^{\infty}\mathcal{M}^{\otimes k}$ (and, obviously on the $*$-subalgebra  $M_s(\mathcal{M})$). Observe that $\sigma(\mathbbm{1})=1.$
For a $\tau$-finite projection $e\in\mathcal M$ we will also consider the algebra $M_s(e\mathcal{M}e)$  equipped with the trace
\begin{equation}\label{eq_sigma_e}\sigma_e:=\exp(-\tau({e}))\bigoplus_{k=0}^{\infty}\frac1{k!}\tau^{\otimes k}.\end{equation}
Note that $\sigma_e$ is defined on $\bigoplus_{k=0}^{\infty}(e\mathcal{M}e)^{\otimes k}.$ By $p_k$ we denote natural projections of the algebra $M_s(\mathcal{M})$ given by
\begin{equation}\label{eq_p_k}
p_k:=0\oplus \ldots\oplus 0\oplus {\mathbbm 1^{\otimes k}}\oplus 0\oplus\ldots \in M_s(\mathcal{M}),\quad k\ge 0.
\end{equation}
We identify the algebra $\mathcal{M}^{\otimes k}_s$ with a subalgebra of $ M_s(\mathcal{M})$
by means of the projection $p_k$.

\subsection{Inductive limits of $C^*$-algebras}

We refer the reader to the books \cite{Sakai} and \cite{Murphy} for the details of the inductive limit construction used in Section \ref{junge section} (note that in \cite{Murphy} the notion of direct limit is used instead of inductive limit).

\begin{lem}\cite[Proposition 1.23.2]{Sakai}
\label{dirlimdef1} Let $\{\mathcal{A}_n\}_{n\geq0}$ be $C^*$-algebras. Suppose that, for every $n\leq m,$ there exists a $^*-$unital isomorphic embedding $\pi_{n,m}:\mathcal{A}_n\to\mathcal{A}_m.$ If
$$\pi_{n,k}=\pi_{m,k}\circ\pi_{n,m}\quad n\leq m\leq k,$$
then there exists a $C^*$-algebra $A$ and a net of $^*$-isomorphic embeddings $\pi_n:\mathcal{A}_n\to A,$ $n\geq0,$ such that
\begin{equation}\label{eq_pi_n}\pi_n=\pi_m\circ\pi_{n,m}\quad 0\leq n\leq m\end{equation}
 and $\bigcup_{n\geq 1}\pi_n(\mathcal{A}_n)$ is uniformly dense in $A$.
\end{lem}
%\begin{proof} Set
%$$A=\{\{x_n\}_{n\geq0}, \ {\cb x_n\in\mathcal{A}_n}:\ \exists n_0\geq0\mbox{ such that }x_n=\pi_{n_0,n}(x_{n_0})\mbox{ for all }n\geq n_0\}$$
%and
%$$\|\{x_n\}_{n\geq0}\|=\|x_{n_0}\|_{\mathcal{A}_{n_0}}.$$
%The algebra $A$ (equipped with componentwise algebraic operations) becomes a pre-$C^*$-algebra. Its completion $\mathcal{A}$ is a $C^*$-algebra.
%
%For a given $n_0\geq0$ and $x\in\mathcal{A}_{n_0},$ set $\pi_{n_0}(x)=\{x_n\}_{n\geq0},$ where $x_n=\pi_{n_0,n}(x)$ for $n\geq n_0$ and $x_n=0$ for $n<n_0.$ Clearly, $\pi_{n_0}:\mathcal{A}_{n_0}\to \mathcal{A}$ is a $^*$-isomorphic embedding satisfying the required conditions.
%\end{proof}

 The algebra $A$ from Lemma \ref{dirlimdef1} is recall \textit{the inductive limit} of $\{\mathcal{A}_n\}_{n\geq0}.$
In parallel, we construct a inductive  limit of states.

\begin{lem}\cite[Definition 1.23.10]{Sakai}\label{dirlimdef2} Let the $C^*$-algebra $A$ be the inductive limit of $C^*$-algebras $\mathcal{A}_n,$ $n\geq0.$ If states $\sigma_n$ on $\mathcal{A}_n,$ $n\geq0,$ are such that
$$\sigma_n=\sigma_m\circ\pi_{n,m},\quad 0\leq n\leq m,$$
then there exists a state $\sigma$ on $A$ such
that
\begin{equation}\label{eq_sigma_n}\sigma_n=\sigma\circ\pi_n,\quad n\geq0.\end{equation}
If the states $\sigma_n,$ $n\geq0,$ are tracial, then so is the
limit state  $\sigma$.
\end{lem}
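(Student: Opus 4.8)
The plan is to build $\sigma$ first on the (generally non-closed) unital $*$-subalgebra $A_0:=\bigcup_{n\geq0}\pi_n(\mathcal{A}_n)$ of $A$ and then extend it to all of $A$ by uniform density. First I would record that $A_0$ really is a unital $*$-subalgebra: by \eqref{eq_pi_n}, if $n\leq m$ then $\pi_n(\mathcal{A}_n)=\pi_m(\pi_{n,m}(\mathcal{A}_n))\subseteq\pi_m(\mathcal{A}_m)$, so the family $\{\pi_n(\mathcal{A}_n)\}_{n\geq0}$ is directed by inclusion and $A_0$ is closed under the algebraic and $*$-operations; it contains $\mathbbm 1$ because each $\pi_n$ is unital; and it is uniformly dense in $A$ by Lemma \ref{dirlimdef1}. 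Next, given $a\in A_0$ I pick $n$ with $a\in\pi_n(\mathcal{A}_n)$, write $a=\pi_n(b)$ with $b\in\mathcal{A}_n$ (the element $b$ is unique since $\pi_n$ is injective), and set $\sigma(a):=\sigma_n(b)$. To see this is independent of the choice of $n$, suppose $a=\pi_n(b)=\pi_m(c)$ with $n\leq m$; then $\pi_m(\pi_{n,m}(b))=\pi_n(b)=\pi_m(c)$ by \eqref{eq_pi_n}, so $c=\pi_{n,m}(b)$ by injectivity of $\pi_m$, and therefore $\sigma_m(c)=\sigma_m(\pi_{n,m}(b))=\sigma_n(b)$ by the hypothesis $\sigma_n=\sigma_m\circ\pi_{n,m}$. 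Thus $\sigma$ is well defined on $A_0$ and, by construction, $\sigma\circ\pi_n=\sigma_n$ for every $n$.

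Then I would check that $\sigma$ restricted to $A_0$ is a state. Linearity follows by bringing any two given elements of $A_0$ into a common $\pi_m(\mathcal{A}_m)$ (possible by directedness) and invoking linearity of $\sigma_m$. For positivity, observe that each $\pi_n$ is an injective $^*$-homomorphism between $C^*$-algebras, hence isometric, so $\pi_n(\mathcal{A}_n)$ is a unital $C^*$-subalgebra of $A$ and $\pi_n$ is a $^*$-isomorphism onto it; in particular it preserves positivity and spectra. Hence if $a\in A_0$ is positive in $A$, then $a\in\pi_n(\mathcal{A}_n)$ for some $n$, $a=\pi_n(b)$ with $b\geq0$ in $\mathcal{A}_n$, and $\sigma(a)=\sigma_n(b)\geq0$; also $\sigma(\mathbbm 1)=\sigma_n(\mathbbm 1)=1$. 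Applying the Cauchy--Schwarz inequality inside the $C^*$-algebra $\pi_n(\mathcal{A}_n)$ (on which $\sigma$ restricts to a state) gives $|\sigma(a)|^2\leq\sigma(a^*a)\leq\|a^*a\|=\|a\|^2$ for every $a\in A_0$, so $\sigma$ is bounded of norm $1$ on $A_0$.

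Finally, I would extend $\sigma$ by uniform continuity and density to a bounded linear functional on $A$ with $\|\sigma\|=1$; since also $\sigma(\mathbbm 1)=1$, the standard characterization of states on a unital $C^*$-algebra ($\|\varphi\|=\varphi(\mathbbm 1)$ implies $\varphi\geq0$) shows the extension is a state, and it still satisfies $\sigma\circ\pi_n=\sigma_n$ because this holds on the dense set $A_0$. If each $\sigma_n$ is tracial, then $\sigma$ is tracial on $A_0$ (once more: bring two elements into a common $\pi_m(\mathcal{A}_m)$ and use traciality of $\sigma_m$), and traciality propagates to $A$ since multiplication is jointly continuous on norm-bounded sets and $\sigma$ is norm-continuous.

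I do not anticipate a genuine obstacle: the argument is entirely routine once the right dense subalgebra is identified. The one place demanding care is the well-definedness of $\sigma$ on overlaps $\pi_n(\mathcal{A}_n)\cap\pi_m(\mathcal{A}_m)$, which is precisely what the two compatibility hypotheses — $\pi_n=\pi_m\circ\pi_{n,m}$ and $\sigma_n=\sigma_m\circ\pi_{n,m}$ — are designed to resolve; everything else (linearity, positivity, the norm bound, and passage to the limit) is automatic.
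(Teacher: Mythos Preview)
Your argument is correct and is the standard one. The paper does not supply its own proof of this lemma: it simply records the statement with a citation to \cite[Definition 1.23.10]{Sakai}, so there is nothing to compare against beyond noting that your construction of $\sigma$ on the dense $*$-subalgebra $\bigcup_n\pi_n(\mathcal{A}_n)$ and extension by continuity is exactly the routine verification one expects.
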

%\begin{proof} Let $\{x_n\}_{n\geq0}\in A,$ {\cb where $A$ is defined in the proof of preceding lemma.} That is, $x_n=\pi_{n_0,n}(x_{n_0})$ for all $n\geq n_0.$ Define the functional {\cb $\varsigma$} on $\mathcal{A}_0$ by setting ${\cb \varsigma}(x)=\varsigma_{n_0}(x_{n_0}).$ It is easy to see that ${\cb \varsigma}(x)$ is well defined (that is, it does not depend on the particular choice of $n_0$). Clearly, such a functional is linear and bounded on $A.$ Thus, it extends to a bounded functional on $\mathcal{A}.$
%
%If $x\in\mathcal{A}$ is positive, then there exists
%$y\in\mathcal{A}$ such that $x=y^*y.$ Hence, for a given
%$\varepsilon>0,$ there exists $z\in A$ such that
%$\|x-z^*z\|_{\mathcal{A}}\leq\varepsilon.$ We have
%$z=\{z_n\}_{n\geq0}$ and $z_n=\pi_{n_0,n}(z_{n_0})$ for all $n\geq
%n_0.$ It follows that ${\cb
%\varsigma}(z^*z)=\varsigma_{n_0}(z_{n_0}^*z_{n_0})\geq 0.$ By
%continuity of ${\cb \varsigma},$ we have ${\cb
%\varsigma}(x)\geq0.$ Since ${\cb \varsigma}(\mathbbm 1)=1,$ it
%follows that ${\cb \varsigma}$ is a state on $\mathcal{A}.$ {\cb
%Since} each $\varsigma_n$ is a trace, {\cb it follows that } ${\cb
%\varsigma}$ is a trace on $A.$ By continuity, it extends to a
%trace on $\mathcal{A}.$
%\end{proof}
Recall that a pair $(\mathcal{A},\sigma)$ is called a noncommutative probability space, if $\mathcal{A}$ is a von Neumann algebra equipped with faithful tracial state $\sigma$.
\begin{lem}\label{lem_cyclic}
Let  noncommutative probability spaces $(\mathcal{A}_n,\sigma_n), n\geq 0,$ be as in Lemmas \ref{dirlimdef1} and \ref{dirlimdef2}, let $A$ and $\sigma$ be the inductive limits of $\mathcal{A}_n$ and $\sigma_n$, respectively. Then $\sigma$ induces a tracial state on von Neumann algebra generated by cyclic representation $(\theta,H)$ of $A$ with respect to $\sigma$, such that
\begin{equation}\label{eq_sigma_theta_pi}
\sigma\circ\theta\circ\pi_n=\sigma_n.
\end{equation}
\end{lem}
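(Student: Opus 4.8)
The plan is to build everything out of the GNS (cyclic) construction applied to the inductive limit pair $(A,\sigma)$ and then to transport the compatibility relations \eqref{eq_pi_n} and \eqref{eq_sigma_n} through it. First I would recall that, since $\sigma$ is a state on the $C^*$-algebra $A$, the GNS construction furnishes a Hilbert space $H$, a cyclic vector $\xi\in H$ with $\|\xi\|=1$, and a $*$-representation $\theta:A\to\mathcal{L}(H)$ such that $\sigma(a)=\langle\theta(a)\xi,\xi\rangle_H$ for all $a\in A$ and $\theta(A)\xi$ is dense in $H$. Let $\mathcal{N}:=\theta(A)''\subseteq\mathcal{L}(H)$ be the von Neumann algebra generated by $\theta(A)$. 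The vector state $\omega_\xi(x):=\langle x\xi,\xi\rangle_H$ is a normal state on $\mathcal{N}$ whose restriction to $\theta(A)$ coincides with $\sigma\circ\theta^{-1}$ in the obvious sense; I will denote by $\sigma$ (abusing notation as the paper does) the induced normal state $\omega_\xi$ on $\mathcal{N}$, so that by construction $\sigma\circ\theta=\sigma$ on $A$.

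The second step is to check that this induced state is tracial and faithful. Traciality is immediate on $\theta(A)$: for $a,b\in A$ we have $\sigma(\theta(a)\theta(b))=\sigma(\theta(ab))=\sigma(ab)=\sigma(ba)=\sigma(\theta(b)\theta(a))$, using that $\sigma$ is a tracial state on $A$ (which itself follows from Lemma \ref{dirlimdef2}, since each $\sigma_n$ is tracial). Because $\sigma$ is normal on $\mathcal{N}$ and $\theta(A)$ is $\sigma$-weakly dense in $\mathcal{N}$ by von Neumann's bicommutant theorem, separate $\sigma$-weak continuity of each side of $\sigma(xy)=\sigma(yx)$ in the variables $x,y\in\mathcal{N}$ — obtained from Kaplansky density and normality of $\sigma$ — extends traciality from $\theta(A)$ to all of $\mathcal{N}$. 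Faithfulness: the cyclic vector $\xi$ is separating for $\mathcal{N}$ precisely because $\sigma$ is a trace (the GNS representation of a tracial state has a cyclic separating vector; alternatively invoke $J\theta(a)\xi=\theta(a^*)\xi$), and a normal state implemented by a separating vector is faithful. Thus $(\mathcal{N},\sigma)$ is a noncommutative probability space.

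The third and final step is to verify \eqref{eq_sigma_theta_pi}. By \eqref{eq_sigma_n} from Lemma \ref{dirlimdef2} we have $\sigma_n=\sigma\circ\pi_n$ on $\mathcal{A}_n$, where on the right $\sigma$ denotes the limit state on $A$; composing with $\theta$ and using $\sigma\circ\theta=\sigma$ on $A$ gives exactly
\begin{equation*}
\sigma\circ\theta\circ\pi_n=\sigma\circ\pi_n=\sigma_n,\quad n\geq 0,
\end{equation*}
where the leftmost $\sigma$ now stands for the induced normal tracial state on $\mathcal{N}$. This is the asserted identity.

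The main obstacle is not any single deep fact but the bookkeeping around \emph{normality}: the limit state $\sigma$ on the $C^*$-algebra $A$ need not a priori be normal for any von Neumann structure, and one must be careful that the GNS vector state really does define a \emph{normal} faithful tracial state on the bicommutant $\mathcal{N}=\theta(A)''$ — this is where one uses that vector states are automatically normal together with the cyclic-and-separating property forced by traciality. A secondary subtlety worth a sentence in the write-up is the harmless notational overloading of the symbol $\sigma$ for three objects (the states $\sigma_n$, the $C^*$-limit state on $A$, and the induced normal state on $\mathcal{N}$); the statement \eqref{eq_sigma_theta_pi} is precisely the compatibility that makes the overloading consistent.
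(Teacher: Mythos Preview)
Your proposal is correct and follows essentially the same approach as the paper: GNS construction, extension of $\sigma$ to $\theta(A)''$ via the vector state $\langle\,\cdot\,\xi,\xi\rangle_H$, extension of traciality by density (the paper does the same two-step approximation you sketch, first in one variable then the other, using weak-operator convergence), and then \eqref{eq_sigma_theta_pi} from \eqref{eq_sigma_n}. Your treatment of faithfulness via the cyclic-and-separating property and your remarks on normality are in fact more detailed than what the paper writes, which simply asserts that the vector state is a faithful extension.
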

\begin{proof}
Let $(\theta,H)$ be cyclic representation of $A$ with respect to $\sigma$ and let
$\mathcal{A}:=\overline{\theta(A)}^{wo}$. For every $x\in\mathcal{A}$ we set $\sigma(x):=\langle x\xi,\xi\rangle_H$, where $\xi$ be the cyclic vector of the representation $(\theta,H)$. One can easily prove that in this way we can extend the state $\sigma$ defined on $A$ up to a faithful state on $\mathcal{A}$. By construction of $\sigma$ and \eqref{eq_sigma_n}
we have $\sigma\circ\theta\circ\pi_n=\sigma_n.$
It remains to show that this extension is in fact trace on $\mathcal{A}$.
Let firstly $x\in \mathcal A, y\in A$, then there exists a sequence $\{x_n\}_{n\geq 0}\subset A$ such that $x_n\xrightarrow{wo}x$. Since the state $\sigma$ is tracial on $A$ we have $\sigma(x_ny)=\sigma(yx_n)$. Hence, $wo$-convergence of $x_n$ to $x$ implies that $\sigma(xy)=\sigma(yx)$. Further, for arbitrary $x,y\in\mathcal A$ we have $\sigma(xy_n)=\sigma(y_nx)$, where $y_n\xrightarrow{wo} y$, and therefore,  $\sigma(xy)=\sigma(yx)$ for all $x,y\in\mathcal A$.
\end{proof}

\begin{lem}\label{lem_il_hyperf}
If $(\mathcal{A}_n,\sigma_n), (\mathcal{A},\sigma), n\geq 0,$ are as in Lemma \ref{lem_cyclic} and in addition, every $\mathcal{A}_n$ is hyperfinite, then $\mathcal{A}$ is also hyperfinite.
\end{lem}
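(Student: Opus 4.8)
The plan is to exploit the structure of the inductive limit together with the cyclic representation constructed in Lemma \ref{lem_cyclic}. Recall that hyperfiniteness (injectivity) of a finite von Neumann algebra with separable predual is equivalent to the algebra being the weak-operator closure of an increasing union of finite-dimensional (or, more generally, hyperfinite) $*$-subalgebras; in the tracial setting this is Connes' theorem, but it suffices here to use the elementary direction: a von Neumann algebra generated by an increasing union of hyperfinite subalgebras is hyperfinite. So first I would transport the chain $\{\pi_n(\mathcal{A}_n)\}_{n\geq 0}$ inside $A$ over to the representation, setting $\mathcal{B}_n:=\theta(\pi_n(\mathcal{A}_n))$ and $\mathcal{N}_n:=\overline{\mathcal{B}_n}^{wo}\subseteq\mathcal{A}$. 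Since $\theta\circ\pi_n$ is a $*$-homomorphism and $\sigma\circ\theta\circ\pi_n=\sigma_n$ is faithful on $\mathcal{A}_n$ by \eqref{eq_sigma_theta_pi}, the map $\theta\circ\pi_n$ is injective, hence a $*$-isomorphism of $\mathcal{A}_n$ onto $\mathcal{B}_n$. The compatibility relation \eqref{eq_pi_n} gives $\pi_n(\mathcal{A}_n)\subseteq\pi_m(\mathcal{A}_m)$ for $n\leq m$, so $\mathcal{B}_n\subseteq\mathcal{B}_m$ and therefore $\mathcal{N}_n\subseteq\mathcal{N}_m$.

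Next I would identify each $\mathcal{N}_n$ as a finite von Neumann algebra carrying the faithful normal tracial state $\sigma|_{\mathcal{N}_n}$, and argue that it is hyperfinite. The key point is that $\mathcal{B}_n$ is $*$-isomorphic (as a $C^*$-algebra) to $\mathcal{A}_n$, and the weak-operator closure $\mathcal{N}_n$ is the von Neumann algebra generated in the GNS-type representation $\theta|_{\pi_n(\mathcal{A}_n)}$ equipped with the state $\sigma$, which corresponds to $\sigma_n$ on $\mathcal{A}_n$. Thus $\mathcal{N}_n$ is a tracial-state quotient/completion of $(\mathcal{A}_n,\sigma_n)$; since $\sigma_n$ is already faithful on the von Neumann algebra $\mathcal{A}_n$, one in fact gets that $\theta\circ\pi_n$ extends to a normal $*$-isomorphism of $\mathcal{A}_n$ onto $\mathcal{N}_n$ (the restriction of $\theta$ to $\pi_n(\mathcal{A}_n)$ is spatially equivalent to the GNS representation of $(\mathcal{A}_n,\sigma_n)$, which is normal and faithful because $\sigma_n$ is). Hyperfiniteness of $\mathcal{A}_n$ is preserved under $*$-isomorphism of von Neumann algebras, so $\mathcal{N}_n$ is hyperfinite.

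Finally I would show $\mathcal{A}=\overline{\bigcup_{n\geq 0}\mathcal{N}_n}^{wo}$ and conclude. By Lemma \ref{dirlimdef1}, $\bigcup_{n\geq 0}\pi_n(\mathcal{A}_n)$ is uniformly dense in $A$, hence $\bigcup_n\theta(\pi_n(\mathcal{A}_n))=\bigcup_n\mathcal{B}_n$ is norm-dense in $\theta(A)$, whose weak-operator closure is $\mathcal{A}$ by definition. Therefore $\bigcup_n\mathcal{N}_n\supseteq\bigcup_n\mathcal{B}_n$ is weak-operator dense in $\mathcal{A}$, i.e. $\mathcal{A}$ is the weak-operator closure of an increasing union of hyperfinite von Neumann subalgebras. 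A von Neumann algebra that is the weak-operator closure of an increasing net of hyperfinite subalgebras is itself hyperfinite (each $\mathcal{N}_n$ is the increasing union of finite-dimensional subalgebras $F_{n,j}$; the doubly-indexed family $\{F_{n,j}\}$, directed appropriately, has weak-operator dense union in $\mathcal{A}$), which gives the claim.

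The main obstacle, and the step I would spell out carefully, is the middle one: controlling the normality/faithfulness of the restriction $\theta|_{\pi_n(\mathcal{A}_n)}$ so that $\mathcal{N}_n$ really is a von Neumann-algebraic copy of the hyperfinite algebra $\mathcal{A}_n$ rather than merely a $C^*$-completion that could a priori be a larger algebra. The cleanest route is to observe that the cyclic vector $\xi$ for $\theta$ need not be cyclic for $\theta(\pi_n(\mathcal{A}_n))$, but its orthogonal projection onto the closed subspace $\overline{\theta(\pi_n(\mathcal{A}_n))\xi}$ is a cyclic and separating vector for $\mathcal{N}_n$ implementing $\sigma|_{\mathcal{N}_n}$, which corresponds under $\theta\circ\pi_n$ to the faithful trace $\sigma_n$ on $\mathcal{A}_n$; faithfulness of $\sigma_n$ on the von Neumann algebra $\mathcal{A}_n$ then forces the induced normal map $\mathcal{A}_n\to\mathcal{N}_n$ to be a $*$-isomorphism. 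Once this identification is in hand, everything else is the standard fact that an increasing union of hyperfinite subalgebras generates a hyperfinite algebra.
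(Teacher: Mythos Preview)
Your proposal is correct and follows essentially the same strategy as the paper: transport the hyperfinite algebras $\mathcal{A}_n$ into $\mathcal{A}$ via $\theta\circ\pi_n$, observe that the images form an increasing chain with weak-operator dense union, and conclude hyperfiniteness. The only difference is in how the ``main obstacle'' you identify is dispatched. You argue via GNS/spatial considerations that $\mathcal{N}_n=\overline{(\theta\circ\pi_n)(\mathcal{A}_n)}^{wo}$ is $*$-isomorphic to $\mathcal{A}_n$ as a von Neumann algebra. The paper instead simply invokes the standard fact (citing Bratteli--Robinson) that a trace-preserving $*$-homomorphism between tracial von Neumann algebras is automatically weak$^*$-continuous; this immediately makes $\theta\circ\pi_n$ normal, so finite-dimensional subalgebras $\mathcal{A}_{n,m}\subset\mathcal{A}_n$ with weak$^*$-dense union map to finite-dimensional subalgebras $\mathcal{B}_{n,m}=(\theta\circ\pi_n)(\mathcal{A}_{n,m})$ whose union is weak$^*$-dense in $\mathcal{A}$. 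This bypasses your GNS/cyclic-vector analysis entirely and is somewhat cleaner, but the content is the same --- normality of $\theta\circ\pi_n$ is precisely the point, and your GNS argument is one valid way to obtain it.
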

\begin{proof}Let $(\theta,H)$ be as in Lemma \ref{lem_cyclic}.
 For a given $n\geq0,$ select finite dimensional subalgebras $\mathcal{A}_{n,m},$ $m\geq0,$ in $\mathcal{A}_n$ such that their union is weak$^*$ dense in $\mathcal{A}_n.$ Define finite dimensional subalgebras $\mathcal{B}_{n,m},$ $n,m\geq0,$ in $\mathcal{A}$ by setting $\mathcal{B}_{n,m}=(\theta\circ\pi_n)(\mathcal{A}_{n,m}).$
We have that $\bigcup_{n}\mathcal{B}_{n,m}$ is weak$^*$ dense in $(\theta\circ\pi_n)(\mathcal{A}_n)$, since trace preserving $*$-homomorphism  $\theta\circ\pi_n$ from $\mathcal{A}_n$ onto $(\theta\circ\pi_n)(\mathcal{A}_n)$ is weak$*$-continuous \cite[Theorem 2.4.23]{Brat_Robin}. Consequently, $\bigcup_{n,m}\mathcal{B}_{n,m}$ is weak$^*$ dense in $\bigcup_{n}(\theta\circ\pi_n)(\mathcal{A}_n)$. By Lemma \ref{dirlimdef1} the union $\bigcup_{n}\pi_n(\mathcal{A}_n)$ is uniformly dense in the algebra $A$, and therefore $\bigcup_{n}(\theta\circ\pi_n)(\mathcal{A}_n)$ is weak$^*$ dense in $\mathcal{A}$. Thus, $\bigcup_{n}\mathcal{B}_{n,m}$ is weak$^*$ dense in $\mathcal{A}$ and, therefore, $\mathcal{A}$ is also hyperfinite.
\end{proof}

\subsection{Ultraproducts of von Neumann algebras and $\mathcal{L}_p-$spaces}
Let $(\mathcal{N}_k,\tau_k),$ $k\geq0,$ be noncommutative probability spaces. Consider the $C^*-$algebra
$$l_{\infty}(\{\mathcal{N}_k\}_{k\geq 0})=\Big\{\{x(k)\}_{k\geq 0}\in\prod_{k\geq 0}\mathcal{N}_k:\ \sup_{k\geq 0}\|x(k)\|_{\infty}<\infty\Big\}.$$
We fix a free ultrafilter $\omega$ on $\mathbb{Z}_+$ and equip $l_{\infty}(\{\mathcal{N}_k\}_{k\geq 0})$ with a tracial state $\Big(\tau_k\Big)_{k\geq0}^{\omega}$ defined by the formula
$$\Big(\tau_k\Big)_{k\geq0}^{\omega}(\{x(k)\}_{k\geq 0})=\lim_{k\to\omega}\{\tau_k(x(k))\}_{k\geq 0}.$$
Consider the ideal $J$ in $l_{\infty}(\{\mathcal{N}_k\}_{k\geq 0})$ defined by the formula
$$J=\Big\{ x\in l_{\infty}(\{\mathcal{N}_k\}_{k\geq 0}):\ \Big(\tau_k\Big)_{k\geq0}^{\omega}(|x|^2)=0\Big\}.$$
The quotient $C^*-$algebra
$$\Big(\mathcal{N}_k\Big)_{k\geq0}^{\omega}\stackrel{def}{=}l_{\infty}(\{\mathcal{N}_k\}_{k\geq 0})/J$$
is, in fact, a von Neumann algebra (see e.g. \cite{Mcduff,Vesterstrom}). The tracial state $\Big(\tau_k\Big)_{k\geq0}^{\omega}$ generates a faithful tracial state on $\Big(\mathcal{N}_k\Big)_{k\geq0}^{\omega}$ (which we also denote by $\Big(\tau_k\Big)_{k\geq0}^{\omega}$). Thus, $(\Big(\mathcal{N}_k\Big)_{k\geq0}^{\omega},\Big(\tau_k\Big)_{k\geq0}^{\omega})$ becomes a noncommutative probability space. For a special case $\mathcal{N}_k=\mathcal{N},$ $\tau_k=\tau,$ it is common to write $\mathcal{N}^{\omega}$ and $\tau^{\omega}.$

To define a Banach space $\Big(\mathcal{L}_p(\mathcal{N}_k,\tau_k)\Big)_{k\geq0}^{\omega},$ we equip the set of all bounded sequences $\{x(k)\}_{k\geq0}\in\prod_{k\geq0}\mathcal{L}_p(\mathcal{N}_k,\tau_k)$ with a seminorm
$$\|\{x(k)\}_{k\geq0}\|_p=\lim\limits_{k\to\omega}\|x(k)\|_p,\quad \{x(k)\}_{k\geq0}\in\prod_{k\geq0}\mathcal{L}_p(\mathcal{N}_k,\tau_k)\mbox{ is bounded}.$$
We define $\Big(\mathcal{L}_p(\mathcal{N}_k,\tau_k)\Big)_{k\geq0}^{\omega}$ to be a quotient space over the linear subspace
$$\Big\{\{x(k)\}_{k\geq0}:\ \lim\limits_{k\to\omega}\|x(k)\|_p=0\Big\}$$
and equip it with a quotient norm.

It is tempting to make an identification
\begin{equation}\label{false}
\Big(\mathcal{L}_p(\mathcal{N}_k,\tau_k)\Big)_{k\geq0}^{\omega}=\mathcal{L}_p(\Big(\mathcal{N}_k\Big)_{k\geq0}^{\omega},\Big(\tau_k\Big)_{k\geq0}^{\omega}).
\end{equation}
Unfortunately, such an identification does not hold. In fact, we have (see Lemma 2.13 in \cite{hrs} and diagram following it) that
\begin{equation}\label{true}
\Big(\mathcal{L}_p(\mathcal{N}_k,\tau_k)\Big)_{k\geq0}^{\omega}\supset\mathcal{L}_p(\Big(\mathcal{N}_k\Big)_{k\geq0}^{\omega},\Big(\tau_k\Big)_{k\geq0}^{\omega}).
\end{equation}
However, the following lemma shows how to use the uniform integrability condition to deduce that a certain element of $\Big(\mathcal{L}_p(\mathcal{N}_k,\tau_k)\Big)_{k\geq0}^{\omega}$ belongs to the $\mathcal{L}_p-$space $\mathcal{L}_p(\Big(\mathcal{N}_k\Big)_{k\geq0}^{\omega},\Big(\tau_k\Big)_{k\geq0}^{\omega}).$

The sequence $\{y(k)\}_{k\geq 0}\in\prod_{k\geq0}\mathcal{L}_1(\mathcal{N}_k,\tau_k)$ is said to be uniformly integrable if, for every $\varepsilon>0,$ there exists $K$ such that $\tau(|y(k)|E_{|y(k)|}(K,\infty))\leq\varepsilon$ for all $k\geq0.$

\begin{lem}\label{equi2} If the sequence $x=\{x(k)\}_{k\geq0}\in\prod_{k\geq0}\mathcal{L}_p(\mathcal{N}_k,\tau_k)$ is such that the sequence $\{|x(k)|^p\}_{k\geq0}\in \prod_{k\geq0}\mathcal{L}_1(\mathcal{N}_k,\tau_k)$ is uniformly integrable, then $$x\in\mathcal{L}_p(\Big(\mathcal{N}_k\Big)_{k\geq0}^{\omega},\Big(\tau_k\Big)_{k\geq0}^{\omega}).$$
\end{lem}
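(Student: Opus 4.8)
The plan is to exhibit the sequence $x=\{x(k)\}_{k\geq0}$ as an element of the $\mathcal{L}_p$-space of the ultraproduct by approximating it in $\Big(\mathcal{L}_p(\mathcal{N}_k,\tau_k)\Big)_{k\geq0}^{\omega}$ by a norm-convergent series of bounded elements whose norms are controlled using uniform integrability; the key point is that the ambient $\mathcal{L}_p$-space of the ultraproduct von Neumann algebra is closed, so it suffices to produce a Cauchy sequence of its elements converging to $x$. First I would, for each $K>0$, introduce the spectral cut-offs: writing $p_K(k):=E_{|x(k)|}([0,K])$ (a projection in $\mathcal{N}_k$), set $y_K(k):=x(k)p_K(k)$, which is a bounded element of $\mathcal{N}_k$ with $\|y_K(k)\|_\infty\leq K$, and hence $\{y_K(k)\}_{k\geq0}$ defines an element $y_K$ of $\Big(\mathcal{N}_k\Big)_{k\geq0}^{\omega}$. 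Since bounded elements of the ultraproduct von Neumann algebra lie in every $\mathcal{L}_p$ of it (the trace being a state), each $y_K\in\mathcal{L}_p\big(\Big(\mathcal{N}_k\Big)_{k\geq0}^{\omega},\Big(\tau_k\Big)_{k\geq0}^{\omega}\big)$.

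Next I would estimate the defect $x-y_K$ in the norm of $\Big(\mathcal{L}_p(\mathcal{N}_k,\tau_k)\Big)_{k\geq0}^{\omega}$. We have $x(k)-y_K(k)=x(k)(\mathbbm{1}-p_K(k))=x(k)E_{|x(k)|}(K,\infty)$, whose $\mathcal{L}_p$-norm is $\tau_k\big(|x(k)|^pE_{|x(k)|}(K,\infty)\big)^{1/p}$. Now the hypothesis that $\{|x(k)|^p\}_{k\geq0}$ is uniformly integrable says precisely that, for every $\varepsilon>0$, there is $K_\varepsilon$ with $\tau_k\big(|x(k)|^pE_{|x(k)|^p}(K_\varepsilon,\infty)\big)\leq\varepsilon^p$ for all $k$; since $E_{|x(k)|}(K,\infty)=E_{|x(k)|^p}(K^p,\infty)$, choosing $K$ with $K^p\geq K_\varepsilon$ gives $\|x(k)-y_K(k)\|_p\leq\varepsilon$ uniformly in $k$, hence $\|x-y_K\|_p\leq\varepsilon$ in the ultraproduct seminorm. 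Therefore $y_K\to x$ as $K\to\infty$ in $\Big(\mathcal{L}_p(\mathcal{N}_k,\tau_k)\Big)_{k\geq0}^{\omega}$; picking $K_1<K_2<\cdots$ with $\|x-y_{K_j}\|_p\to0$ produces a sequence in the subspace $\mathcal{L}_p\big(\Big(\mathcal{N}_k\Big)_{k\geq0}^{\omega},\Big(\tau_k\Big)_{k\geq0}^{\omega}\big)$ converging to $x$.

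Finally I would invoke completeness: by \eqref{true} this subspace is a closed subspace of $\Big(\mathcal{L}_p(\mathcal{N}_k,\tau_k)\Big)_{k\geq0}^{\omega}$ (it is itself a Banach space under the quotient/$\mathcal{L}_p$-norm and the inclusion is isometric, as recorded in the reference \cite{hrs}), so the limit $x$ of the $y_{K_j}$ lies in it. This completes the argument.

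The main obstacle I anticipate is not the uniform-integrability estimate itself, which is a routine spectral-projection computation, but the careful bookkeeping of \emph{which} norm one is computing in: one must check that the $\mathcal{L}_p$-norm of the representative $\{x(k)-y_K(k)\}_{k\geq0}$ as an element of the abstract ultraproduct space $\Big(\mathcal{L}_p(\mathcal{N}_k,\tau_k)\Big)_{k\geq0}^{\omega}$ really is $\lim_{k\to\omega}\|x(k)-y_K(k)\|_p$ and that the inclusion in \eqref{true} is isometric so that closedness transfers, and also that $y_K$ — defined as a bounded sequence — is genuinely identified with an element of $\mathcal{L}_p$ of the ultraproduct algebra (rather than merely of the Banach-space ultraproduct), which is where one uses that the limit trace is finite. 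Once these identifications are pinned down via the diagram in \cite{hrs}, the proof is short.
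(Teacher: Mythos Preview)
Your argument is correct and is essentially the same as the paper's: truncate each $x(k)$ by the spectral projection $E_{|x(k)|}([0,K])$, use uniform integrability of $\{|x(k)|^p\}$ to show the truncations converge to $x$ in the Banach-space ultraproduct norm, and then invoke Lemma~2.13 of \cite{hrs} to conclude that the limit lies in $\mathcal{L}_p$ of the von Neumann ultraproduct. The paper first reduces to $x(k)\geq0$ whereas you work with general $x(k)$ via $|x(k)|$, but this is a cosmetic difference.
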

\begin{proof} Without loss of generality, we have $x(k)\geq0,$ $k\geq0.$ For a given $t>0,$ consider a operator $x_t=\{x(k)E_{x(k)}[0,t]\}_{n\geq0}\in \Big(\mathcal{N}_k\Big)_{k\geq0}^{\omega}.$ We have that
$$\|x-x_t\|_{\Big(\mathcal{L}_p(\mathcal{N}_k,\tau_k)\Big)_{k\geq0}^{\omega}}=\lim\limits_{k\to\omega}\|x(k)E_{x(k)}(t,\infty)\|_p\leq(\sup_{k\geq0}\tau(x(k)^pE_{x(k)}(t,\infty)))^{1/p}.$$
Since the sequence $\{x(k)^p\}_{k\geq0},$ is uniformly integrable, it follows that the right hand side tends to $0$ as $t\to\infty.$ Thus, $x$ admits an approximation in $\Big(\mathcal{L}_p(\mathcal{N}_k,\tau_k)\Big)_{k\geq0}^{\omega}$ by bounded operators. By Lemma 2.13 in \cite{hrs}, we have that $x\in\mathcal{L}_p(\Big(\mathcal{N}_k\Big)_{k\geq0}^{\omega},\Big(\tau_k\Big)_{k\geq0}^{\omega}).$
\end{proof}

\section{Noncommutative Kruglov operator ($\tau$-finite case)}\label{krug fin}

The Kruglov operator was originally introduced in the setting of classical (commutative) probability theory in \cite{as_zam,AS}. The origins of this construction go back to Braverman \cite{Braverman} and Kruglov \cite{Kruglov}. Note that in commutative setting the Kruglov operator is originally defined on the space of all measurable functions (see \cite{AS} and \cite{as1}). In order to achieve main objective of this paper we will consider this operator on $L_1$-spaces associated with semifinite von Neumann algebras.

Let $\Omega=\prod_{k=0}^{\infty}(0,1)$ be the (infinite
dimensional) hypercube equipped with the product Lebesgue
measure $dm^{\infty}.$ The Kruglov operator $K$ acts from
$L_1(0,1)$ to $L_1(\Omega)$ by the following formula
\begin{equation}\label{kastsuk}
Kx=\sum_{k=1}^{\infty}\sum_{m=1}^k\chi_{A_k}\otimes \chi_{(0,1)}^{\otimes(m-1)}\otimes x\otimes \chi_{(0,1)}^{\otimes\infty},\quad x\in L_1(0,1),
\end{equation}
where $A_k,$ $k\geq0,$ are pairwise disjoint sets with $m(A_k)=\frac1{e\cdot k!}$ for all $k\geq0$ (so that $\cup_{k\geq0}A_k=(0,1)$) and where $\chi_{B}$ is the indicator function of the measurable set $B\subset (0,1).$

The following assertion is proved in \cite{ASorlicz}

\begin{thm}\label{as krug} The Kruglov operator $K:L_1(0,1)\to L_1(\Omega)$ defined in \eqref{kastsuk} satisfies the following properties.
\begin{enumerate}[{\rm (i)}]
\item $K$ maps positive random variables into positive ones.
\item $K$ maps symmetrically distributed random variables into symmetrically distributed ones.
\item If $0\leq x,y\in L_1(0,1)$ are equimeasurable, then so are $Kx,Ky\in L_1(\Omega).$
\item If $x_k\in L_1(0,1),$ $1\leq k\leq n,$ are pairwise disjointly supported functions, then the functions $Kx_k\in L_1(0,1),$ $1\leq k\leq n,$ are independent.
\end{enumerate}
\end{thm}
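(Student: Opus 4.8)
The plan is to verify the four properties directly from the explicit formula \eqref{kastsuk}, exploiting the probabilistic structure of the Kruglov construction: $Kx$ is, in distribution, the sum $\sum_{j=1}^N x_j$ where $N$ is a Poisson random variable of parameter $1$ and $x_1, x_2, \dots$ are i.i.d.\ copies of $x$ independent of $N$. The sets $A_k$ with $m(A_k) = \tfrac{1}{e\cdot k!}$ encode exactly this: conditioning on $\chi_{A_k}$ corresponds to the event $\{N = k\}$, which has probability $\tfrac{1}{e\cdot k!}$, and on that event $Kx$ is a sum of $k$ copies of $x$ living in distinct tensor slots, hence independent.

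First, for (i): if $x \geq 0$, then every summand $\chi_{A_k}\otimes\chi_{(0,1)}^{\otimes(m-1)}\otimes x\otimes\chi_{(0,1)}^{\otimes\infty}$ is a product of nonnegative functions, and the sets $A_k$ are pairwise disjoint, so the series is a sum of nonnegative functions supported on disjoint "blocks" of $\Omega$; positivity of $Kx$ is immediate (and one should note the series converges in $L_1(\Omega)$ since $\|Kx\|_1 = \sum_k m(A_k)\cdot k\cdot\|x\|_1 = \|x\|_1$). Next, for (ii): if $x$ is symmetrically distributed, i.e.\ equimeasurable with $-x$, then on the block indexed by $A_k$ the function $Kx$ restricts to a sum of $k$ independent copies of $x$ in separate coordinates; replacing each copy of $x$ by $-x$ is a measure-preserving transformation of that block (a product of coordinatewise measure-preserving maps, using that each coordinate copy of $x$ is symmetrically distributed), and these transformations paste together across the disjoint blocks to give a measure-preserving transformation of $\Omega$ carrying $Kx$ to $-Kx$. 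For (iii): if $0 \leq x, y$ are equimeasurable then, slot by slot and block by block, one builds a measure-preserving bijection of $\Omega$ intertwining $Kx$ and $Ky$ — this is the same bookkeeping as in (ii), now comparing $x$ with $y$ rather than $x$ with $-x$.

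The substantive point is (iv). Here I would argue as follows: suppose $x_1, \dots, x_n \in L_1(0,1)$ are pairwise disjointly supported. For each $k$, on the set $A_k$ the operator acts by placing, in each of the first $k$ tensor coordinates after $A_k$, a copy of the relevant $x_i$. The crucial observation is that disjointness of the supports of the $x_i$ means that in any fixed tensor coordinate, the functions $\chi_{(0,1)}^{\otimes(m-1)}\otimes x_i\otimes\cdots$ for different $i$ live on disjoint subsets of $(0,1)$ in that coordinate, so they "do not interact" — and across the countably many blocks $A_k$ and the many tensor slots, one checks that the joint distribution of $(Kx_1, \dots, Kx_n)$ factorizes. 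The clean way to see this is via characteristic functions: compute $\int_\Omega \exp\big(i\sum_{j=1}^n t_j (Kx_j)\big)\, dm^\infty$ by summing over $k$ (the block $A_k$ contributes $m(A_k)$ times a $k$-fold product over the tensor slots, and each slot contributes the same factor $\int_0^1 \exp(i\sum_j t_j x_j)\,ds$ because the $x_j$ are disjointly supported, so that $\exp(i\sum_j t_j x_j) = \sum_j \exp(it_j x_j)\chi_{\mathrm{supp}(x_j)} + \chi_{(0,1)\setminus\bigcup_j\mathrm{supp}(x_j)}$), obtaining $\exp\big(\sum_j(\widehat{\mu_{x_j}}(t_j) - 1)\big) = \prod_j \exp(\widehat{\mu_{x_j}}(t_j) - 1) = \prod_j \mathbb{E}\exp(it_j Kx_j)$, which is precisely independence. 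I expect this characteristic-function computation — keeping careful track of the combinatorial factor $\sum_{m=1}^k$ and the Poisson normalization $m(A_k) = \tfrac{1}{e\cdot k!}$ — to be the main obstacle, and indeed this is the one place where the specific choice of the masses $m(A_k)$ is used essentially.

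Finally, I would remark that all four statements are in fact consequences of (or are proved alongside) the identification of $K$ with Poissonization, and that they are stated and proved in \cite{ASorlicz}; the role of this theorem in the present paper is to have these properties recorded in a form convenient for the noncommutative lifting carried out in the next sections, so the proof here can be brief, referring to \cite{ASorlicz} for the details and only indicating the characteristic-function argument for (iv).
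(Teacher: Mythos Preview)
The paper gives no proof of this theorem at all: it is stated as a result from the literature, with the single sentence ``The following assertion is proved in \cite{ASorlicz}'' immediately preceding it. Your proposal is therefore more than the paper provides; your sketch is sound (and the characteristic-function argument you outline for (iv) is exactly the mechanism the paper later uses in the noncommutative setting, Theorem~\ref{fincor} and Lemma~\ref{k finite}), and your closing remark that one should simply refer to \cite{ASorlicz} is precisely what the paper does.
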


The main aim of this section is to construct the noncommutative Kruglov operator in a $\tau$-finite von Neumann algebra. Recall that
  if $x_k\in \mathcal{S}(\mathcal{M},\tau),$ $k\geq0,$ if $e_k\in\mathcal{M},$ $k\geq0,$ are pairwise orthogonal projections, and if $y_k\in e_k\mathcal{S}(\mathcal{M},\tau)e_k$ are such that $\mu(y_k)=\mu(x_k),$ $k\geq0,$ then we write
\begin{equation}\label{def_direct_sum_of_operators}
\bigoplus_{k=0}^\infty x_k:=\sum_{k=0}^\infty y_k.
\end{equation}
If $x_0,x_1,\ldots\in \mathcal S(\mathcal M,\tau)$ then we view $\bigoplus_{k=0}^{\infty}x_k$ as an element  of $\mathcal S\big(\bigoplus_{k=0}^{\infty}\mathcal{M},\bigoplus_{k=0}^{\infty}\tau \big).$

Let $\mathcal{N}$ be a von Neumann algebra equipped with a faithful normal finite trace $\tau.$ For every $x\in\mathcal{S}(\mathcal{N},\tau),$ set
\begin{equation}\label{k fin def}
\mathscr{K}x=0\oplus\bigoplus_{k=1}^{\infty}\left(\sum_{m=1}^k\mathbbm 1^{\otimes(m-1)}\otimes x\otimes \mathbbm 1^{\otimes (k-m)}\right).
\end{equation}
Here, $\mathbbm 1$ denotes the unit element in $\mathcal{N}.$ By definition $\mathscr{K}x$ belongs to $\mathcal{S}\Big(\bigoplus_{k=0}^{\infty}\mathcal{N}^{\otimes k},\sigma\Big)$ for every $x\in\mathcal{S}(\mathcal{N},\tau),$  where $\sigma$ is defined by \eqref{eq_sigma}.
The operator
$$\mathscr{K}:\mathcal{S}(\mathcal{N},\tau)\to \mathcal{S}\Big(\bigoplus_{k=0}^{\infty}\mathcal{N}^{\otimes k},\sigma\Big)$$
defined by \eqref{k fin def} is said to be the \textit{(noncommutative) Kruglov operator}.

The following lemma shows that the values of the Kruglov operator belong in fact to a smaller algebra.

\begin{lem}\label{lem1} For any $x\in\mathcal{S}(\mathcal{N},\tau),$ we have $\mathscr{K}x\in \mathcal{S}(M_s(\mathcal{N}),\sigma),$ where the algebra $M_s(\mathcal{N})$ is defined by \eqref{eq_M_s}.
\end{lem}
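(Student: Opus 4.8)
The plan is to unwind the definitions. Recall that $\mathscr{K}x = 0\oplus\bigoplus_{k=1}^\infty y_k$, where $y_k = \sum_{m=1}^k \mathbbm 1^{\otimes(m-1)}\otimes x\otimes\mathbbm 1^{\otimes(k-m)}$ is an element of the $k$-th summand $\mathcal{N}^{\otimes k}$, and that $M_s(\mathcal{N}) = \bigoplus_{k=0}^\infty \mathcal{N}_s^{\otimes k}$, where $\mathcal{N}_s^{\otimes k} = E_k(\mathcal{N}^{\otimes k})$ is the range of the symmetrizing conditional expectation $E_k$ from \eqref{eq_E_k}. Since $\mathscr{K}x$ is already a direct sum over $k$ whose $k$-th component lies in $\mathcal{N}^{\otimes k}$, to see that $\mathscr{K}x\in\mathcal{S}(M_s(\mathcal{N}),\sigma)$ it suffices to check two things: first, that each component $y_k$ lies in the symmetric subalgebra $\mathcal{N}_s^{\otimes k}$ (equivalently $E_k(y_k) = y_k$), and second, that the resulting element of $M_s(\mathcal{N})$ is $\sigma$-measurable, which is automatic because $\sigma$ is finite (so every closed operator affiliated with $M_s(\mathcal{N})$ is $\sigma$-measurable) once we know $y_k$ is affiliated with $\mathcal{N}_s^{\otimes k}$.

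So the heart of the matter is the invariance $E_k(y_k) = y_k$. First I would reduce to bounded $x$: the symmetrizing map $E_k$ is normal and $y_k$ depends $t_\tau$-continuously (even linearly) on $x$, and $\mathcal{N}$ is $t_\tau$-dense in $\mathcal{S}(\mathcal{N},\tau)$; alternatively one works at the level of spectral projections. For bounded $x$, observe that $y_k$ is, up to the obvious reindexing, invariant under the natural action of the permutation group $\mathfrak{S}_k$ on $\mathcal{N}^{\otimes k}$ that permutes the tensor legs: applying the permutation $\rho$ to the term $\mathbbm 1^{\otimes(m-1)}\otimes x\otimes\mathbbm 1^{\otimes(k-m)}$ (which has $x$ in slot $m$ and $\mathbbm 1$ elsewhere) produces the term with $x$ in slot $\rho^{-1}(m)$, and as $m$ runs over $\{1,\dots,k\}$ so does $\rho^{-1}(m)$; hence the sum $y_k$ is fixed by every $\rho$. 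Since $E_k$ is precisely the average $\tfrac1{k!}\sum_\rho \rho$ over this action, $E_k(y_k) = \tfrac1{k!}\sum_\rho \rho(y_k) = \tfrac1{k!}\sum_\rho y_k = y_k$. Therefore $y_k\in E_k(\mathcal{N}^{\otimes k}) = \mathcal{N}_s^{\otimes k}$.

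Assembling: for bounded $x$ we get $\mathscr{K}x \in \bigoplus_{k=0}^\infty \mathcal{N}_s^{\otimes k} = M_s(\mathcal{N})$ directly; for general $x\in\mathcal{S}(\mathcal{N},\tau)$, write $x$ as a $t_\tau$-limit of $x_n = xE_{|x|}[0,n]\in\mathcal{N}$ (or use that $\mathscr{K}x$ is built from spectral projections of $x$, each of which is a bounded element whose image under the leg-embeddings and summation lies in $M_s(\mathcal{N})$, and $M_s(\mathcal{N})$ being a von Neumann algebra contains the relevant sums/spectral resolution). Since $\sigma$ is a finite trace, $\mathscr{K}x$, being a closed densely defined operator affiliated with $M_s(\mathcal{N})$, is automatically $\sigma$-measurable, so $\mathscr{K}x\in\mathcal{S}(M_s(\mathcal{N}),\sigma)$.

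The only genuinely delicate point is the passage from bounded to unbounded $x$: one must make sure that $\mathscr{K}x$ really is affiliated with $M_s(\mathcal{N})$ (not merely a formal direct sum of affiliated pieces) and that the direct sum over $k$ converges in the appropriate sense in $\mathcal{S}(M_s(\mathcal{N}),\sigma)$. This is handled exactly as the measurability of $\bigoplus_{k=0}^\infty x_k$ was handled in \eqref{def_direct_sum_of_operators} and the paragraph before it, together with the fact that $\sigma$ from \eqref{eq_sigma} is a finite trace; the permutation-averaging argument above shows each block lands in the symmetric part, and affiliation with $M_s(\mathcal{N})$ then follows because the spectral projections of $\mathscr{K}x$ are block-diagonal with blocks in the $\mathcal{N}_s^{\otimes k}$.
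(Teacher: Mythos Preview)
Your proposal is correct and follows essentially the same strategy as the paper: reduce to showing each block $y_k$ lies in $\mathcal{N}_s^{\otimes k}$, verify this for bounded $x$, then pass to general $x\in\mathcal{S}(\mathcal{N},\tau)$ by truncation $x_n=xE_{|x|}[0,n]$ and measure-topology limits. The only cosmetic difference is in the bounded step: the paper exhibits $y_k$ explicitly as $E_k\big(k\cdot x\otimes\mathbbm 1^{\otimes(k-1)}\big)$ by expanding the symmetrization, whereas you observe directly that $y_k$ is permutation-invariant and hence fixed by $E_k$.
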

\begin{proof}
In order to prove the assertion of the lemma it is enough to show that
$y_k(x)\in \mathcal{S}(\mathcal{N}^{\otimes k}_s,\tau^{\otimes k}),$ for any  $x\in\mathcal{S}(\mathcal{N},\tau),$ where
\begin{equation}\label{eq_y_k}
y_k(x):= \sum_{m=1}^k\mathbbm 1^{\otimes(m-1)}\otimes x\otimes \mathbbm 1^{\otimes (k-m)},\quad k\ge 1.
\end{equation}
Let us first show that
\begin{equation}\label{xisbound2}
y_k(x)\in\mathcal N^{\otimes k}_s\quad\text{for every}\quad x\in\mathcal{N},\quad k\ge 1.
\end{equation}
Indeed, since $x\in \mathcal{N},$ it follows that $y_k(x)\in \mathcal N^{\otimes k}.$
Consider the element $$z_k(x):=k (x\otimes \mathbbm 1^{\otimes (k-1)})\in\mathcal N^{\otimes k}, \ \ \ k\ge 1.$$
Then by the definition of $E_k$ (see \eqref{eq_E_k}), we have
\begin{equation}\label{xisbound1} E_k(z_k(x))=k \cdot \frac1{k!}\sum_{\rho}\tilde x_{\rho(1)}\otimes\cdots\otimes \tilde x_{\rho(k)}, \ \ k\ge 1,
\end{equation}
where $\tilde x_{ \rho(j)}=x,$ if $\rho(j)=1$ and $\tilde x_{ \rho(j)}=\mathbbm 1$ otherwise, $1\le j\le k,$ and $\rho$ runs over the set of all permutations $\mathfrak{S}_k$. Expanding the right hand side of
\eqref{xisbound1}, we obtain
 \begin{multline*} E_k(z_k(x))=k \cdot \frac1{k!}\left((k-1)! ( x\otimes \mathbbm 1^{\otimes (k-1)})+
 (k-1)! ( \mathbbm 1\otimes x\otimes \mathbbm 1^{\otimes (k-2)})+\ldots\right.\\\left.+(k-1)! ( \mathbbm 1^{\otimes (k-1)}\otimes x)\right)=
 \sum_{m=1}^k\mathbbm 1^{\otimes(m-1)}\otimes x\otimes \mathbbm 1^{\otimes (k-m)}=y_k(x), \ \ k\ge 1,
\end{multline*}
that completes the proof of \eqref{xisbound2}.

Now let $x\in\mathcal{S}(\mathcal{N},\tau).$
Fix $k\ge 1$ and consider $x_n=x E_{x}([-n,n])$ and
$y_k(x_n)$,  $n\ge 1.$
Since $x_n\in \mathbb N,$ by \eqref{xisbound2} we have that $y_k(x_n)\in \mathcal N^{\otimes k}_s$ for every $n\ge 1.$
Now observe that the sequence $\{y_k(x_n)\}_{n=1}^\infty$ converges to $y_k(x)$ in the measure topology~$t_{\tau^{\otimes k}}$. Since
$\mathcal S(\mathcal N^{\otimes k}_s,\tau^{\otimes k})$ is complete with respect to the measure topology, we obtain that $y_k(x)\in
\mathcal S(\mathcal N^{\otimes k}_s,\tau^{\otimes k}).$
 \end{proof}

The following proposition collects some of important properties of the operator  $\mathscr{K}.$
Recall that $[x,y]:=xy-yx,$ $x,y\in\mathcal{S}(\mathcal{N},\tau)$.

 \begin{prop} \label{pr_prop_K}
\begin{enumerate}[{\rm (i)}]
        \item\label{prop_K3} $\mathscr{K}x\in\mathcal{L}_1(M_s(\mathcal{N}),\sigma)$ and $\sigma(\mathscr{K}x)=\tau(x)$ for every $x\in\mathcal{L}_1(\mathcal{N},\tau).$
       \item\label{kommut lemma}  For any  $x,y\in\mathcal{S}(\mathcal{N},\tau)$, we have that  $[\mathscr{K}x,\mathscr{K}y]=\mathscr{K}[x,y].$
       \item\label{prop_K5} $\mathscr{K}x \ge 0$ whenever $x\ge 0;$
        \item\label{K is self_adj} $(\mathscr{K}x)^* =\mathscr{K}x$ whenever $x=x^*.$
        \end{enumerate}
\end{prop}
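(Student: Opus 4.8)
The plan is to verify each of the four properties directly from the definition \eqref{k fin def}, exploiting the fact established in Lemma \ref{lem1} that each summand $y_k(x)=\sum_{m=1}^k\mathbbm 1^{\otimes(m-1)}\otimes x\otimes\mathbbm 1^{\otimes(k-m)}$ lies in $\mathcal{S}(\mathcal{N}^{\otimes k}_s,\tau^{\otimes k})$. For \eqref{prop_K3}, I would first observe that for a positive $x\in\mathcal{L}_1(\mathcal{N},\tau)$ each $\mathbbm 1^{\otimes(m-1)}\otimes x\otimes\mathbbm 1^{\otimes(k-m)}$ is positive with $\tau^{\otimes k}$-trace equal to $\tau(x)\tau(\mathbbm 1)^{k-1}=\tau(x)$ (since $\tau$ is a state), so $\tau^{\otimes k}(y_k(x))=k\,\tau(x)$; then summing over the direct sum against the weights $\exp(-\tau(\mathbbm 1))/k!$ from \eqref{eq_sigma} gives $\sigma(\mathscr{K}x)=\exp(-1)\sum_{k\geq1}\frac{k}{k!}\tau(x)=\tau(x)$, using $\tau(\mathbbm 1)=1$ in the $\tau$-finite normalized case (and for general $x\in\mathcal{L}_1$ one splits into real, imaginary, positive and negative parts). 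The $\mathcal{L}_1$-membership follows from the same computation applied to $|x|$ (or more carefully, from $\mathscr{K}|x|\geq|\mathscr{K}x|$ once positivity and the triangle-type estimate are in hand), noting the series $\sum_k \frac{k}{k!}$ converges.

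For \eqref{kommut lemma}, the key point is that within $\mathcal{N}^{\otimes k}$ the ``copies'' of $x$ in different tensor slots commute with the copies of $y$ in other slots, while in a common slot they contribute $[x,y]$. Concretely, I would write $y_k(x)=\sum_{m}x_m$ and $y_k(y)=\sum_{\ell}y_\ell$ where $x_m$ (resp. $y_\ell$) denotes $x$ (resp. $y$) placed in slot $m$ (resp. $\ell$); then $[x_m,y_\ell]=0$ for $m\neq\ell$ and $[x_m,y_m]=[x,y]_m$, so $[y_k(x),y_k(y)]=\sum_{m}[x,y]_m=y_k([x,y])$, and this identity, being slotwise, passes through the direct sum, yielding $[\mathscr{K}x,\mathscr{K}y]=\mathscr{K}[x,y]$. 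One should be slightly careful that these are unbounded operators and products must be interpreted as strong products in $\mathcal{S}$, but since everything reduces via the approximation $x_n=xE_x([-n,n])$ (as in Lemma \ref{lem1}) to bounded operators where the algebraic identity is literal, and the strong product is jointly continuous on appropriate bounded pieces, the identity persists in the limit.

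Properties \eqref{prop_K5} and \eqref{K is self_adj} are the easiest: if $x\geq0$ then each $\mathbbm 1^{\otimes(m-1)}\otimes x\otimes\mathbbm 1^{\otimes(k-m)}\geq0$ (a tensor product of positive operators, with $\mathbbm 1$ positive), hence each $y_k(x)\geq0$ as a sum of positives, hence $\mathscr{K}x\geq0$ as a direct sum of positives; similarly $(\mathbbm 1^{\otimes(m-1)}\otimes x\otimes\mathbbm 1^{\otimes(k-m)})^*=\mathbbm 1^{\otimes(m-1)}\otimes x^*\otimes\mathbbm 1^{\otimes(k-m)}$, so self-adjointness of $x$ forces self-adjointness of each summand and therefore of $\mathscr{K}x$. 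The main obstacle, such as it is, is purely bookkeeping in \eqref{kommut lemma} and \eqref{prop_K3}: making the manipulations with unbounded $\tau$-measurable operators rigorous (convergence of the defining series in the measure topology, legitimacy of the strong product, interchange of trace and infinite direct sum). I would handle all of this by the now-standard device of truncating $x$ to $x_n=xE_{|x|}([0,n])\in\mathcal{N}$, proving the bounded case by direct algebra, and invoking completeness of $\mathcal{S}$ in the measure topology together with normality of $\sigma$ to pass to the limit, exactly as in the proof of Lemma \ref{lem1}.
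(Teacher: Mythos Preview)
Your proposal is correct and follows essentially the same route as the paper: direct computation of $\sigma(\mathscr{K}x)$ from the series definition for \eqref{prop_K3}, the slot-by-slot commutator identity $[y_k(x),y_k(y)]=y_k([x,y])$ for \eqref{kommut lemma}, and the obvious positivity/self-adjointness of tensor summands for \eqref{prop_K5} and \eqref{K is self_adj}. One small point: the paper does not assume $\tau(\mathbbm 1)=1$, so $\tau^{\otimes k}(y_k(x))=k\,\tau(x)\tau(\mathbbm 1)^{k-1}$ and the sum is $\exp(-\tau(\mathbbm 1))\sum_{k\geq1}\frac{k\,\tau(\mathbbm 1)^{k-1}}{k!}\tau(x)=\tau(x)$; your normalization is an unnecessary restriction but the fix is immediate. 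Your added care about unbounded operators (truncation and measure-topology limits) in \eqref{kommut lemma} is more than the paper provides, and for \eqref{K is self_adj} the paper simply invokes \eqref{prop_K5} via $x=x_+-x_-$ rather than arguing directly, but both routes are equivalent.
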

\begin{proof}
To prove \eqref{prop_K3},
  we show that $\sigma(\mathscr{K}x)=\tau(x)$ for every $x\in\mathcal{L}_1(\mathcal{N},\tau).$ Indeed,
  \begin{align*}
  \sigma(\mathscr{K}x)&=\sum_{k=1}^\infty\frac1{\exp{(\tau(\mathbbm 1))}\cdot k!} \ \tau^{\otimes k}\left(\sum_{m=1}^k\mathbbm 1^{\otimes(m-1)}\otimes x\otimes \mathbbm 1^{\otimes (k-m)}\right)\\
  &=\sum_{k=1}^\infty\frac{k\cdot\tau(x)\tau(\mathbbm 1)^{k-1}}{\exp(\tau(\mathbbm 1))\cdot k!}=\frac{\tau(x)}{\exp(\tau(\mathbbm 1))}\sum_{k=1}^\infty\frac{\tau(\mathbbm 1)^{k-1}}{ (k-1)!}=\tau(x).
  \end{align*}

\eqref{kommut lemma}. A straightforward computation yields
\begin{align*}
& \Big(\sum_{m_1=1}^k1^{\otimes(m_1-1)}\otimes x\otimes 1^{\otimes (k-m_1)}\Big)\Big(\sum_{m_2=1}^k1^{\otimes(m_2-1)}\otimes y\otimes 1^{\otimes (k-m_2)}\Big)\\
& =\sum_{m_1=1}^{k-1}\sum_{m_2=m_1+1}^k1^{\otimes(m_1-1)}\otimes x\otimes 1^{\otimes (m_2-m_1-1)}\otimes y\otimes 1^{\otimes (k-m_2)} \\
&+\sum_{m_2=1}^{k-1}\sum_{m_1=m_2+1}^k1^{\otimes(m_2-1)}\otimes y\otimes 1^{\otimes (m_1-m_2-1)}\otimes x\otimes 1^{\otimes (k-m_1)}\\
&+\sum_{m=1}^k1^{\otimes(m-1)}\otimes xy\otimes 1^{\otimes (k-m)} \ \ \end{align*}
for every $k\ge 1$. It follows that, for every $k\geq 1,$
\begin{align*}
 &\Big(\sum_{m=1}^k1^{\otimes(m-1)}\otimes x\otimes 1^{\otimes
(k-m)}\Big)\Big(\sum_{m=1}^k1^{\otimes(m-1)}\otimes y\otimes
1^{\otimes (k-m)}\Big) \\
 & -\Big(\sum_{m=1}^k1^{\otimes(m-1)}\otimes y\otimes 1^{\otimes
(k-m)}\Big)\Big(\sum_{m=1}^k1^{\otimes(m-1)}\otimes  x\otimes
1^{\otimes (k-m)}\Big)\\
& =\sum_{m=1}^k1^{\otimes(m-1)}\otimes
xy\otimes 1^{\otimes (k-m)}-\sum_{m=1}^k1^{\otimes(m-1)}\otimes
yx\otimes 1^{\otimes (k-m)}\\
&=\sum_{m=1}^k1^{\otimes(m-1)}\otimes
[x,y]\otimes 1^{\otimes (k-m)}.\end{align*} 
Thus, we have
$$[\mathscr{K}x,\mathscr{K}y]=0\oplus\bigoplus_{k=1}^{\infty}\left(\Big(\sum_{m=1}^k1^{\otimes(m-1)}\otimes x\otimes 1^{\otimes (k-m)}\Big)\Big(\sum_{m=1}^k1^{\otimes(m-1)}\otimes y\otimes 1^{\otimes (k-m)}\Big)\right.$$
$$\left.-\Big(\sum_{m=1}^k1^{\otimes(m-1)}\otimes y\otimes 1^{\otimes (k-m)}\Big)\Big(\sum_{m=1}^k1^{\otimes(m-1)}\otimes x\otimes 1^{\otimes
(k-m)}\Big)\right)=\mathscr{K}[x,y].$$

Property \eqref{prop_K5} follows from the definition of the operator $\mathscr{K}$. Property \eqref{K is self_adj} is a straightforward subsequence of \eqref{prop_K5}.
\end{proof}

The following result shows the connection between noncommutative and commutative Kruglov operators.

\begin{lem}\label{lem2} Let $\tau(\mathbbm 1)=1.$ Then for every positive $x\in \mathcal{L}_1(\mathcal{N},\tau),$  we have $\mu(K\mu(x))=\mu(\mathscr{K}x).$
\end{lem}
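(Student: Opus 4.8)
The plan is to reduce the claimed identity $\mu(K\mu(x)) = \mu(\mathscr{K}x)$ to a statement comparing the distribution functions of the two operators, and then to compare both sides termwise against the building blocks of the two Kruglov constructions. Since $\tau(\mathbbm 1) = 1$, the algebra $M_s(\mathcal N)$ carries the finite trace $\sigma$ with $\sigma(\mathbbm 1) = 1$, so $\mathscr{K}x$ is a $\sigma$-measurable self-adjoint operator (positive, by Proposition \ref{pr_prop_K}\eqref{prop_K5}), and on the commutative side $\Omega = \prod_{k\geq 0}(0,1)$ carries the probability measure $dm^\infty$ with $K\mu(x)$ positive by Theorem \ref{as krug}(i). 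So it suffices to show $K\mu(x)$ and $\mathscr{K}x$ are equimeasurable, i.e. $d_{K\mu(x)} = d_{\mathscr{K}x}$, since equimeasurable operators have the same singular value function.

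First I would observe that both operators are assembled as direct sums over $k \geq 1$ of blocks living on pieces of total mass $\frac{1}{e\cdot k!}$: on the commutative side the $k$-th block is $\chi_{A_k}\otimes\big(\sum_{m=1}^k \chi_{(0,1)}^{\otimes(m-1)}\otimes\mu(x)\otimes\chi_{(0,1)}^{\otimes\infty}\big)$ with $m(A_k) = \frac{1}{e\cdot k!}$, while on the noncommutative side the $k$-th summand in \eqref{k fin def} is $y_k(x) = \sum_{m=1}^k \mathbbm 1^{\otimes(m-1)}\otimes x\otimes\mathbbm 1^{\otimes(k-m)}$ sitting inside $\mathcal N_s^{\otimes k}$, which via $\sigma$ is weighted by $\frac{1}{e\cdot k!}$ through formula \eqref{eq_sigma}. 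Because the distribution function of a direct sum is the sum of the (reweighted) distribution functions of the summands, it is enough to check, for each fixed $k$, that the $k$-th commutative block $\sum_{m=1}^k \chi_{(0,1)}^{\otimes(m-1)}\otimes\mu(x)\otimes\chi_{(0,1)}^{\otimes(k-m)}$ on $(0,1)^k$ with Lebesgue measure is equimeasurable (as a function/operator) with $y_k(x) \in \mathcal N^{\otimes k}$ with trace $\tau^{\otimes k}$, $\tau(\mathbbm 1)=1$.

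The key point for this termwise comparison is that for fixed $k$ the $k$ summands $\mathbbm 1^{\otimes(m-1)}\otimes x\otimes\mathbbm 1^{\otimes(k-m)}$, $1\leq m\leq k$, are \emph{commuting} self-adjoint operators in $\mathcal N^{\otimes k}$, pairwise \emph{independent} with respect to $\tau^{\otimes k}$ (each lives in a distinct tensor leg), and each equimeasurable with $x$ (hence with $\mu(x)$, which as a function on $(0,1)$ is equimeasurable with $x$ by definition). Their sum is therefore a sum of $k$ independent copies of $x$, computed inside a finite von Neumann algebra; likewise $\sum_{m=1}^k \chi_{(0,1)}^{\otimes(m-1)}\otimes\mu(x)\otimes\chi_{(0,1)}^{\otimes(k-m)}$ is a sum of $k$ independent copies of $\mu(x)$ on the commutative probability space $(0,1)^k$. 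Since the distribution of a sum of independent random variables depends only on the individual distributions (the distribution of the sum is the $k$-fold convolution $d_x * \cdots * d_x$, which one sees by passing to the commutative von Neumann algebra generated by the commuting family and invoking the definition of independence to factor the moments, equivalently the joint spectral measure), the two $k$-th blocks have the same distribution function. Summing over $k$ with the common weights $\frac{1}{e\cdot k!}$ gives $d_{K\mu(x)} = d_{\mathscr{K}x}$, hence $\mu(K\mu(x)) = \mu(\mathscr{K}x)$.

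I expect the main obstacle to be the careful bookkeeping of the direct-sum/trace-reweighting identity at the level of (unbounded, $\tau$-measurable) distribution functions — one must justify that $d_{\bigoplus_k z_k} = \sum_k (\text{weight}_k)\, d_{z_k}$ in the $\sigma$-measurable category, using that the $\mathcal N_s^{\otimes k}$ sit orthogonally in $M_s(\mathcal N)$ as in \eqref{def_direct_sum_of_operators} and that $\sigma$ restricts to $\frac{1}{e\cdot k!}\tau^{\otimes k}$ on the $k$-th block — together with the reduction of "sum of independent copies" to a statement purely about distribution functions, which is cleanest to phrase by restricting to the abelian von Neumann subalgebra generated by the commuting family $\{\mathbbm 1^{\otimes(m-1)}\otimes x\otimes\mathbbm 1^{\otimes(k-m)}\}_{m=1}^k$ and matching it with the corresponding abelian subalgebra of $L_\infty((0,1)^k)$; the finiteness of $\tau$ (so that $\tau(\mathbbm 1)=1$ and all the exponential weights are genuine probabilities) is what makes the measure-theoretic convolution argument go through without domain subtleties.
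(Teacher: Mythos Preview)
Your proposal is correct and follows essentially the same route as the paper: decompose both $K\mu(x)$ and $\mathscr{K}x$ into their $k$-th blocks carrying mass $\tfrac{1}{e\cdot k!}$, verify that the $k$-th blocks are equimeasurable, and conclude that the direct sums are equimeasurable. The only difference is that where the paper simply cites \cite{C-S} for the equimeasurability of $y_k(x)$ with $\sum_{m=1}^k \chi_{(0,1)}^{\otimes(m-1)}\otimes\mu(x)\otimes\chi_{(0,1)}^{\otimes(k-m)}$, you spell out the underlying reason (the tensor legs give commuting independent copies, and the distribution of a sum of commuting independent random variables is the convolution of the marginals, which depends only on the individual distributions).
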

\begin{proof} Fix $k\in \mathbb N$. Observe that the operators $y_k(x)$ defined by \eqref{eq_y_k} with respect to the algebra $(\mathcal{N}^{\otimes k}, \tau^{\otimes k})$  and
$$\sum_{m=1}^k \chi_{(0,1)}^{\otimes(m-1)}\otimes \mu(x)\otimes \chi_{(0,1)}^{\otimes (k-m)},$$ with respect to $(\Omega, dm^\infty)$ are equimeasurable (see~e.g.~\cite{C-S}). Therefore, the operator $y_k(x)$
 with respect to the algebra $(\mathcal{N}^{\otimes k}, \frac{1}{e^{\tau(\mathbbm 1)}\cdot k!}\cdot\tau^{\otimes k})$ is equimeasurable with the operator
$$\chi_{A_k}\otimes\sum_{m=1}^k \chi_{(0,1)}^{\otimes(m-1)}\otimes \mu(x)\otimes \chi_{(0,1)}^{\otimes (k-m)},$$ with respect to $(\Omega, dm^\infty)$,
where $A_k$ is the same as in \eqref{kastsuk}.
Thus, the operators $K\mu(x)$ and $\mathscr{K}x$ are equimeasurable as a direct sum of equimeasurable operators.
\end{proof}

It is not difficult to see that there is a much stronger connection than indicated by Lemma \ref{lem2} between the commutative and noncommutative Kruglov operators. The following theorem states this connection in full generality. Since we do not use this theorem in this paper, we omit the proof.

 \begin{thm}
 Suppose that $\mathcal N=L_\infty(0,1).$ Then there exists trace preserving unital $*$-homomorphism
$\gamma:M_s(\mathcal{N})\to L_\infty(\Omega)$ such that its $L_1$-extension $\tilde\gamma$ satisfies
$\tilde\gamma(\mathscr{K}(x))=Kx$ for any $x\in L_1(0,1).$
 \end{thm}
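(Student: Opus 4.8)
The plan is to construct the $*$-homomorphism $\gamma$ level by level, using the isomorphism between the symmetric tensor power $\mathcal{N}^{\otimes k}_s$ and the (symmetric part of the) $k$-fold product $(0,1)^k$, and then piecing these together over $k$ in a way compatible with the weights $\frac{1}{e\cdot k!}$ appearing in both $\sigma$ (see \eqref{eq_sigma}) and in the commutative Kruglov operator $K$ (see \eqref{kastsuk}). First I would recall that, for $\mathcal{N}=L_\infty(0,1)$, we have $\mathcal{N}^{\otimes k}=L_\infty((0,1)^k, dm^{\otimes k})$, that the conditional expectation $E_k$ from \eqref{eq_E_k} is precisely averaging over the symmetric group action on coordinates, and hence that $\mathcal{N}^{\otimes k}_s=E_k(\mathcal{N}^{\otimes k})$ is naturally identified with $L_\infty$ of the quotient $(0,1)^k/\mathfrak{S}_k$, equivalently with the $\mathfrak{S}_k$-invariant functions on $(0,1)^k$. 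There is therefore a canonical trace-preserving unital $*$-embedding $\iota_k:\mathcal{N}^{\otimes k}_s\hookrightarrow L_\infty((0,1)^k)$, namely the inclusion of invariant functions, which carries $\frac{1}{k!}\tau^{\otimes k}$ on the source to $\frac{1}{k!}dm^{\otimes k}$ on the target (the $\frac{1}{k!}$ reflects that the quotient has measure $1/k!$ and matches exactly the scaling used for the pieces of $\mathscr{K}$).

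The second step is to package these into a single map on $M_s(\mathcal{N})=\bigoplus_{k=0}^\infty \mathcal{N}^{\otimes k}_s$. Fix the pairwise disjoint sets $A_k\subset(0,1)$ with $m(A_k)=\frac{1}{e\cdot k!}$ from \eqref{kastsuk}, and for each $k$ fix a measure-preserving identification $\phi_k:A_k\to (0,1)^k$ after rescaling $dm|_{A_k}$ by the factor $e\cdot k!$ (so $\phi_k$ pushes $dm|_{A_k}$ to $\frac{1}{e\cdot k!}dm^{\otimes k}$). Then define $\gamma:M_s(\mathcal{N})\to L_\infty(\Omega)$, where we harmlessly view $L_\infty(0,1)\subset L_\infty(\Omega)$ via the first coordinate and more precisely realize the $k$-th block inside the copy of $(0,1)^k$ sitting in $\Omega$ via the ``$m$-th coordinate'' slots used in \eqref{kastsuk}: on the $k$-th summand set $\gamma(a_k)=\chi_{A_k}\cdot(\iota_k(a_k)\circ\phi_k)$, extended by $0$ off $A_k$, placed in coordinates $1,\dots,k$ of $\Omega$. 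One checks $\gamma$ is a unital $*$-homomorphism block by block (the blocks have orthogonal supports $\chi_{A_k}$, so no cross terms interfere with multiplicativity, and $\gamma(\mathbbm 1)=\sum_k\chi_{A_k}=\chi_{(0,1)}=1$), and that $\gamma$ is trace preserving: $\int \gamma(a_k)\,dm^\infty=m(A_k)\cdot\big(e\cdot k!\big)\cdot\frac{1}{k!}\tau^{\otimes k}(\text{lift of }a_k)=\frac{1}{e\cdot k!}\tau^{\otimes k}(\cdots)$ summed against the $\exp(-\tau(\mathbbm 1))=e^{-1}$ — wait, with $\tau(\mathbbm 1)=1$ — normalization gives exactly $\sigma(a_k)$ as in \eqref{eq_sigma}. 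Since $\gamma$ is a trace-preserving $*$-homomorphism between $L_\infty$-spaces with finite trace, it extends by continuity to a contraction $\tilde\gamma:\mathcal{L}_1(M_s(\mathcal{N}),\sigma)\to\mathcal{L}_1(\Omega)$ (indeed an isometry onto its range).

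The third step is the identity $\tilde\gamma(\mathscr{K}x)=Kx$. Comparing \eqref{k fin def} with \eqref{kastsuk} term by term: the $k$-th block of $\mathscr{K}x$ is $y_k(x)=\sum_{m=1}^k \mathbbm 1^{\otimes(m-1)}\otimes x\otimes\mathbbm 1^{\otimes(k-m)}\in\mathcal{N}^{\otimes k}_s$, which under $\iota_k$ is exactly the symmetric function $\sum_{m=1}^k \chi_{(0,1)}^{\otimes(m-1)}\otimes x\otimes\chi_{(0,1)}^{\otimes(k-m)}$ on $(0,1)^k$; applying $\gamma$ (i.e. multiplying by $\chi_{A_k}$ and transporting into the relevant $\Omega$-coordinates) yields precisely $\sum_{m=1}^k\chi_{A_k}\otimes\chi_{(0,1)}^{\otimes(m-1)}\otimes x\otimes\chi_{(0,1)}^{\otimes\infty}$, which is the $k$-th summand of $Kx$. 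Summing over $k$ and using continuity of $\tilde\gamma$ on $\mathcal{L}_1$ (the series for $Kx$ converges in $L_1(\Omega)$ by Proposition \ref{pr_prop_K}\eqref{prop_K3} type estimates, since $\|\mathscr{K}x\|_1$ is finite) gives $\tilde\gamma(\mathscr{K}x)=Kx$ for all $x\in L_1(0,1)$. I expect the main obstacle to be purely bookkeeping: making the identification of $\mathcal{N}^{\otimes k}_s$ with invariant functions on $(0,1)^k$ genuinely canonical and checking that $\iota_k$ is trace preserving with the correct normalization — the point being that the factor $\frac{1}{k!}$ in $\sigma$ is consumed partly by the measure of the symmetric quotient and partly by $m(A_k)$ together with the $e^{-1}$ prefactor — plus verifying that the ``coordinate slots'' in \eqref{kastsuk} and the $\phi_k$'s can be chosen coherently so that $\gamma$ is globally a $*$-homomorphism rather than just blockwise one; since the blocks are supported on the disjoint sets $A_k$ this last point is automatic. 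Everything else is routine once these identifications are set up.
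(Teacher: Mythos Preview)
The paper explicitly omits the proof of this theorem (``Since we do not use this theorem in this paper, we omit the proof''), so there is nothing to compare against. Your overall approach is correct and is the natural one: identify each $\mathcal{N}^{\otimes k}_s$ with the $\mathfrak{S}_k$-invariant functions in $L_\infty((0,1)^k)$, place the $k$-th summand over $A_k$ using coordinate $0$ of $\Omega$ and coordinates $1,\dots,k$ for the tensor, and then compare \eqref{k fin def} with \eqref{kastsuk} block by block.

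The one muddled spot is the map $\phi_k:A_k\to(0,1)^k$. As written it is inconsistent with the rest of your argument: if you precompose $a_k$ with $\phi_k$ you get a function on $A_k\subset(0,1)$ (coordinate $0$ only), which would recover only equimeasurability (Lemma \ref{lem2}), not the literal equality $\tilde\gamma(\mathscr{K}x)=Kx$. The construction you actually use in Step~3, and the one that works, is simply
\[
\gamma(a_k)(\omega_0,\omega_1,\omega_2,\dots)=\chi_{A_k}(\omega_0)\cdot a_k(\omega_1,\dots,\omega_k),
\]
with no $\phi_k$. Coordinate $0$ carries the partition $\{A_k\}_{k\geq0}$ and coordinates $1,\dots,k$ carry the $k$-fold tensor directly. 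With this definition the trace computation is immediate,
\[
\int_\Omega \gamma(a_k)\,dm^\infty = m(A_k)\int_{(0,1)^k} a_k\,dm^{\otimes k}=\frac{1}{e\cdot k!}\,\tau^{\otimes k}(a_k)=\sigma(a_k),
\]
unitality is $\sum_k\chi_{A_k}=1$, multiplicativity is clear since the blocks have disjoint supports in coordinate $0$, and the identification $\gamma(y_k(x))=\chi_{A_k}\otimes\big(\sum_{m=1}^k\chi_{(0,1)}^{\otimes(m-1)}\otimes x\otimes\chi_{(0,1)}^{\otimes\infty}\big)$ matches the $k$-th summand of \eqref{kastsuk} exactly. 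Drop the $\phi_k$ and the proof is clean.
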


The following important lemma delivers the expression for the characteristic function of the random variable $\mathscr{K}x$  in terms of that of the random variable $x$
(compare with the Kruglov property studied in \cite{Braverman,AS_UMN}).

\begin{lem}\label{k finite} Let $\mathcal{N}$ be a finite von Neumann algebra equipped with a finite trace $\tau.$ The Kruglov operator $\mathscr{K}:\mathcal{L}_1(\mathcal{N},\tau)\to\mathcal{L}_1(M_s(\mathcal{N}),\sigma)$ satisfies the condition
$$\sigma(\exp(i\mathscr{K}x))=\exp(\tau(\exp(ix)-\mathbbm 1)),\quad x=x^*\in\mathcal{L}_1(\mathcal{N},\tau),$$
where $\sigma$ is defined by \eqref{eq_sigma} and $\mathbbm 1$ is the unit element of the algebra $\mathcal{N}.$
\end{lem}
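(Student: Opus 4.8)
The plan is to compute $\sigma(\exp(i\mathscr{K}x))$ directly from the defining formula \eqref{k fin def} for $\mathscr{K}$, using the block-diagonal structure of $\mathscr{K}x$ and the explicit form of the trace $\sigma$ in \eqref{eq_sigma}. First I would reduce to the case $x\in\mathcal{N}$ (bounded): for general $x=x^*\in\mathcal{L}_1(\mathcal{N},\tau)$, approximate by $x_n=xE_x([-n,n])$, note $x_n\to x$ in measure topology, hence $\exp(ix_n)\to\exp(ix)$ and $\exp(i\mathscr{K}x_n)\to\exp(i\mathscr{K}x)$ in the respective measure topologies, and pass to the limit in both sides using normality/continuity of the traces on bounded nets (the unitaries $\exp(i\mathscr{K}x_n)$ are uniformly bounded, and $\tau(\exp(ix_n))\to\tau(\exp(ix))$ by dominated convergence since $|\exp(ix_n)|\le\mathbbm 1$ and $\tau$ is finite). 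So assume $x=x^*\in\mathcal{N}$.

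Next, since $\mathscr{K}x=0\oplus\bigoplus_{k=1}^{\infty}y_k(x)$ where $y_k(x)=\sum_{m=1}^k\mathbbm 1^{\otimes(m-1)}\otimes x\otimes\mathbbm 1^{\otimes(k-m)}$, and the operators in distinct summands live in orthogonal blocks, we get $\exp(i\mathscr{K}x)=\mathbbm 1_{\mathbb{C}}\oplus\bigoplus_{k=1}^{\infty}\exp(iy_k(x))$. Applying $\sigma$ from \eqref{eq_sigma},
\begin{equation*}
\sigma(\exp(i\mathscr{K}x))=\exp(-\tau(\mathbbm 1))\sum_{k=0}^{\infty}\frac{1}{k!}\tau^{\otimes k}(\exp(iy_k(x))),
\end{equation*}
with the $k=0$ term equal to $1$. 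The key computation is then $\tau^{\otimes k}(\exp(iy_k(x)))=\tau(\exp(ix))^k$. This follows because the $k$ operators $\mathbbm 1^{\otimes(m-1)}\otimes x\otimes\mathbbm 1^{\otimes(k-m)}$, $1\le m\le k$, commute pairwise and each is the image of $x$ under the $m$-th tensor-leg embedding $\mathcal{N}\hookrightarrow\mathcal{N}^{\otimes k}$, which are tensor-independent with respect to $\tau^{\otimes k}$; hence $\exp(iy_k(x))=\prod_{m=1}^k\bigl(\mathbbm 1^{\otimes(m-1)}\otimes\exp(ix)\otimes\mathbbm 1^{\otimes(k-m)}\bigr)=\exp(ix)^{\otimes k}$, and $\tau^{\otimes k}(\exp(ix)^{\otimes k})=\tau(\exp(ix))^k$ by the product property of $\tau^{\otimes k}$ on elementary tensors. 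I would justify the identity $\exp(iy_k(x))=\exp(ix)^{\otimes k}$ by expanding the exponential series (legitimate since $x$ is bounded, so everything converges in norm) and using commutativity, or simply by invoking the functional calculus for commuting self-adjoint operators.

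Finally, substituting back,
\begin{equation*}
\sigma(\exp(i\mathscr{K}x))=\exp(-\tau(\mathbbm 1))\sum_{k=0}^{\infty}\frac{\tau(\exp(ix))^k}{k!}=\exp(-\tau(\mathbbm 1))\exp(\tau(\exp(ix)))=\exp\bigl(\tau(\exp(ix)-\mathbbm 1)\bigr),
\end{equation*}
which is the claimed formula. The main obstacle I anticipate is not in the bounded case — that is a clean finite computation — but in the unbounded approximation step: one must carefully check that $\exp(i\mathscr{K}x_n)\to\exp(i\mathscr{K}x)$ in the measure topology $t_\sigma$ (which follows from continuity of the exponential series in measure topology, as noted in the Preliminaries) and that $\sigma$ is continuous along this uniformly bounded net, so that $\sigma(\exp(i\mathscr{K}x_n))\to\sigma(\exp(i\mathscr{K}x))$; the right-hand side convergence $\tau(\exp(ix_n)-\mathbbm 1)\to\tau(\exp(ix)-\mathbbm 1)$ is handled similarly using finiteness of $\tau$. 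Together with continuity of $t\mapsto e^t$, this yields the general case.
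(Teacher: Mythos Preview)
Your proposal is correct and follows essentially the same route as the paper: both compute $\sigma(\exp(i\mathscr{K}x))$ block by block via the direct-sum structure, establish the key identity $\exp(iy_k(x))=\exp(ix)^{\otimes k}$ (the paper derives this through \eqref{k_finite4}--\eqref{k_finite5} exactly as you describe), and then sum the resulting exponential series. The only difference is that you reduce to bounded $x$ first and pass to the limit at the end, whereas the paper works directly with $x=x^*\in\mathcal{L}_1(\mathcal{N},\tau)$, relying on the measure-topology convergence of the exponential series noted in the Preliminaries; your approximation step is valid (uniform boundedness of the unitaries plus measure convergence gives $\mathcal{L}_1$-convergence in a finite trace setting, so both $\sigma$ and $\tau$ pass to the limit) but unnecessary.
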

\begin{proof}
Let $x=x^*\in\mathcal{L}_1(\mathcal{N},\tau)$ and let $p_k$ ($k\ge 0$) be a projection of the algebra $M_s(\mathcal{N})$ defined in \eqref{eq_p_k}.
By definition of $\sigma$ we have that
\begin{equation}\label{k_finite1}
\sigma(\exp(i\mathscr{K}x))=\exp(-\tau(\mathbbm 1))\sum_{k=0}^{\infty}\frac1{k!}\tau^{\otimes k}(p_k \exp(i\mathscr{K}x)).
\end{equation}
Let $y\in\mathcal{S}(M_s(\mathcal{N}),\sigma).$ Observing that $p_ky^m=(p_ky)^m$ for all $m\geq0$ and $k\ge 0,$ we infer that $p_k(\exp(iy))=\exp(i p_k y)$. It follows that
\begin{equation}\label{k_finite2}
p_k \exp(i\mathscr{K}x)=\exp(ip_k\mathscr{K}x),\quad k\ge 0.
\end{equation}
Next we will show that
\begin{equation}\label{k_finite3}
\tau^{\otimes k}(\exp(ip_k\mathscr{K}x))=\tau^k(\exp(ix)),\quad k\geq0.
\end{equation}
 Indeed,  for every $x=x^*\in\mathcal{L}_1(\mathcal{N},\tau)$ and $k\ge 0$ we have
\begin{equation}\label{k_finite4}
\exp\left(i\sum_{m=1}^k\mathbbm{1}^{\otimes(m-1)}\otimes x\otimes \mathbbm{1}^{\otimes (k-m)}\right)=
\prod_{m=1}^k \exp\left(\mathbbm{1}^{\otimes (m-1)}\otimes ix\otimes \mathbbm{1}^{\otimes (k-m)}\right).
\end{equation}
Considering each term of the latter product separately for every $k\ge 0$ and $1\le m\le k,$ we obtain
\begin{align}\label{k_finite5}
 &\exp(\mathbbm{1}^{\otimes (m-1)}\otimes ix\otimes \mathbbm{1}^{\otimes (k-m)})\\
 &=
   \sum_{j=0}^\infty\frac{1}{j!}\Big(\mathbbm{1}^{\otimes (m-1)}\otimes ix\otimes \mathbbm{1}^{\otimes (k-m)}\Big)^j=
   \mathbbm{1}^{\otimes (m-1)}\otimes\left(\sum_{j=0}^\infty\frac{(ix)^j}{j!}\right) \otimes \mathbbm{1}^{\otimes (k-m)} \nonumber \\
 &=
   \mathbbm{1}^{\otimes (m-1)}\otimes \underbrace{\exp(ix)}_m \otimes \mathbbm{1}^{\otimes (k-m)}. \nonumber
\end{align}
Combining \eqref{k_finite4} and \eqref{k_finite5} we arrive at
$$\exp\left(i\sum_{m=1}^k\mathbbm 1^{\otimes(m-1)}\otimes x\otimes \mathbbm 1^{\otimes (k-m)}\right)=\exp(ix)^{\otimes k}.$$
Hence, for $k\geq0$ we obtain
 \begin{equation*}\tau^{\otimes k}(\exp(ip_k\mathscr{K}x))=\tau^{\otimes k}\left(\exp\left(i\sum_{m=1}^k\mathbbm 1^{\otimes(m-1)}\otimes x\otimes \mathbbm 1^{\otimes (k-m)}\right)\right)=\tau^k(\exp(ix)), \end{equation*}
which completes the proof of \eqref{k_finite3}.
Applying subsequently \eqref{k_finite1}, \eqref{k_finite2} and \eqref{k_finite3} we conclude
\begin{align*}&\sigma(\exp(i\mathscr{K}x))=\exp(-\tau(\mathbbm 1))\sum_{k=0}^{\infty}\frac1{k!}\tau^{\otimes k}(p_k \exp(i\mathscr{K}x))\\
  &=\exp(-\tau(\mathbbm 1))\sum_{k=0}^{\infty}\frac1{k!}\tau^{\otimes k}(\exp(ip_k\mathscr{K}x))
=\exp(-\tau(\mathbbm 1))\sum_{k=0}^{\infty}\frac{\tau^k(\exp(ix))}{k!} \\
 &=\exp(\tau(\exp(ix)-\mathbbm 1)).\qedhere \end{align*}
 \end{proof}

The following result is the noncommutative extension of Theorem \ref{as krug}.

\begin{thm}\label{fincor} The operator $\mathscr{K}:\mathcal{L}_1(\mathcal{N},\tau)\to\mathcal{L}_1(M_s(\mathcal{N}),\sigma)$ satisfies the following properties.
\begin{enumerate}[{\rm (i)}]
\item\label{fincor1} $\mathscr{K}$ maps positive random variables into positive ones.
\item\label{fincor2} $\mathscr{K}$ maps symmetrically distributed random variables into symmetrically distributed ones.
\item\label{kkd} If random variables $x,y\in\mathcal{L}_1(\mathcal{N},\tau)$ are equimeasurable, then so are $\mathscr{K}x,\mathscr{K}y\in \mathcal{L}_1(M_s(\mathcal{N}),\sigma).$
\item\label{kke} If random variables $x_k\in \mathcal{L}_1(\mathcal{N},\tau),$ $1\leq k\leq n,$ are such that $x_jx_k=0$ for $j\neq k,$ then random variables $\mathscr{K}x_k\in \mathcal{L}_1(M_s(\mathcal{N}),\sigma),$ $1\leq k\leq n,$ are independent (in the sense of definition given in Subsection \ref{indep subsect}).
\end{enumerate}
\end{thm}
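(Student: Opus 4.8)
I would derive all four properties from the characteristic-function identity of Lemma~\ref{k finite}, rather than attempting to reduce to the commutative Theorem~\ref{as krug} (which, via Lemma~\ref{lem2}, only controls the singular value functions and cannot detect the independence in \eqref{kke}). Property \eqref{fincor1} is immediate: it is the positivity statement of Proposition~\ref{pr_prop_K} \eqref{prop_K5}. For \eqref{kkd}, suppose $x=x^*,y=y^*\in\mathcal{L}_1(\mathcal{N},\tau)$ are equimeasurable; since $\tau$ is finite this is the same as $d_x=d_y$, so that $\tau(g(x))=\tau(g(y))$ for every bounded Borel function $g$, and in particular $\tau(\exp(itx))=\tau(\exp(ity))$ for every $t\in\mathbb{R}$. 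Applying Lemma~\ref{k finite} with $tx$, resp.\ $ty$, in place of $x$ gives
\begin{align*}
\sigma(\exp(it\mathscr{K}x))&=\exp\big(\tau(\exp(itx)-\mathbbm{1})\big)\\
&=\exp\big(\tau(\exp(ity)-\mathbbm{1})\big)=\sigma(\exp(it\mathscr{K}y)),\qquad t\in\mathbb{R}.
\end{align*}
Since $\mathscr{K}x$ and $\mathscr{K}y$ are self-adjoint elements of the finite von Neumann algebra $(M_s(\mathcal{N}),\sigma)$ (Lemma~\ref{lem1} and Proposition~\ref{pr_prop_K} \eqref{K is self_adj}), coincidence of their characteristic functions forces $d_{\mathscr{K}x}=d_{\mathscr{K}y}$, i.e.\ $\mathscr{K}x$ and $\mathscr{K}y$ are equimeasurable.

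Property \eqref{fincor2} then follows from \eqref{kkd}: since $\mathscr{K}$ is linear (see \eqref{k fin def}), if $x$ is symmetrically distributed, i.e.\ equimeasurable with $-x$, then $\mathscr{K}x$ is equimeasurable with $\mathscr{K}(-x)=-\mathscr{K}x$. The substantive assertion is \eqref{kke}. Given $x_k=x_k^*\in\mathcal{L}_1(\mathcal{N},\tau)$, $1\le k\le n$, with $x_jx_k=0$ for $j\ne k$, I would first observe that $x_kx_j=(x_jx_k)^*=0$ as well, hence $[x_j,x_k]=0$, hence $[\mathscr{K}x_j,\mathscr{K}x_k]=\mathscr{K}[x_j,x_k]=0$ by Proposition~\ref{pr_prop_K} \eqref{kommut lemma}. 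Therefore $\mathcal{B}:=W^*(\mathscr{K}x_1,\dots,\mathscr{K}x_n)$ is abelian, and in particular the von Neumann subalgebras $\mathcal{A}_k$ generated by the spectral projections of the $\mathscr{K}x_k$ pairwise commute, which is the standing hypothesis in the definition of independence in Subsection~\ref{indep subsect}.

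It remains to verify the trace factorization. Fix $t_1,\dots,t_n\in\mathbb{R}$ and put $y:=\sum_{k=1}^n t_kx_k$, a self-adjoint element of $\mathcal{L}_1(\mathcal{N},\tau)$. Since the $x_k$ have pairwise orthogonal support projections, $y^m=\sum_{k=1}^n(t_kx_k)^m$ for every $m\ge1$, so that $\exp(iy)-\mathbbm{1}=\sum_{k=1}^n(\exp(it_kx_k)-\mathbbm{1})$, a sum of operators with pairwise orthogonal supports; applying $\tau$ gives $\tau(\exp(iy)-\mathbbm{1})=\sum_{k=1}^n\tau(\exp(it_kx_k)-\mathbbm{1})$. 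Using linearity of $\mathscr{K}$ together with Lemma~\ref{k finite},
\begin{align*}
\sigma\Big(\exp\Big(i\sum_{k=1}^n t_k\mathscr{K}x_k\Big)\Big)&=\sigma(\exp(i\mathscr{K}y))=\exp\big(\tau(\exp(iy)-\mathbbm{1})\big)\\
&=\prod_{k=1}^n\exp\big(\tau(\exp(it_kx_k)-\mathbbm{1})\big)=\prod_{k=1}^n\sigma(\exp(it_k\mathscr{K}x_k)),
\end{align*}
so the joint characteristic function of $(\mathscr{K}x_1,\dots,\mathscr{K}x_n)$ factorizes.

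To conclude, I would identify the abelian probability space $(\mathcal{B},\sigma|_{\mathcal{B}})$ with $L_\infty(X,\nu)$ for a genuine probability space $(X,\nu)$; under this identification the $\mathscr{K}x_k$ become real-valued measurable functions $f_k$ on $X$, and the last display says precisely that the joint characteristic function of $(f_1,\dots,f_n)$ equals the product of the marginal ones. By the classical characterization of mutual independence via characteristic functions, $f_1,\dots,f_n$ are independent, hence the $\sigma$-subalgebras of $(X,\nu)$ they generate are independent; transporting this back through $\mathcal{B}\cong L_\infty(X,\nu)$ yields that the subalgebras $\mathcal{A}_k$ are independent in the sense of Subsection~\ref{indep subsect}, which proves \eqref{kke}. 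I expect this last step to be the main obstacle: apart from making the manipulations with $\exp(iy)$ rigorous in the measure topology, one has to justify passing from factorization of the joint characteristic function to independence in the von Neumann-algebraic sense, and this is exactly the point at which the commutativity of the $\mathscr{K}x_k$ supplied by Proposition~\ref{pr_prop_K} \eqref{kommut lemma} is indispensable, since only then is there a bona fide joint distribution on a classical probability space to appeal to.
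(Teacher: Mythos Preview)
Your proof is correct and follows the same overall strategy as the paper: positivity from Proposition~\ref{pr_prop_K}, the disjoint-support identity $\exp(iy)-\mathbbm{1}=\sum_k(\exp(it_kx_k)-\mathbbm{1})$ combined with Lemma~\ref{k finite} to factorize the joint characteristic function, and commutativity of the $\mathscr{K}x_k$ from Proposition~\ref{pr_prop_K}~\eqref{kommut lemma}. Two minor differences in execution are worth noting. For \eqref{kkd} the paper argues directly from the definition \eqref{k fin def}---each block $\sum_{m=1}^k \mathbbm{1}^{\otimes(m-1)}\otimes x\otimes\mathbbm{1}^{\otimes(k-m)}$ is equimeasurable with its $y$-counterpart, hence so are the direct sums---whereas you route through Lemma~\ref{k finite}; your argument is more uniform, the paper's is more elementary. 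For the endgame of \eqref{kke}, the paper passes from the factorization $\sigma\big(\prod_k\exp(i\lambda_k\mathscr{K}x_k)\big)=\prod_k\sigma(\exp(i\lambda_k\mathscr{K}x_k))$ to arbitrary $u_k\in\mathcal{A}_k$ via the explicit Fourier representation $f_k(\mathscr{K}x_k)=\int_{\mathbb{R}}(\mathcal{F}f_k)(\lambda)\exp(i\lambda\mathscr{K}x_k)\,d\lambda$ for Schwartz $f_k$, followed by weak$^*$ density of Schwartz functions in $L_\infty(\mathbb{R})$; you instead realize the abelian algebra $\mathcal{B}$ as $L_\infty(X,\nu)$ and quote the classical equivalence between factorization of characteristic functions and mutual independence. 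Both endgames are equivalent in substance---the paper's is more self-contained, yours is shorter once one is willing to invoke classical probability.
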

\begin{proof} The property \eqref{fincor1} is proved in Proposition \ref{pr_prop_K} \eqref{prop_K5}.
The property \eqref{fincor2} is a straightforward consequence of \eqref{kkd} and \eqref{kke}. The property \eqref{kkd} is also straightforward. Indeed, since
$$\sum_{m=1}^k1^{\otimes(m-1)}\otimes x\otimes 1^{\otimes (k-m)} \ \ \mbox{and} \ \ \sum_{m=1}^k1^{\otimes(m-1)}\otimes y\otimes 1^{\otimes (k-m)},$$ are equimeasurable,
 whenever  $x,y\in \mathcal S(\mathcal{N},\tau)$ are, the equimeasurability of $\mathscr{K}x$ and $\mathscr{K}y$ follows from \eqref{k fin def}.

Next, we concentrate on \eqref{kke}. Throughout the proof we fix $x_k=x_k^*\in\mathcal{L}_1(\mathcal{N},\tau),$ $1\leq k\leq n,$ such that $x_jx_k=0$ for $j\neq k.$ First we show that
\begin{equation}\label{xdg1}
\exp\Big(i\sum_{k=1}^n\lambda_kx_k\Big)-\mathbbm 1=\sum_{k=1}^n\Big(\exp(i\lambda_kx_k)-\mathbbm 1\Big),\quad \lambda_k\in\mathbb{R},
\end{equation}
where $\mathbbm 1$ is a unit element of the algebra $\mathcal{N}.$
Indeed, the condition on $x_k\in\mathcal{L}_1(\mathcal{N},\tau),$ $1\leq k\leq n,$ implies that there exists a commutative von Neumann subalgebra $\mathcal{N}_0$ such that $x_k\in\mathcal{L}_1(\mathcal{N}_0,\tau),$ $1\leq k\leq n.$ By spectral theorem, $x_k\in\mathcal{L}_1(\mathcal{N}_0,\tau),$ $1\leq k\leq n,$ can be realised as functions (with disjoint support) on some measure space. For these functions equality \eqref{xdg1} is verified pointwise.

Therefore, applying $\tau$ and $\exp$ to both parts in \eqref{xdg1}, we obtain
\begin{equation}\label{etaue}\exp\Big(\tau\Big(\exp\Big(i\sum_{k=1}^n\lambda_kx_k\Big)-\mathbbm 1\Big)\Big)=\prod_{k=1}^n\exp(\tau(\exp(i\lambda_kx_k)-\mathbbm 1)).\end{equation}
Applying Lemma \ref{k finite} to both expressions below, we obtain
$$\exp\Big(\tau\Big(\exp\Big(i\sum_{k=1}^n\lambda_kx_k\Big)-\mathbbm 1\Big)\Big)=\sigma\Big(\exp\Big(i\sum_{k=1}^n\lambda_k\mathscr{K}x_k\Big)\Big)$$
and
$$\prod_{k=1}^n\exp(\tau(\exp(i\lambda_kx_k)-\mathbbm 1))=\prod_{k=1}^n\sigma(\exp(i\lambda_k\mathscr{K}x_k)).$$
Using \eqref{etaue}, we obtain
\begin{equation}\label{eq_fincor1}\sigma\Big(\exp\Big(i\sum_{k=1}^n\lambda_k\mathscr{K}x_k\Big)\Big)=\prod_{k=1}^n\sigma(\exp(i\lambda_k\mathscr{K}x_k)).\end{equation}
By Lemma \ref{pr_prop_K} \eqref{kommut lemma}, the operators $\mathscr{K}x_k,$ $1\leq k\leq n,$ commute. Hence,
\begin{equation}\label{eq_fincor2}\exp\Big(i\sum_{k=1}^n\lambda_k\mathscr{K}x_k\Big)=\prod_{k=1}^n\exp(i\lambda_k\mathscr{K}x_k).\end{equation}
Combining  \eqref{eq_fincor1} and \eqref{eq_fincor2}, we arrive at
\begin{equation}\label{xdg2}
\sigma\Big(\prod_{k=1}^n\exp(i\lambda_k\mathscr{K}x_k)\Big)=\prod_{k=1}^n\sigma(\exp(i\lambda_k\mathscr{K}x_k)).
\end{equation}
Let $f_k,$ $1\leq k\leq n,$ be Schwartz functions. Denoting by $\mathcal{F}$ and $\mathcal{F}^{-1}$ the Fourier transform and the inverse Fourier transform respectively, we have
\begin{equation}\label{eq_Fourier}
f_k(\mathscr{K}x_k)=(\mathcal{F}^{-1}\mathcal{F} f_k)(\mathscr{K}x_k)=\int_{\mathbb{R}}(\mathcal{F}f_k)(\lambda_k)\ \exp(i\lambda_k\mathscr{K}x_k)d\lambda_k,
\end{equation}
where the latter integral is understood as $L_1$-limit of the series
$$2^{-n}\sum_{m=-2^{2n}}^{2^{2n}}(\mathcal{F}f_k)\Big(\frac{m}{2^n}\Big)\exp\Big(i\frac{m}{2^n}\mathscr{K}x_k\Big),\quad\mbox{when}\quad n\to\infty.$$
The fact that this series converges in $\mathcal{L}_1(H)$ can be found in \cite[Theorem 6.2]{H-S}. Therefore, using \eqref{xdg2}, we obtain
\begin{gather}\begin{split}\label{fincor3}
\sigma\Big(\prod_{k=1}^nf_k(\mathscr{K}x_k)\Big)&=\sigma\Big(\prod_{k=1}^n
\int_{\mathbb{R}}(\mathcal{F}f_k)(\lambda_k)\exp(i\lambda_k\mathscr{K}x_k)d\lambda_k\Big)\\&=
\sigma\Big(
\int_{\mathbb{R}^n}\prod_{k=1}^n\Big((\mathcal{F}f_k)(\lambda_k)\exp(i\lambda_k\mathscr{K}x_k)\Big)d\lambda_1\ldots d\lambda_n\Big)\\&=
\int_{\mathbb{R}^n}\prod_{k=1}^n(\mathcal{F}f_k)(\lambda_k)\cdot \sigma\Big(\prod_{k=1}^n\exp(i\lambda_k\mathscr{K}x_k)\Big)d\lambda_1\ldots d\lambda_n\\& \stackrel{\eqref{xdg2}}{=}
\int_{\mathbb{R}^n}\prod_{k=1}^n(\mathcal{F}f_k)(\lambda_k)\cdot \prod_{k=1}^n\sigma(\exp(i\lambda_k\mathscr{K}x_k)) \ d\lambda_1\ldots d\lambda_n\\&=\prod_{k=1}^n\Big(\int_{\mathbb{R}}(\mathcal{F}f_k)(\lambda_k)\cdot \sigma(\exp(i\lambda_k\mathscr{K}x_k)) \ d\lambda_k\Big)\\&=\prod_{k=1}^n\sigma\Big(\int_{\mathbb{R}}(\mathcal{F}f_k)(\lambda_k) \ \exp(i\lambda_k\mathscr{K}x_k)\ d\lambda_k\Big)\\&=\prod_{k=1}^n\sigma(f_k(\mathscr{K}x_k)).
\end{split}\end{gather}
Let $\mathcal{A}_k$ be the von Neumann subalgebra in $\mathcal{M}$ generated by the spectral family of the element $\mathscr{K}x_k,$ $1\le k\le n.$. Since Schwartz functions are  weak$^*$ dense in $L_\infty(\mathbb R),$ it follows that the set
$$\{f(\mathscr{K}x_k),\ f\mbox{ is Schwartz function}\}$$
is weak$^*$ dense in $\mathcal{A}_k.$ Hence, by \eqref{fincor3}, we arrive at
$$\sigma\Big(\prod_{k=1}^nu_k\Big)=\prod_{k=1}^n\sigma(u_k),\quad u_k\in\mathcal{A}_k,\quad 1\le k\le n.$$
By definition given in Subsection \ref{indep subsect}, elements $\mathscr{K}x_k,$ $1\leq k\leq n,$ are independent.
\end{proof}

\section{Noncommutative Kruglov operator ($\tau-$infinite case)}\label{junge section}
The main objective of this section is to adapt definition \eqref{k fin def} given in the preceding section for $\tau$-finite von Neumann algebras to the setting when $\mathcal{N}$ is a semifinite von Neumann algebra. This is a much harder task and its accomplishment is based on the approach from \cite{junge}.

Fix a semifinite (nonfinite) von Neumann algebra $(\mathcal{N},\tau)$ with a faithful normal semifinite trace $\tau$.
For a fixed $\tau$-finite projection $e$ from $\mathcal{N}$ and $n\in\mathbb{N}$ we consider the
mapping $\alpha_{n,k}:e\mathcal{N}e\to \mathcal{N}^{\otimes n}, 0\leq k\leq n,$ defined by  setting
\begin{equation}\label{alpha_n,k}
\alpha_{n,k}(x)=\binom{n}{k}E_n(x^{\otimes k}\otimes(\mathbbm{1}-e)^{\otimes(n-k)}),\quad x\in e\mathcal{N}e,
\end{equation}
where $E_n$ is the conditional expectation from $\mathcal{N}^{\otimes n}$ onto $\mathcal{N}^{\otimes n}_s$ defined by \eqref{eq_E_k}.

\begin{lem}\label{first homomorphism lemma} For every $e\in\mathcal{P}_{fin}(\mathcal{N})$ and $k,n\in\mathbb{N}, 0\leq k\leq n$, the mapping $\alpha_{n,k}:e\mathcal{N}e\to\mathcal{N}^{\otimes n}$ defined by \eqref{alpha_n,k}
preserves multiplication.
\end{lem}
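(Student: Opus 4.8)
The plan is to show that $\alpha_{n,k}$ is multiplicative by reducing the computation to a statement about the conditional expectation $E_n$ and then using a symmetrization identity. First I would record the key algebraic fact: for $x,y\in e\mathcal{N}e$ and any $n$, the symmetrized tensors $E_n(x^{\otimes k}\otimes(\mathbbm 1-e)^{\otimes(n-k)})$ live inside the symmetric algebra $\mathcal N_s^{\otimes n}$, so multiplication of two such elements can be carried out inside $\mathcal N^{\otimes n}$ and then re-symmetrized. The goal is the identity
\begin{equation*}
\alpha_{n,k}(x)\,\alpha_{n,k}(y)=\alpha_{n,k}(xy),\qquad x,y\in e\mathcal N e,
\end{equation*}
and the natural route is to compute both sides explicitly as averages over permutations of $\{1,\dots,n\}$.

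The main combinatorial step is the following. Write $u=x$ in positions of a $k$-subset $S\subseteq\{1,\dots,n\}$ and $\mathbbm 1-e$ elsewhere; then $\binom{n}{k}E_n(x^{\otimes k}\otimes(\mathbbm 1-e)^{\otimes(n-k)})$ equals the sum over all $k$-subsets $S$ of the elementary tensor with $x$ in the slots of $S$. Multiplying the $S$-term by the $T$-term (for $k$-subsets $S,T$) gives, slot by slot, $x^2$ on $S\cap T$, $x(\mathbbm 1-e)=0$ on $S\setminus T$ (since $x\in e\mathcal N e$ kills $\mathbbm 1-e$), $(\mathbbm 1-e)x=0$ on $T\setminus S$, and $(\mathbbm 1-e)^2=\mathbbm 1-e$ outside $S\cup T$. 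Hence the product of the $S$- and $T$-terms vanishes unless $S=T$, in which case it is the elementary tensor with $x^2$ (more precisely $xy$, when one term carries $x$ and the other $y$) on $S$ and $\mathbbm 1-e$ elsewhere. Summing over $S$ recovers $\binom{n}{k}E_n((xy)^{\otimes k}\otimes(\mathbbm 1-e)^{\otimes(n-k)})=\alpha_{n,k}(xy)$. This is where the hypothesis $x,y\in e\mathcal N e$ is essential: it is exactly the orthogonality $x(\mathbbm 1-e)=(\mathbbm 1-e)x=0$ that forces the cross terms $S\neq T$ to drop out.

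I would present this cleanly by working with the explicit permutation-sum form of $E_n$ from \eqref{eq_E_k}: $E_n(z)=\frac1{n!}\sum_{\rho\in\mathfrak S_n}\rho\cdot z$, where $\rho$ permutes tensor legs. Applied to $z=x^{\otimes k}\otimes(\mathbbm 1-e)^{\otimes(n-k)}$, the $\binom{n}{k}$ factor cancels the overcounting and one gets $\alpha_{n,k}(x)=\sum_{|S|=k}e_S(x)$, where $e_S(x)$ denotes the elementary tensor with $x$ in slots indexed by $S$ and $\mathbbm 1-e$ in the remaining slots. Then $\alpha_{n,k}(x)\alpha_{n,k}(y)=\sum_{S,T}e_S(x)e_T(y)$, the product $e_S(x)e_T(y)$ is computed leg-by-leg as above, and the sum collapses to $\sum_{S}e_S(xy)=\alpha_{n,k}(xy)$.

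The subtle point to handle carefully is unboundedness/measurability: a priori one only knows $x\in e\mathcal N e$, so everything here is bounded and the leg-by-leg product computation in $\mathcal N^{\otimes n}$ is entirely algebraic, with no convergence issues — so in fact this lemma is purely a bounded, finite computation and the only ``obstacle'' is bookkeeping the permutation sums correctly. If one later wants $\alpha_{n,k}$ on $\mathcal S(e\mathcal N e,\tau)$ one extends by measure-topology continuity exactly as in the proof of Lemma \ref{lem1}, but for the present statement (domain $e\mathcal N e$) no such extension is needed. I expect the main obstacle, modest as it is, to be phrasing the ``product of elementary tensors vanishes unless $S=T$'' argument compactly rather than via a case analysis over slots.
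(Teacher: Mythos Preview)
Your proposal is correct and follows essentially the same approach as the paper: both rewrite $\alpha_{n,k}(x)$ as the sum $\sum_{|S|=k}e_S(x)$ of elementary tensors indexed by $k$-subsets, multiply two such sums leg-by-leg, and use $x(\mathbbm 1-e)=0$ (for $x\in e\mathcal N e$) to kill all cross terms with $S\neq T$, leaving $\sum_S e_S(xy)=\alpha_{n,k}(xy)$. The paper's notation is $\mathcal Z_A^{x,\mathbbm 1-e}$ in place of your $e_S(x)$, but the argument is identical.
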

\begin{proof}We denote by $2^n$  the set of all subsets of $\{1,\cdots,n\}.$
 Since for every $x\in e\mathcal{N}e$ the mapping $E_n(x^{\otimes k}\otimes (\mathbbm{1}-e)^{\otimes(n-k)})$ contains $k!(n-k)!$ equal summands (see \eqref{eq_E_k}), by \eqref{alpha_n,k} we infer that
\begin{equation}\label{E_n_hatx}
\alpha_{n,k}(x)=
\binom{n}{k}E_n(x^{\otimes k}\otimes (\mathbbm{1}-e)^{\otimes(n-k)})=\sum_{\substack{A\in 2^n\\|A|=k}}\mathcal Z_A^{x,\mathbbm{1}-e},
%\Big(\bigotimes_{l\in A}x\bigotimes_{l\notin A}(1-e)\Big).
\end{equation}
where
\begin{equation}\label{x_A}
\mathcal Z_A^{x,y}=z_1\otimes\dots\otimes z_n\in\mathcal{N}^{\otimes n},\quad
z_i=\left\{\begin{aligned}x, &\quad i\in A\\
y,& \quad \mathrm{otherwise}\end{aligned}\right.\quad 1\leq i\leq n.
\end{equation}
Therefore,
$$\alpha_{n,k}(x)\alpha_{n,k}(y)=\sum_{\substack{A_1,A_2\in 2^n\\|A_1|=|A_2|=k}}
\mathcal Z_{A_1}^{x,\mathbbm{1}-e}\mathcal Z_{A_2}^{y,\mathbbm{1}-e}$$
If $A_1\neq A_2,$ then, , we may assume that there exists $l\in A_1$ such that $l\notin A_2$ and, since $x(\mathbbm{1}-e)=0,$ it follows that the corresponding summand is $0.$ Hence,
$$\alpha_{n,k}(x)\alpha_{n,k}(y)=\sum_{\substack{A\in 2^n\\|A|=k}}
\mathcal Z_{A}^{x,\mathbbm{1}-e}\mathcal Z_{A}^{y,\mathbbm{1}-e}=
\sum_{\substack{A\in 2^n\\|A|=k}}\mathcal Z_{A}^{xy,\mathbbm{1}-e}=
\alpha_{n,k}(xy).$$
\end{proof}

Now, we construct a $*-$homomorphism from $(e\mathcal{N}e)^{\otimes k}_s$ into $\mathcal{N}^{\otimes n}_s,$ $e\in \mathcal{P}_{fin}(\mathcal{N}),$ $n,k\in\mathbb{N}$. We define the linear mapping $\pi_{n,k}:(e\mathcal{N}e)^{\otimes k}\to \mathcal{N}_s^{\otimes n}, 0\leq k\leq n,$ by the following rule:
\begin{equation}\label{pi_n,k}
\pi_{n,k}^{e,\mathbbm{1}}:z\to\binom{n}{k}E_n(z\otimes(\mathbbm{1}-e)^{\otimes(n-k)}),\quad z\in(e\mathcal{N}e)^{\otimes k}.
\end{equation}
In what follows, we frequently suppress the dependence of $\pi_{n,k}^{e,\mathbbm{1}}$ on $e$ and $\mathbbm{1}$ and simply write $\pi_{n,k}$.

It is clear that for $\alpha_{n,k}$, defined by \eqref{alpha_n,k}, the equality
\begin{equation}\label{alpha_pi}
\alpha_{n,k}(x)=\pi_{n,k}(x^{\otimes k}), \, x\in e\mathcal{N}e
\end{equation}
holds.

We need the following technical lemma, whose proof consists of tedious algebraic computations and therefore is omitted.
\begin{lem}\label{lem_horror}Let $n\geq k$. We have
\begin{enumerate}[{\rm (i)}]
\item  $\pi_{n,k}(x)=\pi_{n,k}\circ E_k(x), \, x\in (e\mathcal{N}e)^{\otimes k}$;
\item  For $u=u_1\otimes\cdots\otimes u_k\in(e\mathcal{N}e)^{\otimes k}$ we have $$E_k(u)=\frac1{k!}\frac{\partial^k}{\partial^k\lambda} \Big(\sum_{i=1}^k\lambda_iu_i\Big)^{\otimes k},\lambda_i\in\mathbb{R},$$
where we use the notation $\partial^k\lambda$ as a shorthand for $\partial\lambda_1\cdots\partial\lambda_k$;
\item  $\pi_{n,k}\left(\frac{\partial^k}{\partial^k\lambda}(\sum_{i=1}^k\lambda_iu_i)^{\otimes k}\right)=\frac{\partial^k}{\partial^k\lambda}\pi_{n,k}((\sum_{i=1}^k\lambda_iu_i)^{\otimes k}).$
\end{enumerate}
\end{lem}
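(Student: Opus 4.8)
The plan is to prove the three items in the order (ii), (iii), (i), reducing throughout to elementary tensors. Since $e$ is $\tau$-finite, $e\mathcal{N}e$ is a finite von Neumann algebra and the algebraic tensor product of $k$ copies of $e\mathcal{N}e$ is weak$^*$-dense in $(e\mathcal{N}e)^{\otimes k}$; as $E_k$, $\pi_{n,k}$ (see \eqref{pi_n,k}), and the partial differentiations in $\lambda$ are all linear and normal, it suffices to verify each identity for $x=u=u_1\otimes\cdots\otimes u_k$ with $u_1,\dots,u_k\in e\mathcal{N}e$ and then pass to the weak$^*$-closure. Below I write $\partial^k_\lambda:=\partial^k/\partial\lambda_1\cdots\partial\lambda_k$, matching the shorthand used in the statement.

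For (ii), I would expand $\big(\sum_{i=1}^k\lambda_iu_i\big)^{\otimes k}=\sum_{f}\lambda_{f(1)}\cdots\lambda_{f(k)}\,u_{f(1)}\otimes\cdots\otimes u_{f(k)}$, the sum over all maps $f\colon\{1,\dots,k\}\to\{1,\dots,k\}$, which is a homogeneous polynomial of degree $k$ in $\lambda_1,\dots,\lambda_k$ with coefficients in the algebraic tensor product. Applying $\partial^k_\lambda$ annihilates every term in which some $\lambda_j$ occurs with multiplicity $\neq 1$, i.e. every $f$ that is not a bijection; each bijection $f=\rho\in\mathfrak{S}_k$ survives and contributes exactly $u_{\rho(1)}\otimes\cdots\otimes u_{\rho(k)}$, so $\partial^k_\lambda\big(\sum_i\lambda_iu_i\big)^{\otimes k}=\sum_{\rho\in\mathfrak{S}_k}u_{\rho(1)}\otimes\cdots\otimes u_{\rho(k)}=k!\,E_k(u)$ by the definition \eqref{eq_E_k} of $E_k$. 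Item (iii) is then essentially formal: $\lambda\mapsto\big(\sum_i\lambda_iu_i\big)^{\otimes k}$ is a polynomial map $\mathbb{R}^k\to(e\mathcal{N}e)^{\otimes k}$ with coefficients in the algebraic tensor product, $\pi_{n,k}$ is a bounded linear map, and a linear map commutes with partial differentiation of a polynomial-valued function — term by term, both sides of (iii) equal $\pi_{n,k}$ evaluated on the coefficient of $\lambda_1\cdots\lambda_k$.

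For (i), by multilinearity it is enough to treat $u=u_1\otimes\cdots\otimes u_k$, and combining (ii), (iii) and \eqref{alpha_pi} gives
\[
\pi_{n,k}(E_k(u))=\frac1{k!}\,\pi_{n,k}\Big(\partial^k_\lambda\big(\textstyle\sum_i\lambda_iu_i\big)^{\otimes k}\Big)=\frac1{k!}\,\partial^k_\lambda\,\pi_{n,k}\Big(\big(\textstyle\sum_i\lambda_iu_i\big)^{\otimes k}\Big)=\frac1{k!}\,\partial^k_\lambda\,\alpha_{n,k}\Big(\sum_i\lambda_iu_i\Big).
\]
Inserting the combinatorial expansion \eqref{E_n_hatx}, $\alpha_{n,k}(w)=\sum_{|A|=k}\mathcal Z_A^{w,\mathbbm{1}-e}$, putting $w=\sum_i\lambda_iu_i$, and differentiating each $\mathcal Z_A^{w,\mathbbm{1}-e}$ exactly as in the proof of (ii) (the $k$ "$w$-slots" indexed by $A$ produce, after $\partial^k_\lambda$, a sum over all bijections of $\{1,\dots,k\}$ onto those slots), one recognizes after summing over $A$ precisely $\binom{n}{k}E_n\big(u_1\otimes\cdots\otimes u_k\otimes(\mathbbm{1}-e)^{\otimes(n-k)}\big)=\pi_{n,k}(u)$. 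Alternatively, and more cleanly, (i) follows at once from the fact that $E_n$ averages over all of $\mathfrak{S}_n$, which contains the copy of $\mathfrak{S}_k$ permuting the first $k$ tensor legs, so that $E_n\big(E_k(z)\otimes(\mathbbm{1}-e)^{\otimes(n-k)}\big)=E_n\big(z\otimes(\mathbbm{1}-e)^{\otimes(n-k)}\big)$ for every $z$, i.e. $\pi_{n,k}\circ E_k=\pi_{n,k}$.

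The only genuine obstacle is bookkeeping: in the $\alpha_{n,k}$-via-powers route for (i) one must keep the normalizing constants straight — the $\binom{n}{k}$ from \eqref{pi_n,k}, the $1/k!$ from (ii), and the $k!(n-k)!$ coinciding summands in the definition \eqref{eq_E_k} of $E_n$ — and this is the "tedious" part the authors elide; nothing beyond normality of $E_k$, $E_n$ and linearity/boundedness of $\pi_{n,k}$ is used, and none of the three identities requires $\pi_{n,k}$ to be multiplicative (that is Lemma \ref{first homomorphism lemma}, invoked separately). The point of assembling (i)--(iii) is the resulting formula $\pi_{n,k}\big(E_k(u_1\otimes\cdots\otimes u_k)\big)=\frac1{k!}\partial^k_\lambda\,\alpha_{n,k}\big(\sum_i\lambda_iu_i\big)$, which expresses $\pi_{n,k}$ on the symmetric subalgebra through the multiplicative map $\alpha_{n,k}$.
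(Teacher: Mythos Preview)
The paper explicitly omits the proof of this lemma, stating only that it ``consists of tedious algebraic computations.'' Your argument is correct and supplies exactly what the authors left out: (ii) is the standard polarization-by-differentiation identity, (iii) is just linearity of $\pi_{n,k}$ applied coefficientwise to a polynomial, and your second route to (i)---observing that $E_n$ already averages over the copy of $\mathfrak{S}_k$ permuting the first $k$ legs, so pre-symmetrizing by $E_k$ is redundant---is both correct and cleaner than the $\alpha_{n,k}$ detour you sketch first. The reduction to elementary tensors by normality of $E_k$, $E_n$ and $\pi_{n,k}$ is legitimate. There is nothing to compare against in the paper, and no gap in your proposal.
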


\begin{lem}\label{second homomorphism lemma}
Let  $e\in\mathcal{P}_{fin}(\mathcal{N}), \,n,k\in\mathbb{N}, 0\leq k\leq n$ and let $\pi_{n,k}$ be defined by \eqref{pi_n,k}. Then, the restriction of $\pi_{n,k}$ onto the algebra $(e\mathcal{N}e)^{\otimes k}_s$ is a $*$-homomorphism from $(e\mathcal{N}e)^{\otimes k}_s$ into $\mathcal{N}^{\otimes n}_s$.
\end{lem}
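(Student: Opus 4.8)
The plan is to reduce the claim that $\pi_{n,k}$ is a $*$-homomorphism on $(e\mathcal{N}e)^{\otimes k}_s$ to the already-established multiplicativity of $\alpha_{n,k}$ on $e\mathcal{N}e$ (Lemma \ref{first homomorphism lemma}), using the differentiation identities of Lemma \ref{lem_horror} to pass from the "diagonal" elements $x^{\otimes k}$ to arbitrary symmetric tensors. First I would observe that $(e\mathcal{N}e)^{\otimes k}_s = E_k((e\mathcal{N}e)^{\otimes k})$ is the weak$^*$-closed linear span of elements of the form $E_k(u^{\otimes k})$ with $u\in e\mathcal{N}e$, in fact of the "polarized" elements $E_k\big((\sum_{i=1}^k\lambda_i u_i)^{\otimes k}\big)$, since Lemma \ref{lem_horror}(ii) expresses $E_k(u_1\otimes\cdots\otimes u_k)$ as a mixed partial derivative of such a power. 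Thus it suffices to check that $\pi_{n,k}$ is multiplicative and $*$-preserving on this spanning set and then extend by (weak$^*$-continuous) linearity; note $\pi_{n,k}$ is manifestly adjoint-preserving on all of $(e\mathcal Ne)^{\otimes k}$ since $E_n$ is a conditional expectation and $(\mathbbm 1-e)^*=\mathbbm 1-e$, so the real content is multiplicativity.

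The key computation is on single powers: for $x\in e\mathcal Ne$, Lemma \ref{lem_horror} together with \eqref{alpha_pi} gives $\pi_{n,k}(x^{\otimes k})=\alpha_{n,k}(x)$, and Lemma \ref{first homomorphism lemma} then yields
\begin{equation*}
\pi_{n,k}(x^{\otimes k})\,\pi_{n,k}(y^{\otimes k})=\alpha_{n,k}(x)\alpha_{n,k}(y)=\alpha_{n,k}(xy)=\pi_{n,k}((xy)^{\otimes k}).
\end{equation*}
To upgrade this to symmetric tensors I would argue as follows. Fix $u=u_1\otimes\cdots\otimes u_k$ and $v=v_1\otimes\cdots\otimes v_k$ in $(e\mathcal Ne)^{\otimes k}$. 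Applying Lemma \ref{lem_horror}(ii) to write $E_k(u)$ and $E_k(v)$ as mixed partials of $(\sum_i\lambda_iu_i)^{\otimes k}$ and $(\sum_j\mu_jv_j)^{\otimes k}$ respectively, and using Lemma \ref{lem_horror}(iii) to commute $\pi_{n,k}$ past the derivatives (in both sets of variables, since the two derivative operators act on disjoint variable families and commute), one obtains
\begin{equation*}
\pi_{n,k}(E_k(u))\,\pi_{n,k}(E_k(v))=\frac{\partial^{2k}}{\partial^k\lambda\,\partial^k\mu}\Big[\pi_{n,k}\big((\textstyle\sum_i\lambda_iu_i)^{\otimes k}\big)\,\pi_{n,k}\big((\textstyle\sum_j\mu_jv_j)^{\otimes k}\big)\Big].
\end{equation*}
By the single-power identity above (applied with $x=\sum_i\lambda_iu_i$, $y=\sum_j\mu_jv_j$, both in $e\mathcal Ne$) the bracket equals $\pi_{n,k}\big((\sum_i\lambda_iu_i)^{\otimes k}(\sum_j\mu_jv_j)^{\otimes k}\big)=\pi_{n,k}\big((\sum_i\lambda_iu_i\cdot\sum_j\mu_jv_j)^{\otimes k}\big)$. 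Expanding this last power is a polynomial in the $\lambda$'s and $\mu$'s; taking $\partial^k\lambda\,\partial^k\mu$ picks out exactly the multilinear term, which — again by Lemma \ref{lem_horror}(ii),(iii) and \eqref{alpha_pi} — is precisely $\pi_{n,k}(E_k(uv))$, where $uv=u_1v_1\otimes\cdots\otimes u_kv_k$. Since $E_k(uv)=E_k(E_k(u)E_k(v))$ need not hold termwise, one should instead note that it is $E_k(u)E_k(v)$ (an element of $(e\mathcal Ne)^{\otimes k}_s$) that we are multiplying, and $E_k(u)E_k(v)\in(e\mathcal Ne)^{\otimes k}_s$ need not be of product form; the cleanest route is to verify the homomorphism property directly on the weak$^*$-total set $\{E_k((\sum_i\lambda_iu_i)^{\otimes k})\}$ and then invoke bilinearity and weak$^*$-density, so that the only identity actually needed is $\pi_{n,k}(E_k(w^{\otimes k}))\pi_{n,k}(E_k(w'^{\otimes k}))=\pi_{n,k}(E_k((ww')^{\otimes k}))$ for $w,w'\in e\mathcal Ne$ — which follows from Lemma \ref{lem_horror}(i) (to replace $w^{\otimes k}$ by $E_k(w^{\otimes k})$ inside $\pi_{n,k}$) combined with the single-power identity.

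Finally I would record that $\pi_{n,k}$ restricted to $(e\mathcal Ne)^{\otimes k}_s$ takes values in $\mathcal N^{\otimes n}_s$: this is immediate because $E_n$ lands in $\mathcal N^{\otimes n}_s$ by definition. Continuity for the weak$^*$-extension is standard: $E_n$ and the tensoring maps are normal, and the binomial factor is a scalar, so $\pi_{n,k}$ is weak$^*$-continuous on bounded sets, which justifies extending the multiplicative identity from the spanning set to all of $(e\mathcal Ne)^{\otimes k}_s$. The main obstacle I anticipate is purely bookkeeping: keeping the two families of auxiliary parameters $\lambda$ and $\mu$ separate and making sure that $\pi_{n,k}$ genuinely commutes with the full mixed partial $\partial^k\lambda\,\partial^k\mu$ (which follows by applying Lemma \ref{lem_horror}(iii) twice, once in each variable block, using that $\pi_{n,k}$ is linear and the derivative operators in the $\lambda$- and $\mu$-blocks commute). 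Once that is set up, multiplicativity on the spanning set is exactly Lemma \ref{first homomorphism lemma}, and the $*$-property is free.
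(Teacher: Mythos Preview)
Your ``cleanest route'' is correct and is in fact slightly more direct than the paper's argument: checking $\pi_{n,k}(w^{\otimes k})\pi_{n,k}((w')^{\otimes k})=\pi_{n,k}((ww')^{\otimes k})$ via $\alpha_{n,k}$ and Lemma~\ref{first homomorphism lemma}, then extending by bilinearity (which is legitimate, since multiplicativity on a linear spanning set passes to the span) and normality. The polarization identity behind Lemma~\ref{lem_horror}(ii) guarantees that $\{w^{\otimes k}:w\in e\mathcal{N}e\}$ linearly spans a weak$^*$-dense subspace of $(e\mathcal{N}e)^{\otimes k}_s$, so this suffices.

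Your first attempt, however, contains a genuine error that you should correct rather than sidestep, because fixing it recovers exactly the paper's proof. After you reach
\[
\pi_{n,k}(E_k(u))\,\pi_{n,k}(E_k(v))=\frac{1}{k!^2}\,\frac{\partial^{2k}}{\partial^k\lambda\,\partial^k\mu}\,\pi_{n,k}\Big(\big(\textstyle\sum_i\lambda_iu_i\cdot\sum_j\mu_jv_j\big)^{\otimes k}\Big),
\]
you claim the mixed partial picks out $\pi_{n,k}(E_k(uv))$ with $uv=u_1v_1\otimes\cdots\otimes u_kv_k$. This is false: expanding $(\sum_{i,j}\lambda_i\mu_j u_iv_j)^{\otimes k}$ and extracting the $\lambda_1\cdots\lambda_k\mu_1\cdots\mu_k$ coefficient gives $\sum_{\sigma,\tau\in\mathfrak{S}_k}(u_{\sigma(1)}v_{\tau(1)})\otimes\cdots\otimes(u_{\sigma(k)}v_{\tau(k)})$, which after dividing by $k!^2$ is $E_k(u)E_k(v)$, \emph{not} $E_k(uv)$ (the latter would be the diagonal $\sigma=\tau$ terms only). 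This is precisely the paper's identity \eqref{second linear}. Once you have $\pi_{n,k}(u)\pi_{n,k}(v)=\pi_{n,k}(E_k(u)E_k(v))$ for all $u,v\in(e\mathcal{N}e)^{\otimes k}$, specializing to $u,v\in(e\mathcal{N}e)^{\otimes k}_s$ (where $E_k(u)=u$, $E_k(v)=v$) gives multiplicativity immediately, with no need for a separate density argument. So the paper's route avoids the bilinearity/density extension step at the cost of the extra identity \eqref{second linear}; your route avoids \eqref{second linear} at the cost of that extension step. Both are fine.
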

\begin{proof} For the proof of this lemma we firstly show that for all $u,v\in(e\mathcal{N}e)^{\otimes k}$ the equality
\begin{equation}\label{pi_E_kE_k}
\pi_{n,k}(u)\pi_{n,k}(v)=\pi_{n,k}(E_k(u)E_k(v))
\end{equation} holds.  Fix $u=u_1\otimes\cdots\otimes u_k$ and $v=v_1\otimes\cdots\otimes v_k$. By Lemma \ref{lem_horror} (ii) we have
\begin{equation}\label{second linear}
E_k(u)E_k(v)=\frac1{k!^2}\frac{\partial^{2k}}{\partial^k\lambda\partial^k\mu}\left(\Big(\sum_{i=1}^k\lambda_iu_i\Big)\Big(\sum_{j=1}^k\mu_jv_j\Big)\right)^{\otimes k}.\end{equation}

Therefore, again applying Lemma \ref{lem_horror}, we obtain
\begin{gather*}
\begin{split}
\pi_{n,k}(u)\pi_{n,k}(v)&\stackrel{(i)}{=}\pi_{n,k}(E_k(u))\pi_{n,k}(E_k(v))\\
&=\frac1{k!}\pi_{n,k}\Big(\frac{\partial^k}{\partial^k\lambda}\Big(\sum_{i=1}^k\lambda_iu_i\Big)^{\otimes k}\Big)\cdot\frac1{k!}\pi_{n,k}\Big(\frac{\partial^k}{\partial^k\mu}\Big(\sum_{j=1}^k\mu_ju_j\Big)^{\otimes k}\Big)\\
&\stackrel{(iii)}{=}\frac1{k!^2}\frac{\partial^{2k}}{\partial^k\lambda\partial^k\mu}\pi_{n,k}\Big(\Big(\sum_{i=1}^k\lambda_iu_i\Big)^{\otimes k}\Big)\pi_{n,k}\Big(\Big(\sum_{j=1}^k\mu_ju_j\Big)^{\otimes k}\Big)\\
&\stackrel{\eqref{alpha_pi}}{=}\frac1{k!^2}\frac{\partial^{2k}}{\partial^k\lambda\partial^k\mu}\alpha_{n,k}\Big(\sum_{i=1}^k\lambda_iu_i\Big)\alpha_{n,k}\Big(\sum_{j=1}^k\mu_ju_j\Big).
\end{split}
\end{gather*}
Since by Lemma \ref{first homomorphism lemma} the mapping $\alpha_{n,k}$ preserves multiplication, it follows that
\begin{equation*}
\begin{split}
\pi_{n,k}(u)\pi_{n,k}(v)&=\frac1{k!^2}\frac{\partial^{2k}}{\partial^k\lambda\partial^k\mu}\alpha_{n,k}\Big(\Big(\sum_{i=1}^k\lambda_iu_i\Big)\Big(\sum_{j=1}^k\mu_ju_j\Big)\Big)
\\
&\stackrel{\eqref{alpha_pi}}{=}\frac1{k!^2}\frac{\partial^{2k}}{\partial^k\lambda\partial^k\mu}\pi_{n,k}\Big(\Big(\Big(\sum_{i=1}^k\lambda_iu_i\Big)\Big(\sum_{j=1}^k\mu_ju_j\Big)\Big)^{\otimes k}\Big)\\
&=\pi_{n,k}\Big(\frac1{k!^2}\frac{\partial^{2k}}{\partial^k\lambda\partial^k\mu}\Big(\Big(\sum_{i=1}^k\lambda_iu_i\Big)\Big(\sum_{j=1}^k\mu_ju_j\Big)\Big)^{\otimes k}\Big)\\
&\stackrel{\eqref{second linear}}{=}\pi_{n,k}(E_k(u)E_k(v)).
\end{split}
\end{equation*}

Since $E_k$ is a conditional expectation from $(e\mathcal{N}e)^{\otimes k}$ into $(e\mathcal{N}e)_s^{\otimes k}$, for every $u,v\in(e\mathcal{N}e)_s^{\otimes k}$ we have $E_k(u)=u,E_k(v)=v$, and therefore
 by \eqref{pi_E_kE_k} we obtain
$$\pi_{n,k}(u)\pi_{n,k}(v)=\pi_{n,k}(E_k(u)E_k(v))=\pi_{n,k}(uv).$$
Since $E_n(x^*)=(E_n(x))^*, x\in\mathcal{N}^{\otimes n}$, one can easily obtain that $\pi_{n,k}(u^*)=\pi_{n,k}(u)^*$ for all $u\in(e\mathcal{N}e)_s^{\otimes k}$. Thus, $\pi_{n,k}$ is a $*$-homomorphism from $(e\mathcal{N}e)_s^{\otimes k}$ into $\mathcal{N}_s^{\otimes n}$.

\end{proof}

The following auxiliary lemma is used in Proposition \ref{k compat} below.

\begin{lem}\label{strange equality} Let $1\leq k\leq n.$ For every $x\in\mathcal{N},$ we have
$$\sum_{k=1}^nk\binom{n}{k}E_n(x\otimes e^{\otimes(k-1)}\otimes(\mathbbm{1}-e)^{\otimes(n-k)})=\sum_{k=1}^n\mathbbm{1}^{\otimes(k-1)}\otimes x\otimes \mathbbm{1}^{\otimes(n-k)}\in\mathcal{N}^{\otimes n}_s.$$
\end{lem}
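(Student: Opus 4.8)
The plan is to expand both sides of the claimed identity as sums of elementary tensors in $\mathcal{N}^{\otimes n}$ and match them term by term. Write $2^n$ for the collection of subsets of $\{1,\dots,n\}$, and for $A\in 2^n$ and $l\notin A$ let $\mathcal{W}_{A,l}\in\mathcal{N}^{\otimes n}$ denote the elementary tensor whose $l$-th leg is $x$, whose legs indexed by $A$ equal $e$, and whose remaining legs equal $\mathbbm 1-e$. First I would observe, exactly as in the proof of Lemma \ref{first homomorphism lemma} (see \eqref{E_n_hatx}), that
\begin{equation*}
\binom{n}{k}E_n\big(x\otimes e^{\otimes(k-1)}\otimes(\mathbbm 1-e)^{\otimes(n-k)}\big)=\frac1k\sum_{\substack{A\in 2^n,\ |A|=k\\ l\notin A}}\mathcal{W}_{A,l},
\end{equation*}
because $E_n$ symmetrizes the legs and the factor $k\binom{n}{k}=n\binom{n-1}{k-1}$ counts the placements of the slot occupied by $x$ together with the $k-1$ slots occupied by $e$. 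Multiplying by $k$ and summing over $k$ from $1$ to $n$ then gives
\begin{equation*}
\sum_{k=1}^n k\binom{n}{k}E_n\big(x\otimes e^{\otimes(k-1)}\otimes(\mathbbm 1-e)^{\otimes(n-k)}\big)=\sum_{l=1}^n\ \sum_{A\subseteq\{1,\dots,n\}\setminus\{l\}}\mathcal{W}_{A,l},
\end{equation*}
where now $A$ ranges over \emph{all} subsets of $\{1,\dots,n\}\setminus\{l\}$, of every size.

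Next I would fix $l$ and carry out the inner sum over $A$: for a fixed leg $j\ne l$, summing $\mathcal{W}_{A,l}$ over all $A$ amounts to independently choosing, for each such $j$, either $e$ or $\mathbbm 1-e$ in the $j$-th slot. Hence the inner sum factors as the elementary-tensor expression with $x$ in slot $l$ and $e+(\mathbbm 1-e)=\mathbbm 1$ in every other slot; that is, $\sum_{A}\mathcal{W}_{A,l}=\mathbbm 1^{\otimes(l-1)}\otimes x\otimes\mathbbm 1^{\otimes(n-l)}$. Summing over $l$ yields the right-hand side of the asserted identity. Finally, since each $\mathbbm 1^{\otimes(l-1)}\otimes x\otimes\mathbbm 1^{\otimes(n-l)}$ is symmetric under permutation of tensor legs when $x\in\mathcal{N}$ only in the sense relevant here — more precisely, the full sum $\sum_{l=1}^n\mathbbm 1^{\otimes(l-1)}\otimes x\otimes\mathbbm 1^{\otimes(n-l)}$ is fixed by $E_n$, being a sum over a permutation orbit — the resulting element lies in $\mathcal{N}^{\otimes n}_s=E_n(\mathcal{N}^{\otimes n})$, which is the last assertion of the lemma. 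Equivalently, one notes that the left-hand side is manifestly in the range of $E_n$ and the two sides are equal, so membership in $\mathcal{N}^{\otimes n}_s$ is automatic.

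The computation is entirely combinatorial and I expect no serious obstacle; the one point requiring a little care is the bookkeeping of the binomial coefficient, namely verifying that the multiplicity with which a given $\mathcal{W}_{A,l}$ (with $|A|=k-1$) arises from $k\binom{n}{k}E_n(\cdots)$ is exactly $1$ — this is where the identity $k\binom{n}{k}=n\binom{n-1}{k-1}$ and the count $k!\,(n-k)!$ of equal summands produced by the symmetrization $E_n$ must be reconciled. Once that normalization is pinned down, the reorganization of the double sum and the telescoping $e+(\mathbbm 1-e)=\mathbbm 1$ finish the argument immediately.
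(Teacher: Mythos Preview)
Your approach is correct and is essentially the same as the paper's: both expand the symmetrization into elementary tensors indexed by the position of $x$ and the set of positions carrying $e$, use $k\binom{n}{k}=n\binom{n-1}{k-1}$ to normalize, and then collapse via $\sum_A e_A=\big(e+(\mathbbm 1-e)\big)^{\otimes(n-1)}=\mathbbm 1^{\otimes(n-1)}$; the paper simply fixes the slot of $x$ first by symmetry whereas you sum over it. One small slip: in your first displayed formula the condition should read $|A|=k-1$ (as you yourself write later), and the sum over $l$ should be made explicit there.
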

\begin{proof} It is sufficient to consider only the terms on the LHS having $x$ in the first position, since all other terms can be treated similarly. Therefore, by definition of $E_n$ (see \eqref{eq_E_k}),  we have to prove the equality
$$x\otimes\Big(\sum_{k=1}^nk\binom{n}{k}\frac1nE_{n-1}(e^{\otimes(k-1)}\otimes(\mathbbm{1}-e)^{\otimes(n-k)})\Big)=x\otimes \mathbbm{1}^n.$$
By \eqref{E_n_hatx}, we have that
$$\binom{n-1}{k-1}E_{n-1}(e^{\otimes(k-1)}\otimes(\mathbbm{1}-e)^{\otimes(n-k)})=\sum_{\substack{A\in 2^{n-1}\\|A|=k-1}}e_A,$$
where $e_A$ is defined by \eqref{x_A}.
Hence,
$$LHS=x\otimes\Big(\sum_{k=1}^nk\binom{n}{k}\frac1{n\binom{n-1}{k-1}}\sum_{\substack{A\in 2^{n-1}\\|A|=k-1}}e_A\Big)=x\otimes\Big(\sum_{A\in 2^{n-1}}e_A\Big)=x\otimes \mathbbm{1}^{\otimes(n-1)}.$$

The second assertion follows from the proof of Lemma \ref{lem1}.
\end{proof}

 Let $(\mathcal{N}, \tau)$ be a semifinite (nonfinite) von Neumann algebra with faithful normal semifinte trace $\tau$, let $e\leq f$ be arbitrary $\tau$-finite projections from $\mathcal{N}$ and let $M_s(e\mathcal{N}e), M_s(f\mathcal{N}f)$ be symmetric quantized algebras with corresponding finite traces $\sigma_e,\sigma_f$ given by \eqref{eq_sigma_e}. Denote by $\mathscr{K}_e$ (respectively, $\mathscr{K}_f$) the corresponding Kruglov operators on $e\mathcal{N}e$ (respectively, on $f\mathcal{N}f$) constructed in Section \ref{krug fin} (see \eqref{k fin def}).

The following proposition is the key result used in the construction of the Kruglov operator $\mathcal{K}$ on $\mathcal{L}_1(\mathcal{N}).$ It allows us to pass from Kruglov  operator defined on a smaller algebra, to the Kruglov operator on a larger algebra.

\begin{prop}\label{k compat} Let $(\mathcal{N},\tau)$ be a semifinite von Neumann algebra. For all $\tau$-finite projections $e\leq f\in\mathcal{N},$ there exists a unital  $*-$homomorphism $$\pi_{e,f}:S(M_s(e\mathcal{N}e), \sigma_e)\to S(M_s(f\mathcal{N}f),\sigma_f)$$ such that
\begin{enumerate}[{\rm (i)}]
\item for all $ z\in  M_s(e\mathcal{N}e)$, we have
\begin{equation}\label{eq_sigma_pi}
\sigma_e(z)=\sigma_\mathbbm{1}\circ\pi_{e,\mathbbm{1}}(z),
\end{equation}
in particular $\pi_{e,f}:\mathcal{L}_1(M_s(e\mathcal{N}e), \sigma_e)\to\mathcal{L}_1(M_s(f\mathcal{N}f),\sigma_f)$;
\item for all $x\in \mathcal{L}_1(e\mathcal{N}e,\tau)$ the equality
\begin{equation}\label{pi_K}
\pi_{e,f}(\mathscr{K}_e(x))=\mathscr{K}_f(x)
\end{equation} holds.
\end{enumerate}
\end{prop}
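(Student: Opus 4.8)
The plan is to build $\pi_{e,f}$ level by level out of the partial maps of \eqref{pi_n,k}, now with the $\tau$-finite projection $f-e$ playing the role of $\mathbbm 1-e$. For $0\le k\le n$ put
$$\pi_{n,k}^{e,f}:(e\mathcal Ne)_s^{\otimes k}\to(f\mathcal Nf)_s^{\otimes n},\qquad\pi_{n,k}^{e,f}(z)=\binom nk E_n\big(z\otimes(f-e)^{\otimes(n-k)}\big).$$
Since $e\le f$ and $f\mathcal Nf$ is a finite von Neumann algebra with unit $f$, Lemma \ref{second homomorphism lemma}, applied with $\mathcal N$ replaced by $f\mathcal Nf$ and $\mathbbm 1$ by $f$, shows that each $\pi_{n,k}^{e,f}$ is a $*$-homomorphism. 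I then define, for $z=\bigoplus_{k\ge0}z_k\in M_s(e\mathcal Ne)$,
$$\pi_{e,f}(z):=\bigoplus_{n\ge0}\Big(\sum_{k=0}^{n}\pi_{n,k}^{e,f}(z_k)\Big)\in\bigoplus_{n\ge0}(f\mathcal Nf)_s^{\otimes n}=M_s(f\mathcal Nf).$$

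The first task is to check that $\pi_{e,f}$ is a well-defined unital normal $*$-homomorphism. The decisive observation is that for $k\ne l$ one has $\pi_{n,k}^{e,f}(z_k)\,\pi_{n,l}^{e,f}(w_l)=0$: by \eqref{E_n_hatx} each summand of $\pi_{n,k}^{e,f}(z_k)$ carries an $e\mathcal Ne$-leg in some $k$-element set of slots and an $(f-e)$-leg in the others, and since $a(f-e)=0$ for every $a\in e\mathcal Ne$ (because $e(f-e)=0$), the product of two such summands vanishes unless their sets of "$e$-slots" coincide, which forces $k=l$. As multiplication in $M_s(e\mathcal Ne)$ is componentwise, the $n$-th component of $\pi_{e,f}(z)\pi_{e,f}(w)$ therefore collapses to $\sum_{k=0}^{n}\pi_{n,k}^{e,f}(z_k)\pi_{n,k}^{e,f}(w_k)=\sum_{k=0}^{n}\pi_{n,k}^{e,f}(z_kw_k)$ by Lemma \ref{second homomorphism lemma}, so $\pi_{e,f}$ is multiplicative; $*$-preservation and normality are inherited from the $\pi_{n,k}^{e,f}$ (each a conditional expectation composed with tensoring by a fixed operator). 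By \eqref{E_n_hatx} again, $\sum_{k=0}^n\pi_{n,k}^{e,f}(e^{\otimes k})=\sum_{A\subseteq\{1,\dots,n\}}\mathcal Z_A^{e,f-e}=(e+(f-e))^{\otimes n}=f^{\otimes n}$, the unit of $(f\mathcal Nf)_s^{\otimes n}$, which gives unitality; the same decomposition shows the ranges $\pi_{n,k}^{e,f}((e\mathcal Ne)_s^{\otimes k})$, $0\le k\le n$, lie in mutually orthogonal corners of $(f\mathcal Nf)_s^{\otimes n}$, whence $\|\pi_{e,f}(z)\|_\infty\le\|z\|_\infty$ and $\pi_{e,f}$ is genuinely $M_s(f\mathcal Nf)$-valued. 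Being normal, it extends to a $*$-homomorphism $S(M_s(e\mathcal Ne),\sigma_e)\to S(M_s(f\mathcal Nf),\sigma_f)$.

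For part (i) I would establish the trace-preservation $\sigma_e=\sigma_f\circ\pi_{e,f}$ asserted in \eqref{eq_sigma_pi} by a direct computation: using $\tau^{\otimes n}\circ E_n=\tau^{\otimes n}$ and $\tau^{\otimes n}\big(z_k\otimes(f-e)^{\otimes(n-k)}\big)=\tau^{\otimes k}(z_k)\,\tau(f-e)^{n-k}$, the definition \eqref{eq_sigma_e} of $\sigma_f$ gives
$$\sigma_f(\pi_{e,f}(z))=\exp(-\tau(f))\sum_{n\ge0}\frac1{n!}\sum_{k=0}^{n}\binom nk\tau^{\otimes k}(z_k)\,\tau(f-e)^{n-k}.$$
Interchanging the two sums, writing $\tfrac1{n!}\binom nk=\tfrac1{k!(n-k)!}$ and summing $\sum_{j\ge0}\tau(f-e)^{j}/j!=\exp(\tau(f)-\tau(e))$ yields $\sigma_f(\pi_{e,f}(z))=\exp(-\tau(e))\sum_{k\ge0}\frac1{k!}\tau^{\otimes k}(z_k)=\sigma_e(z)$. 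In particular $\pi_{e,f}$ is an isometric embedding of $\mathcal L_1(M_s(e\mathcal Ne),\sigma_e)$ into $\mathcal L_1(M_s(f\mathcal Nf),\sigma_f)$.

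For part (ii), for bounded $x\in e\mathcal Ne$ I would substitute the explicit form \eqref{k fin def} of $\mathscr K_ex$, whose $k$-th component is $\sum_{m=1}^{k}e^{\otimes(m-1)}\otimes x\otimes e^{\otimes(k-m)}$. Since $E_n$ is permutation invariant, each of these $k$ summands contributes $E_n\big(x\otimes e^{\otimes(k-1)}\otimes(f-e)^{\otimes(n-k)}\big)$, so $\pi_{n,k}^{e,f}\big(\sum_{m=1}^{k}e^{\otimes(m-1)}\otimes x\otimes e^{\otimes(k-m)}\big)=k\binom nk E_n\big(x\otimes e^{\otimes(k-1)}\otimes(f-e)^{\otimes(n-k)}\big)$; summing over $1\le k\le n$ and applying Lemma \ref{strange equality} with $\mathbbm 1$ replaced by $f$ (legitimate inside the finite algebra $f\mathcal Nf$ as $e\le f$) produces $\sum_{m=1}^{n}f^{\otimes(m-1)}\otimes x\otimes f^{\otimes(n-m)}$, i.e. exactly the $n$-th component of $\mathscr K_fx$. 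For general $x\in\mathcal L_1(e\mathcal Ne,\tau)$ I would reduce by linearity to $x\ge0$, approximate by the bounded operators $x_\ell:=xE_x([0,\ell])$, and pass to the limit, using that $\mathscr K_e$ and $\mathscr K_f$ are isometric on positive $\mathcal L_1$-elements (Proposition \ref{pr_prop_K}\eqref{prop_K3}) together with the $\mathcal L_1$-isometry of $\pi_{e,f}$ from part (i). I expect the main obstacle to be precisely the second paragraph — proving that $\pi_{e,f}$ is a genuine $*$-homomorphism, in particular the orthogonality $\pi_{n,k}^{e,f}(z_k)\pi_{n,l}^{e,f}(w_l)=0$ for $k\ne l$ and the careful bookkeeping of the $E_n$-expansions — while the remaining steps are a direct computation or a reduction to Lemmas \ref{second homomorphism lemma} and \ref{strange equality}.
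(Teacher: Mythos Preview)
Your proposal is correct and follows essentially the same route as the paper: the paper also defines $\pi_{e,f}$ (after reducing to $f=\mathbbm 1$) by $\bigoplus_{n\ge0}\sum_{k=0}^n\pi_{n,k}\circ p_k$, proves the orthogonality $\pi_{n,k}(u)\pi_{n,l}(v)=0$ for $k\ne l$ by the same $e(\mathbbm 1-e)=0$ argument, invokes Lemma~\ref{second homomorphism lemma} for multiplicativity, checks unitality via $\sum_k\pi_{n,k}(e^{\otimes k})=\mathbbm 1^{\otimes n}$, verifies trace preservation by the identical binomial/exponential computation, and derives \eqref{pi_K} from Lemma~\ref{strange equality}. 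One small point: the extension of $\pi_{e,f}$ from $M_s(e\mathcal Ne)$ to $S(M_s(e\mathcal Ne),\sigma_e)$ is justified in the paper by \emph{trace preservation} (continuity in measure topology, cf.\ \cite{DdePS}), not by normality alone as you phrase it; since you establish trace preservation anyway, this is only a matter of ordering the argument.
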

\begin{proof} Without loss of generality, we can assume that $f=\mathbbm{1}.$ Let $p_k$ be the natural projection from $M_s(e\mathcal{N}e)=\bigoplus_{k\geq 0}(e\mathcal{N}e)^{\otimes k}_s$ into $(e\mathcal{N}e)_s^{\otimes k}.$ Define the mapping $\pi_{e,\mathbbm{1}}:M_s(e\mathcal{N}e)\to M_s(\mathcal{N})$ by setting
\begin{equation}\label{pi def}
\pi_{e,\mathbbm{1}}=\bigoplus_{n\geq0}\sum_{k=0}^n\pi_{n,k}\circ p_k,
\end{equation}
where $\pi_{n,k}$ is defined by \eqref{pi_n,k}.

Let $k\neq l, k,l\leq n$ and $u=u_1\otimes\dots\otimes u_k\in (e\mathcal{N}e)^{\otimes k}, v=v_1\otimes \dots \otimes v_l\in (e\mathcal{N}e)^{\otimes l}$. Each term of the product
$$E_n(u_1\otimes\dots\otimes u_k\otimes (\mathbbm{1}-e)^{\otimes(n-k)})E_n(v_1\otimes \dots \otimes v_l\otimes (\mathbbm{1}-e)^{\otimes(n-l)})$$ contains the product $u_i(\mathbbm{1}-e)=0$ or $(\mathbbm{1}-e)v_i=0$ on some $j$-th place of tensor product, and therefore,
$$\pi_{n,k}(u)\pi_{n,l}(v)=\binom{n}{k}\binom{n}{l}E_n(u\otimes (\mathbbm{1}-e)^{\otimes(n-k)})E_n(v\otimes (\mathbbm{1}-e)^{\otimes(n-l)})=
0$$ for all $u\in(e\mathcal{N}e)^{\otimes k},v\in(e\mathcal{N}e)^{\otimes l}.$
Consequently, since $p_k(z_1)\in (e\mathcal{N}e)_s^{\otimes k}, p_l(z_2)\in (e\mathcal{N}e)_s^{\otimes l}, z_1,z_2\in M_s(e\mathcal{N}e)$ we have
\begin{equation}\label{pi_n,k,l}
(\pi_{n,k}\circ p_k)(z_1)\cdot(\pi_{n,l}\circ p_l)(z_2)=0,\quad z_1,z_2\in M_s(e\mathcal{N}e), k\neq l, k,l\leq n.
\end{equation}

Since, by Lemma \ref{second homomorphism lemma}, $\pi_{n,k}:(e\mathcal{N}e)^{\otimes k}_s\to\mathcal{N}^{\otimes n}_s,$ $n\geq k,$ is a $*-$homomorphism, for every $z_1,z_2\in M_s(e\mathcal{N}e),$  we obtain
\begin{gather*}
\Big(\sum_{k=0}^n(\pi_{n,k}\circ p_k)(z_1)\Big)\Big(\sum_{l=0}^n(\pi_{n,l}\circ p_l)(z_2)\Big)=\sum_{k,l=0}^n(\pi_{n,k}\circ p_k)(z_1)(\pi_{n,l}\circ p_l)(z_2)\\
\stackrel{\eqref{pi_n,k,l}}{=}\sum_{k=0}^n(\pi_{n,k}\circ p_k)(z_1)(\pi_{n,k}\circ p_k)(z_2)=\sum_{k=0}^n(\pi_{n,k}\circ p_k)(z_1z_2),
\end{gather*}
and similarly $$\sum_{k=0}^n(\pi_{n,k}\circ p_k)(z^*)=(\sum_{k=0}^n(\pi_{n,k}\circ p_k)(z))^*, z\in M_s(e\mathcal{N}e).$$
Therefore,  the mapping
$$\pi_{e,\mathbbm{1}}=\bigoplus_{n\geq 0}\sum_{k=0}^n\pi_{n,k}\circ p_k:M_s(e\mathcal{N}e)\to M_s(\mathcal{N})$$
is a $*-$homomorphism as a direct sum of $*$-homomorphisms. In addition, (see also the end of the proof of Lemma \eqref{strange equality}), $$\sum_{k=1}^n\pi_{n,k}(\mathbbm{1}^{\otimes k})\stackrel{\eqref{alpha_pi}}{=}\sum_{k=1}^n\alpha_{n,k}(\mathbbm{1})\stackrel{\eqref{E_n_hatx}}{=}\sum_{k=1}^n\sum_{\substack{A\in 2^n\\|A|=k}}e_A=\sum_{A\in 2^n}e_A=\mathbbm{1}^{\otimes n}.$$
That is $\pi_{e,\mathbbm{1}}$ is a unital $*$-homomorphism from $M_s(e\mathcal{N}e)$ into $M_s(f\mathcal{N}f)$.

Let us show now, that the $*$-homomorphism  $\pi_{e,\mathbbm{1}}$ is trace preserving, i.e.
\begin{equation*}
\sigma_e(z)=\sigma_\mathbbm{1}\circ\pi_{e,\mathbbm{1}}(z), \quad z\in  M_s(e\mathcal{N}e).
\end{equation*}
Clearly, this assertion needs to be verified on each direct summand of $M_s(e\mathcal{N}e)$.

Let $z=(e\mathcal{N}e)_s^{\otimes k}$. Since $p_m(z)=0$ for all $m\neq k$, by the definition of $\sigma_e,$ (see \eqref{eq_sigma_e}) we have
$$\sigma_e(z)=\exp(-\tau(e))\frac{\tau^{\otimes k}(z)}{k!}.$$
Again, by the definition of $\sigma_\mathbbm{1},$ we have
\begin{gather*}
\begin{split}
(\sigma\circ\pi_{e,\mathbbm{1}})(z)&=\sigma\Big(\bigoplus_{n\geq 0}\sum_{k=0}^n\pi_{n,k}\circ p_k(z)\Big)
\\&=\sigma\Big(\bigoplus_{n\geq k}\pi_{n,k}(z)\Big)=
\exp(-\tau(\mathbbm{1}))\sum_{n\geq k}\frac1{n!}\tau^{\otimes n}(\pi_{n,k}(z))\\
&=\exp(-\tau(\mathbbm{1}))\sum_{n\geq k}\frac1{n!}\binom{n}{k}\tau^{\otimes k}(z)\tau^{n-k}(e-\mathbbm{1})=\exp(-\tau(e))\frac{\tau^{\otimes k}(z)}{k!},
\end{split}
\end{gather*}
that is $\sigma_e(z)=(\sigma_\mathbbm{1}\circ\pi_{e,\mathbbm{1}})(z).$

Thus the $*$-homomorphism $\pi_{e,\mathbbm{1}}:M_s(e\mathcal{N}e)\to M_s(\mathcal{N})$ is trace preserving, and therefore (see e.g. \cite{DdePS}) $\pi_{e,\mathbbm{1}}$ can be extended to a unital $*$-homomorphism from
$S(M_s(e\mathcal{N}e), \sigma_e)$ into $S(M_s(\mathcal{N}),\sigma)$
In particular, $\pi_{e,\mathbbm{1}}$ maps
$\mathcal{L}_1(M_s(e\mathcal{N}e), \sigma_e)$ into $\mathcal{L}_1(M_s(\mathcal{N}),\sigma)$. We preserve the notation $\pi_{e,\mathbbm{1}}$ for this extension.

For the proof of equality \eqref{pi_K} fix $x\in\mathcal{L}_1(e\mathcal{N}e,\tau)$.
By the definition of $\mathcal{K}_e$ (see \eqref{k fin def}), we have $p_0(\mathscr{K}_ex)=0$ and
$$p_k(\mathscr{K}_ex)=\sum_{m=1}^ke^{\otimes(m-1)}\otimes x\otimes e^{\otimes(k-m)},\quad k\geq1.$$
It follows now from Lemma \ref{strange equality} that
\begin{gather*}
\begin{split}
\pi_{e,\mathbbm{1}}(\mathscr{K}_ex)&\stackrel{\eqref{pi def}}{=}
\bigoplus_{n\geq 0}\sum_{k=1}^n\pi_{n,k}\circ p_k(\mathscr{K}_ex)\\
&\stackrel{\eqref{pi_n,k}}{=}
0\oplus\bigoplus_{n\geq1}\sum_{k=1}^n\binom{n}{k}\sum_{m=1}^kE_n(e^{\otimes(m-1)}\otimes x\otimes e^{\otimes(k-m)}\otimes(\mathbbm{1}-e)^{\otimes(n-k)})\\
&=0\oplus\bigoplus_{n\geq1}\sum_{k=1}^nk\binom{n}{k}E_n(x\otimes e^{\otimes(k-1)}\otimes(\mathbbm{1}-e)^{\otimes(n-k)})\\
&\stackrel{L. \ref{strange equality}}{=}0\oplus\bigoplus_{n\geq1}
\sum_{k=1}^n\mathbbm{1}^{\otimes(k-1)}\otimes x\otimes \mathbbm{1}^{\otimes(n-k)}=
\mathscr{K}_\mathbbm{1}x.
\end{split}
\end{gather*}
\end{proof}

Note that, the equality \eqref{eq_sigma_pi} can be viewed as a noncommutative analogue of the compatibility of measures in Kolmogorov extension theorem \cite{Fremlin}.

The following lemma shows that $*$-homomorphisms constructed in Proposition \ref{k compat} are well-compatible, that allows us to construct the inductive limit of algebras of the form $M_s(e\mathcal{N}e), e\in\mathcal{P}_{fin}(\mathcal{N})$ (see the proof of Theorem \ref{junge thm}).

\begin{lem}\label{pi compat} We have $\pi_{e,g}(z)=(\pi_{f,g}\circ\pi_{e,f})(z)$ whenever $e\leq f\leq g, z\in M_s(e\mathcal{N}e).$
\end{lem}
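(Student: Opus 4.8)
The plan is to reduce the asserted equality, via a density argument, to a single binomial identity. First I would observe that $\pi_{e,g}$ and $\pi_{f,g}\circ\pi_{e,f}$ are both normal unital $*$-homomorphisms from $M_s(e\mathcal{N}e)$ into $M_s(g\mathcal{N}g)$: normality of $\pi_{e,f}$ holds because each summand $\pi_{n,k}$ in \eqref{pi def} is the composition of the normal conditional expectation $E_n$ with a normal embedding and a scalar factor, and a map into a direct sum of von Neumann algebras is normal as soon as each of its components is. Arguing as in the proof of Lemma \ref{second homomorphism lemma} --- i.e. using Lemma \ref{lem_horror}(ii) to write every symmetric tensor $E_k(u_1\otimes\cdots\otimes u_k)$ as a finite linear combination of elements $v^{\otimes k}$ with $v\in e\mathcal{N}e$ --- the linear span of $\{v^{\otimes k}:v\in e\mathcal{N}e,\ k\ge 0\}$ is weak$^*$ dense in $M_s(e\mathcal{N}e)$, so it is enough to verify the identity for $z=x^{\otimes k}$, $x\in e\mathcal{N}e$.

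Next I would evaluate both sides on $z=x^{\otimes k}$ directly from \eqref{pi def}, \eqref{pi_n,k} and \eqref{alpha_pi}. Since $p_j(x^{\otimes k})$ vanishes for $j\ne k$, the $n$-th component of $\pi_{e,g}(x^{\otimes k})$ is $\binom{n}{k}E_n\big(x^{\otimes k}\otimes(g-e)^{\otimes(n-k)}\big)$, whereas $\pi_{e,f}(x^{\otimes k})=\bigoplus_{m\ge k}\binom{m}{k}E_m\big(x^{\otimes k}\otimes(f-e)^{\otimes(m-k)}\big)$; applying $\pi_{f,g}$ componentwise (legitimate by normality), the $n$-th component of $(\pi_{f,g}\circ\pi_{e,f})(x^{\otimes k})$ equals
\[
\sum_{m=k}^{n}\binom{n}{m}\binom{m}{k}\,E_n\Big(E_m\big(x^{\otimes k}\otimes(f-e)^{\otimes(m-k)}\big)\otimes(g-f)^{\otimes(n-m)}\Big).
\]
Here I would invoke the tower identity $E_n\circ(E_m\otimes\mathrm{id})=E_n$, valid because $E_n$ averages over the whole symmetric group $\mathfrak{S}_n$ and is therefore unchanged by the prior partial averaging $E_m\otimes\mathrm{id}$; this turns the $n$-th component of the right-hand side into
\[
\sum_{m=k}^{n}\binom{n}{m}\binom{m}{k}\,E_n\big(x^{\otimes k}\otimes(f-e)^{\otimes(m-k)}\otimes(g-f)^{\otimes(n-m)}\big).
\]

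Finally I would expand the left-hand side. Writing $g-e=(f-e)+(g-f)$ and using multilinearity of the tensor product together with the $\mathfrak{S}_n$-invariance of $E_n$ --- all placements of $j$ copies of $f-e$ and $n-k-j$ copies of $g-f$ among the last $n-k$ tensor positions give the same value --- the $n$-th component of $\pi_{e,g}(x^{\otimes k})$ becomes
\[
\binom{n}{k}\sum_{j=0}^{n-k}\binom{n-k}{j}\,E_n\big(x^{\otimes k}\otimes(f-e)^{\otimes j}\otimes(g-f)^{\otimes(n-k-j)}\big).
\]
Matching the two sums term by term via $j=m-k$, the proof is finished once one records the elementary identity $\binom{n}{k}\binom{n-k}{j}=\binom{n}{k+j}\binom{k+j}{k}$, both sides being the multinomial coefficient $\tfrac{n!}{k!\,j!\,(n-k-j)!}$.

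The only genuine computation is the tower identity $E_n\circ(E_m\otimes\mathrm{id})=E_n$; after it, everything reduces to the displayed binomial identity, and I expect no serious obstacle. The points needing a little care are the normality assertions, which underpin the reduction to elements $x^{\otimes k}$, and keeping the two symmetrizations properly bookkept.
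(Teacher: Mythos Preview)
Your proof is correct and follows the same overall strategy as the paper's: reduce to $z=x^{\otimes k}$ via polarization/weak$^*$-density, and then match the $n$-th components of both sides using the expansion $(g-e)=(f-e)+(g-f)$. The difference is only in packaging. The paper expands $\pi_{n,k}$ explicitly as a sum over subsets (the $\mathcal{Z}_A$ notation of \eqref{E_n_hatx}) and verifies the match by direct subset enumeration, establishing $\mathcal{Z}_{A_1}^{x,g-e}=\sum_{A_2\supset A_1}\mathcal{Z}_{A_1,A_2}^{x,f-e,g-f}$; the binomial coefficients are hidden in the subset counts. You instead keep the symmetrizer $E_n$ throughout, isolate the group-theoretic content as the tower identity $E_n\circ(E_m\otimes\mathrm{id})=E_n$, and reduce the match to the transparent multinomial identity $\binom{n}{k}\binom{n-k}{j}=\binom{n}{k+j}\binom{k+j}{k}$. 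Your route is a bit cleaner and makes the structure more visible; the paper's has the minor advantage of needing no auxiliary identity for $E_n$. One small remark: the parenthetical ``legitimate by normality'' is not needed where you place it --- by \eqref{pi def} the $n$-th component of $\pi_{f,g}$ involves only the finitely many summands $p_m$ with $m\le n$, so no limit is taken; normality is used only in the density reduction at the start.
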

\begin{proof} Let $x\in e\mathcal{N}e$ and let $n\geq k\geq m.$ We have (cf. \eqref{E_n_hatx} and \eqref{alpha_pi}),
$$\pi_{k,m}^{e,f}(x^{\otimes m})=\sum_{\substack{A_1\in 2^k\\|A_1|=m}}
\mathcal Z_{A_1}^{x,f-e},$$
where $\mathcal Z_{A_1}^{x,f-e}$ is defined in \eqref{x_A},
and therefore,
$$\pi_{n,k}^{f,g}(\pi_{k,m}^{e,f}(x^{\otimes m}))=\binom{n}{k}E_n\Big(\Big(\sum_{\substack{A_1\in 2^k\\|A_1|=m}}
\mathcal{Z}_{A_1}^{x,f-e}\Big)\otimes (g-f)^{\otimes (n-k)}\Big)=$$
$$=\sum_{\substack{A_1\in 2^k\\|A_1|=m}}\sum_{\substack{A_2\in 2^n\\A_2\supset A_1\\|A_2|=k}}
\mathcal Z_{A_1,A_2}^{x,f-e,g-f},$$
where
$$
\mathcal Z_{A_1,A_2}^{x,f-e,g-f}=z_1\otimes\dots\otimes z_n\in\mathcal{N}^{\otimes n},\quad
z_i=\left\{\begin{aligned}x,&\quad i\in A_1\\
f-e,& \quad i\in A_2\setminus A_1\\
g-f, & \quad \mathrm{otherwise }\end{aligned}\right.\quad 1\leq i\leq n,
$$
And hence we obtain,
\begin{equation}\label{pc3}\sum_{k=m}^n\pi_{n,k}^{f,g}(\pi_{k,m}^{e,f}(x^{\otimes m}))=\sum_{\substack{A_1\in 2^n\\|A_1|=m}}\sum_{\substack{A_2\in 2^n\\A_2\supset A_1}}\mathcal Z_{A_1,A_2}^{x,f-e,g-f}.\end{equation}
Next we shall prove that for a fixed $A_1\in 2^n$ such that $|A_1|=m,$ we have
\begin{equation}\label{pc2}
\mathcal Z_{A_1}^{x,g-e}=\sum_{\substack{A_2\in 2^n\\A_2\supset A_1}}\mathcal Z_{A_1,A_2}^{x,f-e,g-f}.
\end{equation}  Indeed, observing that the following formula
 \begin{equation}\label{pc1}
(y+z)^{\otimes (n-m)}=\sum_{A\in 2^{n-m}} \mathcal Z_{A}^{y,z}, \ \ \mbox{for all} \ \ y,z\in\mathcal N.
\end{equation} holds, that can be easily established by induction, plugging $x$ on fixed $A_1$ positions  between elements of tensors
on the both sides of \eqref{pc1}, we obtain that
 \begin{equation}\label{pc4}
\mathcal Z_{A_1}^{x,y+z}=\sum_{\substack{A\in 2^{n}\\A\cap A_1=\varnothing}} \mathcal Z_{A_1,A_1\cup A}^{x,y,z}=
\sum_{\substack{A_2\in 2^n\\A_2\supset A_1}}\mathcal Z_{A_1,A_2}^{x,y,z}, \ \ \mbox{for all} \ \ y,z\in\mathcal N.
\end{equation}
Now taking $y=f-e,$ $z=g-f$ in \eqref{pc4}, we arrive at \eqref{pc2}. Combining \eqref{pc3} and \eqref{pc2}, we obtain that
$$\sum_{k=m}^n\pi_{n,k}^{f,g}(\pi_{k,m}^{e,f}(x^{\otimes m}))
=\sum_{\substack{A_1\in 2^n\\|A_1|=m}}Z_{A_1}^{x,g-e}.$$
Consequently,  we have that
\begin{equation}\label{compatpi1}
\pi_{n,m}^{e,g}(x^{\otimes m})=\sum_{k=m}^n\pi_{n,k}^{f,g}(\pi_{k,m}^{e,f}(x^{\otimes m})),\, x\in e\mathcal{N}e.
\end{equation}
 By polarization identity every element $z_1\otimes\dots\otimes z_m,z_i\in (e\mathcal{N}e),1\leq i\leq m,$ is a (finite) linear combination of the elements of the form $x^{\otimes m}$, and hence equality  \eqref{compatpi1} holds for these elements too. Since  elements of the form $z_1\otimes\dots\otimes z_m,z_i\in (e\mathcal{N}e),1\leq i\leq m$ are dense in $(e\mathcal{N}e)_s^{\otimes m}$ (in the weak$^*$ topology) and by construction the $*-$homomorphism $\pi_{n,k}$ is continuous in the weak$^*$ topology (see \eqref{pi_n,k}), we infer the equality \eqref{compatpi1} for all $z\in(e\mathcal{N}e)_s^{\otimes m}.$

Since for every $z\in(e\mathcal{N}e)_s^{\otimes m},$ we have $p_k(z)=0$ for $k\neq m$ and $p_m(z)=z,$ it follows from construction of the mapping $\pi_{e,g}$ that
\begin{gather*}
\begin{split}
\pi_{f,g}(\pi_{e,f}(z))&\stackrel{\eqref{pi def}}{=}\pi_{f,g}(\bigoplus_{k\geq 0}\sum_{m=0}^k(\pi_{k,m}^{e,f}\circ p_m)(z))=\pi_{f,g}(\bigoplus_{k\geq m}\pi_{k,m}^{e,f}(z))\\
&=\bigoplus_{n\geq m}\sum_{k=m}^n\pi_{n,k}^{f,g}(\pi_{k,m}^{e,f}(z))\stackrel{\eqref{compatpi1}}{=}\bigoplus_{n\geq m}\pi_{n,m}^{e,g}(z)=\pi_{e,g}(z).
\end{split}
\end{gather*}
for every $z\in(e\mathcal{N}e)_s^{\otimes m}.$ It follows immediately that
$$\pi_{e,g}(z)=\pi_{f,g}(\pi_{e,f}(z)),\quad z\in M_s(e\mathcal{N}e).$$
\end{proof}

The following theorem is the main result of this section. It presents the construction of Kruglov operator for arbitrary semifinite von Neumann algebra.
The idea of the construction used in Theorem \ref{junge thm} is adopted from \cite{junge}.

\begin{thm}\label{junge thm} Let $(\mathcal{N},\tau)$ be a semifinite von Neumann algebra. There exists a finite von Neumann algebra $(\mathcal{M},\sigma)$ and a positive bounded linear operator $\mathscr{K}:\mathcal{L}_1(\mathcal{N},\tau)\to\mathcal{L}_1(\mathcal{M},\sigma)$ such that
\begin{equation}\label{kinf main property}
\sigma(\exp(i\mathscr{K}x))=\exp(\tau(\exp(ix)-\mathrm{1})),\quad x=x^*\in\mathcal{L}_1(\mathcal{N},\tau).
\end{equation}
If $\mathcal{N}$ is hyperfinite, then so is $\mathcal{M}.$
\end{thm}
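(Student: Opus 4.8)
The plan is to build $(\mathcal M,\sigma)$ as the von Neumann algebra generated by the GNS representation of the inductive limit of the finite von Neumann algebras $M_s(e_n\mathcal Ne_n)$ equipped with the states $\sigma_{e_n}$, where $e_0\le e_1\le\cdots$ is a sequence of $\tau$-finite projections with $e_n\uparrow\mathbbm 1$ strongly (such a sequence exists as $\mathcal N$ is countably decomposable in the cases of interest; in general one replaces it by a directed net and uses the evident net-versions of Lemmas \ref{dirlimdef1}--\ref{lem_il_hyperf}). First I would note that $\sigma_{e_n}$ is a faithful normal tracial state on $M_s(e_n\mathcal Ne_n)$, since $\sigma_{e_n}(\mathbbm 1)=\exp(-\tau(e_n))\sum_{k\ge0}\tau(e_n)^k/k!=1$. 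By Proposition \ref{k compat} the maps $\pi_{e_n,e_m}\colon M_s(e_n\mathcal Ne_n)\to M_s(e_m\mathcal Ne_m)$ ($n\le m$) are unital trace-preserving $*$-homomorphisms, hence injective (being trace-preserving with $\sigma_{e_n}$ faithful), and by Lemma \ref{pi compat} they form a compatible system: $\pi_{e_n,e_k}=\pi_{e_m,e_k}\circ\pi_{e_n,e_m}$. Lemmas \ref{dirlimdef1}, \ref{dirlimdef2} and \ref{lem_cyclic} then yield the inductive limit $C^*$-algebra $A$ with embeddings $\pi_n$, the limit tracial state $\sigma$, and finally the finite von Neumann algebra $(\mathcal M,\sigma)=(\overline{\theta(A)}^{wo},\sigma)$ together with trace-preserving unital $*$-homomorphisms $\Phi_n:=\theta\circ\pi_n\colon M_s(e_n\mathcal Ne_n)\to\mathcal M$ satisfying $\sigma\circ\Phi_n=\sigma_{e_n}$ and, by \eqref{eq_pi_n}, $\Phi_m\circ\pi_{e_n,e_m}=\Phi_n$. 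Each $\Phi_n$ is automatically normal (as recalled in the proof of Lemma \ref{lem_il_hyperf}), so it extends to a $*$-homomorphism $\mathcal S(M_s(e_n\mathcal Ne_n),\sigma_{e_n})\to\mathcal S(\mathcal M,\sigma)$ that is continuous for the measure topologies and restricts to an isometry $\widetilde\Phi_n\colon\mathcal L_1(M_s(e_n\mathcal Ne_n),\sigma_{e_n})\to\mathcal L_1(\mathcal M,\sigma)$. If $\mathcal N$ is hyperfinite then so is each corner $e_n\mathcal Ne_n$, hence so is each $M_s(e_n\mathcal Ne_n)=\bigoplus_k(e_n\mathcal Ne_n)^{\otimes k}_s$, since corners, tensor powers, ranges of normal conditional expectations and countable direct sums preserve hyperfiniteness; so Lemma \ref{lem_il_hyperf} gives that $\mathcal M$ is hyperfinite.

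Next I would define $\mathscr K$ on the dense subspace of $\mathcal L_1(\mathcal N,\tau)$ consisting of the finitely supported operators --- those $x$ with $x=e_nxe_n$ for some $n$ --- which is dense because $e_nxe_n\to x$ in $\mathcal L_1$. For such $x$ put $\mathscr Kx:=\widetilde\Phi_n(\mathscr K_{e_n}x)$, where $\mathscr K_{e_n}\colon\mathcal L_1(e_n\mathcal Ne_n,\tau)\to\mathcal L_1(M_s(e_n\mathcal Ne_n),\sigma_{e_n})$ is the $\tau$-finite Kruglov operator of Section \ref{krug fin}. This is independent of $n$: if $x=e_nxe_n$ and $n\le m$ then $\widetilde\Phi_m(\mathscr K_{e_m}x)=\widetilde\Phi_m(\pi_{e_n,e_m}(\mathscr K_{e_n}x))=\widetilde\Phi_n(\mathscr K_{e_n}x)$ by Proposition \ref{k compat}\eqref{pi_K} and $\Phi_m\circ\pi_{e_n,e_m}=\Phi_n$. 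Linearity on this subspace is clear. For boundedness: if $x\ge0$ then $\|\mathscr K_{e_n}x\|_1=\sigma_{e_n}(\mathscr K_{e_n}x)=\tau(x)=\|x\|_1$ by Proposition \ref{pr_prop_K}\eqref{prop_K3}; for self-adjoint $x$, the inequality $|\mathscr K_{e_n}x|\le\mathscr K_{e_n}x_++\mathscr K_{e_n}x_-$ (valid since $\mathscr K_{e_n}$ is positive) gives $\|\mathscr K_{e_n}x\|_1\le\|x\|_1$; and for general $x$, splitting into real and imaginary parts gives $\|\mathscr Kx\|_1=\|\mathscr K_{e_n}x\|_1\le2\|x\|_1$ since $\widetilde\Phi_n$ is isometric. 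Hence $\mathscr K$ extends uniquely to a bounded operator $\mathcal L_1(\mathcal N,\tau)\to\mathcal L_1(\mathcal M,\sigma)$, which is positive because it carries the positive cone of the dense subspace into the closed positive cone of $\mathcal L_1(\mathcal M,\sigma)$ (as $\mathscr K_{e_n}$ is positivity-preserving and $\widetilde\Phi_n$ is the positive $L_1$-extension of a $*$-homomorphism).

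Finally I would check \eqref{kinf main property}. For finitely supported self-adjoint $x=e_nxe_n$, the fact that $\widetilde\Phi_n$ is a measure-continuous $*$-homomorphism gives $\exp(i\mathscr Kx)=\widetilde\Phi_n(\exp(i\mathscr K_{e_n}x))$, so, using that $\Phi_n$ is trace-preserving and then Lemma \ref{k finite} in the finite algebra $(e_n\mathcal Ne_n,\tau)$ (whose unit is $e_n$),
\begin{equation*}
\sigma\big(\exp(i\mathscr Kx)\big)=\sigma_{e_n}\big(\exp(i\mathscr K_{e_n}x)\big)=\exp\big(\tau(\exp(ix)-e_n)\big)=\exp\big(\tau(\exp(ix)-\mathbbm 1)\big),
\end{equation*}
the last step because $\exp(ix)-e_n$ computed inside $e_n\mathcal Ne_n$ coincides with $\exp(ix)-\mathbbm 1$ computed inside $\mathcal N$ when $x=e_nxe_n$. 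For arbitrary self-adjoint $x\in\mathcal L_1(\mathcal N,\tau)$, put $x_n:=e_nxe_n\to x$ in $\mathcal L_1(\mathcal N,\tau)$; then $\mathscr Kx_n\to\mathscr Kx$ in $\mathcal L_1(\mathcal M,\sigma)$, hence $\exp(i\mathscr Kx_n)\to\exp(i\mathscr Kx)$ in measure, hence --- these operators being contractions and $\sigma$ finite --- in $\mathcal L_1(\mathcal M,\sigma)$; and $\|\exp(ix_n)-\exp(ix)\|_1\le\|x_n-x\|_1\to0$ by Duhamel's formula, so $\tau(\exp(ix_n)-\mathbbm 1)\to\tau(\exp(ix)-\mathbbm 1)$. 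Passing to the limit in the displayed identity yields \eqref{kinf main property}.

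The step I expect to be the main obstacle is the first paragraph: verifying that Proposition \ref{k compat} and Lemma \ref{pi compat} genuinely supply a compatible system of injective trace-preserving unital $*$-homomorphisms to which Lemmas \ref{dirlimdef1}--\ref{lem_il_hyperf} apply, and that the resulting maps $\Phi_n$ descend to the $\mathcal L_1$-spaces (and to $\mathcal S(\cdot,\cdot)$, for the exponentials) compatibly with Proposition \ref{k compat}\eqref{pi_K}; this is precisely what makes $\mathscr K$ well-defined. Once it is in place, the boundedness and positivity of $\mathscr K$ and the limiting argument for \eqref{kinf main property} are routine.
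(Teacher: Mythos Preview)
Your proof is correct and follows the same inductive-limit scaffolding as the paper (Proposition \ref{k compat}, Lemma \ref{pi compat}, and Lemmas \ref{dirlimdef1}--\ref{lem_il_hyperf}), but you take a genuinely simpler route to define $\mathscr{K}$ and establish its properties. The paper defines $\mathscr{K}x$ for self-adjoint $x\in e_k\mathcal N e_k$ indirectly: it forms the one-parameter unitary group $t\mapsto(\theta\circ\pi_k)(\exp(it\mathscr{K}_{e_k}x))$ and invokes Stone's theorem to recover its generator; linearity is then obtained via the Trotter product formula, and boundedness via an equimeasurability argument involving spectral projections. You bypass all of this by observing that each $\Phi_n=\theta\circ\pi_n$, being a trace-preserving unital $*$-homomorphism between finite von Neumann algebras, extends to a measure-continuous $*$-homomorphism $\mathcal S(M_s(e_n\mathcal Ne_n))\to\mathcal S(\mathcal M)$ and an $\mathcal L_1$-isometry $\widetilde\Phi_n$; then $\mathscr{K}x:=\widetilde\Phi_n(\mathscr{K}_{e_n}x)$ is immediately well-defined, linear, positive, and $\mathcal L_1$-bounded. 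The two definitions agree, since $\exp(it\widetilde\Phi_n(\mathscr{K}_{e_n}x))=\widetilde\Phi_n(\exp(it\mathscr{K}_{e_n}x))$ by measure-continuity. Your approach is more economical; the paper's has the minor side benefit of making the equimeasurability $\mu(\mathscr{K}x)=\mu(\mathscr{K}_{e_k}x)$ (used in Corollary \ref{infincor}) explicit, but that is equally immediate from your setup since $\widetilde\Phi_n$ preserves distributions. The limiting argument for \eqref{kinf main property} is the same in both proofs.
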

\begin{proof} Let $e_n\in\mathcal{N}$ be a sequence of $\tau-$finite projections such that $e_n\uparrow \mathbbm{1}$.  For all $n\in\mathbb{N}$ consider the finite von Neumann algebra $(e_n\mathcal{N}e_n,\tau_{e_n})$.
By Proposition \ref{k compat} and Lemma \ref{pi compat} there exist unital $*$-homomorphisms
$$\pi_{e_n,e_m}:M_s(e_n\mathcal{N}e_n)\to M_s(e_m\mathcal{N}e_m), n\leq m $$ such that $ \pi_{e_n,e_m}=\pi_{e_k,e_m}\circ\pi_{e_n,e_k}$ for all $n\leq k\leq m$. Hence, by Lemma \ref{dirlimdef1} there exists the inductive limit of unital $C^*$-algebras
$$M=\lim_nM_s(e_n\mathcal{N}e_n)$$ and  canonical $*$-homomorphisms $\pi_n:M_s(e_n\mathcal{N}e_n)\to M$.
Furthermore, since every algebra $M_s(e_n\mathcal{N}e_n)$ is equipped with finite trace $\sigma_{e_n}$ and by Proposition \ref{k compat} $*$-homomorphisms $\pi_{e_n,e_k}, k\leq n,$ preserve these traces,   by Lemma \ref{dirlimdef2} the equality
 $\sigma=\lim_n\sigma_{e_n}$ defines a tracial state on $M$.
By Lemma \ref{lem_cyclic} the trace $\sigma$ can be extended on the von Neumann algebra $\mathcal{M}:=\overline{\theta(M)}^{wo}$, where $\theta$ is cyclic representation of $M$ for $\sigma$ on a Hilbert space $H$. Thus $(\mathcal{M},\sigma)$ is a finite von Neumann algebra with faithful normal finite trace $\sigma$.
In addition, if  $\mathcal{N}$ is hyperfinite, then  every von Neumann algebra $e_n\mathcal{N}e_n$ is hyperfinite. Consequently, by Lemma \ref{lem_il_hyperf} the algebra $\mathcal{M}$ is also hyperfinite.

%Recall the following properties of $\pi_n.$
%\begin{equation}\label{pie compat}
%\sigma_{e_n}=\sigma\circ\pi_n,\quad \pi_m\circ\pi_{e_n,e_m}=\pi_n,\quad n\leq m.
%\end{equation}

\textbf{Step 1.} Here we define a one-parameter unitary group $\exp(it\mathscr{K}x)$ for every $x=x^*\in e_k\mathcal{N}e_k$.
Fix $k\in\mathbb{N}$ and $x=x^*\in e_k\mathcal{N}e_k$. Since the $*$-homomorphism $\pi_{e_k,e_n}$ given by \eqref{pi_K} is trace preserving, it follows that it
is continuous in the topology of convergence in measure \cite[Proposition 3.3]{DdePS}, and therefore, for all $n\geq k$ we have
\begin{gather*}
\begin{split}
\pi_{e_k,e_n}(\exp(it\mathscr{K}_{e_k}x))&=\pi_{e_k,e_n}\left(\sum_{j=0}^\infty\frac{(it\mathscr{K}_{e_k}x)^j}{j!}\right)=
\sum_{j=0}^\infty\frac{\pi_{e_k,e_n}\left((it\mathscr{K}_{e_k}x)^j\right)}{j!}\\
&=\sum_{j=0}^\infty\frac{(it)^j\left(\pi_{e_k,e_n}(\mathscr{K}_{e_k}x)\right)^j}{j!}\stackrel{\eqref{pi_K}}{=} \sum_{j=0}^\infty\frac{(it\mathscr{K}_{e_n}x)^j}{j!}=\exp(it\mathscr{K}_{e_n}x).
\end{split}
\end{gather*}

Therefore,
$$(\theta\circ\pi_k)(\exp(it\mathscr{K}_{e_k}x))\stackrel{\eqref{eq_pi_n}}{=}(\theta\circ\pi_n\circ\pi_{e_k,e_n})(\exp(it\mathscr{K}_{e_k}x))=
(\theta\circ\pi_n)(\exp(it\mathscr{K}_{e_n}x))\in\mathcal{M}$$ for all $ n\geq k.$ Consequently, the formula $t\to(\theta\circ\pi_k)(\exp(it\mathscr{K}_{e_k}x))$ defines a one-parameter unitary group in $\mathcal{M},$ which does not depend on $k$. By Stone's theorem, there exists a self-adjoint operator $\mathscr{K}x$ on $H$ such that
\begin{equation}\label{k def}
\exp(it\mathscr{K}x)=(\theta\circ\pi_k)(\exp(it\mathscr{K}_{e_k}x)),\quad x=x^*\in e_k\mathcal{N}e_k.
\end{equation}
In addition, the operator $\mathscr{K}x$ is affiliated with $\mathcal{M}$ since $(\theta\circ\pi_k)(\exp(it\mathscr{K}_{e_k}x))\in\mathcal{M}$.
Thus, for every $x=x^*\in e_k\mathcal{N}e_k$ it is correctly defined a self-adjoint operator  $\mathscr{K}x\in S(\mathcal{M},\sigma)$, since the trace $\sigma$ finite on $\mathcal{M}$.

\textsc{Step 2.}
We claim that the mapping $x\mapsto \mathscr{K}x, x\in e_k\mathcal{N}e_k,$ is linear. It is clear that $\mathscr{K}(\lambda x)=\lambda\mathscr{K}(x)$ for all $\lambda\in\mathbb{R}$ and $x=x^*\in e_k\mathcal{N}e_k$.
Let $x=x^*,y=y^*\in e_k\mathcal{N}e_k.$ By Trotter formula \cite[Theorem VIII.31]{ReedSimon} (which is applicable to the operators $\mathscr{K}x$ and $\mathscr{K}y$, since the sum $\mathscr{K}x+\mathscr{K}y$ is understood in strong sense), we have
$$\exp(it(\mathscr{K}x+\mathscr{K}y))=(so)-\lim_{m\to\infty}(e^{i\frac{t}{m}\mathscr{K}x}e^{i\frac{t}{m}\mathscr{K}y})^m,$$
where the notation $(so)-\lim_{m\rightarrow\infty}$ stands for the limit in topology of convergence in measure.
 By \cite[Theorem 2, p. 59]{Dixmier}, $*$-homomorphisms on von Neumann algebras are continuous with respect to the strong operator topology. Therefore,
\begin{gather*}
\begin{split}
\exp(it(\mathscr{K}x+\mathscr{K}y))\\
&\stackrel{\eqref{k def}}{=}(so)-\lim_{m\to\infty}((\theta\circ\pi_k)(\exp(i\frac{t}{m}\mathscr{K}_{e_k}x))(\theta\circ\pi_k)(\exp(i\frac{t}{m}\mathscr{K}_{e_k}y)))^m\\
&=(so)-\lim_{m\to\infty}(\theta\circ\pi_k)\Big(\exp(i\frac{t}{m}\mathscr{K}_{e_k}x)\exp(i\frac{t}{m}\mathscr{K}_{e_k}y)\Big)^m\\
&=(\theta\circ\pi_k)\Big((so)-\lim_{m\to\infty}(\exp(i\frac{t}{m}\mathscr{K}_{e_k}x)\exp(i\frac{t}{m}\mathscr{K}_{e_k}y))^m\Big)\\
&=(\theta\circ\pi_k)(\exp(it(\mathscr{K}_{e_k}x+\mathscr{K}_{e_k}y)))\\
&=(\theta\circ\pi_k)(\exp(it\mathscr{K}_{e_k}(x+y)))\\
&\stackrel{\eqref{k def}}{=}\exp(it\mathscr{K}(x+y)).
\end{split}
\end{gather*}
Consequently,
\begin{equation}\label{k prelim linearity}
\mathscr{K}(x+y)=\mathscr{K}x+\mathscr{K}y,\quad x=x^*,y=y^*\in e_k\mathcal{N}e_k.
\end{equation}
Thus, the mapping $x\mapsto \mathscr{K}x$ is linear on $e_k\mathcal{N}e_k$.

\textbf{Step 3.}
Let us show that $\mathscr{K}$ acts boundedly with respect to $L_1$-norm.
Firstly, we show that for $x=x^*\in e_k\mathcal{N}e_k$ the operators $\mathscr{K}_{e_k}x$ and $\mathscr{K_{e_k}}x$ are equimeasurable. For a Schwartz function $f\in S(\mathbb{R})$, we have (see \eqref{eq_Fourier})
\begin{gather*}
\begin{split}
(\theta\circ\pi_k)(f(\mathscr{K}_{e_k}x))&=(\theta\circ\pi_k)\int_{\mathbb{R}}(\mathcal{F}(f))(\lambda)\exp(i\lambda\mathscr{K}_{e_k}x)d\lambda\\
&=\int_{\mathbb{R}}(\mathcal{F}(f))(\lambda)(\theta\circ\pi_k)\exp(i\lambda\mathscr{K}_{e_k}x)d\lambda
\\&\stackrel{\eqref{k def}}{=}
\int_{\mathbb{R}}(\mathcal{F}(f))(\lambda)\exp(i\lambda\mathscr{K}x)d\lambda=f(\mathscr{K}x),
\end{split}
\end{gather*}
where the second equality holds since $(\theta\circ\pi_k)$ is a trace preserving $*$-homomorphism (see \eqref{eq_sigma_theta_pi}).
For an interval $(s,t), s,t\in\mathbb{R}, s<t, $ choose a sequence $f_n$ of Schwartz functions, such that $f_n\downarrow\chi_{(s,t)}$. Then we have
$$(\theta\circ\pi_k)(f_n(\mathscr{K}_{e_k}x))\rightarrow(\theta\circ\pi_k)(E_{\mathscr{K}_{e_k}x}(s,t))$$
and
$$f_n(\mathscr{K}x)\rightarrow E_{\mathscr{K}x}(s,t)$$
in the topology of convergence in measure.
Consequently,
\begin{equation}\label{spec.pr}
(\theta\circ\pi_k)(E_{\mathscr{K}_{e_k}x}(s,t))=(E_{\mathscr{K}x}(s,t)), s<t.
\end{equation}
In particular, by general properties of spectral projections, we infer that equality \eqref{spec.pr} holds for $t=\infty$. Hence, by \eqref{eq_sigma_theta_pi} $$\sigma_{e_k}(E_{\mathscr{K}_{e_k}x}(s,\infty))=\sigma((\theta\circ\pi_k)(E_{\mathscr{K}_{e_k}x}(s,\infty)))=\sigma(E_{\mathscr{K}x}(s,\infty))$$ for all $s\in\mathbb{R}$. In other words, $d_{\mathscr{K}_{e_k}x}(s)=d_{\mathscr{K}x}(s)$ for all $s\in\mathbb{R}$ and $x=x^*\in e_k\mathcal{N}e_k$. Thus, the operators $\mathscr{K}_{e_k}x$ and $\mathscr{K_{e_k}}x$ are equimeasurable, in particular,
\begin{equation}\label{eq_Kek_K}
\mu(\mathscr{K}_{e_k}x)=\mu(\mathscr{K}(x)),\quad x=x^*\in e_k\mathcal{N}e_k.
\end{equation}

Furthermore, from equality \eqref{spec.pr} we also have that $(\theta\circ\pi_k)(E_{\mathscr{K}_{e_k}x}(-\infty,0))=(E_{\mathscr{K}x}(-\infty,0))$, and therefore,   $\mathscr{K}x\geq 0$ for all $e_k\mathcal{N}e_k\ni x\geq 0$. Hence, by Proposition \ref{pr_prop_K} and equality \eqref{eq_Kek_K} we infer that $\sigma(\mathscr{K}x)=\sigma_{e_k}(\mathscr{K}_{e_k}x)=\tau(x).$
Consequently, $\|\mathscr{K}x\|_1\leq 4\|x\|_1$ for every $x\in e_k\mathcal{N}e_k.$ Since the union of $e_k\mathcal{N}e_k,$ $k\geq0,$ is $\|\cdot\|_1-$dense in $\mathcal{L}_1(\mathcal{N},\tau)$ \cite{C-S} it follows that $\mathscr{K}$ admits a bounded extension $$\mathscr{K}:\mathcal{L}_1(\mathcal{N},\tau)\to\mathcal{L}_1(\mathcal{M},\sigma).$$
Thus, we have constructed the finite von Neumann algebra $\mathcal{M}$ and operator $\mathscr{K}$ acting from $\mathcal{L}_1(\mathcal{N},\tau)$ into $\mathcal{L}_1(\mathcal{M},\sigma)$.

\textbf{Step 4.} We now prove equality \eqref{kinf main property}. For this purpose fix
 $x=x^*\in e_k\mathcal{N}e_k.$ We have
$$\sigma(\exp(i\mathscr{K}x))\stackrel{\eqref{k def}}{=}(\sigma\circ\pi_k)(\exp(i\mathscr{K}_{e_k}(x)))\stackrel{\eqref{eq_sigma_n}}{=}\sigma_{e_k}(\exp(i\mathscr{K}_{e_k}(x))).$$
It follows now from Lemma \ref{k finite} that
\begin{equation}\label{pred  nuj}
\sigma(\exp(i\mathscr{K}x))=\exp(\tau_{e_k}(\exp(ix)-\mathbbm{1})),\quad x=x^*\in e_k\mathcal{N}e_k.
\end{equation}
Using again $\|\cdot\|_1$-density of union of  $e_k\mathcal{N}e_k,$ $k\geq0,$ in $\mathcal{L}_1(\mathcal{N},\tau),$ for a fixed $x=x^*\in \mathcal{L}_1(\mathcal{N},\tau)$ choose a sequence $\{x_k\}_{n\geq 1},x_k\in e_k\mathcal{N}e_k$ such that $\|x_k-x\|_1\rightarrow 0,k\rightarrow\infty$. It follows from Duhamel's formula (see e.g. \cite[Lemma 5.2]{ACDS}) that $\|e_k(\exp(ix_k)-\exp(ix))e_k\|_1\rightarrow 0,$
as well as $\|\exp(i\mathscr{K}x_k)-\exp(i\mathscr{K}x)\|_1\rightarrow 0,$ for $k\rightarrow\infty$. Therefore, $$\sigma(\exp(i\mathscr{K}x_k))\rightarrow\sigma(\exp(i\mathscr{K}x))$$ and $$\tau_{e_k}(\exp(ix_k)-\mathbbm{1})=\tau(e_k(\exp(ix_k)-\mathbbm{1})e_k)\rightarrow\tau((\exp(ix)-\mathbbm{1})).$$
Thus, by \eqref{pred  nuj} we infer
 equality \eqref{kinf main property}.
\end{proof}

The following corollary lists main properties of the Kruglov operator constructed in Theorem \ref{junge thm}.
\begin{cor}\label{infincor}Let $(\mathcal{N},\tau)$ be a semifinite atomless von Neumann algebra,  let $(\mathcal{M},\sigma)$ and  $\mathscr{K}$ be the finite von Neumann algebra and the Kruglov operator constructed in Theorem \ref{junge thm}, respectively. The following assertions hold:
\begin{enumerate}[{\rm (i)}]
\item\label{kkka} If self-adjoint operators $x,y\in\mathcal{L}_1(\mathcal{N},\tau)$ are equimeasurable, then $\mathscr{K}x,\mathscr{K}y\in \mathcal{L}_1(\mathcal{M},\sigma)$ are equimeasurable too. In addition, if $x=x^*\in\mathcal{L}_1(\mathcal{N},\tau)$ is equimeasurable with $y\in L_1(0,1),$ then the elements $\mathscr{K}x\in\mathcal{L}_1(\mathcal{M})$ and $Ky\in L_1(0,1)$ are equimeasurable.
\item\label{kkk_L_2}The operator $\mathscr{K}$ acts boundedly from $(\mathcal{L}_1\cap\mathcal{L}_2)(\mathcal{N})$, equipped with the norm $\|\cdot\|_{\mathcal{L}_1\cap\mathcal{L}_2}=\|\cdot\|_{\mathcal{L}_1}+\|\cdot\|_{\mathcal{L}_2}$  into $\mathcal{L}_2(\mathcal{M})$.
\item\label{kkkcom}For all $x,y\in (\mathcal{L}_1\cap\mathcal{L}_2)(\mathcal{N})$ the equality $[\mathscr{K}x,\mathscr{K}y]=\mathscr{K}[x,y]$ holds.
\item\label{kkke} If $x_k=x_k^*\in\mathcal{L}_1(\mathcal{N},\tau),$ $1\leq k\leq n,$ are such that $x_jx_k=0$ for $j\neq k,$ then the random variables $\mathscr{K}x_k\in \mathcal{L}_1(\mathcal{M},\sigma),$ $1\leq k\leq n,$ are independent in the sense of Section \ref{indep subsect}.
\item\label{kkkc} The operator $\mathscr{K}$ maps symmetrically distributed operators from $\mathcal{L}_1(\mathcal{N},\tau)$ into symmetrically distributed operators from $\mathcal{L}_1(\mathcal{M},\sigma)$.
\end{enumerate}
\end{cor}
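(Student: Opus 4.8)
The plan is to organise the proof so that the items feed into one another: I would prove \eqref{kkka} first, deduce \eqref{kkkc} from it, carry the computations behind \eqref{kkk_L_2} and \eqref{kkkcom} on the $\|\cdot\|_{\mathcal{L}_1\cap\mathcal{L}_2}$-dense subalgebra $\bigcup_ke_k\mathcal{N}e_k$ (with $e_k\uparrow\mathbbm 1$ the exhausting sequence from the proof of Theorem \ref{junge thm}), and leave \eqref{kkke} — the genuinely new point — for last.

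For \eqref{kkka}, the crucial remark is that applying \eqref{kinf main property} to $tx$ in place of $x$, together with the linearity of $\mathscr{K}$, gives
\[
\sigma(\exp(it\mathscr{K}x))=\exp\bigl(\tau(\exp(itx)-\mathbbm 1)\bigr),\qquad t\in\mathbb R,\quad x=x^*\in\mathcal{L}_1(\mathcal{N},\tau),
\]
and that $\exp(itx)-\mathbbm 1\in\mathcal{L}_1(\mathcal{N},\tau)$ since $|\exp(itx)-\mathbbm 1|\le|t|\,|x|$. Splitting $x$ along the supports of $x_+$ and $x_-$ shows that $\tau(\exp(itx)-\mathbbm 1)$ depends only on $d_{x_+}$ and $d_{x_-}$, so equimeasurable self-adjoint $x,y$ have the same characteristic function $t\mapsto\sigma(\exp(it\mathscr{K}x))$; since $(\mathcal{M},\sigma)$ is finite, this determines the distribution function of $\mathscr{K}x$, and identical distribution functions in a finite algebra mean equimeasurability. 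For the second half of \eqref{kkka}, a direct computation from \eqref{kastsuk} (conditioning on the sets $A_k$, on which $Ky$ is a sum of $k$ independent copies of $y$, and summing the resulting series $\frac1e\sum_k\frac1{k!}(\int_0^1\exp(ity)\,dm)^k$) yields $\int_\Omega\exp(itKy)=\exp\bigl(\int_0^1(\exp(ity)-1)\,dm\bigr)$; if $x=x^*$ is equimeasurable with $y\in L_1(0,1)$ this equals $\sigma(\exp(it\mathscr{K}x))$, and the same uniqueness argument forces $\mathscr{K}x$ and $Ky$ to be equimeasurable. Finally \eqref{kkkc} is immediate: if $x$ is symmetrically distributed then $\mathscr{K}(-x)=-\mathscr{K}x$ by linearity, so \eqref{kkka} makes $\mathscr{K}x$ equimeasurable with $-\mathscr{K}x$.

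For \eqref{kkk_L_2} and \eqref{kkkcom} I would first record that for $x=x^*\in e_k\mathcal{N}e_k$ the operator $(\theta\circ\pi_k)(\mathscr{K}_{e_k}x)$ has one-parameter unitary group $(\theta\circ\pi_k)(\exp(it\mathscr{K}_{e_k}x))=\exp(it\mathscr{K}x)$, so $\mathscr{K}x=(\theta\circ\pi_k)(\mathscr{K}_{e_k}x)$ by Stone's theorem; by linearity and an $\mathcal{L}_1$-approximation this persists for every $x\in\mathcal{L}_2(e_k\mathcal{N}e_k)$. Since $\theta\circ\pi_k$ is trace preserving, \eqref{eq_Kek_K} gives $\|\mathscr{K}x\|_2=\|\mathscr{K}_{e_k}x\|_2$, and expanding $\sigma_{e_k}(|\mathscr{K}_{e_k}x|^2)$ — using that the $k$ summands in \eqref{k fin def} behave like $k$ independent copies of $x$, so the exponential series collapse as in Lemma \ref{k finite} — yields $\sigma_{e_k}(|\mathscr{K}_{e_k}x|^2)=\tau(x^2)+\tau(x)^2\le(\|x\|_1+\|x\|_2)^2$; passing to real and imaginary parts and then to the $\|\cdot\|_{\mathcal{L}_1\cap\mathcal{L}_2}$-closure (using that $\mathscr{K}$ is already $\mathcal{L}_1$-bounded) proves \eqref{kkk_L_2}. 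Since $\theta\circ\pi_k$ respects the strong product of measurable operators, Proposition \ref{pr_prop_K}\eqref{kommut lemma} then gives $[\mathscr{K}x,\mathscr{K}y]=(\theta\circ\pi_k)([\mathscr{K}_{e_k}x,\mathscr{K}_{e_k}y])=(\theta\circ\pi_k)(\mathscr{K}_{e_k}[x,y])=\mathscr{K}[x,y]$ for $x,y\in\mathcal{L}_2(e_k\mathcal{N}e_k)$; by \eqref{kkk_L_2}, H\"older's inequality $\|ab\|_1\le\|a\|_2\|b\|_2$, and density of $\bigcup_k\mathcal{L}_2(e_k\mathcal{N}e_k)$, both sides are $\|\cdot\|_{\mathcal{L}_1\cap\mathcal{L}_2}$-continuous in $x,y$, which extends \eqref{kkkcom} to all of $(\mathcal{L}_1\cap\mathcal{L}_2)(\mathcal{N})$ (note $[x,y]\in\mathcal{L}_1(\mathcal{N},\tau)$, so $\mathscr{K}[x,y]$ is meaningful).

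The main obstacle is \eqref{kkke}, the difficulty being that the support projections of the $x_k$ need not be $\tau$-finite, so one cannot simply restrict to a corner. I would fix this by choosing, inside the corner cut by $\mathrm{supp}(x_k)$, $\tau$-finite projections $q_m^{(k)}\uparrow\mathrm{supp}(x_k)$ and setting $q_m=\sum_{k=1}^nq_m^{(k)}$: then $q_m$ is $\tau$-finite, the truncations $x_k^{(m)}:=q_mx_kq_m=q_m^{(k)}x_kq_m^{(k)}$ remain pairwise disjointly supported, all lie in the finite corner $q_m\mathcal{N}q_m$, and $x_k^{(m)}\to x_k$ in $\mathcal{L}_1(\mathcal{N},\tau)$. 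By Theorem \ref{fincor}\eqref{kke} the operators $\mathscr{K}_{q_m}x_k^{(m)}$ are independent in $(M_s(q_m\mathcal{N}q_m),\sigma_{q_m})$; pushing forward through a trace-preserving $*$-homomorphism $M_s(q_m\mathcal{N}q_m)\to\mathcal{M}$ carrying $\mathscr{K}_{q_m}$ to $\mathscr{K}$ (available from the inductive-limit construction of Theorem \ref{junge thm}, which is independent of the chosen exhaustion) preserves independence, so $\mathscr{K}x_1^{(m)},\dots,\mathscr{K}x_n^{(m)}$ are independent in $(\mathcal{M},\sigma)$. Letting $m\to\infty$, $\mathscr{K}x_k^{(m)}\to\mathscr{K}x_k$ in $\mathcal{L}_1(\mathcal{M},\sigma)$ hence in measure; since bounded continuous functional calculus is continuous for the measure topology and the resulting elements are uniformly bounded in the finite algebra $\mathcal{M}$, the factorisation $\sigma(\prod_kf_k(\mathscr{K}x_k^{(m)}))=\prod_k\sigma(f_k(\mathscr{K}x_k^{(m)}))$ survives the limit for all bounded continuous $f_k$, and weak$^*$-density of such functions gives independence of $\mathscr{K}x_1,\dots,\mathscr{K}x_n$. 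I expect the construction of truncations that are simultaneously disjointly supported, confined to one finite corner, and $\mathcal{L}_1$-convergent, together with the transfer of independence across the limit, to be the technical heart of the argument.
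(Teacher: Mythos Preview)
Your treatment of \eqref{kkka}, \eqref{kkk_L_2}, \eqref{kkkcom} and \eqref{kkkc} is correct and matches the paper closely. The only cosmetic differences are that the paper obtains $\sigma((\mathscr{K}x)^2)=\tau(x^2)+\tau(x)^2$ by differentiating \eqref{kinf main property} twice (after first checking $\mathscr{K}x\in\bigcap_n\mathcal{L}_n(\mathcal{M})$ for bounded finitely supported $x$) rather than by your direct Fock-space computation, and that for the second half of \eqref{kkka} the paper appeals to Lemma~\ref{lem2} rather than computing the characteristic function of $Ky$ by hand. Your derivation of \eqref{kkkc} from \eqref{kkka} alone is also fine; the paper's mention of \eqref{kkke} here is not needed.

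For \eqref{kkke}, however, you are working much harder than necessary and your argument has a gap. The paper simply notes that the proof of Theorem~\ref{fincor}\eqref{kke} goes through verbatim: that proof uses only the characteristic-function identity (Lemma~\ref{k finite} there, replaced by \eqref{kinf main property} here) together with the commutativity of the $\mathscr{K}x_k$. The latter follows from \eqref{kkkcom} after the obvious truncation $x_k\mapsto x_kE_{|x_k|}[1/m,\infty)\in\mathcal{L}_1\cap\mathcal{L}_\infty\subset\mathcal{L}_1\cap\mathcal{L}_2$ and a passage to the limit in measure on the unitaries $\exp(it\mathscr{K}x_k)$. No detour through the finite-corner operators $\mathscr{K}_{q_m}$ is required.

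Your route, by contrast, relies on a trace-preserving $*$-homomorphism $M_s(q_m\mathcal{N}q_m)\to\mathcal{M}$ intertwining $\mathscr{K}_{q_m}$ with $\mathscr{K}$, which you justify by asserting that the inductive-limit construction of Theorem~\ref{junge thm} is independent of the chosen exhaustion. This is not proved in the paper and is not immediate: $\mathcal{M}$ is built from a \emph{fixed} sequence $e_n\uparrow\mathbbm{1}$, and an arbitrary $\tau$-finite projection $q_m$ need not satisfy $q_m\le e_n$ for any $n$, so there is no ready-made map $\pi_{q_m,e_n}$ to compose with $\theta\circ\pi_n$. One could try to repair this by enlarging the exhaustion a priori or by proving exhaustion-independence of $(\mathcal{M},\sigma,\mathscr{K})$ up to a suitable isomorphism, but either route costs more than simply rerunning the characteristic-function argument of Theorem~\ref{fincor}\eqref{kke} with \eqref{kinf main property} in place of Lemma~\ref{k finite}.
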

\begin{proof}
\eqref{kkka}. We have that
\begin{gather*}
\begin{split}
\tau(\exp(ix)-\mathbbm{1})&\stackrel{\eqref{xdg1}}{=}\tau(\exp(ix_+)-\mathbbm{1})+\tau(\exp(ix_-)-\mathbbm{1})\\
&=\int_0^\infty(\exp(i\mu(s,x_+))-1)ds+
\int_0^\infty(\exp(i\mu(s,x_-))-1)ds\\
&=\int_0^\infty(\exp(i\mu(s,y_+))-1)ds+\int_0^\infty(\exp(i\mu(s,y_-))-1)ds\\
&=\tau(\exp(iy_+)-\mathbbm{1})+\tau(\exp(iy_-)-\mathbbm{1})\stackrel{\eqref{xdg1}}{=}\tau(\exp(iy)-\mathbbm{1}),\
\end{split}
\end{gather*}
 and therefore by equality \eqref{kinf main property}
we obtain that
\begin{gather*}
\begin{split}
\sigma(\exp(i\mathscr{K}x))&=\exp(\tau(\exp(ix)-\mathbbm{1}))=\exp(\tau(\exp(iy)-\mathbbm{1}))=\sigma(\exp(i\mathscr{K}y))
\end{split}
\end{gather*}
By \cite[Theorem 2.12.1]{Gned_Kol} the preceding equality implies that $d_{\mathscr{K}x}=d_{\mathscr{K}y}$. Since the von Neumann algebra $\mathcal{M}$ is finite we infer that the  operators $\mathscr{K}x$ and $\mathscr{K}y$ are equimeasurable. If $x$ is equimeasurable with $y\in L_1[0,1]$, then the assertion is proved similarly using Lemma \ref{lem2}.

\eqref{kkk_L_2}. Suppose firstly that $0\leq x\in\mathcal{N}$ with finite support. There exist a finite sequence of projections $\{q_k\}_{k=0}^{N-1}\subset\mathcal{N}$ with $\tau(q_k)=1$ such that $x\leq \sum_{k=0}^{N-1}\|x\|_\infty q_k$. Since the operator $\mathscr{K}$ is positive we have that $\mathscr{K}x\leq \|x\|_\infty\sum_{k=0}^{N-1}\mathscr{K}q_k$. Hence
$$\mu(\mathscr{K}x)\leq\|x\|_\infty\mu(\sum_{k=0}^{N-1}\mathscr{K}q_k)\leq \|x\|_\infty ND_N\mu(\mathscr{K}q_0).$$
In addition by \eqref{kkka} the equality $\mu(\mathscr{K}q_0)=\mu(K\mathbbm{1})\in\bigcap_{n\geq 1}L_n(\Omega)$ holds \cite[Theorem 4.4]{AS}, and therefore we obtain $\mathscr{K}x\in\bigcap_{n\geq 1}\mathcal{L}_n(\mathcal{M})$.  Furthermore, by standard arguments one can prove that $\frac{d^2}{dt^2}\sigma(\exp(it\mathscr{K}x))\big|_{t=0}=-\sigma((\mathscr{K}x)^2)$ and
$\frac{d^2}{dt^2}\exp(\tau(\exp(itx)-\mathbbm{1}))\big|_{t=0}=-\tau(x^2)-\tau^2(x)$. Consequently, by \eqref{kinf main property} we infer that $\sigma((\mathscr{K}x)^2)=\tau(x^2)+\tau^2(x)$, and therefore  $\|\mathscr{K}x\|_2\leq \|x\|_1+\|x\|_2$.

Now, for arbitrary $x\geq 0$ from $(\mathcal{L}_1\cap\mathcal{L}_2)(\mathcal{N})$ we set $x_n=xE_x[\frac1n,n]$. We have that $x_n\uparrow x$ and $x_n\xrightarrow{\|\cdot\|_1}x$ \cite{C-S}, and therefore $\mathscr{K}x_n\uparrow$ and by Theorem \ref{junge thm} $\mathscr{K}x_n\xrightarrow{\|\cdot\|_1}\mathscr{K}x$. Since in addition $\|\mathscr{K}x_n\|_2\leq\|x\|_1+\|x_2\|$ for all $n\in\mathbb{N}$ we obtain that $\|\mathscr{K}x\|_2\leq\|x\|_1+\|x\|_2$ due to Fatou property of $\mathcal{L}_2(\mathcal{M})$ (see e.g. \cite{ddp}).

\eqref{kkkcom}. Let $x,y\in(\mathcal{L}_1\cap\mathcal{L}_2)(\mathcal{N})$. Set $x_n=e_nxe_n, y_n=e_nye_n,$ where $e_n$ is sequence of $\tau$-finite projections from the proof of Theorem \ref{junge thm}. By Proposition \ref{pr_prop_K} \eqref{prop_K5} the equality $[\mathscr{K}x_n,\mathscr{K}y_n]=\mathscr{K}[x_n,y_n]$ holds. Since $[x_n,y_n]\xrightarrow{\|\cdot\|_1}[x,y]$ \cite{C-S} we have that $\mathscr{K}[x_n,y_n]\xrightarrow{\|\cdot\|_1}\mathscr{K}[x,y].$ On the other hand, by \eqref{kkk_L_2} we obtain
\begin{gather*}
\begin{split}
\|[\mathscr{K}x_n,\mathscr{K}y_n]&-[\mathscr{K}x,\mathscr{K}y]\|_1\leq \|[\mathscr{K}x_n-\mathscr{K}x,\mathscr{K}y_n]\|_1+\|[\mathscr{K}x,\mathscr{K}y_n-\mathscr{K}y]\|_1\\
&\leq
2\|\mathscr{K}x_n-\mathscr{K}x\|_2\|\mathscr{K}y_n\|_2+2\|\mathscr{K}x\|_2\|\mathscr{K}y_n-\mathscr{K}y\|_2
\\&\leq 2\|\mathscr{K}\|^2_{\mathcal{L}_1\cap\mathcal{L}_2\to\mathcal{L}_2}(\|x_n-x\|_{\mathcal{L}_1\cap\mathcal{L}_2}\|y_n\|_{\mathcal{L}_1\cap\mathcal{L}_2} +\|x\|_{\mathcal{L}_1\cap\mathcal{L}_2}\|y_n-y\|_{\mathcal{L}_1\cap\mathcal{L}_2}).
\end{split}
\end{gather*}
And therefore, since $\|x_n-x\|_{\mathcal{L}_1\cap\mathcal{L}_2}\rightarrow 0, \|y_n-y\|_{\mathcal{L}_1\cap\mathcal{L}_2}\rightarrow 0$ we infer that $$[\mathscr{K}x_n,\mathscr{K}y_n]\xrightarrow{\|\cdot\|_1}[\mathscr{K}x,\mathscr{K}y]$$
and, therefore, $[\mathscr{K}x,\mathscr{K}y]=\mathscr{K}[x,y].$

The proof of property \eqref{kkke} is identical to that of Theorem \ref{fincor} and is, therefore, omitted. Property \eqref{kkkc} is a straightforward consequence of \eqref{kkka} and \eqref{kkke}.

\end{proof}

\begin{cor}
Let $\mathcal{R}$ be the hyperfinite $II_1$ factor, $H$ be a separable infinite-dimensional Hilbert space and let $\mathcal{R}\bar{\otimes}\mathcal{L}(H)$ be the hyperfinite $II_\infty$ factor. Then the operator $\mathscr{K}$ defined in Theorem \ref{junge thm} acts from $\mathscr{K}:\mathcal{L}_1(\mathcal{R}\bar{\otimes}\mathcal{L}(H),\tau\otimes{\rm Tr})$ into $\mathcal{L}_1(\mathcal{R},\tau).$
\end{cor}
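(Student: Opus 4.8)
The plan is to apply Theorem~\ref{junge thm} to the semifinite von Neumann algebra $\mathcal{N}:=\mathcal{R}\bar{\otimes}\mathcal{L}(H)$ equipped with the faithful normal semifinite trace $\tau_{\mathcal{R}}\otimes\mathrm{Tr}$ (which we continue to denote by $\tau$), and then to transport the resulting $\mathcal{L}_1$-valued operator into $\mathcal{L}_1(\mathcal{R},\tau)$ by means of a trace-preserving embedding of the ambient finite algebra. First note that $\mathcal{N}$ is a semifinite, non-finite von Neumann algebra (since $\mathcal{L}(H)$ is infinite-dimensional) which is hyperfinite and has separable predual, being the tensor product of $\mathcal{R}$ and $\mathcal{L}(H)$, both of which have separable preduals when $H$ is separable. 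Theorem~\ref{junge thm} then furnishes a hyperfinite finite von Neumann algebra $(\mathcal{M},\sigma)$ with $\sigma(\mathbbm{1})=1$ together with a positive bounded operator $\mathscr{K}\colon\mathcal{L}_1(\mathcal{N},\tau)\to\mathcal{L}_1(\mathcal{M},\sigma)$ satisfying \eqref{kinf main property}.

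The next step is to check that $\mathcal{M}$ has separable predual. Recalling the construction of $\mathcal{M}$ in the proof of Theorem~\ref{junge thm}, we have $\mathcal{M}=\overline{\theta(M)}^{wo}$, where $M=\lim_n M_s(e_n\mathcal{N}e_n)$ and $\theta$ is the GNS representation of $M$ associated with $\sigma$. Each $e_n\mathcal{N}e_n$ has separable predual, hence so does $M_s(e_n\mathcal{N}e_n)=\bigoplus_{k\geq0}(e_n\mathcal{N}e_n)^{\otimes k}_s$; consequently the union $\bigcup_n\pi_n(M_s(e_n\mathcal{N}e_n))$ is separable with respect to the Hilbert norm $\|\cdot\|_{2,\sigma}$ and its $\theta$-image is total in the GNS space. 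Thus the GNS Hilbert space is separable and $\mathcal{M}$, acting on it, has separable predual.

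The crux is then the standard universality of the hyperfinite $\mathrm{II}_1$ factor: every hyperfinite von Neumann algebra with separable predual equipped with a faithful normal tracial state admits a unital normal trace-preserving $*$-embedding $j\colon\mathcal{M}\hookrightarrow\mathcal{R}$. One produces $j$ inductively along an increasing sequence of finite-dimensional subalgebras with weak$^*$-dense union in $\mathcal{M}$, at each stage extending across a finite-dimensional inclusion inside $\mathcal{R}$ (possible because relative commutants of finite-dimensional subalgebras of $\mathcal{R}$ are again $\mathrm{II}_1$ factors); I would cite the standard references rather than reproduce this. Since $j$ is trace-preserving, $|j(x)|=j(|x|)$ yields $\|j(x)\|_{1,\tau}=\sigma(|x|)=\|x\|_{1,\sigma}$ for $x\in\mathcal{M}$, so $j$ extends, by density of $\mathcal{M}$ in $\mathcal{L}_1(\mathcal{M},\sigma)$, to an isometric embedding $\tilde{j}\colon\mathcal{L}_1(\mathcal{M},\sigma)\to\mathcal{L}_1(\mathcal{R},\tau)$.

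Finally, $\tilde{j}\circ\mathscr{K}\colon\mathcal{L}_1(\mathcal{N},\tau)\to\mathcal{L}_1(\mathcal{R},\tau)$ is the asserted operator: it is positive and bounded, and since $\tilde{j}$ (equivalently $j$) is a trace-preserving $*$-homomorphism it inherits the defining identity $\tau(\exp(i\tilde{j}\mathscr{K}x))=\sigma(\exp(i\mathscr{K}x))=\exp(\tau(\exp(ix)-\mathbbm{1}))$ for self-adjoint $x\in\mathcal{L}_1(\mathcal{N},\tau)$, so all the structural properties recorded in Corollary~\ref{infincor} persist. The only substantive point is the trace-preserving embedding $\mathcal{M}\hookrightarrow\mathcal{R}$ of the third step; everything else is bookkeeping built directly on Theorem~\ref{junge thm}.
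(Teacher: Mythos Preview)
Your proof is correct and follows essentially the same approach as the paper: apply Theorem~\ref{junge thm} to $\mathcal{N}=\mathcal{R}\bar{\otimes}\mathcal{L}(H)$ to obtain a hyperfinite finite $(\mathcal{M},\sigma)$, then invoke the universality of $\mathcal{R}$ to obtain a trace-preserving embedding $\mathcal{M}\hookrightarrow\mathcal{R}$ (the paper cites the proof of Theorem~5.2 in \cite{hrs} for this step) and compose. Your added verification that $\mathcal{M}$ has separable predual is a detail the paper leaves implicit but which is indeed needed for the embedding into $\mathcal{R}$.
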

\begin{proof} Set $\mathcal{N}=\mathcal{R}\bar{\otimes}\mathcal{L}(H)$ in Theorem \ref{junge thm}. We have $\mathscr{K}:\mathcal{L}_1(\mathcal{N},\tau)\to\mathcal{L}_1(\mathcal{M},\sigma),$ where $\mathcal{M}$ is hyperfinite. Since every hyperfinite II$_1-$type von Neumann algebra embeds into $\mathcal{R}$ (see e.g. the proof of Theorem 5.2 in \cite{hrs}), it follows that $\mathscr{K}:\mathcal{L}_1(\mathcal{N},\tau)\to\mathcal{L}_1(\mathcal{R},\tau).$
\end{proof}

\section{Proof of Theorem \ref{orlicz embedding theorem}}

 In this section we prove Theorem \ref{orlicz embedding theorem}.
 Let us first note that the
equivalence \eqref{Oetii}$\Leftrightarrow$\eqref{Oetiii} of
Theorem \ref{orlicz embedding theorem} follows from the equivalence
\eqref{pri}$\Leftrightarrow$\eqref{prv} of Theorem \ref{prop_O}.

\subsection{Proof of Theorem \ref{orlicz embedding theorem}: \eqref{Oeti}$\Rightarrow$\eqref{Oetii}}

\begin{proof}[Proof of Theorem \ref{orlicz embedding theorem}: \eqref{Oeti}$\Rightarrow$\eqref{Oetii}]
%This implication is somewhat similar to Theorem 1.d.7 in \cite{LT2}. However, the proof is quite different.
%\begin{lem}\label{upper p lower 2} If an ideal $\mathcal{I}$ admits an isomorphic embedding into $\mathcal{L}_p(\mathcal{R}),$ $1\leq p<2,$ then it satisfies upper $p-$estimate and lower $2-$estimate. That is, given operators $A_k\in\mathcal{I},$ $1\leq k\leq n,$ such that $A_kA_j=A_k^*A_j=0$ for $j\neq k,$ we have
%$${\rm const}\cdot(\sum_{k=1}^n\|A_k\|_{\mathcal{I}}^2)^{1/2}\leq\|\sum_{k=1}^nA_k\|_{\mathcal{I}}\leq{\rm const}\cdot(\sum_{k=1}^n\|A_k\|_{\mathcal{I}}^p)^{1/p}.$$
%\end{lem}
 %Since the implication \eqref{Oeti}$\Rightarrow$\eqref{Oetii} is in fact valid for an arbitrary Banach ideal $\mathcal I,$ we do not specify the ideal in the proof below.
We first prove that if  a Banach ideal $\mathcal{I}$ isomorphically embeds into  $\mathcal{L}_p(\mathcal{R})$ ($1\le p<2$), then
$\mathcal{I}$ satisfies an upper $p$-estimate and a lower $2$-estimate.

  Let $T$ be an isomorphic embedding of $\mathcal{I}$ into $\mathcal{L}_p(\mathcal{R})$ ($1\le p<2$). Fix a constant $C>0$ such that \begin{equation}\label{eq_Oet1}C^{-1}\|A\|_{\mathcal{I}}\leq\|T(A)\|_p\leq C\|A\|_{\mathcal{I}} \ \ \ \mbox{for every} \ \  A\in \mathcal I.\end{equation}
Fix $n\ge 0$ and operators $A_k\in\mathcal{I},$ $0\leq k\leq n,$
such that $l(A_j)l(A_k)=0$ for $j\neq k.$
 Observe that $l(A_j)l(A_k)=0$ implies $A_j^*A_k=0$ for $j\neq k.$ It follows that for almost all $t\in(0,1),$ we have
\begin{equation}\label{eq_Oet2}\Big\|\sum_{k=0}^nr_k(t)A_k\Big\|_{\mathcal{I}}=\Big\|\sum_{k=0}^nA_k\Big\|_\mathcal{I},\end{equation}
where $r_k,$ $0\leq k\leq n,$ are the Rademacher functions.
Applying subsequently \eqref{eq_Oet2}, \eqref{eq_Oet1} and
identifying the Bochner space
$L_p((0,1),\mathcal{L}_p(\mathcal{R}))$ with
$\mathcal{L}_p(\mathcal{R}\bar{\otimes}L_{\infty}(0,1),\tau\otimes
dm)$ (see e.g \cite[Lemma 6.2]{BGM}), we obtain
\begin{multline}\label{eq_Oet3}\Big\|\sum_{k=0}^nA_k\Big\|_{\mathcal{I}}=\Big(\int_0^1\Big\|\sum_{k=0}^nr_k(t)A_k\Big\|_{\mathcal{I}}^pdt\Big)^{1/p}\leq C\Big(\int_0^1\Big\|\sum_{k=0}^nr_k(t)T(A_k)\Big\|_p^pdt\Big)^{1/p}\\=C\Big\|\sum_{k=0}^nr_kT(A_k)\Big\|_{L_p((0,1),\mathcal{L}_p(\mathcal{R}))}=
C\Big\|\sum_{k=0}^nT(A_k)\otimes
r_k\Big\|_{\mathcal{L}_p(\mathcal{R}\bar{\otimes}L_{\infty}(0,1))}.\end{multline}
  Similarly, we obtain
\begin{equation}\label{eq_Oet4}\Big\|\sum_{k=0}^nA_k\Big\|_{\mathcal{I}}
\geq C^{-1}\Big\|\sum_{k=0}^nT(A_k)\otimes
r_k\Big\|_p,\end{equation} where for brevity we use $\|\cdot\|_p$
instead of
$\|\cdot\|_{\mathcal{L}_p(\mathcal{R}\bar{\otimes}L_{\infty}(0,1))}$.
By Theorem \ref{lp theorem}, we have
\begin{equation}\label{eq_Oet5}{\rm const}\cdot\Big(\sum_{k=0}^n\|T(A_k)\|_p^2\Big)^{1/2}\leq\Big\|\sum_{k=0}^nT(A_k)\otimes r_k\Big\|_p\leq\Big(\sum_{k=0}^n\|T(A_k)\|_p^p\Big)^{1/p}.\end{equation}
Combining  \eqref{eq_Oet3} and \eqref{eq_Oet4} with \eqref{eq_Oet5} and \eqref{eq_Oet1}, we conclude
$${\rm const}\cdot\Big(\sum_{k=0}^n\|A_k\|_{\mathcal{I}}^2\Big)^{1/2}\leq\Big\|\sum_{k=0}^nA_k\Big\|_{\mathcal{I}}\leq{\rm const}\cdot\Big(\sum_{k=0}^n\|A_k\|_{\mathcal{I}}^p\Big)^{1/p}.$$
Now let $\mathcal{I}=\mathcal L_M(H)$ isomorphically embeds into  $\mathcal{L}_p(\mathcal{R})$ ($1\le p<2$). Then, as we proved above,
$\mathcal L_M(H)$  satisfies an upper $p$-estimate and a lower $2$-estimate. The implication follows from Theorem \ref{prop_O}.
\end{proof}
%\begin{lem}\label{interpolation criterion} An Orlicz function $M$ is equivalent to a $p-$convex and $2-$concave Orlicz function (on the interval $[0,1]$) with $1\leq p<2$ if and only if $\mathcal{L}_M(H)$ is an interpolation space for the couple $(\mathcal{L}_p(H),\mathcal{L}_2(H)).$
%\end{lem}

\subsection{Proof of Theorem \ref{orlicz embedding theorem}: \eqref{Oetiii}$\Leftrightarrow$\eqref{Oetiv}}

\begin{proof}[Proof of Theorem \ref{orlicz embedding theorem}: \eqref{Oetiii}$\Rightarrow$\eqref{Oetiv}] Let $M$ satisfies \eqref{Oetiii}. We may assume that $M$ is $p$-convex and $2$-concave. Define an Orlicz function $M_1$ by setting $$M_1(t):=M(t^{1/p}), \ \ \ t>0.$$
According to Remark \ref{rem_Orl_mon_submaj}, for every $0\leq x\in l_{M_1}$ and for every $0\leq y\in l_{\infty},$ we have
\begin{equation}\label{eq_(iii)to(iv)1}y\prec\prec x\Longrightarrow \|y\|_{l_{M_1}}\leq\|x\|_{l_{M_1}},\end{equation}
for the notion of submajorization ("$\prec\prec$") see
Definition \ref{hlp majorization def first}.
Now if $0\le x\in l_M,$ then $x^p\in l_{M_1}$ and $y^p\in l_{\infty}.$ Applying \eqref{eq_(iii)to(iv)1} to $x^p$ and $y^p,$ we obtain that the implication
$$y^p\prec\prec x^p\Longrightarrow \|y\|_{l_M}\leq\|x\|_{l_M}$$ holds.
By \cite[Theorem 2]{LSh}, we have that $l_M$ is an interpolation space for the couple $(l_p,l_{\infty}).$

Since by the assumption $M$ is $2$-concave,  it similarly follows from \cite[Theorem 3]{LSh} that  $l_M$ is an interpolation space for the couple $(l_1,l_2).$  Applying \cite[Corollary]{arazycwikel}, we obtain that $l_M$ is an interpolation space for the couple $(l_p,l_2).$ By \cite[Theorem 2.7]{arazy}, $\mathcal{L}_M(H)$ is an interpolation space for the couple $(\mathcal{L}_p(H),\mathcal{L}_2(H)).$
\end{proof}

\begin{proof}[Proof of Theorem \ref{orlicz embedding theorem}:  \eqref{Oetiv}$\Rightarrow$\eqref{Oetiii}]
Suppose that the Orlicz space $\mathcal{L}_M(H)$ is an interpolation space for the
couple $(\mathcal{L}_p(H),\mathcal{L}_2(H)).$ Then again by
\cite[Theorem 2.7]{arazy}, the commutative core $l_M$ is an interpolation space for the
couple $(l_p,l_2).$ For every $n\geq 1$ and $x=\{x(k)\}_{k\ge 0}\in l_\infty$.
Using \eqref{eq_D1},
we obtain
\begin{multline*}\|D_n\|_{l_M\to l_M}\leq{\rm const}\cdot \max\{n^{1/2},n^{1/p}\}={\rm const}\cdot n^{1/p},\\ \|D_{1/n}\|_{l_M\to l_M}\leq
{\rm const}\cdot \max\{n^{-1/2},n^{-1/p}\}={\rm const}\cdot n^{-1/2},\quad n\geq 1,\end{multline*}
where $D_n$ and $D_{1/n}$ are defined by \eqref{eq_D_discrete}.
Therefore, for $m\ge 1$, we have
$$\Big\|\sum_{j=0}^{nm-1}e_j\Big\|_{l_M}\leq\|D_n\|_{l_M\to l_M}\Big\|\sum_{j=0}^{m-1}e_j\Big\|_{l_M}\leq{\rm const}\cdot n^{1/p}\Big\|\sum_{j=0}^{m-1}e_j\Big\|_{l_M}$$
and
$$\Big\|\sum_{j=0}^{m-1}e_j\Big\|_{l_M}\leq\|D_{1/n}\|_{l_M\to l_M}\Big\|\sum_{j=0}^{nm-1}e_j\Big\|_{l_M}\leq{\rm const}\cdot n^{-1/2}\Big\|\sum_{j=0}^{nm-1}e_j\Big\|_{l_M}.$$ Thus, we arrive at \eqref{eq_(ii)to(iii)1}.
Arguing as in the proof of Theorem \ref{prop_O}
\eqref{priv}$\Rightarrow$\eqref{prv} (see \eqref{eq_(ii)to(iii)1}
and subsequent arguments),
 we obtain that $M$ is equivalent to a $p$-convex and $2$-concave Orlicz function.
\end{proof}

\subsection{Proof of Theorem \ref{orlicz embedding theorem}: \eqref{Oetiii}$\Rightarrow$\eqref{Oeti}}

The proof of this implication  is much more difficult and is in
fact completely different from previous ones. Here we use the concept of Kruglov operators developed in Sections \ref{krug fin}
and \ref{junge section}. Before we proceed with the proof of the
implication, let us present several technical estimates, which are important for the subsequent exposition.

%In this section, we prove that an Orlicz ideal in $\mathcal{L}(H),$ which is an interpolation space for the couple $(\mathcal{L}_p(H),\mathcal{L}_2(H)),$ $1\leq p<2,$ (distinct from $\mathcal{L}_p(H)$), can be isomorphically embedded into $\mathcal{L}_p(\mathcal{R}).$

%Let $x,y\in L_{\infty}(0,\infty)$ (or $x,y\in L_{\infty}(0,1)$). Recall that (see Definition \ref{hlp majorization def first}) that $y$ is submajorized by $x$ (written $y\prec\prec x$) if
%$$\int_0^t\mu(s,y)ds\leq\int_0^t\mu(s,x)ds,\quad t>0.$$

The following lemma yields a submajorization estimate for the element
$Kx,$ $x\in L_1(0,1),$ where $K$ is the Kruglov operator defined by \eqref{kastsuk}. Recall that the notion of submajorization ("$\prec\prec$") is given in
Definition \ref{hlp majorization def first} and the dilation operator $D$ is defined by \eqref{eq_D_continuous}.

\begin{lem}\label{maj lemma} For every $x\in L_1(0,1),$  we have
$$Kx\prec\prec\sum_{n=1}^{\infty}nD_{\frac1{e\cdot n!}}\mu(x).$$
\end{lem}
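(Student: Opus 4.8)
The plan is to unwind the series definition \eqref{kastsuk} of $Kx$ into a sum of simple building blocks, identify the singular value function of each block as a dilation of $\mu(x)$, and then control the decreasing rearrangement of the sum via the elementary subadditivity of the functional $f\mapsto\int_0^t\mu(s,f)\,ds$.

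Concretely, for $k\geq1$ and $1\leq m\leq k$ put $f_{k,m}:=\chi_{A_k}\otimes\chi_{(0,1)}^{\otimes(m-1)}\otimes x\otimes\chi_{(0,1)}^{\otimes\infty}\in L_1(\Omega)$, so that $Kx=\sum_{k=1}^{\infty}\sum_{m=1}^{k}f_{k,m}$ with $\sum_{k,m}\|f_{k,m}\|_1=\|x\|_1\sum_{k\geq1}\frac{k}{e\cdot k!}=\|x\|_1<\infty$; in particular the double series, and the corresponding series of absolute values, converges absolutely in $L_1(\Omega)$. A direct computation of distribution functions gives $d_{|f_{k,m}|}(s)=m(A_k)\,d_{|x|}(s)=\frac{1}{e\cdot k!}\,d_{|x|}(s)$ for all $s\geq0$, and hence, since $m(A_k)=\frac{1}{e\cdot k!}$,
\[
\mu(f_{k,m})=D_{\frac{1}{e\cdot k!}}\mu(x)\qquad\text{independently of }m .
\]

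Next I would use the standard identity $\int_0^t\mu(s,f)\,ds=\sup\{\int_E|f|\,dm:m(E)\leq t\}$. Since $|Kx|\leq\sum_{k,m}|f_{k,m}|$ a.e. with the right-hand side convergent in $L_1(\Omega)$, for every measurable $E\subseteq\Omega$ with $m(E)\leq t$ one has $\int_E|Kx|\leq\sum_{k,m}\int_E|f_{k,m}|\leq\sum_{k,m}\int_0^t\mu(s,f_{k,m})\,ds$, and therefore
\[
\int_0^t\mu(s,Kx)\,ds\ \leq\ \sum_{k=1}^{\infty}k\int_0^t\mu\Big(s,D_{\frac{1}{e\cdot k!}}\mu(x)\Big)\,ds,\qquad t>0 .
\]
The last step is to notice that for $u_1\geq u_2$ in $(0,1)$ one has $D_{u_1}\mu(x)\geq D_{u_2}\mu(x)$ pointwise, and each $D_{\frac{1}{e\cdot k!}}\mu(x)$ is a nonnegative nonincreasing function on $(0,1)$; consequently $\phi:=\sum_{k\geq1}kD_{\frac{1}{e\cdot k!}}\mu(x)$ is itself nonincreasing, hence coincides with its own decreasing rearrangement, so that $\mu\big(s,D_{\frac{1}{e\cdot k!}}\mu(x)\big)=D_{\frac{1}{e\cdot k!}}\mu(x)(s)$. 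Tonelli's theorem then gives $\sum_{k\geq1}k\int_0^t\mu\big(s,D_{\frac{1}{e\cdot k!}}\mu(x)\big)\,ds=\int_0^t\phi(s)\,ds=\int_0^t\mu(s,\phi)\,ds$. Combining the two displays yields $\int_0^t\mu(s,Kx)\,ds\leq\int_0^t\mu(s,\phi)\,ds$ for all $t>0$, i.e. $Kx\prec\prec\phi=\sum_{n=1}^{\infty}nD_{\frac{1}{e\cdot n!}}\mu(x)$.

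The routine parts are the distribution-function computation for $f_{k,m}$ and the bookkeeping under the measure isomorphism $L_\infty(\Omega)\cong L_\infty(0,1)$. The point that needs care---and the reason one cannot simply quote \eqref{sum maj}---is that neither $Kx$ nor the majorant $\phi$ lies in $L_\infty(0,1)$ when $x\neq0$, so \eqref{sum maj} is not literally applicable; the monotonicity observation that makes $\phi$ its own decreasing rearrangement is precisely what upgrades the crude subadditive bound $\sum_{k,m}\int_0^t\mu(s,f_{k,m})\,ds$ to the sharp quantity $\int_0^t\mu(s,\phi)\,ds$.
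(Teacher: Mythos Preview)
Your proof is correct and follows essentially the same route as the paper's: decompose $Kx$ into its elementary summands, identify each singular value function as a dilation of $\mu(x)$, and apply the subadditivity principle $\sum_k x_k\prec\prec\sum_k\mu(x_k)$. The paper groups first by the outer index $n$ (invoking \eqref{sum maj} for the inner sum to get $\prec\prec n\mu(x)$, then dilating by $m(A_n)$, then invoking \eqref{sum maj} again for the outer sum), whereas you break all the way down to the individual $f_{k,m}$ and handle the double sum in one shot; the content is the same.

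Your closing remark is well taken: the paper cites \eqref{sum maj} as stated for series converging in $L_\infty$, which is not literally the case here, and you bypass this by working directly with the identity $\int_0^t\mu(s,f)\,ds=\sup_{m(E)\leq t}\int_E|f|$ together with the observation that $\phi=\sum_{n\geq1}nD_{\frac{1}{e\cdot n!}}\mu(x)$ is already nonincreasing, hence equal to its own rearrangement. This is a clean and self-contained way to justify the step; the paper's argument implicitly relies on the well-known fact that \eqref{sum maj} extends to $L_1$ (or $L_1+L_\infty$) summands, but does not spell it out.
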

\begin{proof} Recall (see \eqref{kastsuk}) that
\begin{equation}\label{kastsuk_recall}
Kx=\sum_{n=1}^{\infty}\chi_{A_n}\otimes\Big(\sum_{m=1}^n
1^{\otimes(m-1)}\otimes x\otimes 1^{\otimes\infty}\Big),
\end{equation}
where $A_n,$ $n\geq0,$ are disjoint sets with $m(A_n)=1/(e\cdot n!)$ for all $n\geq0.$

It follows from the inequality \eqref{sum maj} that
$$\sum_{m=1}^n 1^{\otimes(m-1)}\otimes x\otimes 1^{\otimes\infty}\prec\prec n\mu(x),\quad n\ge 1$$
and, therefore,
$$\chi_{A_n}\otimes\Big(\sum_{m=1}^n 1^{\otimes(m-1)}\otimes x\otimes 1^{\otimes\infty}\Big)\prec\prec nD_{m(A_n)}\mu(x)=
nD_{\frac1{e\cdot n!}}\mu(x),\quad n\ge 1.$$
Again applying the inequality \eqref{sum maj}, we obtain
$$Kx\prec\prec\sum_{n=1}^{\infty}nD_{\frac1{e\cdot n!}}\mu(x).$$
\end{proof}

\begin{lem}\label{modified ms} If $1\le p\le 2$ and $x_k\in L_p(0,1),$ $1\leq k\leq n,$ then
$$\Big\|\bigoplus_{k=0}^{\infty}Kx_k\Big\|_{L_p+L_2}\sim\Big\|\bigoplus_{k=0}^{\infty}x_k\Big\|_{L_p+L_2}.$$
\end{lem}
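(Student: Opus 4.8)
The plan is to establish the two one–sided estimates
$\big\|\bigoplus_{k}Kx_k\big\|_{L_p+L_2}\le C\big\|\bigoplus_{k}x_k\big\|_{L_p+L_2}$ and
$\big\|\bigoplus_{k}Kx_k\big\|_{L_p+L_2}\ge c\big\|\bigoplus_{k}x_k\big\|_{L_p+L_2}$ separately; it suffices to treat a finite family $x_1,\dots,x_n$ (padding with zeros to reconcile with the notation $\bigoplus_{k=0}^\infty$). Two elementary facts are used throughout. First, $L_p+L_2$ is a fully symmetric function space on $(0,\infty)$ (see e.g. \cite{KPS,LSZ}): its norm depends only on the singular value function and is monotone with respect to submajorization $\prec\prec$. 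Second, for $0<u\le1$ and $1\le p\le2$ the dilation operator \eqref{eq_D_continuous} satisfies $\|D_u\|_{L_p+L_2\to L_p+L_2}\le\max\{u^{1/p},u^{1/2}\}=u^{1/2}$, while its left inverse $D_{1/u}$ satisfies $\|D_{1/u}\|_{L_p+L_2\to L_p+L_2}\le\max\{u^{-1/p},u^{-1/2}\}=u^{-1/p}$.

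For the upper estimate I would start from Lemma \ref{maj lemma}, giving $Kx_k\prec\prec\sum_{j\ge1}jD_{1/(e\cdot j!)}\mu(x_k)$ for every $k$. Submajorization is stable under direct sums: if $a_k\prec\prec b_k$ for all $k$, then $\bigoplus_k a_k\prec\prec\bigoplus_k b_k$, because $\int_0^t\mu\big(\bigoplus_k a_k\big)=\sup\big\{\sum_k\int_0^{t_k}\mu(a_k):\ t_k\ge0,\ \sum_k t_k=t\big\}$ and the same identity holds for the $b_k$. Hence $\bigoplus_k Kx_k\prec\prec\bigoplus_k\sum_{j\ge1}jD_{1/(e\cdot j!)}\mu(x_k)=\sum_{j\ge1}j\bigoplus_k D_{1/(e\cdot j!)}\mu(x_k)$, the last equality since $\bigoplus_k$ is linear. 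A short distribution–function computation shows $\bigoplus_k D_u\mu(x_k)$ and $D_u\big(\bigoplus_k x_k\big)$ are equimeasurable for $0<u\le1$ (both have distribution function $u\cdot d_{\bigoplus_k x_k}$), so $\big\|\bigoplus_k D_{1/(e\cdot j!)}\mu(x_k)\big\|_{L_p+L_2}\le(e\cdot j!)^{-1/2}\big\|\bigoplus_k x_k\big\|_{L_p+L_2}$. Summing the series via the triangle inequality in the Banach space $L_p+L_2$ (it converges absolutely, as $\sum_{j\ge1}j(e\cdot j!)^{-1/2}<\infty$) and invoking monotonicity of the norm under $\prec\prec$ gives
\[
\Big\|\bigoplus_k Kx_k\Big\|_{L_p+L_2}\le\Big(\sum_{j\ge1}\frac{j}{(e\cdot j!)^{1/2}}\Big)\Big\|\bigoplus_k x_k\Big\|_{L_p+L_2}.
\]

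For the lower estimate I would use that the sets $A_k$ in \eqref{kastsuk} are pairwise disjoint, so in the expansion \eqref{kastsuk} the single term carrying $\chi_{A_1}$ (namely $\chi_{A_1}\otimes x_k$, from $k=1$, $m=1$) is disjointly supported from all the remaining terms; consequently $|Kx_k|\ge\chi_{A_1}\otimes|x_k|$ pointwise. Since $m(A_1)=1/e$, the right–hand side is equimeasurable with $D_{1/e}\mu(x_k)$, whence $\mu(Kx_k)\ge D_{1/e}\mu(x_k)$, and passing to direct sums (again through distribution functions) $\mu\big(\bigoplus_k Kx_k\big)\ge\mu\big(\bigoplus_k D_{1/e}\mu(x_k)\big)=D_{1/e}\mu\big(\bigoplus_k x_k\big)$. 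Applying the left inverse $D_e$ of $D_{1/e}$ then yields $\big\|\bigoplus_k Kx_k\big\|_{L_p+L_2}\ge\big\|D_{1/e}\big(\bigoplus_k x_k\big)\big\|_{L_p+L_2}\ge e^{-1/p}\big\|\bigoplus_k x_k\big\|_{L_p+L_2}$.

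The only genuinely delicate part is the bookkeeping with direct sums: verifying that submajorization, equimeasurability and the dilation operators interact with $\bigoplus_k$ as expected, and that one may apply the triangle inequality term by term to $\sum_{j\ge1}j\bigoplus_k D_{1/(e\cdot j!)}\mu(x_k)$. All of this reduces to elementary identities between distribution functions of disjoint sums, together with the convergence of $\sum_{j\ge1}j(e\cdot j!)^{-1/2}$; the analytic content is entirely contained in Lemma \ref{maj lemma} and the norm estimates for $D_u$ on $L_p+L_2$.
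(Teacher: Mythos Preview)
Your proposal is correct and follows essentially the same route as the paper: Lemma~\ref{maj lemma} plus the triangle inequality and dilation bounds for the upper estimate, and the single $\chi_{A_1}$--term for the lower estimate. Your bookkeeping with direct sums (stability of $\prec\prec$, equimeasurability of $\bigoplus_k D_u\mu(x_k)$ with $D_u\bigoplus_k x_k$) is more explicit than in the paper but amounts to the same computation.

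One small remark: your dilation bound $\|D_u\|_{L_p+L_2\to L_p+L_2}\le u^{1/2}$ for $0<u\le1$ is the correct one; the paper states $u^{1/p}$, which for $p<2$ is actually too small (test on $\chi_{(0,N)}$ with $N$ large). This is harmless for the argument, since either exponent makes $\sum_{j\ge1} j(e\cdot j!)^{-\alpha}$ converge, but your version is the accurate statement.
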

\begin{proof} It follows from Lemma \ref{maj lemma} that
\begin{equation}\label{modified ms2}\bigoplus_{k=1}^{\infty}Kx_k\prec\prec\bigoplus_{k=1}^{\infty}\Big(\sum_{n=1}^{\infty}nD_{\frac1{e\cdot
n!}}\mu(x_k)\Big)=\sum_{n=1}^{\infty}n\Big(\bigoplus_{k=1}^{\infty}D_{\frac1{e\cdot
n!}}\mu(x_k)\Big).\end{equation} Since the norm $\|\cdot\|_{L_p+L_2}$ respects submajorization (see e.g.~\cite[Theorem 4.10]{KPS}), using \eqref{modified ms2}
and the triangle inequality in $(L_p+L_2)(0,\infty),$ we infer
that
\begin{gather}\label{modified ms1}\begin{split}\Big\|\bigoplus_{k=0}^{\infty}Kx_k\Big\|_{L_p+L_2}&\leq\sum_{n=1}^{\infty}n\Big\|\bigoplus_{k=1}^{\infty}D_{\frac1{e\cdot n!}}\mu(x_k)\Big\|_{L_p+L_2}
\\&=\sum_{n=1}^{\infty}n\Big\|D_{\frac1{e\cdot
n!}}\Big(\bigoplus_{k=1}^{\infty}\mu(x_k)\Big)\Big\|_{L_p+L_2}
 \\& \leq \Big(\sum_{n=1}^{\infty}n\big\|D_{\frac1{e\cdot
n!}}\big\|_{L_p+L_2\to
L_p+L_2}\Big)\Big\|\bigoplus_{k=0}^{\infty}x_k\Big\|_{L_p+L_2}.\end{split}\end{gather}
Taking into account that $$\big\|D_u\big\|_{L_p+L_2\to
L_p+L_2}\le u^{1/p},\quad 0<u\leq 1,$$ we infer that
\begin{equation}\label{modified ms3}\sum_{n=1}^{\infty}n\big\|D_{\frac1{e\cdot
n!}}\big\|_{L_p+L_2\to
L_p+L_2}\leq\sum_{n=1}^{\infty}\frac{n}{(e\cdot n!)^{1/p}}\leq
{\rm const}<\infty.\end{equation} Combining \eqref{modified ms1}
and \eqref{modified ms3}, we obtain
$$\Big\|\bigoplus_{k=0}^{\infty}Kx_k\Big\|_{L_p+L_2}\leq {\rm const} \ \Big\|\bigoplus_{k=0}^{\infty}x_k\Big\|_{L_p+L_2}.$$

On the other hand, taking only the first term in
\eqref{kastsuk_recall}, we obtain that
\begin{equation}\label{modified ms4}
\mu(Kx_k)\geq\mu(A_1\otimes x_k\otimes \mathbbm 1^{\infty})=D_{m(A_1)}\mu(x_k)=D_{1/e} \ \mu(x_k), \ \ 1\leq k\leq n.
\end{equation}
Observing that
$$\big\|D_ux\big\|_{L_p+L_2}\ge u^{1/p}\|x\|_{L_p+L_2},\ \ \mbox{for all} \ \ x\in(L_p+L_2)(0,\infty), \ \ 0<u\leq 1,$$
and applying \eqref{modified ms4}, we conclude that
\begin{align*}
\Big\|\bigoplus_{k=0}^{\infty}Kx_k\Big\|_{L_p+L_2}& \geq\Big\|\bigoplus_{k=0}^{\infty}D_{1/e} \ \mu(x_k)\Big\|_{L_p+L_2}\\
&=\Big\|D_{1/e}\Big(\bigoplus_{k=0}^{\infty}\mu(x_k)\Big)\Big\|_{L_p+L_2} \geq{\rm const}\Big\|\bigoplus_{k=0}^{\infty}x_k\Big\|_{L_p+L_2}.  
\end{align*}
\end{proof}

The following lemma is the (noncommutative analogue of) special case of the inequality due to Johnson and Schechtman (see \cite[Theorem~1]{js}). This \lq\lq noncommutative Rosenthal inequality\rq\rq (see \cite{JX}) has become popular in compressed sensing (see e.g. \cite{Rauhut}).

\begin{lem}\label{js lemma} Let $1\le p\le 2$ and let $A_k\in\mathcal{L}_p(\mathcal{R}),$ $1\leq k\leq n,$ be independent symmetrically distributed random variables. Then
$$\Big\|\sum_{k=1}^nA_k\Big\|_p\sim\Big\|\bigoplus_{k=1}^nA_k\Big\|_{\mathcal{L}_p+\mathcal{L}_2} \ \ \ n\ge 1.$$
\end{lem}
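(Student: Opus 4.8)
The plan is to deduce this from the commutative Kruglov machinery together with the results already assembled. First I would reduce the two-sided estimate to a more symmetric-looking statement. Since the $A_k$ are independent and symmetrically distributed, the sum $\sum_k A_k$ has the same distribution as $\sum_k r_k A_k$ (with $r_k$ Rademachers on an auxiliary space), so by Theorem \ref{lp theorem} the quantity $\|\sum_k A_k\|_p$ is comparable, up to the Tomczak-Jaegermann constant, to $(\sum_k \|A_k\|_p^2)^{1/2}$ from below and to $(\sum_k\|A_k\|_p^p)^{1/p}$ from above — but that is too crude, since the claimed right-hand side $\|\bigoplus_k A_k\|_{\mathcal L_p+\mathcal L_2}$ interpolates precisely between these two extremes. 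The correct route is: realize the $A_k$ as Kruglov images. Concretely, for each $k$ choose $x_k\in L_p(0,1)$ (or in $\mathcal L_p$ of a suitable finite algebra) equimeasurable with $A_k$, with the $x_k$ pairwise disjointly supported; then by Theorem \ref{fincor}(iv)/(iii) (or Corollary \ref{infincor}(iv),(i)) the operators $\mathscr K x_k$ are independent, symmetrically distributed (after symmetrizing) and individually equimeasurable with a fixed transform of $A_k$. The point is that the \emph{joint} distribution of an independent symmetrically distributed family is, up to the universal Rosenthal constants, governed by the distribution of the direct sum; this is exactly what the Kruglov operator encodes via its characteristic-function identity \eqref{kinf main property}.

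The key steps, in order, would be: (1) By homogeneity and a density/approximation argument, reduce to $A_k$ bounded with finite support inside a finite von Neumann algebra, so all traces and direct sums are literal. (2) For the upper estimate $\|\sum_k A_k\|_p \le \mathrm{const}\cdot\|\bigoplus_k A_k\|_{\mathcal L_p+\mathcal L_2}$: split each $A_k = A_k' + A_k''$ where $A_k' = A_k E_{|A_k|}[0,\lambda]$ is the ``small'' part and $A_k''$ the ``large'' part, at a level $\lambda$ chosen so that the large parts have small total $\mathcal L_p$-mass and the small parts have controlled $\mathcal L_2$-mass. For the small parts, use Theorem \ref{lp theorem}'s lower bound to pass to $(\sum_k\|A_k'\|_2^2)^{1/2} = \|\bigoplus_k A_k'\|_2$; for the large parts use the triangle inequality in $\mathcal L_p$ to get $\sum_k\|A_k''\|_p \le \|\bigoplus_k A_k''\|_p$ after noting disjointness of supports. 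Then optimize over $\lambda$ to recover the $K$-functional $\|\bigoplus_k A_k\|_{\mathcal L_p+\mathcal L_2}$. (3) For the lower estimate, the cleanest path is via the noncommutative Kruglov operator: if the $A_k$ are \emph{exactly} of the form $\mathscr K x_k$ for disjointly supported $x_k$, then by Lemma \ref{modified ms} (applied in the commutative model via Corollary \ref{infincor}(i)) one has $\|\bigoplus_k A_k\|_{\mathcal L_p+\mathcal L_2}\sim \|\bigoplus_k x_k\|_{\mathcal L_p+\mathcal L_2}$, while $\sum_k A_k = \mathscr K(\sum_k x_k)$ by linearity and additivity of $\mathscr K$ on disjoint sums, whence $\|\sum_k A_k\|_p = \|\mathscr K(\sum_k x_k)\|_p \gtrsim \|\sum_k x_k\|_p \sim \|\bigoplus_k x_k\|_{L_p+L_2}$ (the last comparison being the classical Johnson–Schechtman estimate, or again Lemma \ref{modified ms} combined with the commutative Rosenthal inequality). (4) Finally, reduce a \emph{general} independent symmetrically distributed family to the Kruglov form: since $\mathscr K x$ depends only on the distribution of $x$, and any symmetrically distributed random variable's distribution is realized (up to the comparison constants of Lemma \ref{modified ms}) by a Kruglov image of a disjointly supported family, one matches distributions coordinate-wise and uses that both sides of the claimed equivalence depend only on the individual distributions (the left side via independence + symmetry, the right side by construction).

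The main obstacle I anticipate is step (4) — the passage from an arbitrary independent symmetrically distributed family to one literally built from the Kruglov operator. One cannot simply invoke ``the Kruglov operator realizes every infinitely divisible law'', because a generic symmetrically distributed $A_k$ is \emph{not} infinitely divisible; rather, one needs the weaker statement that $\|\sum_k A_k\|_p$ and $\|\bigoplus_k A_k\|_{\mathcal L_p+\mathcal L_2}$ are each determined, up to universal constants, by the family of one-dimensional distributions $\{d_{A_k}\}$ alone. The lower bound direction of this is the subtle part and is where the genuinely noncommutative input (Theorem \ref{lp theorem}, i.e. the Tomczak-Jaegermann/Rosenthal-type inequality in $\mathcal L_p(\mathcal R)$) must be combined with a careful conditioning/contraction argument to compare the given family with its Kruglov surrogate. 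Once that comparison is in hand, Lemma \ref{modified ms} and the classical Johnson–Schechtman inequality \cite{js} close the argument.
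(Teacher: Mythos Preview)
You have overlooked the crucial observation that makes this lemma nearly immediate: in this paper, \emph{independence} is defined (Subsection~\ref{indep subsect}) only for \emph{commuting} unital subalgebras. Since each $A_k$ is a self-adjoint random variable, the von Neumann subalgebra $\mathcal{A}_k$ it generates is commutative, and by the very definition of independence the $\mathcal{A}_k$ commute pairwise. Hence all the $A_k$ lie in a single commutative von Neumann subalgebra $\mathcal{M}\subset\mathcal{R}$, which (being atomless, or after a harmless enlargement) can be identified with $L_\infty(0,1)$. The statement then becomes a statement about classical independent symmetrically distributed functions in $L_p(0,1)$, and this is exactly the Johnson--Schechtman inequality \cite[Theorem~1]{js}. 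That is the entire proof in the paper.

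Your proposal, by contrast, treats the problem as genuinely noncommutative and attempts to rebuild the Johnson--Schechtman machinery from scratch via the Kruglov operator. This is a substantial detour, and the obstacle you yourself flag in step~(4) is real: you do not have a mechanism to compare an \emph{arbitrary} independent symmetrically distributed family to a Kruglov-built one, because a generic $A_k$ is not of the form $\mathscr{K}x_k$ (its law need not be infinitely divisible), and the claim that ``both sides depend only on the individual distributions'' is precisely the theorem you are trying to prove. Steps~(2) and~(3) are also more delicate than you indicate---for instance, in step~(2) the truncations $A_k'$ are no longer symmetrically distributed, so you cannot directly invoke Theorem~\ref{lp theorem} in the way you suggest. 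None of this is needed once you notice the reduction to the commutative case.
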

\begin{proof} Recall that the independence of random variables $A_k\in\mathcal{L}_p(\mathcal{R}),$ $1\leq k\leq n,$ implies that
 $A_k A_l=A_l A_k,$  $1\le k,l\le n$ (see definition of independence in Subsection \ref{indep subsect}).
 Hence, there exists a commutative von Neumann subalgebra $\mathcal{M}$ in $\mathcal{R}$ such that $A_k\in\mathcal{L}_p(\mathcal{M}),$ $1\leq k\leq n.$
 Without loss of generality, we may assume that
  $\mathcal{M}$ is $^*$-isomorphic to $L_{\infty}(0,1).$
  Hence, the assertion can be viewed as the statement
   about functions. The latter is proved in \cite[Theorem 1]{js}.
\end{proof}

The next lemma describes an isomorphic embedding of the Banach space $(\mathcal{L}_p+\mathcal{L}_2)(\mathcal{R}\overline{\otimes}\mathcal{L}(H))$ into the Banach space $\mathcal{L}_p(\mathcal{R}).$ Recall that the von Neumann algebra $\mathcal{R}\overline{\otimes}\mathcal{L}(H)$ is a hyperfinite $II_\infty$ factor. By $\|\cdot\|_{\mathcal{L}_p+\mathcal{L}_2}$ we denote the standard norm on the space $(\mathcal{L}_p+\mathcal{L}_2)(\mathcal{R}\overline{\otimes}\mathcal{L}(H)).$
For brevity we set $r=r_1,$ where $r_1$ is a Rademacher function defined in \eqref{eq_Rad}.

\begin{lem}\label{lpl2 embed} Let $1\le p\le2$ and let $A\in\mathcal{L}_p(\mathcal{R}\overline{\otimes}\mathcal{L}(H))$ be finitely supported.  Then
$$\|\mathscr{K}(A\otimes r)\|_p\sim\|A\|_{\mathcal{L}_p+\mathcal{L}_2}.$$
Moreover, the mapping $\mathscr{R}:A\to\mathscr{K}(A\otimes r)$ extends to an isomorphic embedding of $(\mathcal{L}_p+\mathcal{L}_2)(\mathcal{R}\overline{\otimes}\mathcal{L}(H))$ into $\mathcal{L}_p(\mathcal{R}).$
\end{lem}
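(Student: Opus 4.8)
The plan is to reduce the assertion, by linearity and by chopping $A$ into disjointly supported positive pieces of trace at most one, to the noncommutative Rosenthal inequality of Lemma \ref{js lemma} together with the commutative Kruglov estimates of Lemma \ref{modified ms} and Corollary \ref{infincor}\eqref{kkka}. Throughout I work in $\mathcal N:=\mathcal R\overline\otimes\mathcal L(H)\overline\otimes L_\infty(0,1)\cong\mathcal R\overline\otimes\mathcal L(H)$, so that the Kruglov operator $\mathscr K$ of Theorem \ref{junge thm} maps $\mathcal L_1(\mathcal N)$ into $\mathcal L_1(\mathcal R)$, as in the Corollary preceding this section.

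First the reductions. Since $\mathscr K$ is linear, positive and $*$-preserving (Proposition \ref{pr_prop_K}), since $\|Z\|_p\sim\|\Re Z\|_p+\|\Im Z\|_p$ on $\mathcal L_p$, and since $\|A\|_{\mathcal L_p+\mathcal L_2}\sim\|\Re A\|_{\mathcal L_p+\mathcal L_2}+\|\Im A\|_{\mathcal L_p+\mathcal L_2}$, it suffices to treat self-adjoint $A$; and a standard truncation argument shows that finite real-linear combinations of mutually orthogonal $(\tau\otimes{\rm Tr})$-finite projections are dense in $(\mathcal L_p+\mathcal L_2)(\mathcal R\overline\otimes\mathcal L(H))$, so it is enough to establish
$$\|\mathscr K(A\otimes r)\|_p\sim\|A\|_{\mathcal L_p+\mathcal L_2}$$
for such (self-adjoint, \emph{simple}) $A$. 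The "moreover" statement then follows formally: $\mathscr R$ is linear and, by the displayed equivalence, bounded above and below on the dense subspace of simple operators, while $\mathcal L_p(\mathcal R)$ is complete, so $\mathscr R$ extends to an isomorphic embedding of $(\mathcal L_p+\mathcal L_2)(\mathcal R\overline\otimes\mathcal L(H))$; and since $\mathscr K$ is $\|\cdot\|_1$-bounded, this extension coincides with $A\mapsto\mathscr K(A\otimes r)$ on every finitely supported $A$, which in particular shows $\mathscr K(A\otimes r)\in\mathcal L_p(\mathcal R)$.

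Now fix a self-adjoint simple $A$. Refining the spectral projections of $A_+$ and $A_-$ and splitting the resulting projections into orthogonal pieces of trace at most one, write $A=\sum_{\alpha}c_\alpha q_\alpha$ with $c_\alpha\in\mathbb R\setminus\{0\}$ and $\{q_\alpha\}$ a finite family of mutually orthogonal projections with $(\tau\otimes{\rm Tr})(q_\alpha)\le 1$. Put $x_\alpha:=c_\alpha\,q_\alpha\otimes r$. Then the $x_\alpha$ are self-adjoint, symmetrically distributed, bounded and finitely supported with support-trace $\le 1$, and $x_\alpha x_\beta=0$ for $\alpha\ne\beta$; hence by Corollary \ref{infincor}\eqref{kkke}, \eqref{kkkc} the operators $\mathscr K x_\alpha$ are independent, symmetrically distributed, and (being Kruglov images of bounded finitely supported elements) lie in $\mathcal L_p(\mathcal R)$, while $\mathscr K(A\otimes r)=\sum_\alpha\mathscr K x_\alpha$ by linearity. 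Lemma \ref{js lemma} (the noncommutative Rosenthal inequality) applied to these variables gives $\|\mathscr K(A\otimes r)\|_p\sim\big\|\bigoplus_\alpha\mathscr K x_\alpha\big\|_{\mathcal L_p+\mathcal L_2}$. Because $(\tau\otimes{\rm Tr})(q_\alpha)\le1$, each self-adjoint $x_\alpha$ is equimeasurable with some $z_\alpha\in L_1(0,1)$, so by Corollary \ref{infincor}\eqref{kkka} $\mathscr K x_\alpha$ is equimeasurable with $K z_\alpha$, $K$ being the classical Kruglov operator \eqref{kastsuk}; thus $\bigoplus_\alpha\mathscr K x_\alpha$ is equimeasurable with $\bigoplus_\alpha K z_\alpha$, and Lemma \ref{modified ms} yields
$$\Big\|\bigoplus_\alpha\mathscr K x_\alpha\Big\|_{\mathcal L_p+\mathcal L_2}=\Big\|\bigoplus_\alpha K z_\alpha\Big\|_{L_p+L_2}\sim\Big\|\bigoplus_\alpha z_\alpha\Big\|_{L_p+L_2}.$$
Finally, $\bigoplus_\alpha z_\alpha$ is equimeasurable with $\bigoplus_\alpha x_\alpha=\bigoplus_\alpha c_\alpha q_\alpha\otimes r$, which is in turn equimeasurable with $A\otimes r$ — the orthogonality of the $q_\alpha$ lets one pass from the direct sum to $A\otimes r$, and a short computation of the distribution functions of positive and negative parts absorbs the factor $r$ — while $\mu(A\otimes r)=\mu(A)$ since $|A\otimes r|=|A|\otimes\mathbbm 1$. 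Hence $\big\|\bigoplus_\alpha z_\alpha\big\|_{L_p+L_2}=\|A\otimes r\|_{\mathcal L_p+\mathcal L_2}=\|A\|_{\mathcal L_p+\mathcal L_2}$, and chaining the three displays proves $\|\mathscr K(A\otimes r)\|_p\sim\|A\|_{\mathcal L_p+\mathcal L_2}$.

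The step I expect to be the real obstacle is the forced passage to pieces of trace at most one: the naive bound $\|\mathscr K x\|_{L_p+L_2}\le C\,\|x\|_{L_p+L_2}$ obtained directly from \eqref{k fin def} and \eqref{sum maj} carries a constant $C=C(\sigma(\mathbbm 1))$ that grows with the trace of the ambient finite algebra, so a \emph{uniform} estimate is available only after one descends to subalgebras with normalized trace — this is exactly why Lemma \ref{modified ms} is stated on $(0,1)$ and why Corollary \ref{infincor}\eqref{kkka} must be routed through genuinely commutative data $z_\alpha\in L_1(0,1)$. The remaining work, namely the equimeasurability bookkeeping (in particular identifying $A\otimes r$ with $\bigoplus_\alpha(c_\alpha q_\alpha\otimes r)$ and verifying the density of simple operators in $\mathcal L_p+\mathcal L_2$), is routine once the positive and negative parts are written out.
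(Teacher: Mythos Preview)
Your argument is correct and follows essentially the same route as the paper: decompose a self-adjoint $A$ into disjointly supported self-adjoint pieces of support-trace at most one, apply Corollary~\ref{infincor} to obtain independence and symmetry of the Kruglov images, invoke Lemma~\ref{js lemma}, and then pass through Corollary~\ref{infincor}\eqref{kkka} and Lemma~\ref{modified ms} to replace the Kruglov images by the original pieces. The only cosmetic difference is that you first reduce to simple $A$ and extend by density, whereas the paper decomposes an arbitrary finitely supported self-adjoint $A$ directly (using that $\mathcal R\overline\otimes\mathcal L(H)$ is atomless to cut the support into commuting pieces of trace $\le1$); both organizations are fine.
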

\begin{proof} Observe that for every finitely supported $A\in\mathcal{L}_p(\mathcal{R}\overline{\otimes}\mathcal{L}(H)),$ the element $A\otimes r$ belongs to $\mathcal{L}_1(\mathcal{R}\bar{\otimes}\mathcal{L}(H)).$ In particular, $\mathscr{K}(A\otimes r)$ is well defined.

Suppose first that $A=A^*.$ Since $A$ is finitely supported, it follows that one can select $A_k=A_k^*\in\mathcal{L}_p(\mathcal{R}\overline{\otimes}\mathcal{L}(H)),$ $1\leq k\leq n,$ such that $A_kA_j=0$ for $k\neq j,$ $A=\sum_{k=1}^nA_k$ and $(\tau\otimes{\rm Tr})({\rm supp}(A_k))\leq 1.$ Since the random varaibles $A_k\otimes r,$ $1\leq k\leq n,$ are symmetrically distributed, it follows from Corollary \ref{infincor} \eqref{kkkc} and \eqref{kkke} that the random variables $\mathscr{K}(A_k\otimes r),$ $1\leq k\leq n,$ are independent and symmetrically distributed. Applying Lemma \ref{js lemma}, we infer that
\begin{equation}\label{lpl2 embed1}\|\mathscr{K}(A\otimes r)\|_p=\Big\|\sum_{k=1}^n\mathscr{K}(A_k\otimes r)\Big\|_p\sim\Big\|\bigoplus_{k=1}^n\mathscr{K}(A_k\otimes r)\Big\|_{\mathcal{L}_p+\mathcal{L}_2}.\end{equation}
For every $1\leq k\leq n,$ select a function $x_k\in L_p(0,1)$ which is equimeasurable with $A_k\otimes r.$ By Corollary \ref{infincor} \eqref{kkka}, we have that  $\mathscr{K}(A_k\otimes r)$ is equimeasurable with $Kx_k.$ Therefore, by Lemma \ref{modified ms}, we have
\begin{align}\label{lpl2embed2}\Big\|\bigoplus_{k=1}^n\mathscr{K}(A_k\otimes r)\Big\|_{\mathcal{L}_p+\mathcal{L}_2}=\Big\|\bigoplus_{k=1}^nKx_k\Big\|_{L_p+L_2} \stackrel{L. \ref{modified ms}}{\sim}\Big\|\bigoplus_{k=1}^nx_k\Big\|_{L_p+L_2}=\Big\|\bigoplus_{k=1}^nA_k\Big\|_{\mathcal{L}_p+\mathcal{L}_2}.
\end{align}
Combining \eqref{lpl2 embed1} and \eqref{lpl2embed2}, we obtain
$$\|\mathscr{K}(A\otimes r)\|_p\sim \Big\|\bigoplus_{k=1}^nA_k\Big\|_{\mathcal{L}_p+\mathcal{L}_2}=\|A\|_{\mathcal{L}_p+\mathcal{L}_2}.$$

For a not necessarily self-adjoint finitely supported operator $A\in\mathcal{L}_p(\mathcal{R}\overline{\otimes}\mathcal{L}(H)),$ we have
\begin{align*}\|A\|_{\mathcal{L}_p+\mathcal{L}_2}&\sim\|\Re A\|_{\mathcal{L}_p+\mathcal{L}_2}+\|\Im A\|_{\mathcal{L}_p+\mathcal{L}_2}\\
& \sim\|\mathscr{K}(\Re A\otimes r)\|_p+\|\mathscr{K}(\Im A\otimes r)\|_p\sim\|\mathscr{K}(A\otimes r)\|_p.
\end{align*}
The final assertion follows from the fact that the linear subspace of all elements $A\in \mathcal{L}_p(\mathcal{R}\overline{\otimes}\mathcal{L}(H))$ with finite support is dense in $(\mathcal{L}_p+\mathcal{L}_2)(\mathcal{R}\overline{\otimes}\mathcal{L}(H))$.
\end{proof}

By $\mathcal{O}_{p,q}$ we denote the class of $p$-convex and $q$-concave Orlicz functions $M$ on $\mathbb R$ such that $M(1)=1.$
 For $1\le p\le2$ we define an Orlicz function $M_p\in\mathcal{O}_{p,2}$ by setting
\begin{equation}\label{lp+l2 orlicz}
M_p(t):=
\begin{cases}
t^2,\quad 0\leq t\leq 1\\
1+\frac{2}{p}(t^p-1),\quad t\geq 1.
\end{cases}
\end{equation}

One can show that $(L_p+L_2)(0,\infty)=L_{M_p}(0,\infty)$  and, therefore,
$$(\mathcal{L}_p+\mathcal{L}_2)(\mathcal{R}\bar{\otimes}\mathcal{L}(H))=\mathcal{L}_{M_p}(\mathcal{R}\bar{\otimes}\mathcal{L}(H)).$$
In what follows, we use the Orlicz norm $\|\cdot\|_{\mathcal{L}_p+\mathcal{L}_2}:=\|\cdot\|_{\mathcal{L}_{M_p}}$ in the space $(\mathcal{L}_p+\mathcal{L}_2)(\mathcal{R}\bar{\otimes}\mathcal{L}(H)).$

The following lemma shows how {\it some} Orlicz spaces can be embedded into $(\mathcal{L}_p+\mathcal{L}_2)(\mathcal{R}\overline{\otimes}\mathcal{L}(H)).$

\begin{lem}\label{ms} Let  $A\in\mathcal{L}(H)$ be an operator of finite rank and let  $A_0\in\mathcal{L}_p(\mathcal{R})$ for $1\le p\le2$. Then
$$\|A_0\otimes A\|_{\mathcal{L}_p+\mathcal{L}_2}=\|A\|_{\mathcal{L}_M},$$
where $\mathcal{L}_M$ is an Orlicz ideal associated to the Orlicz function $M$ defined by setting $$M(t):=\tau(M_p(|tA_0|)),\ \ \ t>0.$$
\end{lem}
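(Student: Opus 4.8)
The plan is to compute both sides directly from the definition of the Luxemburg norm and to match them term by term. Recall that, for the semifinite algebra $(\mathcal{R}\bar{\otimes}\mathcal{L}(H),\tau\otimes{\rm Tr})$, the norm $\|\cdot\|_{\mathcal{L}_p+\mathcal{L}_2}=\|\cdot\|_{\mathcal{L}_{M_p}}$ is given by $\|B\|_{\mathcal{L}_{M_p}}=\inf\{\lambda>0:(\tau\otimes{\rm Tr})(M_p(|B|/\lambda))\leq1\}$, consistently with the sequence case used throughout. Since $\tau(\mathbbm 1)=1$ we have $\mathcal{L}_p(\mathcal{R})\subseteq(\mathcal{L}_p+\mathcal{L}_2)(\mathcal{R})$, and as $A$ is of finite rank, $A_0\otimes A\in\mathcal{L}_p(\mathcal{R}\bar{\otimes}\mathcal{L}(H))\subseteq(\mathcal{L}_p+\mathcal{L}_2)(\mathcal{R}\bar{\otimes}\mathcal{L}(H))$, so every quantity below is finite. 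It is also convenient to record first that $M$ is a genuine Orlicz function: convexity of $t\mapsto\tau(M_p(t|A_0|))$ follows from convexity of $M_p$ and linearity of $\tau$, the equality $M(0)=0$ is clear, and finiteness follows from the elementary estimate $M_p(u)\leq\tfrac2p u^p$ ($u\geq0$), which holds precisely because $1\leq p\leq2$, together with $A_0\in\mathcal{L}_p(\mathcal{R})$.

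Next I would diagonalise $A$: write $|A|=\sum_{j=0}^{N-1}s_j p_j$, where $N$ is the rank of $A$, $s_j=\mu(j,A)>0$, and $p_j\in\mathcal{L}(H)$ are pairwise orthogonal rank-one projections. Then $|A_0\otimes A|=|A_0|\otimes|A|=\sum_{j=0}^{N-1}s_j(|A_0|\otimes p_j)$, and the summands have mutually orthogonal supports, each dominated by $\mathbbm 1\otimes p_j$. Since $M_p(0)=0$, the Borel functional calculus respects this block decomposition, so for every $\lambda>0$ one gets $M_p(|A_0\otimes A|/\lambda)=\sum_{j=0}^{N-1}M_p((s_j/\lambda)|A_0|)\otimes p_j$. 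Applying $\tau\otimes{\rm Tr}$ and using ${\rm Tr}(p_j)=1$ yields $(\tau\otimes{\rm Tr})(M_p(|A_0\otimes A|/\lambda))=\sum_{j=0}^{N-1}\tau(M_p((s_j/\lambda)|A_0|))=\sum_{j=0}^{N-1}M(s_j/\lambda)$, where the last equality is the definition of $M$ (here $s_j/\lambda>0$, so $|(s_j/\lambda)A_0|=(s_j/\lambda)|A_0|$).

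From this I would conclude $\|A_0\otimes A\|_{\mathcal{L}_p+\mathcal{L}_2}=\inf\{\lambda>0:\sum_{j=0}^{N-1}M(s_j/\lambda)\leq1\}$. On the other hand $\mu(A)=(s_0,\dots,s_{N-1},0,0,\dots)$, so the very definition of the Orlicz sequence norm gives $\|A\|_{\mathcal{L}_M}=\|\mu(A)\|_{l_M}=\inf\{\lambda>0:\sum_{j=0}^{N-1}M(s_j/\lambda)\leq1\}$. The two infima coincide, which is the assertion. The only step requiring a word of justification — rather than a genuine obstacle, of which there is none — is the passage of the functional calculus through the orthogonal block decomposition of $|A_0\otimes A|$; this is immediate from $M_p(0)=0$ and the orthogonality of the supports $\mathbbm 1\otimes p_j$. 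Everything else is a direct unwinding of the definitions.
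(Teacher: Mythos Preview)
Your proof is correct and follows essentially the same approach as the paper: both arguments show that $(\tau\otimes{\rm Tr})(M_p(|A_0\otimes A|/\lambda))=\sum_{j}M(\mu(j,A)/\lambda)$ and then identify the Luxemburg infima. The only cosmetic difference is that the paper reaches this identity by passing to singular value functions and invoking the equimeasurability of $(k,s)\mapsto\mu(s,A_0)\mu(k,A)$ with $\mu(A_0\otimes A)$, whereas you obtain it more directly from the block spectral decomposition $|A_0\otimes A|=\sum_j s_j\,|A_0|\otimes p_j$ and the orthogonality of the supports $\mathbbm 1\otimes p_j$.
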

\begin{proof} %For every $x\in(\mathcal{L}_p+\mathcal{L}_2)(\mathcal{R}\bar{\otimes}\mathcal{L}(H))$,  we have
%\begin{equation}\label{ms2}\|X\|_{\mathcal{L}_p+\mathcal{L}_2}=\inf\Big\{\lambda:\ \int_0^{\infty}M_p\Big(\frac{\mu(s,X)}{\lambda}\Big)ds=1\Big\}.\end{}
Employing standard properties of singular valued function $\mu,$ by definition of the function $M$, we have
\begin{equation}\label{ms1}M(t)=\int_0^1\mu(s,M_p(|tA_0|))ds=\int_0^1M_p(t\mu(s,A_0))ds,\quad t>0.\end{equation}
Denote, for brevity, $\|A\|_{\mathcal{L}_M}=\lambda.$
Observe that there exists measure preserving mapping $\mathcal G:\mathbb Z^+\times (0,1)\to (0,\infty),$
$\mathcal G(k,s)=k+s$ such that $\mathcal G$ extends to the measure preserving $^*$-homomorphism from $ L_\infty(0,1)\otimes l_\infty$ into $L_\infty(0,\infty).$ Observe also that the function
$f_{A,A_0}:k+s \mapsto \mu(s,A_0)\mu(k,A)$ is equimeasurable with $\mu(A_0\otimes A).$
Hence, using \eqref{ms1} and the facts listed above, we obtain
\begin{align}\label{ms3}1&={\rm Tr}\Big(M\Big(\frac{|A|}{\lambda}\Big)\Big)=\sum_{k\geq0}\mu\Big(k,M\Big(\frac{|A|}{\lambda}\Big)\Big)\\
&=\sum_{k\geq0}M\Big(\frac{\mu(k,A)}{\lambda}\Big)=
\sum_{k\geq0}\int_0^1M_p\Big(\frac{\mu(s,A_0)\mu(k,A)}{\lambda}\Big)ds\nonumber \\
&=\int_0^{\infty}M_p\Big(\frac{\mu(u,A_0\otimes A)}{\lambda}\Big)du=
({\rm Tr}\otimes \tau)\Big(M_p\Big(\frac{A_0\otimes A}{\lambda}\Big)\Big).  \nonumber \end{align}
Now \eqref{ms3} and well-known property of Orlicz norms (see e.g.~\cite[I,~1.2,~Proposition~11]{R-R}) implies
$\|A_0\otimes A\|_{\mathcal{L}_{M_p}}=\|A\|_{\mathcal{L}_M},$ that completes the proof of the lemma.
\end{proof}

Recall (see \cite{KPS}) that the function $\varphi:(0,\infty)\to(0,\infty)$ is called quasi-concave if
$$\varphi(s)\leq\varphi(t),\quad \frac{\varphi(s)}{s}\ge\frac{\varphi(t)}{t},\quad 0\leq s\leq t.$$

The following lemma is somewhat similar to the one in \cite{AS}. However, the proof presented here is simpler.

\begin{lem}\label{bk lemma} Let $\varphi:[0,1]\to[0,1]$ be a quasi-concave function such that $\varphi(0)=0$ and $\varphi(1)=1.$ For every $d>1,$ there exists $0\leq x\in L_1(0,1)$ with $\|x\|_1\leq 1$ such that
$$\frac14\varphi(t)\leq\int_0^1\min\{x(s),tx^d(s)\}ds\leq \frac52\varphi(t),\quad t\in[0,1].$$
\end{lem}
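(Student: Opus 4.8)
The idea is to build $x$ as a "layer cake" whose distribution function is governed by $\varphi$, so that the quantity $\int_0^1\min\{x(s),tx^d(s)\}ds$ picks out exactly the contribution of $\varphi$ at scale $t$. Observe first that for a single value $a>0$ we have $\min\{a,ta^d\}=a\min\{1,ta^{d-1}\}$, which equals $a$ when $a^{d-1}\ge 1/t$ and equals $ta^d$ otherwise; thus if $x$ takes the value $a_k$ on a set of measure $m_k$, then $\int\min\{x,tx^d\}=\sum_k m_k\min\{a_k,ta_k^d\}$, and the $k$-th term is "switched on" (at full strength $m_ka_k$) precisely once $t$ exceeds the threshold $a_k^{1-d}$. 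So the plan is to choose a geometric sequence of values $a_k$ and matching measures $m_k$ so that summing up the "switched-on" layers reconstructs $\varphi(t)$ up to the stated constants.

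Concretely, I would work with the dyadic scale: for $k\ge 0$ set $t_k=2^{-k}$, let $a_k$ be defined by $a_k^{d-1}=2^k$, i.e. $a_k=2^{k/(d-1)}$, and let the measure of the level set $\{x=a_k\}$ be $m_k=\varphi(t_k)-\varphi(t_{k+1})\ge 0$ (using quasi-concavity, which gives $\varphi$ nondecreasing, so $m_k\ge 0$; and $\sum_k m_k=\varphi(1)=1$ since $\varphi(0^+)=0$, so the $m_k$ partition $(0,1)$). This defines $0\le x\in L_1(0,1)$. For the $L_1$-bound I would use that $\varphi(t)/t$ is nonincreasing: $\|x\|_1=\sum_k a_k m_k$ and $a_k m_k\le a_k(\varphi(2^{-k})-\varphi(2^{-k-1}))\le a_k\varphi(2^{-k})=2^{k/(d-1)}\varphi(2^{-k})$; since $d>1$ and $\varphi(2^{-k})\le 2^{-k}\varphi(1)\cdot 2^{k}\cdot\ldots$ — here I'd invoke $\varphi(s)/s$ nonincreasing to bound $\varphi(2^{-k})\le 2^{-k}\cdot 2^{k\varepsilon}$-type decay and check the geometric series converges, rescaling $x$ by the (bounded) constant if necessary to force $\|x\|_1\le 1$.

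For the two-sided estimate, fix $t\in[0,1]$ and let $N$ be the integer with $2^{-N-1}< t\le 2^{-N}$, i.e. $t\sim t_N$. Then the layer indexed by $k$ is fully switched on iff $ta_k^{d-1}\ge 1$ iff $2^{-k}\le t\cdot\text{(const)}$, i.e. roughly for $k\ge N$; for such $k$ the term contributes $a_k m_k\cdot 1$... wait, no: when switched on the term is $m_k\min\{a_k,ta_k^d\}=m_k a_k$, but we want $\varphi$, not $\sum a_k m_k$. This means the naive choice $a_k=2^{k/(d-1)}$ must be coupled with $m_k$ chosen so that $a_k m_k=\varphi(t_k)-\varphi(t_{k+1})$, i.e. $m_k=a_k^{-1}(\varphi(t_k)-\varphi(t_{k+1}))$; then the switched-on layers sum to $\sum_{k\ge N}(\varphi(t_k)-\varphi(t_{k+1}))=\varphi(t_N)\in[\varphi(t),2\varphi(t)]$ by quasi-concavity, giving the lower bound $\ge\tfrac14\varphi(t)$ after accounting for the constant in the threshold. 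For the partially-switched-on layers $k<N$, each contributes $m_k\cdot ta_k^d=t a_k^{d-1}(\varphi(t_k)-\varphi(t_{k+1}))=t2^k(\varphi(t_k)-\varphi(t_{k+1}))\le t2^k\varphi(t_k)=2^{k}t\varphi(2^{-k})$; using $\varphi(2^{-k})\le 2^{N-k}\varphi(2^{-N})$ (again $\varphi(s)/s$ nonincreasing) this is $\le 2^{N}t\varphi(2^{-N})\cdot 2^{-0}\cdot\sum\ldots$, a convergent geometric sum in $N-k$, bounded by $\text{const}\cdot t\cdot 2^N\varphi(t_N)\le\text{const}\cdot\varphi(t)$. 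Adding the two contributions and keeping careful track of the constants (and the rescaling factor) yields the bound $\le\tfrac52\varphi(t)$.

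**Main obstacle.** The delicate point is bookkeeping the constants: the threshold $ta_k^{d-1}\ge 1$ does not align exactly with the dyadic cutoff, so the switched-on index set is $\{k\ge N\}$ only up to an additive shift, and I must ensure that the accumulated multiplicative errors (from the dyadic approximation $t\sim t_N$, from the shift, from the geometric tails of the partial layers, and from the $L_1$-rescaling) all fit inside the generous window $[\tfrac14,\tfrac52]$. The key structural facts making this work — and the only properties of $\varphi$ used — are: $\varphi$ nondecreasing, $\varphi(s)/s$ nonincreasing (hence $\varphi(t_k)/\varphi(t_{k+1})\le 2$), $\varphi(0^+)=0$, $\varphi(1)=1$; these are exactly what quasi-concavity provides. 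A cleaner alternative, avoiding the rescaling nuisance, is to first note $\|x\|_1=\sum_k a_k m_k=\sum_k(\varphi(t_k)-\varphi(t_{k+1}))=\varphi(1)=1$ exactly with the choice $m_k=a_k^{-1}(\varphi(t_k)-\varphi(t_{k+1}))$ — so in fact no rescaling is needed and $\|x\|_1=1$ on the nose, which I would verify and use to streamline the argument.
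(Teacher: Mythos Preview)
Your construction has a genuine gap in the upper bound. The claim that the partially-switched-on layers form ``a convergent geometric sum in $N-k$'' is false. With $a_k=2^{k/(d-1)}$ and $m_k=a_k^{-1}(\varphi(t_k)-\varphi(t_{k+1}))$, the switched-off contribution at $t\in(2^{-N-1},2^{-N}]$ is
\[
\sum_{k<N} t\,2^k\bigl(\varphi(2^{-k})-\varphi(2^{-k-1})\bigr).
\]
Your bound $\varphi(2^{-k})\le 2^{N-k}\varphi(2^{-N})$ yields only $t\,2^k\varphi(2^{-k})\le t\,2^N\varphi(2^{-N})$, a constant in $k$; summing gives $N\cdot t\,2^N\varphi(2^{-N})$, not a bounded multiple of $\varphi(t)$. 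The concrete counterexample is $\varphi(s)=s$: then $\varphi(t_k)-\varphi(t_{k+1})=2^{-k-1}$, so for $t=2^{-N}$ the integral equals
\[
\sum_{k\ge N}2^{-k-1}+\sum_{k<N}2^{-N}\cdot 2^k\cdot 2^{-k-1}=2^{-N}+N\cdot 2^{-N-1}=\bigl(1+\tfrac{N}{2}\bigr)\varphi(t),
\]
which blows up as $N\to\infty$. Quasi-concavity gives you that $\varphi$ is nondecreasing and $\varphi(s)/s$ is nonincreasing, but it does \emph{not} force $\varphi(s)/s$ to grow at any definite rate as $s\downarrow 0$, and your upper bound needs exactly that.

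The paper's proof addresses this by choosing the scales adaptively rather than on a fixed dyadic grid. It defines two sequences: $t_k$ with $\varphi(t_k)=2^{-k}$ (so $\varphi$ drops geometrically along $t_k$) and $s_k$ with $s_k/\varphi(s_k)=2^{-k}$ (so $\varphi(s)/s$ grows geometrically along $s_k$), then interleaves them into a single sequence $\{u_{2k}\}$. Along this interleaved sequence one has both $\varphi(u_{2k})\le 2^{l-k}\varphi(u_{2l})$ and $\varphi(u_{2k+1})/u_{2k+1}\le 2^{k-l}\varphi(u_{2l+1})/u_{2l+1}$; the first controls the switched-on tail, the second gives the missing geometric decay for the switched-off head. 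When $\varphi'(0)<\infty$ (as in $\varphi(s)=s$), the $s_k$ sequence terminates and the construction collapses to finitely many levels, which is handled separately. Your fixed grid captures only the first of these two pieces of control; you would need to rebuild the level sets so that the ratios $\varphi(\text{level}_k)/\text{level}_k$ also separate geometrically.
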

\begin{proof} Suppose first that $\varphi'(0)=\infty.$ It follows from the equalities
$$\lim_{t\to0}\varphi(t)=0,\quad \lim_{t\to0}\frac{\varphi(t)}{t}=\infty$$
that one can select sequences $t_k\downarrow0$ and $s_k\downarrow0$ such that
$$\varphi(t_k)=2^{-k}\varphi(1),\quad \frac{s_k}{\varphi(s_k)}=2^{-k}\frac{1}{\varphi(1)},\quad k\geq0.$$
Define a sequence $u_k\downarrow0$ by setting $u_0=t_0=1,$ $u_1=s_1$ and
$$u_{2k}=\max\{t_l:\ t_l<u_{2k-1}\},\quad u_{2k+1}=\max\{s_l:\ s_l\leq u_{2k}\},\quad k\geq 1.$$
{\bf Step 1.} Observe that
\begin{equation}\label{bk lemma1}\varphi(u_{2k})\leq 2^{l-k}\varphi(u_{2l}),\quad \frac{\varphi(u_{2k+1})}{u_{2k+1}}\leq 2^{k-l}\frac{\varphi(u_{2l+1})}{u_{2l+1}}, \ \ \ l\le k.\end{equation}
Indeed, by definition of the sequence $\{u_{2k}\}_{k\ge 1}$ we have that $$u_{2k}=t_m, \ \ \ u_{2k-2}=t_n, \ \ \mbox{for some} \ \ n<m.$$
The computation $$\varphi(u_{2k})=\varphi(t_m)=2^{-m}=2^{-m+n}\varphi(t_n)=2^{-m+n}\varphi(u_{2k-2})\le 2^{-1}\varphi(u_{2k-2}).$$
implies the first inequality in \eqref{bk lemma1}. The second one follows similarly.

\noindent{\bf Step 2.}
Fix $d>1.$
Since $\varphi(u_{2k})\le \varphi(t_k)=2^{-k}$ for all $k\geq0,$ it follows that
\begin{equation}\label{bk lemma2}\sum_{k\geq0}\frac12u_{2k}^{1/(d-1)}\varphi(u_{2k})\leq\frac12\sum_{k\geq0}\varphi(u_{2k})\leq\frac12\sum_{k\geq0}2^{-k}=1.\end{equation}
Equality \eqref{bk lemma2} guarantees that $$\sum_{k\ge 1} m\Big((0,\frac12u_{2k}^{1/(d-1)}\varphi(u_{2k}))\Big)\le 1,$$ where $m$ is the Lebesgue measure on $(0,1).$ Therefore, choosing the decreasing order of the summands below, we construct  a positive decreasing function
(see \eqref{def_direct_sum_of_operators})
\begin{equation}\label{x main def}
x:=\bigoplus_{k\geq0}u_{2k}^{-1/(d-1)}\chi_{(0,\frac12u_{2k}^{1/(d-1)}\varphi(u_{2k}))},
\end{equation}
that belongs to $L_1(0,1).$ Equality \eqref{bk lemma2} implies that $\|x\|_1\leq 1$.
For the function $x\in L_1(0,1)$ constructed above, for $t\in[0,1]$,
we have
\begin{align}\label{bk lemma3} &2\int_0^1\min\{x(s),tx^d(s)\}ds =\sum_{k\geq0}u_{2k}^{1/(d-1)}\varphi(u_{2k})\cdot\min\{u_{2k}^{-1/(d-1)},t\cdot u_{2k}^{-d/(d-1)}\}\\
&=\sum_{u_{2k}\leq t}\varphi(u_{2k})+\sum_{u_{2k}>t}t\cdot\frac{\varphi(u_{2k})}{u_{2k}}.\nonumber
\end{align}
Thus, to prove the lemma, it is enough to show that the right hand side of \eqref{bk lemma3} satisfies
\begin{equation}\label{bk lemma4}\frac12 \varphi(t)\le\sum_{u_{2k}\leq t}\varphi(u_{2k})+t\cdot\sum_{u_{2k}>t}\frac{\varphi(u_{2k})}{u_{2k}}\le 5\varphi(t), \ \ \ t\in[0,1].\end{equation}

We first show the right hand side of \eqref{bk lemma4}. Let $l\in\mathbb N$ be such that $t\in[u_{2l},u_{2l-2}).$

\noindent{\bf Step 3.}  Applying \eqref{bk lemma1}, we have
$$\sum_{u_{2k}\leq t}\varphi(u_{2k})=\sum_{k= l}^{\infty}\varphi(u_{2k})\leq\sum_{k= l}^{\infty}2^{l-k}\varphi(u_{2l})=2\varphi(u_{2l})\leq2\varphi(t).$$
Similarly,
\begin{align}\label{bk lemma5}\sum_{u_{2k}>t}\frac{\varphi(u_{2k})}{u_{2k}} &=\frac{\varphi(u_{2l-2})}{u_{2l-2}}+\sum_{k=0}^{l-2}\frac{\varphi(u_{2k})}{u_{2k}}\\
& \leq\frac{\varphi(u_{2l-2})}{u_{2l-2}}+\sum_{k=0}^{l-2}\frac{\varphi(u_{2k+1})}{u_{2k+1}}
\leq
\frac{\varphi(u_{2l-2})}{u_{2l-2}}+\sum_{k=0}^{l-2}2^{k-l+2}\frac{\varphi(u_{2l-3})}{u_{2l-3}}\nonumber \\ &\leq\frac{\varphi(u_{2l-2})}{u_{2l-2}}+2\frac{\varphi(u_{2l-3})}{u_{2l-3}}\leq 3\frac{\varphi(u_{2l-2})}{u_{2l-2}}\leq\frac{3\varphi(t)}{t}. \nonumber
\end{align}
Thus, we obtain the right hand side of \eqref{bk lemma3}.

\noindent{\bf Step 4.} Next we prove the left hand side of \eqref{bk lemma3}. We consider two cases  $t\in[u_{2l},u_{2l-1})$ and
$t\in[u_{2l-1},u_{2l-2})$ separately.
Let $t\in[u_{2l},u_{2l-1}).$ Since $u_{2l}=t_m$ for some $m$ and $t_{m-1}\geq u_{2l-1},$ it follows from the definition of ${u_{2k}}$ that
$$\sum_{u_{2k}\leq t}\varphi(u_{2k})+\sum_{u_{2k}>t}t\cdot\frac{\varphi(u_{2k})}{u_{2k}}\geq\varphi(u_{2l})=\frac12\varphi(t_{m-1})\geq\frac12\varphi(u_{2l-1})\geq\frac12\varphi(t).$$
Let $t\in[u_{2l-1},u_{2l-2}).$ Since $u_{2l-1}=s_m$ for some $m$ and $s_{m-1}\geq u_{2l-2},$ by the definition of ${u_{2k+1}}$, we have that
$$\sum_{u_{2k}\leq t}\varphi(u_{2k})+\sum_{u_{2k}>t}t\cdot\frac{\varphi(u_{2k})}{u_{2k}}\geq$$
$$\geq  t\cdot\frac{\varphi(u_{2l-2})}{u_{2l-2}}  \geq t\cdot\frac{\varphi(s_{m-1})}{s_{m-1}}=\frac{t}{2}\cdot\frac{\varphi(s_m)}{s_m}=\frac{t}{2}\cdot\frac{\varphi(u_{2l-1})}{u_{2l-1}}\geq\frac{t}{2}\cdot\frac{\varphi(t)}{t}=\frac12\varphi(t).$$
The two steps above prove the assertion in the case $\varphi'(0)=\infty.$

\noindent {\bf Step 5.} Let $\varphi'(0)<\infty.$ Then the sequence $\{s_k\}$ is finite and, therefore, the sequence $\{u_{2k}\}_{k\ge 0}$ is also finite. Let $m+1$ and  $n+1$ be the lengths  of the sequences $\{s_k\}$ and  $\{u_{2k}\}_{k\ge 0}$, respectively. If $t\in[u_{2l},u_{2l-2}),$ $l=0,\ldots,n$ then the assertion can be shown similarly (see Steps 1 and 2 above). Suppose that $t\in[0,u_{2n}).$ The right hand side of \eqref{bk lemma4} can be estimated similarly to \eqref{bk lemma5}, as follows
  \begin{equation}\label{bk lemma6}
 \sum_{u_{2k}\leq t}\varphi(u_{2k})+t\cdot\sum_{u_{2k}>t}\frac{\varphi(u_{2k})}{u_{2k}}=t\cdot\sum_{k=0}^n\frac{\varphi(u_{2k})}{u_{2k}}\le
 3\varphi(t).
  \end{equation}
Now we estimate the left hand side of \eqref{bk lemma4}. Since the sequence $\{t_k\}$ is infinite, it follows that the recursive procedure stops at $u_{2n}.$ Indeed, if $u_{2n+1}$ were defined, then by definition of $\{u_{2k}\},$ we would have that $u_{2n+2}$ is defined, which contradicts with the choice of $n$. Since    $u_{2n+1}$ is not defined, it follows that there is no $s_l$ such that $s_l\le u_{2n}.$ Therefore,  $s_k> u_{2n}$ for all $k=1,...,m$ and
$$\varphi'(0)={\lim_{s\to \, 0}} \ \frac{\varphi(s)}{s}<2\frac{\varphi(s_m)}{s_m}\le 2\frac{\varphi(u_{2n})}{u_{2n}}.$$
Thus,  we have
$$t\cdot\sum_{k=0}^n\frac{\varphi(u_{2k})}{u_{2k}}\ge t\cdot\frac{\varphi(u_{2n})}{u_{2n}} \ge \frac{t}2 \ \varphi'(0)\ge \frac{t}2 \cdot\frac{\varphi(t)}{t}=\frac{\varphi(t)}{2},$$
which together with \eqref{bk lemma6} completes the proof.

\end{proof}

Now, we are fully prepared to present the proof the implication.

\begin{proof}[Proof of Theorem \ref{orlicz embedding theorem}: \eqref{Oetiii}$\Rightarrow$\eqref{Oeti}]

 %Let $M$ be an Orlicz function on $(0,\infty)$ such that the ideal $\mathcal{L}_M(H)$ isomorphically embeds into $\mathcal{L}_p(\mathcal{R}).$ By Lemma \ref{upper p lower 2} $\mathcal{L}_M(H)$ satisfies upper $p-$estimate and lower $2-$estimate. By Lemma \ref{justify}, $M$ is equivalent to a $p-$convex and $2-$concave Orlicz function. By Lemma \ref{interpolation criterion}, $\mathcal{L}_M(H)$ is an interpolation space for the couple $(\mathcal{L}_p(H),\mathcal{L}_2(H)).$

 Let $M$ be equivalent to a $p$-convex and $2$-concave Orlicz function. Without loss of generality, we may assume that  $M\in \mathcal{O}_{p,2}$. Define the function $\varphi$ by setting
  $$\varphi(t):=\frac{M\big(t^{\frac1{2-p}}\big)}{t^{\frac{p}{2-p}}}, \ \ \mbox{so that} \ \ M(t)=t^p\varphi(t^{2-p}), \ \  t\in(0,1),$$
   Since $M$ is $p$-convex and $2$ concave, application of Lemma \ref{as lemma} implies that $\varphi$ is quasi-concave. It follows from Lemma \ref{as lemma} and $p$-convexity of $M$ that there exists a limit
$$\lim_{t\to0}\frac{M(t)}{t^p}=\lim_{t\to0}\varphi(t^{2-p}).$$
Since $M(t)\not\sim t^p,$ it follows that the limit above is $0$ and, therefore, $\varphi(0)=0.$ Thus, the function $\varphi$ satisfies the conditions of  Lemma \ref{bk lemma}.

Set $d=2/p,$ select $x\in L_1(0,1)$ as in Lemma \ref{bk lemma} and select a random variable $A_0\in\mathcal{L}_p(\mathcal{R})$ such that $\mu(A_0)=x^{1/p}.$ Recall that $M_p(t)\sim\min\{t^p,t^2\}$ for all $t>0,$ where $M_p$ is the Orlicz function defined by \eqref{lp+l2 orlicz}. we infer from Lemma \ref{bk lemma} that
$$M(t)=t^p\ \varphi(t^{2-p})\sim t^p\tau(\min\{|A_0|^p,t^{2-p}|A_0|^{2}\}) \sim$$
$$\sim \tau(\min\{(t|A_0|)^p,(t|A_0|)^2\})\sim\tau(M_p(t|A_0|)),\quad t\in(0,1).$$
By Lemma \ref{ms} and Lemma \ref{lpl2 embed}, we obtain
$$\|A\|_{\mathcal{L}_M}=\|A\otimes A_0\|_{\mathcal{L}_p+\mathcal{L}_2}\sim\|\mathscr{R}(A\otimes A_0)\|_p$$
for every finite rank operator $A\in\mathcal{L}(H).$ The set of all finite rank operators is dense in every separable ideal $\mathcal{I}, \|\cdot\|_{\mathcal{I}})$, in particular in the ideal $\mathcal{L}_M(H)$. Thus, the ideal $\mathcal{L}_M(H)$ isomorphically embeds into $\mathcal{L}_p(\mathcal{R}).$
\end{proof}

\section{Raynaud-Sch\"utt theorem for $1\leq p<2$}\label{RS section}

In this section, we extend the main result from \cite[Theorem 1.1]{RS}  established  there for $L_1$-spaces to the case of $L_p$-spaces, $1\leq p<2.$ For convenience of the reader, we present a complete version of our argument.

The assertion below extends  \cite[Lemma 2.9]{KwS}. In this and subsequent sections, we denote the set of all $q-$concave  (respectively, that of $p$-convex and $2$-concave) Orlicz functions $M$ such that $M(1)=1$ by $\mathcal{O}_{1,q}$ (respectively, by $\mathcal{O}_{p,2}$).

\begin{lem}\label{first orlicz compute} Let $1\leq q\leq 2$ be fixed. For every $n\geq1,$ there exists $M_n\in\mathcal{O}_{1,q}$ such that
\begin{equation}\label{foc6}\frac14\|x\|_{l_{M_n}}\leq\sum_{k=0}^{n-1}\mu(k,x)+n^{1-1/q}\Big(\sum_{k=n}^{\infty}\mu^q(k,x)\Big)^{1/q}\leq 4\|x\|_{l_{M_n}}\end{equation}
for every finitely supported $x\in l_{\infty}.$
\end{lem}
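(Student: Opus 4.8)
The plan is to construct $M_n$ explicitly as the Orlicz function whose "fundamental-type" behavior matches the two-regime functional
$$\Phi_n(x):=\sum_{k=0}^{n-1}\mu(k,x)+n^{1-1/q}\Big(\sum_{k=n}^{\infty}\mu^q(k,x)\Big)^{1/q}.$$
First I would observe that $\Phi_n$ is (up to equivalence) a norm on finitely supported sequences: it is the sum of the $\ell_1$-norm of the $n$ largest coordinates and $n^{1-1/q}$ times the $\ell_q$-norm of the tail, so it is a rearrangement-invariant functional that respects submajorization. The natural candidate for $M_n$ is obtained by testing $\Phi_n$ on the indicators: set $f_n(m):=\Phi_n(\chi_{\{0,\dots,m-1\}})$, which equals $m$ for $m\le n$ and $n+n^{1-1/q}(m-n)^{1/q}$ for $m\ge n$; then $f_n(m)/m$ is decreasing and $f_n$ is increasing, so $f_n$ is quasi-concave, and I would let $M_n$ be a convex Orlicz function equivalent on $[0,1]$ to the "reciprocal-dilation" function $t\mapsto t/f_n^{-1}(1/t)$ (equivalently, the least concave majorant construction). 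Normalizing so that $M_n(1)=1$ is harmless since $f_n(1)=1$.

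Next I would verify $M_n\in\mathcal{O}_{1,q}$, i.e.\ that $M_n$ is $q$-concave (automatic convexity comes from the construction). This reduces, via Lemma \ref{as lemma} (with $p=1$), to the estimate $M_n(s_1)/s_1^q\ge \mathrm{const}\cdot M_n(s_2)/s_2^q$ for $0<s_1\le s_2\le 1$, which in turn translates into a growth condition on $f_n$: since $f_n$ grows like $m$ for small $m$ and like $m^{1/q}$ for large $m$, the function $m\mapsto f_n(m)/m^{1/q}$ is decreasing, and this is exactly what gives $q$-concavity of the associated Orlicz function. This is a routine but slightly fiddly computation with the two explicit branches of $f_n$; the uniformity of the constant in $n$ is the point to watch, and it holds because the two branches are glued at $m=n$ with matching value $f_n(n)=n$.

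The main content is then the two-sided equivalence \eqref{foc6}. For a finitely supported $x$, by rearrangement-invariance I may assume $x=\mu(x)$ is a decreasing step sequence, and by a further reduction I would approximate $x$ from above and below by sums of indicators $\sum_j \chi_{\{0,\dots,m_j-1\}}$ (a "layer-cake" decomposition of the decreasing sequence) so that both $\Phi_n(x)$ and $\|x\|_{l_{M_n}}$ are controlled by their values on such indicators up to a universal constant; here the fact that both functionals are monotone with respect to submajorization, together with the submajorization inequality \eqref{sum maj}, is what makes the reduction work with absolute constants. On indicators the identity $\|\chi_{\{0,\dots,m-1\}}\|_{l_{M_n}}=1/M_n^{-1}(1/m)\sim f_n(m)=\Phi_n(\chi_{\{0,\dots,m-1\}})$ holds by the very definition of $M_n$, and assembling the layers yields the constants $\tfrac14$ and $4$ in \eqref{foc6} (the precise numerics coming from the two "loss-of-a-factor-2" steps in passing between a decreasing sequence and its dyadic layer decomposition).

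The step I expect to be the main obstacle is the passage from the indicator identity to the general finitely supported $x$ with \emph{absolute} constants independent of $n$: one must show that neither $\Phi_n$ nor $\|\cdot\|_{l_{M_n}}$ distorts the layer-cake decomposition by more than a fixed factor, and this requires using that $\Phi_n$ is actually equivalent (with constant $2$, say) to a genuine r.i.\ norm together with the Fatou/submajorization-monotonicity property of Orlicz norms recorded in Remark \ref{rem_Orl_mon_submaj}. The explicit formula for $f_n$, split at $m=n$, is what keeps all constants uniform in $n$; I would organize the proof so that the $n$-dependence is entirely absorbed into the single function $M_n$ and never reappears in the equivalence constants.
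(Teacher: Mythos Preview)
Your construction of $M_n$ via the fundamental function $f_n(m)=\Phi_n(\chi_{\{0,\dots,m-1\}})$ is essentially correct (modulo a slip: the right inversion is $M_n(t)\sim 1/f_n^{-1}(1/t)$, not $t/f_n^{-1}(1/t)$), and it yields a function equivalent to the paper's explicit
\[
N_n(t)=\begin{cases} n^{q-1}t^q,& 0\le t\le 1/n,\\ qt-(q-1)/n,& t\ge 1/n,\end{cases}
\]
normalized afterwards by $N_n(1)$. So the candidate Orlicz function is fine, and the $q$-concavity check is indeed routine.

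The genuine gap is the layer-cake reduction. Two rearrangement-invariant norms that agree on indicators (i.e.\ share a fundamental function) need \emph{not} be equivalent, and submajorization-monotonicity does not repair this: the triangle inequality gives $\|\sum_j c_j\chi_{A_j}\|\le\sum_j c_j\|\chi_{A_j}\|$ for nested $A_j$, but the reverse fails with any uniform constant already for $\ell_q$, $q>1$ (take $\mu(k,x)\approx(k+1)^{-1/q}$ on $K$ points; the dyadic layer sum is of order $\log K$ while $\|x\|_q\approx(\log K)^{1/q}$). Since both $\Phi_n$ and $\|\cdot\|_{l_{M_n}}$ have an $\ell_q$-like regime (the tail term of $\Phi_n$, respectively the $t^q$-branch of $M_n$), you cannot ``assemble the layers'' with absolute constants as claimed. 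The appeal to Remark \ref{rem_Orl_mon_submaj} is a red herring: it gives monotonicity under $\prec\prec$, not quasi-additivity on nested level sets.

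What actually works, and what the paper does, is to use the explicit two-branch form of $N_n$ rather than just its fundamental function. One normalizes to $S(x)\le 1$ (respectively $\|x\|_{l_{N_n}}\le 1$) and splits the coordinates of $x$ at the threshold value $1/n$: the large coordinates---there can be at most $n$ of them---are controlled by the linear branch of $N_n$, the small ones by the $t^q$-branch, via a direct modular computation in each piece (for the reverse direction one first submajorizes $x$ by the sequence obtained by averaging its top $n$ values). This threshold argument is short, yields the constants $\tfrac14$ and $2$ for $N_n$, and then $\tfrac14$ and $4$ after the normalization $M_n=N_n/N_n(1)$. No layer-cake is needed.
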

\begin{proof} Let $x=\{x(k)\}_{k\ge 0}\in l_{\infty}$ be finitely supported and let $n\ge 1$ be fixed. Without loss of generality we may assume that $x\ge 0.$  By $N_n$ we denote a $q$-concave Orlicz function defined by
$$N_n(t):=
\begin{cases}
n^{q-1}t^q,\quad 0\leq t\leq 1/n\\
qt-(q-1)/n,\quad t\geq 1/n.
\end{cases}
$$ Denoting
$$S(x):=\sum_{k=0}^{n-1}\mu(k,x)+n^{1-1/q}\Big(\sum_{k=n}^{\infty}\mu^q(k,x)\Big)^{1/q},$$
we shall show that the Orlicz function $N_n$ satisfies
\begin{equation}\label{foc1}
\frac14\|x\|_{l_{N_n}}\leq S(x)\leq 2\|x\|_{l_{N_n}}.
\end{equation}

{\bf Step 1:} We first show the left hand side of \eqref{foc1}.
Suppose that $S(x)\leq 1.$
Setting $y=\{y(k)\}_{k\ge 0}$ and $z=\{z(k)\}_{k\ge 0},$ where
$$y(k)=\left\{\begin{array}{cl}x(k), & x(k)>1/n \\ 0, & \mbox{otherwise}\end{array}\right. \ \ \mbox{and} \ \
z(k)=\left\{\begin{array}{cl}x(k), & x(k)\le1/n \\ 0, & \mbox{otherwise}\end{array}\right. \ \ \ k\ge 0,$$ we have $\mu(y),\mu(z)\leq\mu(x).$
Since $S(x)\le 1,$ it obviously follows that \begin{equation}\label{foc2}\sum_{k=0}^{n-1}\mu(k,x)\le 1.\end{equation}
Thus, we obtain
$$\mu(n-1,y)\leq\frac1n\sum_{k=0}^{n-1}\mu(k,y)\leq\frac1n\sum_{k=0}^{n-1}\mu(k,x)\leq\frac1n.$$
According to definition of $y,$ we have that $\mu(n-1,y)=0$ and, therefore, $\mu(k,y)=0$ for $k\geq n-1.$
 Hence, employing again \eqref{foc2}, we have
$$\sum_{k\geq 0}N_n(y(k))=\sum_{y(k)>1/n}\Big(qy(k)-\frac{q-1}{n}\Big)\leq\sum_{y(k)>1/n}qy(k)= q\sum_{k=0}^{n-1}\mu(k,y)\leq q,$$
which implies $\|y\|_{l_{N_n}}\leq q.$

On the other hand, by the assumption,  we have
$$n^{1-1/q}\Big(\sum_{k=n}^{\infty}\mu^q(k,z)\Big)^{1/q}\leq S(x)\leq 1$$
and, therefore,
\begin{equation}\label{foc3}\sum_{k=n}^{\infty}\mu^q(k,z)\leq n^{1-q}.\end{equation}
Since by \eqref{foc3} and by definition of $z$
$$\sum_{k\geq 0}z(k)^q=\sum_{k=n}^{\infty}\mu^q(k,z)+\sum_{k=0}^{n-1}\mu^q(k,z)\leq n^{1-q}+n\cdot n^{-q}=2n^{1-q},$$
it follows that
$$\sum_{k\geq 0}N_n(z(k))=\sum_{z_k\leq 1/n}n^{q-1}z(k)^q\leq 2.$$
Hence, $\|z\|_{l_{N_n}}\leq 2.$

Combining the estimates above, we infer that $$\|x\|_{l_{N_n}}\leq\|y\|_{l_{N_n}}+\|z\|_{l_{N_n}}\leq q+2\le 4,$$
which completes the proof of the left hand side of \eqref{foc1}.

\noindent{\bf Step 2:} Now we prove the right hand side of \eqref{foc1}. Suppose that $\|x\|_{l_{N_n}}\leq 1.$ Setting
$$y:=\Big(\underbrace{\frac1n\sum_{k=0}^{n-1}\mu(k,x)}_{\mbox{\small{$n$ times}}},\mu(n,x),\mu(n+1,x),\ldots\Big),$$
we have that $y\prec\prec x$ and, therefore
(see Remark \ref{rem_Orl_mon_submaj}),
 $\|y\|_{l_{N_n}}\leq 1.$ Hence,
\begin{equation}\label{foc4}\sum_{y(k)>1/n}y(k)\leq\sum_{y(k)>1/n}\Big(qy(k)-\frac{q-1}{n}\Big)\leq\sum_{k\geq0}N_n(y(k))\leq 1.\end{equation}
Suppose that $y(0)>1/n.$ Then $y(k)> 1/n$ for $0\leq k\le n-1$ and, therefore, $\sum_{y(k)>1/n}y(k)>1,$ which contradicts with
 \eqref{foc4}. Thus, we conclude that $y(0)\leq 1/n$ and, therefore, $y(k)\leq 1/n$ for all $k\geq 0.$ Combining the latter and the fact that $\|y\|_{l_{N_n}}\leq 1$, we obtain
$$n^{q-1}\sum_{k\geq0}y(k)^q=\sum_{k\geq0}N_n(y(k))\leq 1.$$
Hence, by definition of $y$,
$$S(x)=\sum_{k=0}^{n-1}\mu(k,x)+n^{1-1/q}\Big(\sum_{k=n}^{\infty}\mu^q(k,x)\Big)^{1/q} \leq ny(0)+n^{1-1/q}\Big(\sum_{k\geq 0}y(k)^q\Big)^{1/q}\leq$$
$$\leq n\cdot\frac1n+n^{1-1/q}\cdot (n^{1-q})^{1/q}\leq 2,$$
which completes the proof of the right hand side of \eqref{foc1}.

\noindent{\bf Step 3:} Next we set
$$M_n(t):=\frac{N_n(t)}{N_n(1)}, \ \ \ t\in[0,\infty).$$ Observe that $M_n\in \mathcal{O}_{1,q}.$
Moreover, since $1\le N_n(1)\le q\le 2,$ for all $n\ge 1,$ we have that
$$M_n(t)\le N_n(t)\le 2 \ M_n(t), \ \ \ t\in[0,\infty), \ \ n\ge 1.$$
Using the latter fact, one can infer that
\begin{equation}\label{foc5}\|x\|_{l_{M_n}}\le \|x\|_{l_{N_n}}\leq 2\|x\|_{l_{M_n}}\end{equation}
Combination of \eqref{foc1} and \eqref{foc5} completes the proof of the lemma.
\end{proof}

The assertion below extends  \cite[Lemma 2.10]{KwS}. Recall that $\mathfrak{S}_n$ denotes the group of all permutations of $n$ elements.

\begin{lem}\label{kws lemma} Let $1\leq p\leq 2$ and $n\in\mathbb N.$ For every $y\in\mathbb{R}^n,$ there exists $M_{n,y}\in\mathcal{O}_{p,2}$ such that
\begin{equation}\label{kws2}\frac1{80n}\|y\|_p^p\cdot\|x\|_{l_{M_{n,y}}}^p\leq\frac1{n!}\sum_{\rho\in\mathfrak{S}_n}
\Big(\sum_{k=0}^{n-1}x^2(\rho(k))y^2(k)\Big)^{\frac{p}2}\leq \frac{16}n\|y\|_p^p\cdot\|x\|_{l_{M_{n,y}}}^p\end{equation}
for every $x\in\mathbb{R}^n.$
\end{lem}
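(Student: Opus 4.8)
The statement is a two-sided quantitative comparison between, on one side, the average over permutations of a power-$p/2$ of a weighted quadratic form $\sum_k x^2(\rho(k))y^2(k)$, and, on the other side, $\|y\|_p^p\|x\|_{l_{M_{n,y}}}^p$ for a suitable Orlicz function depending on $n$ and on the vector $y$. The natural approach is to follow the Kwapie\'n--Sch\"utt strategy (\cite{KwS}, Lemma 2.10) but carry the extra exponent $p$ through. First I would reduce to the case $x\ge 0$ (all terms depend only on $x^2$) and normalize, say, $\|y\|_p=1$, so that the claim reads $\frac1{80n}\|x\|^p_{l_{M_{n,y}}}\le \frac1{n!}\sum_\rho(\sum_k x^2(\rho(k))y^2(k))^{p/2}\le\frac{16}{n}\|x\|^p_{l_{M_{n,y}}}$. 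The key step is to \emph{define} $M_{n,y}$ through the averaged expression itself: set, for $t\ge 0$,
\begin{equation*}
\Phi(t):=\frac1{n!}\sum_{\rho\in\mathfrak{S}_n}\Big(\sum_{k=0}^{n-1}t^2\,\chi_{\{0\}}(\rho(k))\,y^2(k)\Big)^{p/2}\quad\text{(schematically)},
\end{equation*}
more precisely defining $M_{n,y}(t)$ as an appropriate normalization of $\frac1{n!}\sum_\rho(\sum_{k} a_k(\rho) t^2 y^2(k))^{p/2}$ evaluated on a single ``spike'' $x=t e_0$; this is exactly the device by which Lemma \ref{first orlicz compute} was built from its target expression. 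One then checks that this $\Phi$, or rather its convex/Orlicz regularization, is $p$-convex and $2$-concave with $M_{n,y}(1)=1$, i.e.\ lies in $\mathcal{O}_{p,2}$: $p$-convexity follows since $t\mapsto \Phi(t^{1/p})$ is an average of functions $t\mapsto (ct^{2/p})^{p/2}=c^{p/2}t$, hence affine, and $2$-concavity should come from the concavity of $s\mapsto s^{p/2}$ for $p\le 2$ applied after the substitution $t\mapsto t^{1/2}$.

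\textbf{The comparison.} With $M_{n,y}$ so defined, the upper and lower bounds in \eqref{kws2} become estimates comparing the value of an ``averaged power-$p/2$ quadratic form'' on a general $x$ with its values on spikes, summed in the Orlicz way. The standard route is: (a) for the Orlicz-norm side, use that $\|x\|_{l_{M_{n,y}}}^p$ is controlled above and below (up to the constants $\tfrac1{80n}$, $\tfrac{16}n$) by $\sum_{j} M_{n,y}(\mu(j,x))$ whenever $\|x\|_{l_{M_{n,y}}}$ is of order $1$ — this is the same normalization bookkeeping as in Step 3 of Lemma \ref{first orlicz compute}; (b) for the permutation-average side, exploit that since each $y^2(k)$ gets matched to each coordinate of $x$ equally often, the sum $\frac1{n!}\sum_\rho(\sum_k x^2(\rho(k))y^2(k))^{p/2}$ only sees the multiset $\{x^2(j)\}$ and $\{y^2(k)\}$, so one may assume $x$ and $y$ are decreasingly rearranged. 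Then split the quadratic form $\sum_k x^2(\rho(k))y^2(k)$ into its ``large'' and ``small'' parts relative to a threshold (mirroring the $y,z$ decomposition in Lemma \ref{first orlicz compute}), use $\ell_2$--$\ell_1$ interpolation inequalities of the type $(\sum a_k)^{p/2}\le (\#\text{terms})^{p/2-1}\sum a_k^{p/2}$ on one part and pure $p/2$-subadditivity on the other, and average. The combinatorial factor $\frac1{n!}\sum_\rho$ contributes, for the quadratic form restricted to any fixed coordinate pattern, a clean count like $\binom{n}{\cdot}^{-1}$, which produces the $1/n$ and $1/(80n)$ normalizations.

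\textbf{Main obstacle.} I expect the hard part to be obtaining the constants $\tfrac1{80n}$ and $\tfrac{16}n$ \emph{uniformly in $y$} — i.e.\ ensuring that the Orlicz function $M_{n,y}$ one writes down, after being replaced by a genuinely $p$-convex $2$-concave representative, still sandwiches the averaged quadratic form with dimension-free absolute constants in front of the $\tfrac1n$. In Lemma \ref{first orlicz compute} this was handled because the target $S(x)$ had an explicit piecewise-power form whose Orlicz primitive $N_n$ was written by hand; here the dependence on $y$ is through the whole vector, and the averaging over $\mathfrak{S}_n$ mixes the scales $y^2(0)\ge\cdots\ge y^2(n-1)$ in a way that must be controlled by a \emph{single} Orlicz function. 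The resolution should be the same as in \cite{KwS}: show that the function $t\mapsto\frac1{n!}\sum_\rho(\sum_k t^2\chi_{A_\rho}\cdot y^2(k))^{p/2}$ (for a fixed-size index set $A$) is, up to the stated universal constants, equivalent on $[0,1]$ to a $p$-convex $2$-concave function, using Lemma \ref{as lemma} to verify the two monotonicity conditions \eqref{pconv crit}--\eqref{2conc crit} for $q=2$. Once that equivalence is in hand with absolute constants, \eqref{kws2} follows by combining it with the elementary two-sided bound between an Orlicz norm near $1$ and $\sum_j M_{n,y}(\mu(j,x))$, exactly as the previous lemmas were assembled. So the plan is: (1) define $M_{n,y}$ as the Orlicz regularization of the spike-evaluated averaged form; (2) verify $M_{n,y}\in\mathcal{O}_{p,2}$ via Lemma \ref{as lemma}; (3) reduce both sides to rearranged $x,y$ and to sums $\sum_j M_{n,y}(\mu(j,x))$; (4) prove the two-sided bound by a large/small decomposition of the quadratic form plus the permutation-counting, tracking the $\tfrac1n$-scaling; (5) absorb all dimension-free losses into the constants $80$ and $16$.
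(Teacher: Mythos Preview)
Your proposed definition of $M_{n,y}$ via a single spike $x=te_0$ collapses. If $x=te_0$, then $\sum_k x^2(\rho(k))y^2(k)=t^2y^2(\rho^{-1}(0))$, so
\[
\frac1{n!}\sum_{\rho}\Big(\sum_k x^2(\rho(k))y^2(k)\Big)^{p/2}=\frac1{n!}\sum_{\rho}t^p|y(\rho^{-1}(0))|^p=\frac{t^p}{n}\|y\|_p^p.
\]
This is just $ct^p$: the spike evaluation sees only one coordinate of $y$ at a time and the permutation average washes out all shape information about $y$. The resulting ``Orlicz function'' is $t^p$, and $l_{M_{n,y}}=l_p$ regardless of $y$, so \eqref{kws2} cannot hold in general. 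Your analogy with Lemma \ref{first orlicz compute} is also off: there the function $N_n$ was written down explicitly as a piecewise power, not obtained by evaluating the target expression on a spike.

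The paper's route is structurally different and much shorter. Set $q=2/p$ and rewrite
\[
S_y(x)=\frac1{n!}\sum_{\rho}\Big(\sum_k\big(x^p(\rho(k))y^p(k)\big)^q\Big)^{1/q},
\]
then apply the Kwapie\'n--Sch\"utt combinatorial theorem \cite[Theorem~1.2]{KwS} as a black box to the vectors $x^p,y^p$. This already yields a two-sided estimate of $S_y(x)$ by $\tfrac1n$ times the explicit ``top-$n$ plus $\ell_q$-tail'' expression in the rearrangement of the \emph{tensor product} $x^p\otimes y^p$; Lemma \ref{first orlicz compute} then converts that expression into $\|x^p\otimes y^p\|_{l_{M_n}}$. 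The $y$-dependent Orlicz function arises by a tensor-factorization trick: define $N_{n,y}(t)=\sum_k M_n(ty^p(k)/\|y^p\|_{l_{M_n}})$, so that $\|x^p\otimes y^p\|_{l_{M_n}}=\|y^p\|_{l_{M_n}}\,\|x^p\|_{l_{N_{n,y}}}$, and finally set $M_{n,y}(t)=N_{n,y}(t^p)$. The constants $80$ and $16$ come straight from multiplying the KwS constants by those of Lemma \ref{first orlicz compute}. What you are missing is (i) the passage to $x^p,y^p$ with exponent $q=2/p$, which puts the problem exactly in the KwS framework, and (ii) the observation that the correct $M_{n,y}$ is built from $M_n$ and the \emph{whole} vector $y^p$ via the tensor factorization, not from a spike evaluation.
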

\begin{proof} Let $1\leq p\leq 2,$ $q=2/p,$ $n\in\mathbb N$ and $y\in\mathbb{R}^n$ be fixed. For every $x\in\mathbb{R}^n$ we obviously have
\begin{equation*}S_y(x):=\frac1{n!}\sum_{\rho\in\mathfrak{S}_n}\Big(\sum_{k=0}^{n-1}x^2(\rho(k))y^2(k)\Big)^{p/2}=\frac1{n!}\sum_{\rho\in\mathfrak{S}_n}
\Big(\sum_{k=0}^{n-1}(x^p(\rho(k))y^p(k))^q\Big)^{1/q}.\end{equation*}
Applying \cite[Theorem 1.2]{KwS} to the right hand side of the equality above, we obtain
\begin{multline}\label{kws1}\frac1{5n}\Big(\sum_{k=0}^{n-1}\mu(k,x^p\otimes y^p)+n^{1-\frac1q}\Big(\sum_{k=n}^{\infty}\mu^q(k,x^p\otimes y^p)\Big)^{\frac1{q}}\Big)\le S_y(x)\\
\leq\frac1n\Big(\sum_{k=0}^{n-1}\mu(k,x^p\otimes y^p)+n^{1-\frac1{q}}\Big(\sum_{k=n}^{\infty}\mu^q(k,x^p\otimes y^p)\Big)^{\frac1{q}}\Big).\end{multline}
By Lemma \ref{first orlicz compute} there exists the Orlicz function $M_n\in\mathcal{O}_{1,q}$ that satisfies \eqref{foc6} for any finitely supported vector. Thus, applying \eqref{foc6} for the vector $x^p\otimes y^p$ to
 the right and left hand sides of \eqref{kws1}, we infer
\begin{equation}\label{kws3}\frac1{20n}\|x^p\otimes y^p\|_{l_{M_n}}\leq S_y(x)\leq\frac4n\|x^p\otimes y^p\|_{l_{M_n}}.\end{equation}

Define the function $N_{n,y}$ by setting
$$N_{n,y}(t):=\sum_{k\geq0}M_n\Big(\frac{ty^p(k)}{\|y^p\|_{l_{M_n}}}\Big),  \ \ \ t\in[0,\infty).$$
Observe that $$N_{n,y}(1)=\sum_{k\geq0}M_n\Big(\frac{y^p(k)}{\|y^p\|_{l_{M_n}}}\Big)=1$$ and  $q$-concavity of the function $N_{n,y}$ follows from the $q$-concavity of $M_n$. It follows that $N_{n,y}\in\mathcal{O}_{1,q}.$
Moreover, by definition of the Orlicz norm, we have
\begin{align}\label{kws4}\|x^p\otimes y^p\|_{l_{M_n}} &=\inf\Big\{\lambda:\ \sum_{k,l\geq0}M_n\Big(\frac{x^p(l)y^p(k)}{\lambda}\Big)\leq 1\Big\}\\
&=\inf\Big\{\lambda:\ \sum_{l\geq0}N_{n,y}\Big(\frac{\|y^p\|_{l_{M_n}}x^p(l)}{\lambda}\Big)\leq 1\Big\}=\|y^p\|_{l_{M_n}}\|x^p\|_{l_{N_{n,y}}}. \nonumber \end{align}
Combining \eqref{kws3} and \eqref{kws4}, we obtain
\begin{equation}\label{kws5}\frac1{20n}\|x^p\|_{l_{N_{n,y}}}\|y^p\|_{l_{M_n}}\leq S_y(x)\leq\frac4n\|x^p\|_{l_{N_{n,y}}}\|y^p\|_{l_{M_n}}.\end{equation}
Using Lemma \ref{first orlicz compute} for the sequence $y\in\mathbb R^n$, we infer
\begin{equation}\label{kws6}\frac14\|y\|_p^p\leq\|y^p\|_{l_{M_n}}\leq 4\|y\|_p^p.\end{equation}
Setting $M_{n,y}(t):=N_{n,y}(t^p),$ $t\ge0,$ we obtain that
$\|x^p\|_{l_{N_{n,y}}}=\|x\|_{l_{M_{n,y}}}^p.$  Application of the latter and \eqref{kws6} in \eqref{kws5} completes the proof of \eqref{kws2}.
Since $N_{n,y}\in\mathcal{O}_{1,q},$ it follows that $M_{n,y}\in\mathcal{O}_{p,2}.$
\end{proof}

\begin{lem} Let $1\leq p\leq 2$. The set $\mathcal{O}_{p,2}$ equipped with the topology of pointwise convergence is a compact metrizable space.
\end{lem}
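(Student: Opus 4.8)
The plan is to realise $\mathcal{O}_{p,2}$ as a closed subset of a Tychonoff cube and then to use continuity of convex functions to reduce the index set to a countable one.

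First I would record uniform pointwise bounds on the members of $\mathcal{O}_{p,2}$. If $M\in\mathcal{O}_{p,2}$, then the function $g(t):=M(t^{1/p})$ is convex with $g(0)=0$, so $s\mapsto g(s)/s$ is non-decreasing on $(0,\infty)$; evaluating at $s=t^p$ and $s=1$ and using $M(1)=1$ yields $M(t)\le t^p$ for $0<t\le1$ and $M(t)\ge t^p$ for $t\ge1$. Applying the same reasoning to the concave function $h(t):=M(t^{1/2})$ gives $M(t)\ge t^2$ for $0<t\le1$ and $M(t)\le t^2$ for $t\ge1$. (Alternatively, these are exactly the inequalities \eqref{pconv crit} and \eqref{2conc crit} of Lemma \ref{as lemma}, which hold on all of $(0,\infty)$ with constant $1$ since $M$ is itself $p$-convex and $2$-concave.) Hence $\mathcal{O}_{p,2}\subseteq K:=\prod_{t>0}I_t$ with $I_t:=[\min\{t^p,t^2\},\max\{t^p,t^2\}]$, and $K$ is compact by Tychonoff's theorem.

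Next I would verify that $\mathcal{O}_{p,2}$ is closed in $\mathbb{R}^{(0,\infty)}$ with the product topology, hence compact. Each defining condition is preserved under pointwise limits: $M(0)=0$, $M(1)=1$ and positivity are obviously closed conditions; convexity of $t\mapsto M(t^{1/p})$ and concavity of $t\mapsto M(t^{1/2})$ are intersections, over all triples consisting of two points and one weight, of closed conditions; and evenness together with convexity of $M$ itself follow since $M(t)=g(t^p)$ with $g$ convex non-decreasing and $t\mapsto t^p$ convex non-decreasing on $[0,\infty)$. Thus $\mathcal{O}_{p,2}$ is a closed subspace of the compact space $K$.

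For metrizability I would observe that every $M\in\mathcal{O}_{p,2}$ is convex on the open interval $(0,\infty)$, hence continuous there, and so is determined by its restriction to $\mathbb{Q}\cap(0,\infty)$. The evaluation map $r\colon\mathcal{O}_{p,2}\to\prod_{q\in\mathbb{Q}\cap(0,\infty)}I_q$, $r(M):=(M(q))_q$, is therefore continuous and injective, and its target is a countable product of compact metric spaces, hence metrizable. Since $\mathcal{O}_{p,2}$ is compact and the target is Hausdorff, $r$ is a homeomorphism onto its image, so $\mathcal{O}_{p,2}$ is metrizable. This last point is where the real content lies: the full cube $K$ is an uncountable product and is not metrizable, so one genuinely needs the fact that convexity forces all members of $\mathcal{O}_{p,2}$ to be continuous and thus governed by countably many coordinates. (A slightly longer alternative is to prove directly that $\mathcal{O}_{p,2}$ is equicontinuous on each compact subinterval $[a,b]\subset(0,\infty)$ — which follows from convexity together with the pointwise bounds above, since the difference quotients of $M$ on $[a,b]$ are dominated by $M(2b)/b\le\max\{(2b)^p,(2b)^2\}/b$ — and then to manufacture an explicit metric from countably many evaluations.)
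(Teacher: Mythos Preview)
Your proof is correct and takes a somewhat different route from the paper's. Both arguments hinge on the same key observation---that convexity forces continuity and hence each $M\in\mathcal{O}_{p,2}$ is determined by its values on a countable dense set---but they deploy it differently.

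The paper begins by writing down an explicit metric $d(M_1,M_2)=\sum_{n\ge1}2^{-n}\frac{|(M_1-M_2)(a_n)|}{1+|(M_1-M_2)(a_n)|}$ built from evaluations at the positive rationals $\{a_n\}$, asserts that it generates the pointwise topology, and then proves compactness by hand: given a sequence $\{M_n\}$, a diagonal argument (using the bound $M_n(a)\le\max\{1,a^2\}$ from $2$-concavity) extracts a subsequence converging at every rational, and a short squeeze using $M_{n,n}(t)\le M_{n,n}(a)\le(1+\varepsilon)^2 M_{n,n}(t)$ for $a\in(t,t(1+\varepsilon))\cap\mathbb{Q}$ upgrades this to convergence at every $t>0$. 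So the paper does sequential compactness directly, with no appeal to Tychonoff.

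Your approach inverts the order: you first obtain compactness in one stroke by embedding $\mathcal{O}_{p,2}$ as a closed subset of the Tychonoff cube $\prod_{t>0}[\min\{t^p,t^2\},\max\{t^p,t^2\}]$ (the two-sided pointwise bounds and the closedness verification are exactly what is needed here), and only then extract metrizability by restricting to rational coordinates and invoking the compact--Hausdorff homeomorphism lemma. This is cleaner structurally and makes explicit why the uncountable product is not an obstruction. The paper's proof, on the other hand, is more elementary in that it avoids Tychonoff and gives the metric explicitly, which is convenient since the next theorem in the paper (Theorem~\ref{rs theorem}) immediately equips $\mathcal{O}_{p,2}$ with its Borel $\sigma$-algebra and constructs measures on it.
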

\begin{proof} Using that the set of rational numbers $\mathbb{Q}$ is countable, we numerate $\mathbb{Q}_+$ as $\mathbb{Q}_+=\{a_1,a_2,\cdots\}.$ We claim that the metric
$$d(M_1,M_2):=\sum_{n\geq 1}2^{-n}\frac{|(M_1-M_2)(a_n)|}{1+|(M_1-M_2)(a_n)|}, \ \ \ M_1,M_2\in \mathcal{O}_{p,2},$$
 generates the topology of pointwise convergence on $\mathcal{O}_{p,2}$.
Proof of the completeness is routine and, therefore, omitted.

Let us show compactness.
%Observe first that that $2$-concavity of $M$ implies $M(t)\leq\max\{t,t^2\}$ for all $t\le 0.$
Take an arbitrary sequence $\{M_n\}_{n\geq1}\subset\mathcal{O}_{p,2}.$ Since $M_n$ is $2$-concave for any $n\ge 1,$ it follows that the sequence $\{M_n(a_1)\}_{n\geq1}$ is bounded by $\max\{1,a_1^2\}$. Let $m_1$ be a limit of some subsequence of $\{M_{n,1}(a_1)\}_{n\geq1}.$ Select a subsequence $\{M_{n,1}\}_{n\geq1}\subset\{M_n\}_{n\geq1}$ such that $|M_{n,1}(a_1)-m_1|\leq 1/n$ for all $n\geq1.$ Using the same argument, we have that the sequence $\{M_{n,1}(a_2)\}_{n\geq1}$ is bounded. Let $m_2$ be a limit of some subsequence of $\{M_{n,1}(a_1)\}_{n\geq1}.$ Select a subsequence $\{M_{n,2}\}_{n\geq1}\subset\{M_{n,1}\}_{n\geq1}$ such that $|M_{n,2}(a_2)-m_2|\leq 1/n$ for all $n\geq1.$ Also, we have $|M_{n,2}(a_1)-m_1|\leq1/n$ for all $n\geq1.$ Proceeding this process ad infinitum, for every $l\geq 1,$ we obtain
 a sequence $\{M_{n,l}\}_{n\geq1}\subset\mathcal{O}_{p,2}$ such that $|M_{n,l}(a_k)-m_k|\leq 1/n$ for all $n\geq1$ and for all $k\leq l.$ Taking a diagonal subsequence $\{M_{n,n}\}_{n\geq1}\subset\{M_{n,l}\}_{n\geq1,l\geq 1},$ we have $|M_{n,n}(a_k)-m_k|\leq 1/n$ for all $1\le k\le n$.  Hence, the subsequence $\{M_{n,n}\}_{n\geq1}$ converges at every rational point.

Next we prove that $\{M_{n,n}\}_{n\geq1}$ converges at every $t>0.$ Fix $\varepsilon>0$ and select rational number $a\in(t,t(1+\varepsilon)).$ It follows from convexity and $2$-concavity of $M_{n,n}$ that
$$M_{n,n}(t)\leq M_{n,n}(a)\leq (1+\varepsilon)^2M_{n,n}(t), \ \ \ n\ge 1.$$
Passing to the limit, we obtain
$$\limsup_{n\to\infty}M_{n,n}(t)\leq\lim_{n\to\infty}M_{n,n}(a)\leq(1+\varepsilon)^2\liminf_{n\to\infty}M_{n,n}(t).$$
Since $\varepsilon$ is arbitrary and the sequence $\{M_{n,n}(a)\}_{n\ge 1}$ converges, it follows that
$$\limsup_{n\to\infty}M_{n,n}(t)=\liminf_{n\to\infty}M_{n,n}(t),$$ which
guarantees that the sequence $M_{n,n}(t)$ converges. Thus, compactness is proved.
\end{proof}

The following assertion is proved in \cite[Theorem 1.1]{RS} for $p=1.$ We provide a proof (the same as in \cite[Theorem 1.1]{RS}) for convenience of the reader. Due to previous lemma, we equip $\mathcal{O}_{p,2}$ with Borel $\sigma$-algebra.

\begin{thm}\label{rs theorem} Let $1\leq p\leq 2.$ If a symmetric sequence space $I$ isomorphically embeds into $L_p(0,1),$ then there exists a Radon probability measure $\nu$ on the space $\mathcal{O}_{p,2}$ such that
\begin{equation}\label{rs1}\|x\|_I\sim\Big(\int_{M\in\mathcal{O}_{p,2}}\|x\|_{l_M}^pd\nu(M)\Big)^{1/p}\end{equation}
for every finitely supported $x\in l_{\infty}.$
\end{thm}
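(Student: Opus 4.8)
The plan is to follow the Raynaud--Sch\"utt strategy, using the fact that an isomorphic embedding of $I$ into $L_p(0,1)$ produces, via a change of density, an embedding into $L_p$ of some measure space on which the images of the basis vectors $e_k$ become disjointly distributed independent random variables (after passing to independent copies). More concretely, suppose $T\colon I\to L_p(0,1)$ is an isomorphism onto its range. First I would reduce to the finitely supported case: for each $n$, the restriction of $T$ to $\mathrm{span}\{e_0,\dots,e_{n-1}\}$ is an isomorphic embedding of $\ell_\infty^n$ (with the $I$-norm) into $L_p$, and by a routine lattice/gliding-hump argument one may assume (up to equivalence of constants independent of $n$) that $T e_k$ are mean-zero and that for each fixed $x\in\mathbb{R}^n$ the variable $\sum_k x(k)Te_k$ is, in distribution, close to $\big(\sum_k x^2(k)\,(Te_k)^2\big)^{1/2}$ times a Gaussian-like factor — this is where one invokes that $1\le p<2$ forces an $\ell_2$-behaviour of independent mean-zero variables at the lower end. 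The cleanest route is: realize the span of $\{Te_k\}_{k=0}^{n-1}$ inside $L_p$, symmetrize, and use that for symmetric independent variables the $L_p$-norm of $\sum_k x(k)Te_k$ is comparable to the $L_p$-norm of the ``square function'' $\big(\sum_k x^2(k)(Te_k)^2\big)^{1/2}$; averaging over a random permutation $\rho\in\mathfrak{S}_n$ of a finite atomic model for $Te_k$ then brings the quantity $\tfrac1{n!}\sum_{\rho}\big(\sum_k x^2(\rho(k))y^2(k)\big)^{p/2}$ into play, for an appropriate vector $y\in\mathbb{R}^n$ recording the ``sizes'' of the $Te_k$.

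At this point Lemma \ref{kws lemma} applies directly: for each $n$ and each such $y$ there is an Orlicz function $M_{n,y}\in\mathcal{O}_{p,2}$ with
$$\frac1{80n}\|y\|_p^p\,\|x\|_{l_{M_{n,y}}}^p\le\frac1{n!}\sum_{\rho\in\mathfrak{S}_n}\Big(\sum_{k=0}^{n-1}x^2(\rho(k))y^2(k)\Big)^{p/2}\le\frac{16}n\|y\|_p^p\,\|x\|_{l_{M_{n,y}}}^p.$$
Combining this with the comparison of $\|Tx\|_p$ to the permutation-averaged square function, one gets constants $c,C>0$ (independent of $n$) and, for each $n$, a probability measure $\nu_n$ on $\mathcal{O}_{p,2}$ — concentrated on finitely many points $M_{n,y^{(j)}}$ coming from a finite discretization of the relevant range of $y$'s, weighted by the permutation-count — such that $\|x\|_I^p \stackrel{c,C}{\sim}\int_{\mathcal{O}_{p,2}}\|x\|_{l_M}^p\,d\nu_n(M)$ for all $x$ supported on the first $n$ coordinates. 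The main technical obstacle is precisely this step: producing a single $\nu_n$ (rather than an $x$-dependent one) that simultaneously captures the $I$-norm of all $x\in\mathbb{R}^n$, with constants not depending on $n$; this requires organizing the permutation average so that the ``weight'' assigned to each Orlicz function $M_{n,y}$ is a genuine probability (the $\tfrac1{n!}$ and $\tfrac1n$ factors in Lemma \ref{kws lemma} are engineered exactly for this bookkeeping), and controlling the error from discretizing the continuum of vectors $y$ into finitely many representatives.

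Finally I would pass to the limit $n\to\infty$. By the preceding lemma, $\mathcal{O}_{p,2}$ with the topology of pointwise convergence is a compact metrizable space, hence the space of Radon probability measures on it is weak-$*$ compact and metrizable; extract a weak-$*$ convergent subsequence $\nu_{n_j}\to\nu$. For a fixed finitely supported $x\in l_\infty$, say supported on the first $m$ coordinates, the functional $M\mapsto\|x\|_{l_M}^p$ is continuous and bounded on $\mathcal{O}_{p,2}$ (continuity because $\|x\|_{l_M}$ depends continuously on the finitely many values $M(x(k)/\lambda)$, and the Orlicz-norm infimum over $\lambda$ is attained in a compact range thanks to $p$-convexity and $2$-concavity giving two-sided power bounds on $M$), so $\int\|x\|_{l_M}^p d\nu_{n_j}(M)\to\int\|x\|_{l_M}^p d\nu(M)$, and for $n_j\ge m$ the left side is $\stackrel{c,C}{\sim}\|x\|_I^p$ throughout. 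This yields \eqref{rs1} with the same constants $c,C$, completing the proof. The remaining routine points — that the change-of-density reduction preserves the Orlicz-function class $\mathcal{O}_{p,2}$, that $M(1)=1$ can be arranged by normalization, and that continuity of $M\mapsto\|x\|_{l_M}$ is uniform enough on compacta — I would dispatch briefly, citing Lemma \ref{as lemma} for the two-sided bounds $M(t)\ge t^2 M(1)$ near $0$ and $M(t)\le t^pM(1)$ near $\infty$ that make the Orlicz norm continuous in $M$.
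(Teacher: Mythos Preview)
Your overall architecture --- apply Lemma~\ref{kws lemma} to produce for each $n$ a measure $\nu_n$ on $\mathcal{O}_{p,2}$, then use compactness and weak-$*$ convergence --- matches the paper's, and your treatment of the limiting step is fine. The gap is in how you get from the embedding $T$ to the permutation-averaged square-function expression.

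You try to pass through a reduction making the $Te_k$ independent (``change of density'', ``passing to independent copies'') and then through a ``finite atomic model'' giving a vector $y\in\mathbb{R}^n$. Neither is needed, and the first is not obviously available: there is no reason the images $f_k:=Te_k$ should be, or be reducible to, independent random variables. The actual mechanism is much simpler and uses only that $\{e_k\}$ is a \emph{symmetric} basis of $I$. For any permutation $\rho\in\mathfrak{S}_n$ and any $t\in(0,1)$ one has $\|x\|_I=\big\|\sum_k x(\rho(k))r_k(t)e_k\big\|_I$, hence $\|x\|_I\stackrel{C}{\sim}\big\|\sum_k x(\rho(k))r_k(t)f_k\big\|_p$. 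Integrating over $t$ and applying Khinchine's inequality (for the Rademachers $r_k$, not for the $f_k$) yields $\|x\|_I\stackrel{C}{\sim}\big\|\big(\sum_k x^2(\rho(k))f_k^2\big)^{1/2}\big\|_p$ for every $\rho$; then average the $p$-th powers over $\rho$ and pull the $\rho$-average inside the $L_p$-integral over $s\in(0,1)$. This produces, for each fixed $s$, exactly the quantity in Lemma~\ref{kws lemma} with $y_s=(f_k(s))_{k=0}^{n-1}$.

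This also resolves your ``main technical obstacle''. There is no discretization: one applies Lemma~\ref{kws lemma} pointwise in $s$ to obtain $M_{n,y_s}\in\mathcal{O}_{p,2}$, and defines $\nu_n$ as the pushforward, under the measurable map $s\mapsto M_{n,y_s}$, of the measure $d\mu_n(s)=\frac1n\sum_{k=0}^{n-1}|f_k(s)|^p\,ds$ on $(0,1)$. The factor $\frac1n\|y_s\|_p^p$ in Lemma~\ref{kws lemma} is precisely the density of $\mu_n$, so the bookkeeping is exact and no error terms arise. With this correction your compactness argument goes through unchanged.
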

\begin{proof} Let  $x=\{x(k)\}_{k\ge 0}\in l_{\infty}$ be finitely supported, let $\{e_k\}_{k\geq1}$ be the standard unit basis of $c_0$ and let $T$ be the isomorphic embedding of $I$ into $L_p(0,1).$ Fix a constant $C$ such that $$C^{-1}\|x\|_I\leq\|Tx\|_p\leq C\|x\|_I, \ \ \mbox{for any} \ \ x\in I.$$
Taking any $n\geq|{\rm supp}(x)|,$ we may assume
that $x=\sum_{k=0}^{n-1}x(k)e_k.$ Denoting $f_k:=Te_k\in L_p(0,1),$ $k\ge 0,$ for every $t\in(0,1)$ and for every permutation $\rho\in \mathfrak{S}_n,$
we have
$$\|x\|_I=\Big\|\sum_{k=0}^{n-1}x(\rho(k))r_k(t)e_k\Big\|_I\leq C\Big\|\sum_{k=0}^{n-1}x(\rho(k))r_k(t)f_k\Big\|_p$$
and, similarly,
$$\|x\|_I\geq C^{-1}\Big\|\sum_{k=0}^{n-1}x(\rho(k))r_k(t)f_k\Big\|_p,$$
where by $r_k,$ $k\ge 0,$ we denote the Rademacher functions
defined in \eqref{eq_Rad}. Hence,
$$\|x\|_I\leq C\Big(\int_0^1\Big\|\sum_{k=0}^{n-1}x(\rho(k))r_k(t)f_k\Big\|_p^pdt\Big)^{\frac1p}=
C\Big\|\Big(\int_0^1\Big|\sum_{k=0}^{n-1}x(\rho(k))r_k(t)f_k\Big|^pdt\Big)^{\frac1p}\Big\|_p$$
and, similarly
$$\|x\|_I\geq C^{-1}\Big\|\Big(\int_0^1\Big|\sum_{k=0}^{n-1}x(\rho(k))r_k(t)f_k\Big|^pdt\Big)^{\frac1p}\Big\|_p$$
for every permutation $\rho\in \mathfrak{S}_n.$

By Khinchine inequality (see  \cite[Theorem 2.b.3]{LT1} and its proof), we have
$$\frac1{36C}\Big\|\Big(\sum_{k=0}^{n-1}x^2(\rho(k))f_k^2\Big)^{\frac12}\Big\|_p\leq \|x\|_I\leq C\Big\|\Big(\sum_{k=0}^{n-1}x^2(\rho(k))f_k^2\Big)^{\frac12}\Big\|_p$$
for every permutation $\rho\in \mathfrak{S}_n.$ Summing by all
permutations from $\mathfrak{S}_n$ the inequality above, we obtain
\begin{align}\label{rs2}\|x\|_I\leq C\Big(\frac1{n!}\sum_{\rho\in\mathfrak{S}_n}\Big\|\Big(\sum_{k=0}^{n-1}x^2(\rho(k))f_k^2\Big)^{\frac12}\Big\|_p^p\Big)^{\frac1{p}}
=C\Big\|\Big(\frac1{n!}\sum_{\rho\in\mathfrak{S}_n}\Big(\sum_{k=0}^{n-1}x^2(\rho(k))f_k^2\Big)^{\frac{p}2}\Big)^{\frac1{p}}\Big\|_p
\end{align}
and, similarly, 
 \begin{align}\label{rs3}
\|x\|_I\geq\frac1{36C}\Big\|\Big(\frac1{n!}\sum_{\rho\in\mathfrak{S}_n}\Big(\sum_{k=0}^{n-1}x^2(\rho(k))f_k^2\Big)^{\frac{p}2}\Big)^{\frac1p}\Big\|_p.
\end{align}
For a fixed $s\in(0,1)$, by Lemma \ref{kws lemma} applied for
$y_s=\{f_k(s)\}_{k=0}^{n-1}\in\mathbb R^n,$ there exists the
Orlicz function $M_{n,y_s}\in\mathcal{O}_{p,2}$ such that
\begin{equation}\label{rs4}\frac1{80n}\|y_s\|_p^p\cdot\|x\|_{l_{M_{n,y_s}}}^p\leq\frac1{n!}\sum_{\rho\in\mathfrak{S}_n}
\Big(\sum_{k=0}^{n-1}x^2(\rho(k))f_k^2(s)\Big)^{\frac{p}2}\leq
\frac{16}n\|y_s\|_p^p\cdot\|x\|_{l_{M_{n,y_s}}}^p.\end{equation}
Combining the computation
$$\Big\|\Big(\frac1{n}\|y_s\|_p^p\cdot\|x\|_{l_{M_{n,y_s}}}^p\Big)^{\frac1p}\Big\|_p=
\Big(\int_0^1\frac1n\Big(\sum_{k=0}^{n-1}|f_k(s)|^p\Big)\|x\|_{l_{M_{n,y_s}}}^pds\Big)^{\frac1p}$$
with \eqref{rs2},\eqref{rs3} and \eqref{rs4}, we arrive at
 \begin{equation}\label{rs5}\|x\|_I\sim\Big(\int_0^1\frac1n\Big(\sum_{k=0}^{n-1}|f_k(s)|^p\Big)\|x\|_{l_{M_{n,y_s}}}^pds\Big)^{\frac1p}=
\Big(\int_0^1\|x\|_{M_{n,y_s}}^pd\mu_n(s)\Big)^{\frac1p},\end{equation}
where
$$\mu_n(A):=\frac1n\sum_{k=0}^{n-1}\int_A|f_k(s)|^pds,\quad A\subseteq (0,1).$$
Define a  positive Radon measure $\nu_n$ on
$\mathcal{O}_{p,2}$ equipped with Borel $\sigma$-algebra by
setting
$$\nu_n(G):=\mu_n(\{s\in(0,1):\ M_{n,y_s}\in G\}),\quad G\subseteq\mathcal{O}_{p,2}.$$
Via \eqref{rs5}, we conclude
$$\|x\|_I\sim\Big(\int_{M\in\mathcal{O}_{p,2}}\|x\|_{l_M}^pd\nu_n(M)\Big)^{1/p}$$
for every finitely supported $x\in l_{\infty}$ and for every $n\geq|{\rm supp}(x)|.$

For every finitely supported $x\in l_{\infty},$ the function
$M\to\|x\|_{l_M}$ is continuous on $\mathcal{O}_{p,2}$ equipped
with the topology of pointwise convergence. Therefore, every Radon
measure $\nu_n,$ $n\geq|{\rm supp}(x)|,$ generates a bounded
positive functional $\varphi_n\in C(\mathcal{O}_{p,2})^*,$ where
$C(\mathcal{O}_{p,2})^*$ denotes the dual space of the space
$C(\mathcal{O}_{p,2})$ of all continuous functions on
$\mathcal{O}_{p,2}.$ By Banach-Alaoglu theorem, there exists a
subnet $\varphi_{\psi(i)},$ $i\in\mathbb{I},$ and $\varphi\in
C(\mathcal{O}_{p,2})^*$ such that $\varphi_{\psi(i)}\to \varphi.$
By Riesz-Markov theorem, the (positive) functional $\varphi$ is
generated by a Radon measure $\nu$ on $\mathcal{O}_{p,2}.$ Hence,
we have
$$\|x\|_I\sim\Big(\int_{M\in\mathcal{O}_{p,2}}\|x\|_{l_M}^pd\nu(M)\Big)^{1/p}$$
for every finitely supported $x\in l_{\infty}.$
\end{proof}

\section{Proof of Theorem \ref{main theorem}}

In this section, we prove our main result.
Recall, that by Theorem \ref{rs theorem} the space $\mathcal{O}_{p,2}$ is equipped with a Radon probability measure $\nu$, defined on the Borel $\sigma$-algebra.

\begin{lem}\label{bochner} There exists a Bochner measurable function $\xi:\mathcal{O}_{p,2}\to\mathcal{L}_p(\mathcal{R}), 1\leq p<2$ such that
\begin{equation}\label{crucial estimate}
\frac1{12}\|A\|_{\mathcal{L}_M}\leq \|\xi(M)\otimes A\|_{  \mathcal{L}_p+\mathcal{L}_2(\mathcal{R}\overline{\otimes}\mathcal{L}(H))}\leq5\|A\|_{\mathcal{L}_M}
\end{equation}
for every finite rank operator $A\in\mathcal{L}(H)$ and for every $M\in\mathcal{O}_{p,2}$ satisfying the condition
\begin{equation}\label{assu}
\lim_{t\to0}\frac1{t^p}M(t)=0.
\end{equation}
\end{lem}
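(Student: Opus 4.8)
The plan is to take $\xi(M)$ to be a fixed realization inside $\mathcal{L}_p(\mathcal{R})$ of the function supplied by Lemma~\ref{bk lemma} applied to the quasi-concave function attached to $M$, and to verify that this assignment is Borel. First I would fix once and for all a trace-preserving unital copy of $L_\infty(0,1)$ inside $\mathcal{R}$, which induces an isometric order embedding $j\colon L_p(0,1)\hookrightarrow\mathcal{L}_p(\mathcal{R})$ sending a decreasing $f$ to a positive operator whose singular value function is $f$. For $M\in\mathcal{O}_{p,2}$ put $\varphi_M(t):=M\!\left(t^{1/(2-p)}\right)t^{-p/(2-p)}$, $t\in(0,1]$; as in the proof of \eqref{Oetiii}$\Rightarrow$\eqref{Oeti}, $\varphi_M$ is quasi-concave with $\varphi_M(1)=1$, and $\varphi_M(0)=\lim_{u\to0}M(u)/u^p$, so that $\varphi_M(0)=0$ exactly when \eqref{assu} holds. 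The set $\mathcal{G}$ of such $M$ is Borel, since $M\mapsto\lim_{u\to0}M(u)/u^p=\inf_n n^pM(1/n)$ is Borel (by $p$-convexity the ratio $M(u)/u^p$ is monotone in $u$). For $M\in\mathcal{G}$ I apply Lemma~\ref{bk lemma} with $d:=2/p>1$, obtaining $x_M\in L_1(0,1)$ with $\|x_M\|_1\le1$, and set $\xi(M):=j\!\left(x_M^{1/p}\right)$; off $\mathcal{G}$ I set $\xi(M):=0$. Then $\|\xi(M)\|_p^p=\|x_M\|_1\le1$, so $\xi(M)\in\mathcal{L}_p(\mathcal{R})$ with $\mu(\xi(M))=x_M^{1/p}$.

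For Bochner measurability it is enough (as $\mathcal{L}_p(\mathcal{R})$ is separable) to show that $M\mapsto x_M$ is Borel from $\mathcal{G}$ into $L_1(0,1)$, and for this I would run the construction in the proof of Lemma~\ref{bk lemma} with canonical choices. Since $\varphi_M$ is continuous and nondecreasing on $(0,1]$ (a standard consequence of quasi-concavity) and $M\mapsto\varphi_M(t)$ is continuous for each fixed $t$, the numbers $t_k(M):=\inf\{t:\varphi_M(t)\ge2^{-k}\}$ and $s_k(M):=\inf\{t:t/\varphi_M(t)\ge2^{-k}\}$ are Borel functions of $M$ (their sublevel sets are countable unions of closed sets), the auxiliary sequence $u_\bullet(M)$ is obtained from $t_\bullet(M),s_\bullet(M)$ by the measurable recursion of that proof, and finally $x_M=\bigoplus_k u_{2k}(M)^{-1/(d-1)}\chi_{\left(0,\,\frac12 u_{2k}(M)^{1/(d-1)}\varphi_M(u_{2k}(M))\right)}$ is an explicit Borel function of this data, the two cases $\varphi_M'(0)=\infty$ and $\varphi_M'(0)<\infty$ being handled uniformly by the same formula (terms carrying undefined indices are dropped). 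Hence $\xi$ is Borel, hence Bochner measurable.

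It then remains to prove \eqref{crucial estimate} for $M\in\mathcal{G}$ (for $M\notin\mathcal{G}$ it is vacuous). By Lemma~\ref{ms} with $A_0=\xi(M)$, for every finite rank $A$ one has $\|\xi(M)\otimes A\|_{\mathcal{L}_p+\mathcal{L}_2}=\|A\|_{\mathcal{L}_{N_M}}$, where $N_M(t):=\tau(M_p(t|\xi(M)|))=\int_0^1 M_p\!\left(t\,x_M(s)^{1/p}\right)ds$ is a finite Orlicz function. Using $M_p(u)=\min\{u^p,u^2\}$ on $[0,1]$ and $\min\{u^p,u^2\}\le M_p(u)\le 2\min\{u^p,u^2\}$ in general, together with Lemma~\ref{bk lemma} evaluated at $t^{2-p}\in[0,1]$ and the identity $M(t)=t^p\varphi_M(t^{2-p})$, a short computation yields
\begin{equation*}
\tfrac14 M(t)\le N_M(t)\le 5 M(t),\qquad t\in[0,1].
\end{equation*}
This modular comparison is upgraded to norms for finite rank $A$ by the "equal modular" argument: writing $\lambda:=\|A\|_{\mathcal{L}_M}$, $p$-convexity of $M$ forces $\mu(k,A)\le\lambda$ for all $k$ (since $M(u)>M(1)=1$ for $u>1$), so $\sum_k N_M(\mu(k,A)/\lambda)\le5$ and convexity of $N_M$ gives $\|A\|_{\mathcal{L}_{N_M}}\le5\lambda$; conversely, writing $\lambda':=\|A\|_{\mathcal{L}_{N_M}}$, convexity of $N_M$ with $N_M(1)\ge\tfrac14$ gives $N_M(u)\ge\tfrac14 u$ for $u\ge1$, so $\sum_k N_M(\mu(k,A)/\lambda')=1$ forces $\mu(k,A)\le4\lambda'$ for all $k$; then for each $k$, $\mu(k,A)/(12\lambda')\le\tfrac13$ and $M(\mu(k,A)/(12\lambda'))\le4N_M(\mu(k,A)/(12\lambda'))\le\tfrac13 N_M(\mu(k,A)/\lambda')$, whence $\sum_k M(\mu(k,A)/(12\lambda'))\le\tfrac13<1$ and $\|A\|_{\mathcal{L}_M}\le12\|A\|_{\mathcal{L}_{N_M}}$.

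The only genuinely new ingredient over the single-$M$ argument already carried out for \eqref{Oetiii}$\Rightarrow$\eqref{Oeti} is the Borel dependence on $M$ of the construction of Lemma~\ref{bk lemma}; the rest is a quantitative sharpening of facts in hand (the uniform-constant modular comparison being the second, milder, point requiring care). Accordingly I expect the delicate step to be the measurability bookkeeping of the second paragraph — in particular, arranging the case split $\varphi_M'(0)=\infty$ versus $\varphi_M'(0)<\infty$ inside Lemma~\ref{bk lemma} so that the resulting map $M\mapsto x_M$ is visibly Borel.
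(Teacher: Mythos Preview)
Your proposal is correct and follows essentially the same blueprint as the paper: construct $x_M$ via Lemma~\ref{bk lemma} applied to the quasi-concave function $\varphi_M$, embed $x_M^{1/p}$ into $\mathcal{L}_p(\mathcal{R})$, establish Bochner measurability by tracking the Borel dependence of the sequences $t_k(M),s_k(M),u_k(M)$ on $M$, and obtain \eqref{crucial estimate} through Lemma~\ref{ms}. The one noteworthy technical difference is in the norm-upgrade step: the paper introduces an auxiliary Orlicz function $N_M$ agreeing with $M$ on $[0,1]$ and extended so that $\varphi_M$ is defined and bounded on $[0,\infty)$, then pushes the two-sided bound of Lemma~\ref{bk lemma} to all $t\ge0$ and reads off the norm comparison directly; you instead keep the modular comparison on $[0,1]$ and convert it to a norm comparison by the observation that $\mu(k,A)/\|A\|_{\mathcal{L}_M}\le1$ (from $M(1)=1$ and convexity) together with a short convexity argument. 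Both routes yield the stated constants $\tfrac1{12}$ and $5$, and your version has the mild advantage of not needing the extension $N_M$ at all.
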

\begin{proof}
Let $M\in \mathcal{O}_{p,2}$.  Define an Orlicz function $N_M$ by setting
\begin{equation}\label{lem_N_M}
N_M(t)=
\begin{cases}
M(t),\quad 0\leq t\leq 1\\
1+\frac1pM'(1-0)(t^p-1),\quad t\geq 1.
\end{cases}
\end{equation}
It is straightforward to verify that $N_M\in\mathcal{O}_{p,2}.$
Furthermore, if in addition  $M$ satisfies \eqref{assu}, we define a quasi-concave function $\varphi_M:\mathbb{R}_+\to\mathbb{R}_+$ by setting
\begin{equation}\label{lem_phi}
\varphi_M(t)=\frac{N_M(t^{\frac{1}{2-p}})}{t^{\frac{p}{2-p}}}.
\end{equation}
By Lemma  \ref{bk lemma} with $d=2/p$ there exists a positive decreasing function $x_M\in L_1(0,1)$ given by formula \eqref{x main def}.

We claim that the mapping $\eta:\mathcal{O}_{p,2}\to L_1(0,1)$ defined by setting $\eta:M\mapsto x_M$ is Bochner measurable (if $M$ fails the condition \eqref{assu}, then we set $x_M=0$).

Let $t_k(M),s_k(M)$ and $u_{k}(M), k\geq 0, $ be the numbers constructed for the function $\varphi_M$ in the proof of Lemma \ref{bk lemma}.
 First note that  $t_k(M)$ and $s_k(M),$ $k\geq0,$ considered as functions of $M$,  are measurable on $\mathcal{O}_{p,2}$. Indeed, since the function $\varphi_M$ is quasi-concave, for every $s\in(0,1),$ by definition of numbers $t_k$ we have that
$$\{M:\ t_k(M)\geq s\}=\{M:\ \varphi_M(s)\leq 2^{-k}\}=\{M:\ M(s^{\frac{1}{2-p}})\leq s^{\frac{p}{2-p}}2^{-k}\}$$
is a closed set in $\mathcal{O}_{p,2}$ (equipped with topology of poitwise convergence) and therefore, the functions $t_k(\cdot)$ are measurable, and similarly $s_k(\cdot)$ are measurable for all $k\geq 0$. By definition of the numbers  $u_k(M),$ $k\geq0,$ we have that the functions $u_k(\cdot)$ are also measurable functions of $M.$
Furthermore, since $\varphi_M(u_k(M)),k\geq 0$ takes only the values $2^{-n},$ $n\geq0,$ and
$$\{M:\ \varphi_M(u_{2k}(M))=2^{-n}\}=\{M:\ u_{2k}(M)=t_n(M)\}$$
is a measurable set, we have that the function $M\mapsto\varphi_M(u_{2k}(M)),$ $k\geq0$ is also measurable.
 Set
$$v_k(M)=\sum_{l\geq k}\frac12u_{2l}^{1/(d-1)}(M)\varphi_M(u_{2l}(M)),$$
where the convergence of the series follows from \eqref{bk lemma1}. We have that $v_k(\cdot)$ also measurable functions of $M.$

Fix a positive element $y\in L_{\infty}(0,1)$ and denote its primitive (which is monotone) by $z.$ It follows from \eqref{x main def} that
$$\langle x_M,y\rangle=\int_0^1x_M(s)y(s)ds= \sum_{k\geq0}u_{2k}^{-1/(d-1)}(M)\Big(z(v_k(M))-z(v_{k+1}(M))\Big),$$
where the series converges since 
$$\left|u_{2k}^{-1/(d-1)}(M)\Big(z(v_k(M))-z(v_{k+1}(M))\Big)\right|\leq$$
$$\leq u_{2k}^{-1/(d-1)}(M)\|y\|_\infty|v_k(M)-v_{k+1}(M)|=\|y\|_\infty\varphi_M(u_{2k}(M))$$
and $\varphi_M(u_{2k}(M))$ is a geometric sequence.
Since $z$ is monotone, it follows that $z\circ v_k$ is a measurable function of $M$ for every $k\geq0.$ Hence, the mapping $M\to\langle x_M,y\rangle$ is also measurable. This proves that the function $\eta:M\to x_M\in L_1(0,1),$ is weakly measurable. Since the space $L_1(0,1)$ is separable, by Pettis theorem (see e.g. \cite[Proposition 2.15]{Ryan}), we have that $\eta$ is Bochner measurable.

Let $i:L_1(0,1)\to\mathcal{L}_1(\mathcal{R})$ be an isometric embedding. It is clear, that the function $\xi:\mathcal{O}_{p,2}\to\mathcal{L}_p(\mathcal{R})$ defined by the setting $\xi(M)=(i(\eta(M)))^{1/p}$ is Bochner measurable.

Now, let us show that the function $\xi$ satisfies inequality \eqref{crucial estimate}.
Fix a function $M\in\mathcal{O}_{p,2}$ satisfying condition \eqref{assu}. By definition of the function $N_M$ (see \eqref{lem_N_M}) we have that $M(1)=N_M(1)=1$, and therefore  for every finitely supported $x\in l_{\infty}$ we obtain

\begin{align*}&\{\lambda:\ \sum_{k\geq0}M(\frac{x(k)}{\lambda})\leq 1\}=\{\lambda\geq\|x\|_{\infty}:\ \sum_{k\geq0}M(\frac{x(k)}{\lambda})\leq 1\} \\ &=\{\lambda\geq\|x\|_{\infty}:\ \sum_{k\geq0}N_M(\frac{x(k)}{\lambda})\leq 1\}=\{\lambda:\ \sum_{k\geq0}N_M(\frac{x(k)}{\lambda})\leq 1\}.
\end{align*}
Consequently \begin{equation}\label{lem_norms}
\|x\|_{l_{N_M}}=\|x\|_{l_M}
\end{equation} for every finitely supported $x\in l_{\infty}.$ By Lemma \ref{bk lemma} we have that
\begin{equation}\label{lem_two_sided}
\frac14\varphi_M(t)\leq\int_0^1\min\{x_M(s),tx_M^d(s)\}ds\leq\frac52\varphi_M(t),\quad t\in[0,1],
\end{equation}
in particular for $t=1$ we obtain $\frac14\leq\|x_M\|_1$. The
$2$-concavity of the function $M$ implies that $M(2)\leq 4M(1)$ (see e.g. \cite{KPS}) Hence, for the function $\varphi_M$ given by \eqref{lem_phi} we have
$$\varphi_M(\infty)=\frac1p M'(1-0)\leq M(2)-M(1)\leq 3M(1)=3.$$
Therefore, for $t\geq 1$ we infer that
$$\int_0^1\min\{x_M(s),tx_M^d(s)\}ds=\int_0^1x_M(s)ds\geq\frac14\geq\frac1{12}\varphi_M(\infty)\geq\frac1{12}\varphi_M(t).$$
On the other hand, definition of the function $x_M$ (see \eqref{x main def}) implies that $x_M\geq \mathrm{supp}x_M$. In addition by Lemma \ref{bk lemma} $\|x_M\|_1\leq 1$. Consequently, we have
$$\int_0^1\min\{x_M(s),tx_M^d(s)\}ds=\int_0^1x_M(s)ds\leq1=\varphi_M(1)\leq\frac52\varphi_M(t),t\geq 1.$$
Thus, combining the last two inequalities with \eqref{lem_two_sided} we obtain that
\begin{equation}\label{lem_all_t}
\frac1{12}\varphi_M(t)\leq\int_0^1\min\{x_M(s),tx_M^d(s)\}ds\leq\frac52\varphi_M(t)
\end{equation}
for all $t\geq 0$.

Next, for the function $M_p(t)$ defined by \eqref{lp+l2 orlicz} we have
$$\min\{t^2,t^p\}\leq M_p(t)\leq \frac2p\min\{t^2,t^p\}, t\geq 0.$$
Consequently, from the one hand for all $t\geq 0$ we have
\begin{gather*}
\begin{split}
\int_0^1M_p(tx_M^{1/p}(s))ds&\leq\frac{2}{p}\int_0^1\min\{t^2x_M^{2/p}(s), t^px_M(s)\}ds\\
&\leq2t^p\int_0^1\min\{t^{2-p}x_M^d(s),x_M(s)\}ds\stackrel{\eqref{lem_all_t}}{\leq}5 t^p\varphi_M(t^{2-p})\\
&=5N_M(t) \leq N_M(5t).
\end{split}
\end{gather*}
And similarly, from another hand
\begin{gather*}
\begin{split}
\int_0^1M_p(tx_M^{1/p}(s))ds&\geq\int_0^1\min\{t^{2-p}x_M^d(s),x_M(s)\}ds\stackrel{\eqref{lem_all_t}}{\geq}\frac{1}{12}t^p\varphi_M(t^{2-p})\\
&=\frac{1}{12}N_M(t) \geq N_M(\frac{1}{12}t),
\end{split}
\end{gather*}
That is for every finitely supported $x\in l_\infty$
$$\frac1{12}\|x\|_{L_{N_M}}\leq \|x\|_{L_{M'}}\leq 5\|x\|_{L_{N_M}},$$
 where the Orlicz function $M'$ is defined by the equality $M'(t)=\int_0^1M_p(tx_M^{1/p}(s))ds$.
By Lemma \ref{ms} we have that $\|x_M^{1/p}\otimes x\|_{L_p+L_2}=\|x\|_{L_{M'}}.$ Consequently, due to \eqref{lem_norms} we obtain  that
\begin{equation*}
\frac1{12}\|x\|_{l_M}\leq\|x_M^{1/p}\otimes x\|_{L_p+L_2}\leq5\|x\|_{l_M}
\end{equation*}
for every finitely supported $x\in l_{\infty}.$
The last inequality immediately implies \eqref{crucial estimate} for $\xi(M)$ defined by $\xi(M)=(i(\eta(M)))^{1/p}=(i(x_M))^{1/p}$.
\end{proof}

We are now ready to prove the main result of the paper.

\begin{proof}[Proof of Theorem \ref{main theorem}] Let us use means provided by Theorem \ref{rs theorem} to represent the norm in $I$. That is, we write
\begin{equation}\label{th_I-sim}
\|x\|_I\sim(\int_{M\in\mathcal{O}_{p,2}}\|x\|_{l_M}^pd\nu(M))^{1/p}
\end{equation}
for every finitely supported $x\in l_{\infty}.$

Let us show firstly that condition \eqref{assu} holds in fact for almost every $M\in\mathcal{O}_{p,2}$.
For every $M\in\mathcal{O}_{p,2},$ it follows from $p-$convexity of $M$ that $M(t)\leq t^p$ for $t\in[0,1].$ Similarly,  $2-$concavity of $M$ implies that $M(t)\leq t^2$ for $t\geq 1.$ Denote the Orlicz function $t\to\max\{t^2,t^p\}$ by $F.$ We have $M\leq F$ and, therefore, $\|x\|_{l_M}\leq\|x\|_{l_F}$ for every finitely supported $x\in l_{\infty}.$ However, for Orlicz spaces $l_F$ and $l_p$ we have $l_F=l_p$ and $\|\cdot\|_{F}\sim\|\cdot\|_p$. Hence, we have
$$\|x\|_I\stackrel{\eqref{th_I-sim}}{\leq} \mathrm{const}\|x\|_p$$ for every finitely supported $x\in l_{\infty}$.

Define measurable functions $\alpha_t:\mathcal{O}_{p,2}\to\mathbb{R}$ by setting
$$\alpha_t(M):=\frac1{t^p}M(t), t\in(0,1).$$ Since $M(t)\leq t^p$, the limit $\alpha(M):=\lim_{t\rightarrow 0}\alpha_t(M)$ exists. Observe that $\alpha(M)$  is a measurable function as a pointwise limit of measurable functions. Again using  $p-$convexity of $M\in\mathcal{O}_{p,2}$ we infer that $M(t)\geq \alpha(M)t^p$ for all $t\geq0.$ Therefore, we have $\|x\|_{l_M}\geq \alpha(M)^{1/p}\|x\|_p.$ If $\nu(\{M: \alpha(M)>0\})>0,$ then again using \eqref{th_I-sim} we have $\|x\|_I\geq \mathrm{const} \|x\|_p$ for every finitely supported $x\in l_{\infty}.$ Since we already established that $\|x\|_I\leq\mathrm{const}\|x\|_p,$ it follows that $l_p=I,$ that contradicts to the assumption $\mathcal{I}\neq\mathcal{L}_p(H)$. Hence, $\nu(\{M: \alpha(M)>0\})=0,$ and therefore  equality \eqref{assu} holds for almost every $M.$

Let $\xi:\mathcal{O}_{p,2}\to\mathcal{L}_p(\mathcal{R}),$ be the Bochner measurable function constructed in Lemma \ref{bochner}. By the preceding argument inequality \eqref{crucial estimate} holds for almost every $M$ in $\mathcal{O}_{p,2}$. Therefore, combining \eqref{th_I-sim} and \eqref{crucial estimate} we obtain that
$$\|A\|_{\mathcal{I}}\sim \left(\int_{M\in\mathcal{O}_{p,2}}\|\xi(M)\otimes A\|_{\mathcal{L}_p+\mathcal{L}_2}^pd\nu(M)\right)^{1/p}$$
for every finite rank operator $A\in\mathcal{L}(H).$ Furthermore, Lemma \ref{lpl2 embed} implies that
\begin{equation}\label{kmain1}
\|A\|_{\mathcal{I}}\sim \left(\int_{M\in\mathcal{O}_{p,2}}\|\mathscr{K}( \xi(M)\otimes A\otimes r)\|_p^pd\nu(M)\right)^{1/p}
\end{equation}
for every finite rank operator $A\in\mathcal{L}(H).$

Since every finite rank operator is contained in $\mathcal{L}_p(H)$ we have that $B\otimes A\in(\mathcal{L}_p+\mathcal{L}_2)(\mathcal{R}\overline{\otimes}\mathcal{L}(H))$ for every $B\in\mathcal{L}_p(\mathcal{R})$. Hence by Lemma  \ref{lpl2 embed}, for a fixed finite rank operator $A,$ the mapping $B\to\mathscr{K}( B\otimes A\otimes r)$ is continuous on $\mathcal{L}_p(\mathcal{R}).$  Since $\xi:\mathcal{O}_{p,2}\to\mathcal{L}_p(\mathcal{R})$ is Bochner measurable, we infer that the function $M\to\mathscr{K}(\xi(M)\otimes A\otimes r)$ from $\mathcal{O}_{p,2}$ into $\mathcal{L}_p(\mathcal{R}),$ is also Bochner measurable. Define an operator $\mathscr{J}$ from the ideal of finite rank operators into the set of Bochner measurable $\mathcal{L}_p(\mathcal{R})-$valued functions on $\mathcal{O}_{p,2}$ by setting
$$(\mathscr{J}(A))(M)=\mathscr{K}(\xi(M)\otimes A\otimes r),\quad M\in\mathcal{O}_{p,2}.$$

It is proved in \cite{Hal_Neu} (see Theorem 2 there and the preceding paragraph) that every metrizable compact equipped with atomless Radon probability measure is isomorphic to a unit interval equipped with Lebesgue measure. Hence, every metrizable compact equipped with (not necessarily atomless) Radon probability measure is a standard probability space in the sense of Rokhlin. That is, (up to an isomorphism) it is an interval equipped with Lebesgue measure plus at most countably many atoms. Therefore, by Theorem \ref{rs theorem} $(\mathcal{O}_{p,2},\nu)$ is (isomorphic to) an interval equipped with Lebesgue measure plus at most countably many atoms.

It follows from \cite[Lemma 6.2]{BGM} that Bochner space $L_p(\mathcal{O}_{p,2},\nu,\mathcal{L}_p(\mathcal{R}))$ is naturally isomorphic to the space $\mathcal{L}_p(\mathcal{R}\overline{\otimes }L_{\infty}(\mathcal{O}_{p,2},\nu),\tau\otimes d\nu)$ and
\begin{equation}\label{kmain2}
\left(\int_{M\in\mathcal{O}_{p,2}}\|\mathscr{K}(\xi(M)\otimes A\otimes r)\|_p^pd\nu(M)\right)^{1/p}=\|\mathscr{J}(A)\|_{\mathcal{L}_p(\mathcal{R}\otimes L_{\infty}(\mathcal{O}_{p,2},\nu))}.
\end{equation}

Combining \eqref{kmain1} and \eqref{kmain2}, we infer that $\|A\|_{\mathcal{I}}\sim\|\mathscr{J}(A)\|_p$ for every finite rank operator $A.$ Since $\mathcal{I}$ is separable, the set of all finite rank operators is dense in $\mathcal{I}$, and therefore, $\mathcal{I}$ isomorphically embeds into $\mathcal{L}_p(\mathcal{R}\otimes L_{\infty}(\mathcal{O}_{p,2},\nu))$.

Since there exists a trace preserving $*-$homomorphism from $L_{\infty}(\mathcal{O}_{p,2},\nu)$ into $L_{\infty}(0,1)$ and a trace preserving $*-$homomorphism from $L_{\infty}(0,1)$ into $\mathcal{R}$, we have that there exists a trace preserving $*-$homomorphism from $\mathcal{R}\otimes L_{\infty}(\mathcal{O}_{p,2},\nu)$ into $\mathcal{R}\otimes\mathcal{R}.$ The latter von Neumann algebra is $*-$isomorphic to $\mathcal{R}.$ Thus, $\mathcal{L}_p(\mathcal{R}\otimes L_{\infty}(\mathcal{O}_{p,2},\nu))$ isomorphically embeds into $\mathcal{L}_p(\mathcal{R}).$ Consequently, since $\mathcal{I}$ is isomorphically embeds into $\mathcal{L}_p(\mathcal{R}\otimes L_{\infty}(\mathcal{O}_{p,2},\nu))$ we infer  that $\mathcal{I}$ isomorphically embeds into $\mathcal{L}_p(\mathcal{R}).$
\end{proof}

\section{Proof of Theorem \ref{second main theorem}}

Throughout this section, let $\mathcal{N}$ be a finite von Neumann algebra equipped with a faithful normal tracial state $\tau.$

\subsection{Definitions of auxiliary algebras and homomorphisms}  Here we define finite von Neumann algebras $(\mathcal{N}_m,\tau_m)$ by means of the GNS construction of the algebra $l_{\infty}(\mathbb{Z}_+^m)\otimes\mathcal{N}$ with respect to a certain tracial state $\tau_m.$ After that, we discuss certain properties of trace preserving $*-$isomorphisms $\varrho_k,$ $0\leq k<m.$

 Fix a free ultrafilter $\omega$ on $\mathbb{Z}_+$ and denote by $\mathcal{N}_m$ the von Neumann algebra obtained via the GNS construction from the algebra $l_{\infty}(\mathbb{Z}_+^m)\otimes\mathcal{N}$ (identified with the algebra of all bounded $\mathcal{N}-$valued functions on $\mathbb{Z}_+^m$) and the tracial state
\footnote{Here, $\lim_{n\to\omega}a(n)$ denotes the limit of a sequence $a=\{a(n)\}_{n\geq0}$ along the ultrafilter $\omega.$}
$$\tau_m:x\to\lim\limits_{n_0\to\omega}\cdots\lim\limits_{n_{m-1}\to\omega}\tau(x(n_0,\cdots,n_{m-1})),\quad x\in l_{\infty}(\mathbb{Z}_+^m)\otimes\mathcal{N}.$$
That is, $\mathcal{N}_m$ is the quotient of the algebra $l_{\infty}(\mathbb{Z}_+^m)\otimes\mathcal{N}$ by the ideal
$$\Big\{x\in l_{\infty}(\mathbb{Z}_+^m)\otimes\mathcal{N}:\ \lim\limits_{n_0\to\omega}\cdots\lim\limits_{n_{m-1}\to\omega}\tau(|x^2|(n_0,\cdots,n_{m-1}))=0\Big\}.$$
The state $\tau_m$ generates a faithful normal tracial state on $\mathcal{N}_m$ (also denoted by $\tau_m$).

% Consider now a unital $*-$homomorphism $i_1:\mathcal{N}\to \mathcal{N}_m$ defined by setting
% $$(i_1(x))(n_0,\cdots,n_{m-1})=x,\quad (n_0,\cdots,n_{m-1})\in\mathbb{Z}_+^m.$$
% It is clear that
% $$\tau_m(i_1(x))=\lim\limits_{n_0\to\omega}\cdots\lim\limits_{n_{m-1}\to\omega}\tau(x)=\tau(x),\quad x\in\mathcal{N},$$
% so that $i_1$ is trace preserving. To lighten the notation, we identify $\mathcal{N}$ with the subalgebra $i_1(\mathcal{N})$ in $\mathcal{N}_m$ consisting of all constant $\mathcal{N}-$valued functions on $\mathbb{Z}_+^m$ and write $x$ instead of $i_1(x).$
%
% For $k<m,$ consider a unital $*-$homomorphism $i_2:\mathcal{N}_k\to\mathcal{N}_m$ defined by setting
% $$(i_2(x))(n_0,\cdots,n_{m-1})=x(n_0,\cdots,n_{k-1}),\quad (n_0,\cdots,n_{m-1})\in\mathbb{Z}_+^m.$$
% Similarly, $i_2$ is trace preserving. To lighten the notation, we identify $\mathcal{N}_k$ with the subalgebra $i_2(\mathcal{N}_k)$ in $\mathcal{N}_m$ and write $x$ instead of $i_2(x).$

Let $\varrho_k:{\mathcal{N}}_1\to {\mathcal{N}}_m,$ $0\leq k<m,$ be the unital $*-$homomorphism defined by the evaluation at the $k-$th coordinate. That is,
$$(\varrho_k(x))(n_0,\cdots,n_{m-1})=x(n_k),\quad (n_0,\cdots,n_{m-1})\in\mathbb{Z}_+^m.$$
We have
$$\tau_m(\varrho_k(x))=\lim\limits_{n_0\to\omega}\cdots\lim\limits_{n_{m-1}\to\omega}\tau(x(n_k))=\lim\limits_{n_k\to\omega}\tau(x(n_k))=\lim\limits_{n_0\to\omega}\tau(x(n_0))=\tau_1(x).$$
That is, $\varrho_k$ is trace preserving for every $0\leq k<m.$

In what follows, $M_m(\mathbb{C})$, $m\ge 1$ is equipped with the normalised trace $\frac1m{\rm Tr}.$ We identify the tensor product algebra $(M_m(\mathbb{C}),\frac1m{\rm Tr})^{\otimes m}$ with the algebra $M_{m^m}(\mathbb{C})$ equipped with a normalised trace. Let $\theta_k:M_m(\mathbb{C})\to M_{m^m}(\mathbb{C}),$ $0\leq k<m,$ be a unital $*-$homomorphism defined by the setting
$$\theta_k(A)=1^{\otimes k}\otimes x\otimes 1^{\otimes (m-1-k)},\quad x\in M_m(\mathbb{C}).$$
In what follows, $\Omega_m=\{0,1,\cdots,m-1\}$ (respectively, $\Omega_{m^m}=\{0,\cdots,m^m-1\}$) equipped with the normalised measure and the algebra $L_{\infty}(\Omega_m)$ (respectively, $L_{\infty}(\Omega_{m^m})$) is identified with the diagonal subalgebra of $M_m(\mathbb{C})$ (respectively, of $M_{m^m}(\mathbb{C})$). Each $\theta_k,$ $0\leq k<m,$ maps a diagonal subalgebra $L_{\infty}(\Omega_m)$ of $M_m(\mathbb{C})$ into the diagonal subalgebra $L_{\infty}(\Omega_{m^m})$ of $M_{m^m}(\mathbb{C}).$

Finally, let us define the probability space $$(\mathcal{M}_m,\sigma_m):=(M_{m^m}(\mathbb{C}),\frac1{m^m}{\rm Tr})\otimes (\mathcal{N}_m,\tau_m).$$

\subsection{Equivalent representation for the norm $\|\cdot\|_{\mathcal{I}}$}

In this subsection, we assume that the commutative core $I$ of a Banach ideal $\mathcal{I}$ admits an isomorphic embedding $T:I\to \mathcal{L}_p(\mathcal{N},\tau),$ $1\leq p<2$. Our main objective in this subsection is Lemma \ref{prelim embedding lemma} providing an equivalent representation of the norm $\|\cdot\|_{\mathcal{I}}$.

\begin{lem}\label{equi1} Let $I\neq l_p$ be a symmetric sequence space and let $T:I\to \mathcal{L}_p(\mathcal{N},\tau),$ $1\leq p<2,$ be an isomorphic embedding. If $\{e_n\}_{n\geq0}$ is a standard basis of $I,$ then the sequence $\{|T(e_n)|^p\}_{n\geq0}$ is uniformly integrable.
\end{lem}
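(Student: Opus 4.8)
The statement we must prove is: if $I\neq l_p$ is a symmetric sequence space and $T:I\to\mathcal{L}_p(\mathcal{N},\tau)$ ($1\le p<2$) is an isomorphic embedding with $\{e_n\}_{n\ge0}$ the standard basis of $I$, then the family $\{|T(e_n)|^p\}_{n\ge0}$ is uniformly integrable in $\mathcal{L}_1(\mathcal{N},\tau)$. The plan is to argue by contradiction: suppose $\{|T(e_n)|^p\}_{n\ge0}$ fails to be uniformly integrable. Then there is $\varepsilon>0$, a subsequence $\{e_{n_j}\}_{j\ge0}$, and a sequence of spectral projections $q_j:=E_{|T(e_{n_j})|^p}(K_j,\infty)$ with $K_j\uparrow\infty$ such that $\tau\big(|T(e_{n_j})|^p q_j\big)\ge\varepsilon$ for all $j$. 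In other words, a uniformly bounded-norm chunk of the ``mass'' of each $|T(e_{n_j})|^p$ escapes to infinity in the sense of distribution. I would first normalize: since $T$ is an embedding, $\|T(e_n)\|_p\stackrel{C}{\sim}\|e_n\|_I=1$, so the sequence $\{|T(e_n)|^p\}$ is norm-bounded in $\mathcal{L}_1$, and after passing to a subsequence I may assume the ``escaping part'' $f_j:=|T(e_{n_j})|\,\mathbf{1}_{\{|T(e_{n_j})|^p>K_j\}}$ (a positive operator) satisfies $\|f_j\|_p^p\ge\varepsilon$ while its support projections $\mathrm{supp}(f_j)$ have $\tau(\mathrm{supp}(f_j))\to 0$ (because $\tau(q_j)\le \|T(e_{n_j})\|_p^p/K_j\to 0$).

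The next step is to exploit disjointification. Using that $\tau(\mathrm{supp}(f_j))\to 0$, a standard gliding-hump / successive-approximation argument (choosing a rapidly increasing subsequence and replacing each $f_j$ by a suitable spectral truncation) lets me extract a further subsequence, still called $\{f_j\}$, together with genuinely disjoint operators $g_j$ — say $g_j=p_j f_j p_j$ for mutually orthogonal projections $p_j\in\mathcal{N}$ — with $\|f_j-g_j\|_p$ as small as we like, hence $\|g_j\|_p^p\ge\varepsilon/2$. (This is the measure-theoretic heart: $\tau$-finiteness of the supports of the escaping parts forces near-disjointness, which one upgrades to exact disjointness at a negligible cost in $\mathcal{L}_p$-norm; the analogous step for function spaces is classical, see the Johnson–Schechtman circle of ideas and \cite{KwS}.) Now I apply the cotype-type inequality of Theorem \ref{lp theorem}: since $1\le p<2$, for the disjointly-supported $g_j$, $\big\|\sum_{j=0}^{N}g_j\otimes r_j\big\|_p\ge \mathrm{const}\cdot\big(\sum_{j=0}^N\|g_j\|_p^2\big)^{1/2}\ge \mathrm{const}\cdot N^{1/2}\varepsilon^{1/p}$. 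On the other hand, disjointness of supports gives $\big\|\sum_j g_j\otimes r_j\big\|_p^p=\sum_j\|g_j\|_p^p\lesssim N$, so $\big\|\sum_j g_j\otimes r_j\big\|_p\lesssim N^{1/p}$. Combined with the approximation $\|f_j-g_j\|_p$ small and $f_j\le |T(e_{n_j})|$, one obtains, after transferring through $T$ and using that $\big\|\sum_j r_j e_{n_j}\big\|_I=\big\|\sum_j e_{n_j}\big\|_I$ (Rademacher-invariance of the symmetric norm) together with the two-sided estimate $\|\cdot\|_I\stackrel{C}{\sim}\|T(\cdot)\|_p$, a contradiction between the growth rates $N^{1/2}$ and $N^{1/p}$ — \emph{unless} the lower $\mathcal{L}_p$-estimate on the $e_{n_j}$ side is actually saturated, which is exactly the situation $I\cong l_p$ excluded by hypothesis.

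The place where I expect to have to be careful — the main obstacle — is the disjointification step in the noncommutative setting: turning ``supports of small trace'' into ``mutually orthogonal projections'' at small $\mathcal{L}_p$-cost requires the usual successive-approximation argument with spectral projections (choose $K_j$ and a sparse subsequence so that the tails $\tau(\mathrm{supp}(f_j))$ are summable and beat the moduli of continuity of the $\mathcal{L}_p$-norms of the finitely many previously chosen operators), and one must keep track of the fact that $|T(e_n)|$ lives in $\mathcal{N}$, not in a commutative subalgebra, so the truncations must be done via the spectral measure of $|T(e_n)|$ and the resulting $p_j$'s are the ranges of these truncations. Everything else — the norm-boundedness, the Rademacher invariance, the application of Theorem \ref{lp theorem}, and the final ``$N^{1/2}$ vs. $N^{1/p}$'' growth comparison forcing $I=l_p$ — is routine once the disjointification is in place. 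I would organize the write-up as: (1) reduce to a non-uniformly-integrable subsequence and normalize; (2) disjointify; (3) apply Theorem \ref{lp theorem} and Rademacher invariance to derive the growth dichotomy; (4) conclude that $I\cong l_p$, contradicting the hypothesis.
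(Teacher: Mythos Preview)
Your overall strategy---assume non-uniform integrability and derive $I=l_p$ via a Kadec--Pe\l czy\'nski type argument---is the same as the paper's, but the paper simply invokes \cite[Corollary 2.11]{hrs} as a black box: if $\{|T(e_n)|^p\}$ is not uniformly integrable then some subsequence $\{T(e_{n_k})\}$ is equivalent to the $l_p$ basis; pulling back through $T$ and using that $\{e_n\}$ is a symmetric basis gives $I=l_p$, a contradiction. Your proposal attempts to prove this Kadec--Pe\l czy\'nski step from scratch, and that is where the gaps lie.

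First, the inequalities you extract from Theorem~\ref{lp theorem} and from disjointness, namely ${\rm const}\cdot N^{1/2}\le \big\|\sum_j g_j\otimes r_j\big\|_p\le {\rm const}\cdot N^{1/p}$, are \emph{not} in conflict: since $1\le p<2$ we have $N^{1/2}\le N^{1/p}$, so no contradiction arises at this point. Second, and more seriously, the objects $g_j$ you disjointify are approximations to the \emph{peaks} $f_j=|T(e_{n_j})|E_{|T(e_{n_j})|}(K_j^{1/p},\infty)$, which are only pieces of $|T(e_{n_j})|$; they are not $T$ of anything, so there is no legitimate ``transfer through $T$'' back to $\|\sum_j e_{n_j}\|_I$. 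What you actually need is that the full vectors $T(e_{n_j})$ (not just their spectral tails) admit a subsequence that is almost disjointly supported in $\mathcal{L}_p(\mathcal{N})$, so that $\big\|\sum_j a_j T(e_{n_j})\big\|_p\sim(\sum_j|a_j|^p)^{1/p}$ for \emph{all} scalars $a_j$; only then can you pull back through $T$ and conclude $\{e_{n_j}\}$ is $l_p$-equivalent. Controlling the ``bounded parts'' $T(e_{n_j})(1-q_j)$ is precisely the subtle step in the noncommutative Kadec--Pe\l czy\'nski theorem of \cite{hrs}, and your gliding-hump sketch does not address it. Either cite \cite[Corollary~2.11]{hrs} directly, or supply the full subsequence-splitting argument that produces almost disjoint perturbations of the $T(e_{n_j})$ themselves.
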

\begin{proof} Assume the contrary. By  \cite[Corollary 2.11]{hrs}, there exists a strictly increasing sequence $n_k,$ $k\geq0,$ such that the sequence $\{T(e_{n_k})\}_{k\geq0}$ is equivalent to the standard basis of $l_p.$ Thus, the sequence $\{e_{n_k}\}_{k\geq0}$ is equivalent to the standard basis of $l_p.$ Since $\{e_n\}_{n\geq0}$ is a symmetric basis of $I,$ it follows that $\{e_n\}_{n\geq0}$ is equivalent to the standard basis of $l_p.$ In particular, we have $I=l_p$, which is a contradiction with the assumption.
\end{proof}

The following lemma provides an equivalent representation for the norm $\|\cdot\|_{{I}}$.

\begin{lem}\label{junge pos lemma} Let $I$ be a symmetric sequence space. For every $\alpha\in\mathbb{C}^m,$ we have\footnote{Here, the notation $f\otimes e_k,$ $f\in L_p(\Omega_{m^m})$ stands for the element of $L_p(\Omega_{m^m},I)$ acting by $u\to f(u)e_k$ for every $u\in\Omega_{m^m}.$}
$$\Big\|\sum_{k=0}^{m-1}\theta_k(\alpha)\otimes e_k\Big\|_{L_p(\Omega_{m^m},I)}\stackrel{C_{abs}}{\sim}\|\alpha\|_I,\quad 1\leq p\leq 2,$$
Here, $C_{abs}$ is the absolute constant.
\end{lem}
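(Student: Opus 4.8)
The statement to prove is a permutation-averaging estimate: we want to compare $\|\sum_{k=0}^{m-1}\theta_k(\alpha)\otimes e_k\|_{L_p(\Omega_{m^m},I)}$ with $\|\alpha\|_I$ up to an absolute constant. Here $\theta_k(\alpha)$ is the diagonal matrix $1^{\otimes k}\otimes\alpha\otimes 1^{\otimes(m-1-k)}$ living in the diagonal subalgebra $L_\infty(\Omega_{m^m})$, and $\Omega_{m^m}$ is identified with $\Omega_m^m=\{0,\dots,m-1\}^m$ with normalized counting measure. The first thing I would do is unwind the definitions concretely. A point of $\Omega_{m^m}$ is a tuple $u=(u_0,\dots,u_{m-1})\in\{0,\dots,m-1\}^m$, and the value of $\theta_k(\alpha)$ at $u$ is $\alpha(u_k)$. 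So the element in question, evaluated at $u$, is the vector $\sum_{k=0}^{m-1}\alpha(u_k)e_k\in I$, and
\begin{equation*}
\Big\|\sum_{k=0}^{m-1}\theta_k(\alpha)\otimes e_k\Big\|_{L_p(\Omega_{m^m},I)}^p=\frac1{m^m}\sum_{u\in\{0,\dots,m-1\}^m}\Big\|\sum_{k=0}^{m-1}\alpha(u_k)e_k\Big\|_I^p.
\end{equation*}

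**Reduction to an average over injective tuples / permutations.** The key observation is that the tuples $u$ with all coordinates distinct are exactly the sequences $(\rho(0),\dots,\rho(m-1))$ for $\rho\in\mathfrak{S}_m$, and for such $u$ the symmetry of the basis $\{e_k\}$ gives $\|\sum_k\alpha(u_k)e_k\|_I=\|\sum_k\alpha(\rho(k))e_k\|_I=\|\alpha\|_I$ exactly (permuting the coordinates of $\alpha$ does not change the $I$-norm). The number of such tuples is $m!=m!$, so they contribute $\frac{m!}{m^m}\|\alpha\|_I^p$ to the sum. Thus the average is at least $\frac{m!}{m^m}\|\alpha\|_I^p$; but $m!/m^m$ is roughly $e^{-m}$, which is far from an absolute constant, so a naive lower bound from the injective tuples alone is \emph{not} enough. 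This is the main obstacle: I need to show that the non-injective tuples do not drag the average \emph{down} (easy direction — they give a one-sided bound by $2^p\|\alpha\|_I^p$ after a disjointification/triangle-inequality argument) and that they do not blow it \emph{up} either. The upper bound: for any $u$, split $\{0,\dots,m-1\}$ according to the value of $u_k$; on each level set the vector $\sum_{k:u_k=j}\alpha(j)e_k$ has $I$-norm at most $|\alpha(j)|\cdot\|\sum_{k\in A_j}e_k\|_I$ where $|A_j|\le m$, and by a dilation/averaging argument (using that $\{e_k\}$ is $1$-symmetric) one controls $\|\sum_{k\in A}e_k\|_I$ by $\|\alpha\|_I$-type quantities — more precisely, I would bound $\|\sum_k\alpha(u_k)e_k\|_I\le\sum_j|\alpha(j)|\,\|\sum_{k\in A_j}e_k\|_I\le$ (rearranging and using $|A_j|\le m$) something comparable to $\|D_m\mu(\alpha)\|$, which is too big. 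So the honest route for the upper bound must also be \emph{on average}.

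**The averaging identity.** The clean way is to compute the average over $\Omega_{m^m}$ directly by recognizing it as the $m$-fold average of i.i.d.\ uniform choices: if $X_0,\dots,X_{m-1}$ are i.i.d.\ uniform on $\{0,\dots,m-1\}$, then the right-hand side is $\mathbb{E}\|\sum_k\alpha(X_k)e_k\|_I^p$. Now the standard trick (cf.\ the Kwapie\'n–Sch\"utt circle of ideas, \cite{KwS}, which the paper says this lemma extends) is: condition on the random multiset $\{X_0,\dots,X_{m-1}\}$; given the multiset, the vector $\sum_k\alpha(X_k)e_k$ is a random rearrangement of the fixed vector $(\alpha(X_0),\dots,\alpha(X_{m-1}))$ (reordered), whose $I$-norm is therefore the $I$-norm of the decreasing rearrangement of $(\alpha(X_0),\dots,\alpha(X_{m-1}))$, independent of the ordering. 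So $\mathbb{E}\|\cdots\|_I^p=\mathbb{E}\|\mu(\alpha\circ X)\|_I^p$ where $\alpha\circ X$ is the $m$-vector with entries $\alpha(X_j)$. Since each $X_j$ is uniform on $m$ points and $\alpha$ has $m$ entries, $\alpha\circ X$ is "$\alpha$ sampled with replacement $m$ times". The comparison $\|\mu(\alpha\circ X)\|_I\sim\|\alpha\|_I$ with absolute constants is then exactly the combinatorial heart, and it should follow from the observation that the expected multiplicity profile of sampling $m$ items with replacement from $m$ items is, up to absolute constants, the identity profile — concretely one shows $\alpha$ (as a vector in $I$) is, up to a fixed dilation $D_2$ and $D_{1/2}$ sandwich plus the Poissonization estimates already available in the paper (Lemma \ref{maj lemma}, \ref{modified ms}, the Kruglov submajorization bounds), two-sided comparable to $\mu(\alpha\circ X)$ on average. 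I would carry it out by submajorization: $\mu(\alpha\circ X)\prec\prec$ a dilation of $\mu(\alpha)$ with an absolute constant (since no entry appears more than $m$ times and the total length is $m$, using \eqref{kps dilate ineq}), giving the upper bound via Remark \ref{rem_Orl_mon_submaj} / monotonicity of $\|\cdot\|_I$ under submajorization; and the lower bound via the injective-tuple contribution refined by a second-moment / concentration argument that a constant fraction of the $X$-outcomes are "spread out" (at least $m/c$ distinct values), on which $\|\mu(\alpha\circ X)\|_I\ge\|\mu(\alpha)\text{ restricted and dilated}\|_I\ge c^{-1}\|\alpha\|_I$. Assembling these two one-sided bounds yields the claimed $C_{abs}$-equivalence. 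The delicate point — and where I expect to spend the most care — is keeping every constant \emph{absolute}, i.e.\ independent of $m$, $\alpha$, and $p$; the Kwapie\'n–Sch\"utt lemma and the dilation estimates \eqref{eq_D1} are designed exactly for this, so I would lean on them rather than re-deriving concentration from scratch.
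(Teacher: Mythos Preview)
Your unwinding of the definitions is correct: the quantity in question is exactly $\mathbb{E}\big\|\sum_{k=0}^{m-1}\alpha(X_k)e_k\big\|_I^p$ for i.i.d.\ uniform $X_k$ on $\{0,\dots,m-1\}$, and your observation that this depends only on the rearrangement $\mu(\alpha\circ X)$ is also right. However, your proposal diverges substantially from the paper's proof and, as written, has genuine gaps.

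\textbf{What the paper does.} The paper does not argue from scratch at all. It quotes the main result of \cite{Junge-pos} (Junge's Positivity paper on optimal $p$-th moment bounds for sums of independent random variables in symmetric norms), which gives, for independent $f_0,\dots,f_{m-1}$,
\[
\Big\|\sum_{k=0}^{m-1}f_k\otimes e_k\Big\|_{L_p(\Omega,I)}\stackrel{C_{abs}}{\sim}\Big(m\int_0^{1/m}\mu^p(s,h)\,ds\Big)^{1/p}+\Big\|\sum_{i=1}^m\Big(m\int_{(i-1)/m}^{i/m}\mu^p(s,h)\,ds\Big)^{1/p}e_i\Big\|_I,
\]
where $h$ is the disjoint sum of the $f_k$. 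Setting $f_k=\theta_k(\alpha)$, the function $h$ is equimeasurable with $\alpha\in L_\infty(\Omega_m)$ (normalised measure), so each block integral equals $\mu^p(i-1,\alpha)$ and the right-hand side collapses to $\|\alpha\|_\infty+\|\alpha\|_I\sim\|\alpha\|_I$. That is the entire proof.

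\textbf{Where your sketch breaks down.} You are effectively proposing to reprove the theorem from \cite{Junge-pos} rather than cite it, and the outline you give does not close. First, your appeal to Remark~\ref{rem_Orl_mon_submaj} is misplaced: that remark concerns Orlicz spaces, whereas $I$ here is an arbitrary symmetric sequence space, which need not be monotone under Hardy--Littlewood--P\'olya submajorization. Second, you correctly note that a pointwise bound $\|\alpha\circ X\|_I\le C\|\alpha\|_I$ fails (take $\alpha=e_0$ and all $X_k=0$), yet your stated upper-bound mechanism (``$\mu(\alpha\circ X)\prec\prec$ a dilation of $\mu(\alpha)$ with an absolute constant'') is still pointwise; you never say what replaces it on average. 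Third, the lower bound (``a constant fraction of outcomes have at least $m/c$ distinct values, hence $\|\mu(\alpha\circ X)\|_I\ge c^{-1}\|\alpha\|_I$'') is not a valid implication: having many distinct values of $X$ says nothing about whether the \emph{large} coordinates of $\alpha$ are sampled. Finally, the lemmas you point to for help (Lemmas~\ref{maj lemma}, \ref{modified ms}) concern the Kruglov operator in $L_p+L_2$ and are not relevant to a general symmetric space $I$.

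In short: the paper's route is a one-line reduction to a cited theorem; your route is an attempt to reprove that theorem, and as sketched it is missing the actual combinatorial estimates (which in \cite{Junge-pos} are nontrivial and occupy much of that paper).
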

\begin{proof} For convenience of the reader, we  restate here the main result of \cite{Junge-pos}. If $\{f_k\}_{k=0}^{m-1}$ is a sequence of independent random variables on a probability space $\Omega_{m^m}$, then
\begin{align*}\Big\|\sum_{k=0}^{m-1}f_k\otimes e_k\Big\|_{L_p(\Omega_{m^m},I)}\stackrel{C_{abs}}{\sim}\Big(m\int_0^{1/m}\mu^p(s,h)ds\Big)^{\frac1p}
 +\Big\|\sum_{i=1}^m\Big(m\int_{(i-1)/m}^{i/m}\mu^p(s,h)ds\Big)^{\frac1p}e_i\Big\|_I,\end{align*}
where the function $h\in L_{\infty}(\Omega_{m^m}\times (0,1))$ is given by the formula
$$h(u,t)=\sum_{i=1}^m\chi_{(\frac{i-1}{m},\frac{i}{m})}(t)f_i(u),\quad u\in\Omega_{m^m},t\in[0,1].$$
Here, $\chi_{(\frac{i-1}{m},\frac{i}{m})}$ is the indicator function of the interval $(\frac{i-1}{m},\frac{i}{m}).$ Setting $f_k=\theta_k(\alpha),$ we see that $h$ is equimeasurable with $\alpha$ (considered as an element of $L_{\infty}(\Omega_m)$ equipped with a normalised measure). Therefore,
$$m\int_{(i-1)/m}^{i/m}\mu^p(s,h)ds=\mu^p(i-1,\alpha),\quad 1\leq i\leq m.$$
In the latter formula, $\mu(\alpha)$ is computed with respect to the counting measure. In this setting, the above result reads as follows
$$\Big\|\sum_{k=0}^{m-1}\theta_k(\alpha)\otimes e_k\Big\|_{L_p(\Omega_{m^m},I)}\stackrel{C_{abs}}{\sim}\|\alpha\|_{\infty}+\|\alpha\|_I.$$
Since $\|\alpha\|_{\infty}\leq \|\alpha\|_I$, the assertion follows.
\end{proof}

% {\color{blue} IT IS USELESS. TO BE DELETED. The following lemma may be viewed as a simple case of noncommutative Fubini Theorem.
%
% \begin{lem}\label{treqlpeq} For every $z\in\mathcal{L}_1(\mathcal{M}_m,\sigma_m),$ we have
% \begin{equation}\label{treq}
% (m^{-m}{\rm Tr}\otimes\tau_m)(z)=\lim\limits_{n_0\to\omega}\cdots\lim\limits_{n_{m-1}\to\omega}(m^{-m}{\rm Tr}\otimes\tau)(z(n_0,\cdots,n_{m-1})).
% \end{equation}
% Consequently, for every $u\in\mathcal{L}_p(\mathcal{M}_m,\sigma_m),$ we have
% \begin{equation}\label{lpeq}
% \|u\|_p=\lim\limits_{n_0\to\omega}\cdots\lim\limits_{n_{m-1}\to\omega}\|u(n_0,\cdots,n_{m-1})\|_{\mathcal{L}_p(M_{m^m}(\mathbb{C})\otimes\mathcal{N})}.
% \end{equation}
% \end{lem}
% \begin{proof} Every $z\in\mathcal{L}_1(\mathcal{M}_m,\sigma_m)$ admits a representation of the form
% $$z=\sum_{i,j=0}^{m^m-1}U_{ij}\otimes z_{ij},$$
% where $U_{ij},$ $0\leq i,j<m^m$ are matrix units and $z_{ij}\in\mathcal{L}_1(\mathcal{N}_1,\tau_1)$ for $0\leq i,j<m^m.$ It suffices to verify \eqref{treq} for elements of the form $U_{ij}\otimes z_{ij}.$ For such an element,
% $$(m^{-m}{\rm Tr}\otimes\tau_m)(z)=m^{-m}{\rm Tr}(U_{ij})\tau_m(z_{ij})$$
% $$\stackrel{def}{=}m^{-m}{\rm Tr}(U_{ij})\lim\limits_{n_0\to\omega}\cdots\lim\limits_{n_{m-1}\to\omega}\tau(z_{ij}(n_0,\cdots,n_{m-1}))$$
% $$=\lim\limits_{n_0\to\omega}\cdots\lim\limits_{n_{m-1}\to\omega}(m^{-m}{\rm Tr}\otimes\tau)(z(n_0,\cdots,n_{m-1})).$$
%
% The second assertion follows from the first one by putting $z=|u|^p$ and taking into account that every ultrafilter is a homomorphism on $l_\infty(\mathbb{Z}_+)$.
% \end{proof}
%
% }

In the next lemma, we provide an equivalent representation for the norm $\|\cdot\|_{\mathcal{I}}$ of the Banach ideal $\mathcal{I}$ under the assumption that the commutative core $I$ admits an isomorphic embedding into $\mathcal{L}_p(\mathcal{N},\tau).$ We recall that trace preserving $*-$homomorphisms $\{\varrho_k\}_{k=0}^{m-1}$ from $\mathcal{N}_1$ to $\mathcal{N}_m$ were introduced in the preceding subsection.

\begin{lem}\label{prelim embedding lemma} Let $I\neq l_p$ be a symmetric sequence space and let $T:I\to \mathcal{L}_p(\mathcal{N},\tau)$ be an isomorphic embedding. There exists\footnote{Note that $F$ does not depend on the choice of $m.$} $F\in \mathcal{L}_p(\mathcal{N}_1,\tau_1)$ such that
$$\|A\|_{\mathcal{I}}\stackrel{C(I)}{\sim}\Big\|\sum_{k=0}^{m-1}\theta_k(A)\otimes\varrho_k(F)\Big\|_{\mathcal{L}_p(\mathcal{M}_m,\sigma_m)},\quad A=A^*\in M_m(\mathbb{C}).$$
Here, $C(I)$ is the constant which only depends on $I$ (but not on the integer $m\ge 1$) and whose value may change from line to line.
\end{lem}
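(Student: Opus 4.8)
The plan is to transfer the scalar representation of the norm $\|\cdot\|_I$ from Lemma \ref{junge pos lemma} to the operator level, using the embedding $T$ to produce the single function $F$, and then to control the error introduced by replacing $T(e_k)$ by independent copies. First I would fix the embedding $T:I\to\mathcal{L}_p(\mathcal{N},\tau)$ with constant $C$, and apply Lemma \ref{equi1} to conclude that the family $\{|T(e_k)|^p\}_{k\ge0}$ is uniformly integrable; by a standard splitting of $T(e_k)$ into its real and imaginary parts (and then into positive and negative parts of each), we may as well treat $0\le T(e_k)$, so uniform integrability is available throughout. The candidate for $F$ is obtained as follows: view the sequence $\{T(e_k)\}_{k\ge0}$ as a single element $\{T(e_k)\}_{k\ge0}\in\prod_{k\ge0}\mathcal{L}_p(\mathcal{N},\tau)$, and use the ultraproduct construction with constant algebras $\mathcal{N}_k=\mathcal{N}$. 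The uniform integrability of $\{|T(e_k)|^p\}_{k\ge0}$ together with Lemma \ref{equi2} forces this sequence to define an element $F$ of the genuine $L_p$-space $\mathcal{L}_p(\mathcal{N}^\omega,\tau^\omega)$; since $\mathcal{N}^\omega=\mathcal{N}_1$ in the notation of this section, we get $F\in\mathcal{L}_p(\mathcal{N}_1,\tau_1)$, and the homomorphisms $\varrho_k:\mathcal{N}_1\to\mathcal{N}_m$ evaluate $F$ at the $k$-th coordinate, so $\varrho_k(F)$ is "the $k$-th independent copy of $T(e_k)$" inside $\mathcal{N}_m$, with $\mu(\varrho_k(F))$ essentially $\mu(T(e_k))$ in the limit.

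Next I would establish the two-sided estimate. Fix $A=A^*\in M_m(\mathbb{C})$; writing $A$ in its eigenbasis we may by unitary invariance assume $A$ is a diagonal matrix $\alpha=(\alpha(0),\dots,\alpha(m-1))$, so that $\theta_k(A)=\theta_k(\alpha)$ lies in the diagonal subalgebra $L_\infty(\Omega_{m^m})$ of $M_{m^m}(\mathbb{C})$, and $\|A\|_{\mathcal{I}}=\|\alpha\|_I$. The target quantity is $\big\|\sum_{k=0}^{m-1}\theta_k(\alpha)\otimes\varrho_k(F)\big\|_{\mathcal{L}_p(\mathcal{M}_m,\sigma_m)}$. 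Because the $\theta_k(\alpha)$ are scalar-valued (commuting, supported on the diagonal) and the $\varrho_k(F)$ are independent identically distributed copies of (essentially) $T(e_k)$, this is, up to the ultrafilter limit, exactly the $L_p$-norm $\big\|\sum_{k=0}^{m-1}\alpha(k)\, T(e_k)\otimes g_k\big\|_p$ where $g_k$ are disjointly supported independent copies — i.e. the quantity appearing in the Raynaud–Schütt / Junge-type computation. Since $T$ is an isomorphism on the span of $\{e_k\}_{k=0}^{m-1}$ with constant $C$, and since $\sum_k\alpha(k)e_k\otimes g_k$ has the same $I$-norm as $\sum_k\alpha(k)e_k$ (as the $g_k$ realize a disjoint sum), we get $\big\|\sum_k\alpha(k)T(e_k)\otimes g_k\big\|_p\stackrel{C}{\sim}\big\|\sum_k\alpha(k)e_k\otimes g_k\big\|_{L_p(\Omega,I)}$, and the latter is $\stackrel{C_{abs}}{\sim}\|\alpha\|_I$ by Lemma \ref{junge pos lemma}. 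Tracking the constants, all of them depend only on $I$ (through $C$) and on absolute constants, never on $m$, giving the stated $\stackrel{C(I)}{\sim}$.

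The main obstacle is the passage through the ultraproduct, specifically justifying that $\sum_{k=0}^{m-1}\theta_k(\alpha)\otimes\varrho_k(F)\in\mathcal{L}_p(\mathcal{M}_m,\sigma_m)$ is correctly computed by the iterated ultrafilter limits $\tau_m$ and that its $p$-norm equals $\lim_{n_0\to\omega}\cdots\lim_{n_{m-1}\to\omega}\big\|\sum_k\theta_k(\alpha)\otimes T(e_{n_k})\big\|_p$ rather than merely dominating it — this is precisely the subtlety highlighted by \eqref{false}–\eqref{true}, and it is resolved by the uniform integrability of $\{|T(e_k)|^p\}_k$ via Lemma \ref{equi2}, applied coordinate by coordinate (the $m$-fold iterated ultraproduct of a uniformly integrable family remains uniformly integrable, and tensoring with the fixed finite-dimensional matrix block $M_{m^m}(\mathbb{C})$ preserves this). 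A second, more bookkeeping, point is that $F$ a priori depends on the embedding $T$ but not on $m$: this is automatic from the construction since $F$ is built from the single fixed sequence $\{T(e_k)\}_{k\ge0}$, and the role of $m$ enters only through which finitely many copies $\varrho_0(F),\dots,\varrho_{m-1}(F)$ one uses. Finally one reduces the not-necessarily-self-adjoint and non-diagonal cases to the diagonal self-adjoint case by the standard real/imaginary and spectral decompositions together with unitary invariance of $\|\cdot\|_{\mathcal{I}}$, at the cost of another absolute constant.
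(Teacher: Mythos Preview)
Your construction of $F$ via Lemmas~\ref{equi1} and~\ref{equi2}, your handling of the ultraproduct subtlety, and the overall architecture (diagonalize $A$, apply $T$ pointwise in a Bochner norm, invoke Lemma~\ref{junge pos lemma}, pass to the iterated ultralimit) all match the paper's proof. But your middle paragraph misidentifies the intermediate quantity, and as written that step fails. At coordinate $(n_0,\dots,n_{m-1})$ the element $\sum_k\theta_k(\alpha)\otimes\varrho_k(F)$ is $\sum_k\theta_k(\alpha)\otimes T(e_{n_k})$, which at $u\in\Omega_m^m$ reads $\sum_k\alpha(u_k)\,T(e_{n_k})$: it is the \emph{entries of $\alpha$} that are being randomized by the $\theta_k$, while the basis images $T(e_{n_k})$ sit in fixed slots. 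Your expression $\sum_k\alpha(k)\,T(e_k)\otimes g_k$ with ``disjointly supported independent $g_k$'' has the roles reversed, and your claim that $\sum_k\alpha(k)e_k\otimes g_k$ has the same $I$-norm as $\sum_k\alpha(k)e_k$ is false in general (for disjoint $g_k$ with $\|g_k\|_p=1$ that $L_p(\Omega,I)$-norm is $\|\alpha\|_p$, not $\|\alpha\|_I$); this is also not what Lemma~\ref{junge pos lemma} says.

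The correct chain, as the paper runs it, is: for each fixed $n_0<\cdots<n_{m-1}$,
\[
\Big\|\sum_k\theta_k(\alpha)\otimes T(e_{n_k})\Big\|_{L_p(\Omega_{m^m},\mathcal{L}_p(\mathcal{N}))}\stackrel{C}{\sim}\Big\|\sum_k\theta_k(\alpha)\otimes e_{n_k}\Big\|_{L_p(\Omega_{m^m},I)}
\]
by applying $T^{-1}$ pointwise in $u$; then symmetry of the basis $\{e_k\}$ replaces $e_{n_k}$ by $e_k$; and now Lemma~\ref{junge pos lemma} applies verbatim to give $\stackrel{C_{abs}}{\sim}\|\alpha\|_I$. (For the diagonalization you assert ``by unitary invariance''; the paper makes this explicit with the unitary $V=\big(\prod_k\theta_k(U)\big)\otimes 1$, which commutes past all $\theta_j(A)$ with $j\neq k$ since the $\theta_k$'s live in disjoint tensor slots.) Once you replace your muddled intermediate expression with this one, the remainder of your outline is the paper's proof.
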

\begin{proof} Set $F=\{T(e_k)\}_{k\geq0}\in\mathcal{L}_p(\mathcal{N},\tau)^{\omega}.$ It follows from Lemma \ref{equi1} that $|F|^p$ is a uniformly integrable sequence. Using Lemma \ref{equi2}, we infer that
$$F\in\mathcal{L}_p(\mathcal{N}^{\omega},\tau^{\omega})\stackrel{def}{=}\mathcal{L}_p(\mathcal{N}_1,\tau_1).$$

Fix a self-adjoint operator $A\in M_m(\mathbb{C})$ and select a unitary operator $U\in M_m(\mathbb{C})$ such that\footnote{Here, $\lambda(A)\in L_{\infty}(\Omega_m)$ denotes the eigenvalue sequence of the operator $A.$} $A=U^{-1}\lambda(A)U.$ Define a unitary operator $V\in M_{m^m}(\mathbb{C})\otimes\mathcal{N}$ by setting
$$V=\Big(\prod_{k=0}^{m-1}\theta_k(U)\Big)\otimes 1.$$
We have
$$V^{-1}\Big(\sum_{k=0}^{m-1}\theta_k(\lambda(A))\otimes T(e_{n_k})\Big)V=\sum_{k=0}^{m-1}\theta_k(A)\otimes T(e_{n_k}).$$
Here, $(n_0,\cdots,n_{m-1})\in\mathbb{Z}_+^m$ is arbitrary. Therefore, for the algebra $M_{m^m}(\mathbb{C})\otimes\mathcal{N}$ equipped with a tensor product trace, we have
$$\Big\|\sum_{k=0}^{m-1}\theta_k(A)\otimes T(e_{n_k})\Big\|_{\mathcal{L}_p(M_{m^m}(\mathbb{C})\otimes\mathcal{N})}=\Big\|\sum_{k=0}^{m-1}\theta_k(\lambda(A))\otimes T(e_{n_k})\Big\|_{\mathcal{L}_p(M_{m^m}(\mathbb{C})\otimes\mathcal{N})}.$$
Since each $\theta_k$ maps the diagonal subalgebra $L_{\infty}(\Omega_m)$ of $M_m(\mathbb{C})$ into the diagonal subalgebra $L_{\infty}(\Omega_{m^m})$ of $M_{m^m}(\mathbb{C}),$ it follows that
$$\Big\|\sum_{k=0}^{m-1}\theta_k(A)\otimes T(e_{n_k})\Big\|_{\mathcal{L}_p(M_{m^m}(\mathbb{C})\otimes\mathcal{N})}=\Big\|\sum_{k=0}^{m-1}\theta_k(\lambda(A))\otimes T(e_{n_k})\Big\|_{L_p(\Omega_{m^m},L_p({\mathcal{N}},\tau))},$$
where on the right hand side we use the Bochner space norm. Taking $C(I)\geq\|T\|,\|T^{-1}\|,$ we obtain
$$\Big\|\sum_{k=0}^{m-1}\theta_k(\lambda(A))\otimes T(e_{n_k})\Big\|_{L_p(\Omega_{m^m},L_p({\mathcal{N}},\tau))}\stackrel{C(I)}{\sim}\Big\|\sum_{k=0}^{m-1}\theta_k(\lambda(A))\otimes e_{n_k}\Big\|_{L_p(\Omega_{m^m},I)}.$$
Therefore, we have
$$\Big\|\sum_{k=0}^{m-1}\theta_k(A)\otimes T(e_{n_k})\Big\|_{\mathcal{L}_p(M_{m^m}(\mathbb{C})\otimes\mathcal{N})}
\stackrel{C(I)}{\sim}\Big\|\sum_{k=0}^{m-1}\theta_k(\lambda(A))\otimes e_{n_k}\Big\|_{L_p(\Omega_{m^m},I)}.$$
Since $\{e_k\}_{k\geq0}$ is a symmetric basis in $I,$ it follows that
$$\Big\|\sum_{k=0}^{m-1}\theta_k(A)\otimes T(e_{n_k})\Big\|_{\mathcal{L}_p(M_{m^m}(\mathbb{C})\otimes\mathcal{N})}\stackrel{C(I)}{\sim}\Big\|\sum_{k=0}^{m-1}\theta_k(\lambda(A))\otimes e_k\Big\|_{L_p(\Omega_{m^m},I)}$$
whenever $n_0<n_1<\cdots<n_{m-1}.$ It follows now from Lemma \ref{junge pos lemma} that
\begin{equation}\label{topi}
\Big\|\sum_{k=0}^{m-1}\theta_k(A)\otimes T(e_{n_k})\Big\|_{\mathcal{L}_p(M_{m^m}(\mathbb{C})\otimes\mathcal{N})}\stackrel{C(I)}{\sim}\|\lambda(A)\|_I=\|A\|_{\mathcal{I}}
\end{equation}
whenever $n_0<n_1<\cdots<n_{m-1}.$ By definitions of $F$ and $\varrho_k,$ $0\leq k<m,$ we have that
$$(\varrho_k(F))(n_0,\cdots,n_{m-1})=T(e_{n_k}),\quad (n_0,\cdots,n_{m-1})\in\mathbb{Z}_+^m.$$
Therefore, we can restate \eqref{topi} as follows:
$$\Big\|\sum_{k=0}^{m-1}\theta_k(A)\otimes (\varrho_k(F))(n_0,\cdots,n_{m-1})\Big\|_{\mathcal{L}_p(M_{m^m}(\mathbb{C})\otimes\mathcal{N})}\stackrel{C(I)}{\sim}\|A\|_{\mathcal{I}}$$
whenever $n_0<n_1<\cdots<n_{m-1}.$ Since $F\in\mathcal{L}_p(\mathcal{N}_1,\tau_1),$ it follows that $\varrho_k(F)\in\mathcal{L}_p(\mathcal{N}_m,\tau_m).$  Hence, $\sum_{k=0}^{m-1}\theta_k(A)\otimes\varrho_k(F)\in\mathcal{L}_p(M_{m^m}(\mathbb{C})\otimes\mathcal{N}_m).$ Clearly, $M_{m^m}(\mathbb{C})\otimes\mathcal{N}_m$ is an ultrapower of $M_{m^m}(\mathbb{C})\otimes\mathcal{N}.$ Thus, we have
\begin{align*}
&\Big\|\sum_{k=0}^{m-1}\theta_k(A)\otimes \varrho_k(F)\Big\|_{\mathcal{L}_p(M_{m^m}(\mathbb{C})\otimes\mathcal{N}_m)}\\
&=\lim\limits_{n_0\to\omega}\cdots\lim\limits_{n_{m-1}\to\omega}\Big\|\sum_{k=0}^{m-1}\theta_k(A)\otimes (\varrho_k(F))(n_0,\cdots,n_{m-1})\Big\|_{\mathcal{L}_p(M_{m^m}(\mathbb{C})\otimes\mathcal{N})}
 \stackrel{C(I)}{\sim}\|A\|_{\mathcal{I}}.
\end{align*}
This proves the assertion.
\end{proof}

\subsection{Proof of uniform integrability}

In this subsection, we show that, for every rank one projection $p,$ there exists an element $z\in L_p(0,1)$ such that
$$\mu(\sum_{k=0}^{m-1}\theta_k(p)\otimes\varrho_k(F))\leq\mu(z),\quad m\geq 1,$$
where $F$ is defined in Lemma \ref{prelim embedding lemma} above. The following lemma shows that $\{\varrho_k\}_{k\geq0}$ are subsymmetric copies in the sense of Koestler.

\begin{lem}\label{sum equi} For every $\mathcal{A}\subset\{0,\cdots,m-1\},$ we have
$$\mu\Big(\sum_{k\in\mathcal{A}}\varrho_k(x)\Big)=\mu\Big(\sum_{k=0}^{|\mathcal{A}|-1}\varrho_k(x)\Big),\quad x\in\mathcal{S}(\mathcal{N}_1,\tau_1).$$
\end{lem}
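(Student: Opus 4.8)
The statement asserts that the distribution of $\sum_{k\in\mathcal A}\varrho_k(x)$ depends only on the cardinality $|\mathcal A|$, and in fact that one may always relabel to use the initial block $\{0,\dots,|\mathcal A|-1\}$. The plan is to exploit the iterated-ultralimit structure of $(\mathcal N_m,\tau_m)$ together with the fact that each $\varrho_k$ is the evaluation homomorphism at the $k$-th coordinate. First I would reduce to showing invariance under transpositions of adjacent coordinates: if $\sigma\in\mathfrak S_m$ and $\Theta_\sigma:\mathcal N_m\to\mathcal N_m$ is the $*$-automorphism induced by permuting the coordinates of $\mathbb Z_+^m$ (i.e. $(\Theta_\sigma x)(n_0,\dots,n_{m-1})=x(n_{\sigma^{-1}(0)},\dots,n_{\sigma^{-1}(m-1)})$), then it suffices to prove that $\Theta_\sigma$ is trace preserving, since $\Theta_\sigma\circ\varrho_k=\varrho_{\sigma(k)}$ by definition of $\varrho_k$, and a trace-preserving $*$-automorphism sends an operator to an equimeasurable one (preservation of all distribution functions follows from $\tau_m(E_{|y|}(s,\infty))=\tau_m(E_{|\Theta_\sigma y|}(s,\infty))$). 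Given this, $\sum_{k\in\mathcal A}\varrho_k(x)=\Theta_\sigma\big(\sum_{k=0}^{|\mathcal A|-1}\varrho_k(x)\big)$ for any permutation $\sigma$ carrying $\{0,\dots,|\mathcal A|-1\}$ onto $\mathcal A$, which is exactly the claimed equimeasurability.

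The one genuinely delicate point is that $\Theta_\sigma$ is \emph{not} obviously well defined on $\mathcal N_m$: the tracial state $\tau_m$ uses iterated ultralimits $\lim_{n_0\to\omega}\cdots\lim_{n_{m-1}\to\omega}$, which are \emph{not} symmetric in the coordinates for a general element of $l_\infty(\mathbb Z_+^m)\otimes\mathcal N$. So I cannot simply claim $\tau_m\circ\Theta_\sigma=\tau_m$ on the whole bounded-function algebra. The remedy is to reduce to adjacent transpositions $\sigma=(j,j+1)$ and argue coordinate by coordinate: for a \emph{finitely supported} element in the $(n_j,n_{j+1})$ pair of variables the iterated limit in those two slots is an honest finite sum and hence symmetric, and a routine density/approximation argument in the GNS Hilbert space $L_2(\mathcal N_m,\tau_m)$ extends this to all of $\mathcal N_m$. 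Concretely I would: (i) note that elements of $l_\infty(\mathbb Z_+^m)\otimes\mathcal N$ which are \emph{finitely supported in the $j$-th and $(j{+}1)$-th coordinates} are $\|\cdot\|_2$-dense in $L_2(\mathcal N_m,\tau_m)$ — actually one should be a little careful here, since finitely supported functions may be killed in the quotient; the cleaner route is to observe that constant-in-$(n_j,n_{j+1})$ functions and functions depending on $(n_j,n_{j+1})$ through finitely many values together span a dense subspace, and on such a function $\lim_{n_j\to\omega}\lim_{n_{j+1}\to\omega}=\lim_{n_{j+1}\to\omega}\lim_{n_j\to\omega}$ trivially; (ii) conclude $\tau_m(\Theta_{(j,j+1)}y)=\tau_m(y)$ first for such $y$, then for all $y$ by continuity of $\tau_m$ in $\|\cdot\|_2$ and boundedness; (iii) deduce that $\Theta_{(j,j+1)}$ passes to the quotient as a trace-preserving $*$-automorphism of $\mathcal N_m$; (iv) compose adjacent transpositions to realize an arbitrary $\sigma\in\mathfrak S_m$.

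Alternatively — and this is probably the slicker write-up — I would avoid discussing $\Theta_\sigma$ on the algebra level altogether and argue directly with distribution functions. For $x=x^*\in\mathcal S(\mathcal N_1,\tau_1)$ fixed, write $y_{\mathcal A}=\sum_{k\in\mathcal A}\varrho_k(x)$ and compute, for $s\in\mathbb R$,
\[
\tau_m\big(E_{y_{\mathcal A}}(s,\infty)\big)=\lim_{n_0\to\omega}\cdots\lim_{n_{m-1}\to\omega}\tau\Big(E_{\sum_{k\in\mathcal A}x(n_k)}(s,\infty)\Big),
\]
and observe that the inner expression depends on $(n_0,\dots,n_{m-1})$ only through the unordered tuple $\{n_k:k\in\mathcal A\}$, hence only through $|\mathcal A|$ of the variables; since each iterated ultralimit is taken over the \emph{same} ultrafilter $\omega$ and over all of $\mathbb Z_+$, the value is unchanged if we replace $\mathcal A$ by $\{0,\dots,|\mathcal A|-1\}$ (the extra/relabelled limits act on a constant and are harmless). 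This gives $d_{y_{\mathcal A}}(s)=d_{y_{\{0,\dots,|\mathcal A|-1\}}}(s)$ for all $s$; applying the same to $x$ replaced by $\pm x$ (to handle $d_{(\cdot)_\pm}$) yields equimeasurability and hence $\mu(y_{\mathcal A})=\mu(y_{\{0,\dots,|\mathcal A|-1\}})$. The main obstacle in either approach is the same: carefully justifying that permuting coordinates does not disturb the iterated (non-symmetric) ultralimit, which works precisely because after fixing the operator all the dependence collapses to a symmetric function of the relevant coordinates; everything else is bookkeeping with the GNS construction and standard properties of $\mu$ recalled in the Preliminaries.
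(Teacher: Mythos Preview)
Your first approach, via a global permutation automorphism $\Theta_\sigma$ of $\mathcal N_m$, cannot work: the iterated ultralimit $\tau_m$ is genuinely not invariant under coordinate permutations, so $\Theta_\sigma$ does not descend to the GNS quotient. Concretely, with $\mathcal N=\mathbb C$, $m=2$ and $y(n_0,n_1)=\chi_{\{n_0<n_1\}}$ one computes $\tau_2(|y-1|^2)=0$, so $y$ represents $1\in\mathcal N_2$; but $\Theta_{(0,1)}y=\chi_{\{n_1<n_0\}}$ satisfies $\tau_2(|\Theta_{(0,1)}y|^2)=0$, so it represents $0\in\mathcal N_2$. A well-defined $\Theta_{(0,1)}$ on $\mathcal N_2$ would therefore send $1\mapsto 0$. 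No density argument repairs this: your ``nice'' functions (constant or finitely supported in two slots) all represent elements on which the identity already agrees with $\Theta_{(0,1)}$, so closing up gives nothing new; and extending $\tau_m\circ\Theta_\sigma=\tau_m$ by $\|\cdot\|_2$-continuity presupposes that $\Theta_\sigma$ is an $L_2$-isometry, which is exactly what fails.

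Your second approach is in spirit the paper's argument, but the displayed identity
\[
\tau_m\big(E_{y_{\mathcal A}}(s,\infty)\big)=\lim_{n_0\to\omega}\cdots\lim_{n_{m-1}\to\omega}\tau\big(E_{\sum_{k\in\mathcal A}x(n_k)}(s,\infty)\big)
\]
is unjustified: functional calculus with the discontinuous function $\chi_{(s,\infty)}$ does not commute with (iterated) ultraproducts. The fix --- and this is precisely what the paper does --- is to replace the indicator by a polynomial. After reducing to bounded $x$, the paper verifies
\[
\tau_m\Big(\big(\textstyle\sum_{k\in\mathcal A}\varrho_k(x)\big)^N\Big)=\lim_{n_0\to\omega}\cdots\lim_{n_{m-1}\to\omega}\tau\Big(\big(\textstyle\sum_{k\in\mathcal A}x(n_k)\big)^N\Big),
\]
which is legitimate because $(\cdot)^N$ is a polynomial; then your own renaming observation applies verbatim (the inner expression involves only the variables $n_k$ with $k\in\mathcal A$, the unused limits drop out, and the remaining dummy indices are relabelled $0,\dots,|\mathcal A|-1$ in order). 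Equality of all moments of bounded operators forces equality of characteristic functions, hence of distributions, hence of $\mu$. So your ``slicker write-up'' becomes correct once spectral projections are replaced by moments (equivalently, by $e^{it(\cdot)}$).
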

\begin{proof} Without loss of generality, $x$ is bounded. It suffices\footnote{Indeed, if all moments of bounded random variables $x$ and $y$ coincide, then so do their characteristic functions. Hence, their distributions also coincide.} to show that
$$\tau_m\Big(\Big(\sum_{k\in\mathcal{A}}\varrho_k(x)\Big)^N\Big)=\tau_m\Big(\Big(\sum_{k=0}^{M-1}\varrho_k(x)\Big)^N\Big),\quad N\in\mathbb{N},$$
where we denoted for brevity, $M=|\mathcal{A}|.$ Let $\mathcal{A}=\{k_0,\cdots,k_{M-1}\}.$ It is clear that
$$\tau_m\Big(\Big(\sum_{k\in\mathcal{A}}\varrho_k(x)\Big)^N\Big)=\lim\limits_{n_0\to\omega}\cdots\lim\limits_{n_{m-1}\to\omega}\tau\Big(\Big(\sum_{i=0}^{M-1}x(n_{k_i})\Big)^N\Big)=$$
$$=\lim\limits_{n_{k_0}\to\omega}\cdots\lim\limits_{n_{k_{M-1}}\to\omega}\tau\Big(\Big(\sum_{i=0}^{M-1}x(n_{k_i})\Big)^N\Big).$$
Renaming the index $n_{k_0}$ with $n_0,$ we obtain
$$\tau_m\Big(\Big(\sum_{k\in\mathcal{A}}\varrho_k(x)\Big)^N\Big)=\lim\limits_{n_0\to\omega}\lim\limits_{n_{k_1}\to\omega}\cdots\lim\limits_{n_{k_{M-1}}\to\omega}\tau\Big(\Big(x(n_0)+\sum_{i=1}^{M-1}x(n_{k_i})\Big)^N\Big).$$
Renaming the index $n_{k_1}$ with $n_1,$ we obtain
$$\tau_m\Big(\Big(\sum_{k\in\mathcal{A}}\varrho_k(x)\Big)^N\Big)=\lim\limits_{n_0\to\omega}\lim\limits_{n_1\to\omega}\lim\limits_{n_{k_2}\to\omega}\cdots\lim\limits_{n_{k_{M-1}}\to\omega}\tau\Big(\Big(x(n_0)+x(n_1)+\sum_{i=2}^{M-1}x(n_{k_i})\Big)^N\Big).$$
Repeating the process, we conclude the proof.
\end{proof}

Consider the probability space $(\mathcal{N}_{{\rm ext}},\tau_{{\rm ext}})$
$$\mathcal{N}_{{\rm ext}}=\mathbb{C}\bigoplus\Big(\bigoplus_{m\geq1}\mathcal{N}_m\Big),\quad \tau_{{\rm ext}}=\frac1e\bigoplus\Big(\bigoplus_{k=0}^{\infty}\frac1{e\cdot m!}\tau_m\Big).$$
Define the mapping $\mathscr{L}:\mathcal{L}_1(\mathcal{N}_1,\tau_1)\to\mathcal{L}_1(\mathcal{N}_{{\rm ext}},\tau_{{\rm ext}})$ defined by the setting
$$\mathscr{L}(x)=0\bigoplus\Big(\bigoplus_{m\geq 1}\Big(\sum_{k=0}^{m-1}\varrho_k(x)\Big)\Big),\quad x\in\mathcal{L}_1(\mathcal{N}_1,\tau_1).$$
The expression for the mapping $\mathscr{L}$ could be understood as a subsymmetric version of the Kruglov operator replacing tensor copies by the copies given by the $\{\rho_k\}_{k\geq0}.$ However, we do not need these properties and, hence, do not state and prove them.

\begin{lem}\label{l bound} $\mathscr{L}:\mathcal{L}_p(\mathcal{N}_1,\tau_1)\to\mathcal{L}_p(\mathcal{N}_{{\rm ext}},\tau_{{\rm ext}})$ for all $1\leq p\leq 2.$
\end{lem}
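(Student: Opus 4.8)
The plan is to estimate the $\mathcal{L}_p$-norm of $\mathscr{L}(x)$ directly from its definition as a direct sum, reducing everything to a submajorization bound in the spirit of Lemma \ref{maj lemma} together with the boundedness of dilation operators on $L_p+L_2$. First I would record that, by the definition of $\tau_{{\rm ext}}$,
\[
\|\mathscr{L}(x)\|_p^p=\sum_{m\geq1}\frac{1}{e\cdot m!}\Big\|\sum_{k=0}^{m-1}\varrho_k(x)\Big\|_p^p,
\]
so the whole matter is to control $\big\|\sum_{k=0}^{m-1}\varrho_k(x)\big\|_p$. Since the $\varrho_k$ are trace preserving $*$-homomorphisms, Lemma \ref{sum equi} shows that the distribution of $\sum_{k=0}^{m-1}\varrho_k(x)$ depends only on $m$ and on the distribution of $x$; and by submajorization \eqref{sum maj} (applied in $\mathcal{N}_m$, which is legitimate since the $\varrho_k(x)$ lie in a common algebra) we get
\[
\sum_{k=0}^{m-1}\varrho_k(x)\prec\prec m\,\mu(x),\qquad\text{hence}\qquad \mu\Big(\sum_{k=0}^{m-1}\varrho_k(x)\Big)\leq D_m\big(m\,\mu(x)\big)\cdot\tfrac1m\cdot m = m D_{1/m}\mu(x)\text{ after rescaling},
\]
i.e. more precisely $\chi_{(0,1)}$-weighted: the function $\sum_{k=0}^{m-1}\varrho_k(x)$ over the probability space $(\mathcal{N}_m,\tfrac{1}{e\cdot m!}\tau_m)$ is equimeasurable with $\chi_{A_m}\otimes\big(\sum_{k=0}^{m-1}\varrho_k(x)\big)$ over $(\mathcal{N}_m,\tau_m)$ for the sets $A_m$ from \eqref{kastsuk}, exactly as in Lemma \ref{lem2}. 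Combining, $\mathscr{L}(x)\prec\prec\sum_{m\geq1}m D_{\frac{1}{e\cdot m!}}\mu(x)$, which is the analogue of Lemma \ref{maj lemma}.

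Given this submajorization, the estimate proceeds exactly as in Lemma \ref{modified ms}: since the norm $\|\cdot\|_{L_p+L_2}$ respects submajorization and $\|D_u\|_{L_p+L_2\to L_p+L_2}\leq u^{1/p}$ for $0<u\leq1$, I would write
\[
\|\mathscr{L}(x)\|_{\mathcal{L}_p+\mathcal{L}_2}\leq\sum_{m\geq1}m\,\|D_{\frac{1}{e\cdot m!}}\|_{L_p+L_2\to L_p+L_2}\,\|x\|_{\mathcal{L}_p+\mathcal{L}_2}\leq\Big(\sum_{m\geq1}\frac{m}{(e\cdot m!)^{1/p}}\Big)\|x\|_{\mathcal{L}_p+\mathcal{L}_2}<\infty.
\]
For the genuine $\mathcal{L}_p$-statement (not $\mathcal{L}_p+\mathcal{L}_2$), one uses that $x\in\mathcal{L}_p(\mathcal{N}_1,\tau_1)$ means $\mu(x)\in L_p(0,\infty)$, and then $\sum_{m\geq1}m D_{\frac{1}{e\cdot m!}}\mu(x)\in L_p(0,\infty)$ because $\|D_u\|_{L_p\to L_p}=u^{1/p}$ and the same convergent series $\sum_m m(e\cdot m!)^{-1/p}$ controls the $L_p$-norm; submajorization then upgrades to $\|\mathscr{L}(x)\|_p\leq\mathrm{const}\cdot\|x\|_p$ via Remark \ref{rem_Orl_mon_submaj} (the $L_p$-norm is monotone under submajorization). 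Finally linearity and boundedness on a dense subspace (bounded finitely supported operators, which are $\|\cdot\|_p$-dense in $\mathcal{L}_p(\mathcal{N}_1,\tau_1)$) gives the bounded extension $\mathscr{L}:\mathcal{L}_p(\mathcal{N}_1,\tau_1)\to\mathcal{L}_p(\mathcal{N}_{{\rm ext}},\tau_{{\rm ext}})$.

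The main technical point — the one I would be most careful about — is the equimeasurability/submajorization step in the ultrapower algebras $\mathcal{N}_m$: one must verify that $\sum_{k=0}^{m-1}\varrho_k(x)$, viewed in $(\mathcal{N}_m,\tfrac{1}{e\cdot m!}\tau_m)$, is equimeasurable with the corresponding commutative expression so that \eqref{sum maj} and the dilation estimates apply verbatim. This follows from Lemma \ref{sum equi} (which pins down the distribution via moments) together with the observation that all $\varrho_k(x)$ commute only in distribution, not as operators — so one passes to the commutative von Neumann algebra generated by a single copy and uses \eqref{sum maj} there after identifying the joint distribution, exactly in the manner the Kruglov estimates were carried out in Section \ref{krug fin}. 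Everything else is a routine repetition of the arguments in Lemmas \ref{maj lemma} and \ref{modified ms}.

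\begin{proof}
By definition of $\tau_{{\rm ext}}$ we have, for $x\in\mathcal{L}_p(\mathcal{N}_1,\tau_1)$,
\[
\mathscr{L}(x)=0\bigoplus\Big(\bigoplus_{m\geq1}\Big(\sum_{k=0}^{m-1}\varrho_k(x)\Big)\Big)\in\mathcal{S}(\mathcal{N}_{{\rm ext}},\tau_{{\rm ext}}),
\]
where the $m$-th summand is taken with respect to the algebra $(\mathcal{N}_m,\tfrac{1}{e\cdot m!}\tau_m)$. Fix $m\geq1$. Since $\varrho_0,\dots,\varrho_{m-1}$ are trace preserving unital $*$-homomorphisms of $\mathcal{N}_1$ into $\mathcal{N}_m$, the operators $\varrho_k(x)$ all lie in $\mathcal{S}(\mathcal{N}_m,\tau_m)$ and, by \eqref{sum maj},
\[
\sum_{k=0}^{m-1}\varrho_k(x)\prec\prec\sum_{k=0}^{m-1}\mu(\varrho_k(x))=\sum_{k=0}^{m-1}\mu(x)=m\,\mu(x),
\]
the middle equality being the trace preservation of each $\varrho_k$. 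Hence, considering the $m$-th summand over the probability space $(\mathcal{N}_m,\tfrac{1}{e\cdot m!}\tau_m)$ and arguing exactly as in Lemma \ref{lem2} via the sets $A_m$ with $m(A_m)=\tfrac{1}{e\cdot m!}$ from \eqref{kastsuk}, the function $\sum_{k=0}^{m-1}\varrho_k(x)$ over $(\mathcal{N}_m,\tfrac{1}{e\cdot m!}\tau_m)$ is submajorized by $m\,D_{\frac{1}{e\cdot m!}}\mu(x)$. Applying \eqref{sum maj} once more to the direct sum, we obtain
\[
\mathscr{L}(x)\prec\prec\sum_{m\geq1}m\,D_{\frac{1}{e\cdot m!}}\mu(x).
\]
Since $\|D_u\|_{L_p\to L_p}=u^{1/p}$ for $0<u\leq1$ and
\[
\sum_{m\geq1}m\,\|D_{\frac{1}{e\cdot m!}}\|_{L_p\to L_p}=\sum_{m\geq1}\frac{m}{(e\cdot m!)^{1/p}}\leq\mathrm{const}<\infty,
\]
the right hand side belongs to $L_p(0,\infty)$ with $\|\sum_{m\geq1}m\,D_{\frac{1}{e\cdot m!}}\mu(x)\|_p\leq\mathrm{const}\cdot\|\mu(x)\|_p=\mathrm{const}\cdot\|x\|_p$. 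As the $L_p$-norm is monotone with respect to submajorization (Remark \ref{rem_Orl_mon_submaj}), it follows that $\mathscr{L}(x)\in\mathcal{L}_p(\mathcal{N}_{{\rm ext}},\tau_{{\rm ext}})$ and $\|\mathscr{L}(x)\|_p\leq\mathrm{const}\cdot\|x\|_p$. Since $\mathscr{L}$ is linear and the bounded finitely supported operators are $\|\cdot\|_p$-dense in $\mathcal{L}_p(\mathcal{N}_1,\tau_1)$, the operator $\mathscr{L}$ extends to a bounded operator $\mathscr{L}:\mathcal{L}_p(\mathcal{N}_1,\tau_1)\to\mathcal{L}_p(\mathcal{N}_{{\rm ext}},\tau_{{\rm ext}})$ for every $1\leq p\leq2$.
\end{proof}
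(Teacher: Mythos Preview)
Your proof is correct but follows a different route from the paper. The paper establishes the two endpoint bounds directly---for $p=1$ the identity $\tau_{\rm ext}(\mathscr{L}x)=\tau_1(x)$ already gives $\|\mathscr{L}\|_{\mathcal{L}_1\to\mathcal{L}_1}\leq1$, and for $p=2$ one expands $\tau_{\rm ext}((\mathscr{L}x)^2)$, bounds each cross term $\tau_m(\varrho_{k_1}(x)\varrho_{k_2}(x))\leq\tau_1(x^2)$ by Cauchy--Schwarz, and sums to obtain $\|\mathscr{L}\|_{\mathcal{L}_2\to\mathcal{L}_2}\leq\sqrt{2}$---and then interpolates. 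Your submajorization argument, modelled on Lemma~\ref{maj lemma}, avoids interpolation entirely and in fact delivers the $\mathcal{L}_p$-bound for every $p\geq1$, not just $1\leq p\leq2$; the paper's endpoint-plus-interpolation proof is shorter and more self-contained. (An even more direct variant of your idea: since the direct summands are orthogonal, $\|\mathscr{L}(x)\|_p^p=\sum_{m\geq1}\tfrac{1}{e\cdot m!}\big\|\sum_{k=0}^{m-1}\varrho_k(x)\big\|_p^p\leq\big(\sum_{m\geq1}\tfrac{m^p}{e\cdot m!}\big)\|x\|_p^p$ by the triangle inequality in $\mathcal{L}_p(\mathcal{N}_m,\tau_m)$, with no submajorization machinery needed.) Two small expository notes: the displayed chain in your plan after ``hence'' is garbled and should simply read $\sum_k\varrho_k(x)\prec\prec m\,\mu(x)$ in $(\mathcal{N}_m,\tau_m)$; and your worry about commutativity is misplaced, since the operator form of \eqref{sum maj} (equivalently, subadditivity of Ky~Fan norms, cf.\ \cite{FackKosaki,LSZ}) holds for arbitrary noncommuting $\tau$-measurable operators---no reduction to a commutative subalgebra is required. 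The density argument at the end is likewise unnecessary, as your bound already applies to every $x\in\mathcal{L}_p(\mathcal{N}_1,\tau_1)$.
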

\begin{proof} Let $0\leq x\in\mathcal{L}_2(\mathcal{N}_1,\tau_1).$ We have
$$\tau_{{\rm ext}}\big((\mathscr{L}x)^2\big)=\sum_{m\geq 1}\frac1{e\cdot m!}\tau_m\Big(\Big(\sum_{k=0}^{m-1}\varrho_k(x)\Big)^2\Big)=\sum_{m\geq 1}\frac1{e\cdot m!}\sum_{k_1,k_2=0}^{m-1}\tau_m(\varrho_{k_1}(x)\varrho_{k_2}(x)).$$
Since the $*-$homomorphisms $\varrho_k,$ $0\leq k<m,$ are trace preserving, it follows that
$$0\leq\tau_m(\varrho_{k_1}(x)\varrho_{k_2}(x))\leq\left(\tau_m((\varrho_{k_1}(x))^2)\tau_m((\varrho_{k_2}(x))^2)\right)^{1/2}=\tau_1(x^2).$$
Therefore, we have
$$\tau_{{\rm ext}}((\mathscr{L}x)^2)\leq\sum_{m\geq 1}\frac{m^2}{e\cdot m!}\tau(x^2)=2\tau(x^2).$$
It is immediate that $\|\mathscr{L}\|_{\mathcal{L}_2\to\mathcal{L}_2}\leq\sqrt{2}.$ Since $\tau_{{\rm ext}}(\mathscr{L}x)=\tau_1(x)$ for every $x\in\mathcal{L}_1(\mathcal{N}_1,\tau_1),$ it follows that $\|\mathscr{L}\|_{\mathcal{L}_1\to\mathcal{L}_1}\leq 1.$ The assertion follows by interpolation.
\end{proof}

\begin{lem}\label{bad to good} For every rank one projection $p\in M_m(\mathbb{C}),$ we have
$$\mu\Big(\sum_{k=0}^{m-1}\theta_k(p)\otimes\varrho_k(x)\Big)\leq D_e\mu(\mathscr{L}(x)),\quad x\in\mathcal{S}(\mathcal{N}_1,\tau_1).$$
Here, the left hand side is computed in $(\mathcal{M}_m,\sigma_m)$ and the right hand side is computed in $(\mathcal{N}_{{\rm ext}},\tau_{{\rm ext}}).$
\end{lem}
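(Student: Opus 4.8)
The plan is to reduce the claim for $\sum_{k=0}^{m-1}\theta_k(p)\otimes\varrho_k(x)$ to a statement purely about the copies $\varrho_k(x)$ and then invoke the dilation inequality \eqref{kps dilate ineq}. First I would fix a rank one projection $p\in M_m(\mathbb C)$. By unitary conjugation in $M_{m^m}(\mathbb C)$ (which does not change $\mu$) we may assume that $p=e_{00}$ is a fixed rank one diagonal matrix unit, so that each $\theta_k(p)=\mathbbm 1^{\otimes k}\otimes e_{00}\otimes\mathbbm 1^{\otimes(m-1-k)}$ is a diagonal projection in $L_\infty(\Omega_{m^m})$ of trace $1/m$. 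The key observation is that the $m$ projections $\theta_0(p),\dots,\theta_{m-1}(p)$ are \emph{mutually orthogonal}: $\theta_j(p)\theta_k(p)=0$ for $j\neq k$, because they disagree in the $j$-th or $k$-th tensor leg (one has $e_{00}$ there, the other has $\mathbbm 1$, and $e_{00}(\mathbbm 1-e_{00})=0$ — more precisely two distinct such projections always have a leg where one is $e_{00}$ and the other has diagonal entry $0$ in the $e_{00}$ slot). Hence $A:=\sum_{k=0}^{m-1}\theta_k(p)\otimes\varrho_k(x)$ is an orthogonal direct sum: the projections $q_k:=\theta_k(p)\otimes\mathbbm 1$ in $\mathcal M_m$ are mutually orthogonal with $\sigma_m(q_k)=1/m$, and $q_kA q_k=\theta_k(p)\otimes\varrho_k(x)$ while $q_jAq_k=0$.

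Consequently $A=\bigoplus_{k=0}^{m-1}\bigl(\theta_k(p)\otimes\varrho_k(x)\bigr)$ in the sense of \eqref{def_direct_sum_of_operators}, and since $\theta_k(p)$ is a trace-$\tfrac1m$ projection the operator $\theta_k(p)\otimes\varrho_k(x)$ viewed inside $\mathcal M_m$ is equimeasurable with $\varrho_k(x)$ viewed inside the algebra $\mathcal N_m$ rescaled so that $\mathbbm 1$ has trace $1/m$; equivalently $\mu(t,\theta_k(p)\otimes\varrho_k(x))=\mu(mt,\varrho_k(x))$ for $t>0$. Therefore $\mu(A)=\mu\bigl(\sum_{k=0}^{m-1}\mu(\theta_k(p)\otimes\varrho_k(x))\bigr)$, and applying the general submajorization-type dilation estimate \eqref{kps dilate ineq} with $n=m$ gives
$$
\mu(A)\le D_m\sum_{k=0}^{m-1}\mu\bigl(\theta_k(p)\otimes\varrho_k(x)\bigr).
$$
Now $\sum_{k=0}^{m-1}\mu(\theta_k(p)\otimes\varrho_k(x))$, as a decreasing function on $(0,\infty)$, is obtained from $\sum_{k=0}^{m-1}\mu(\varrho_k(x))$ by the dilation $D_{1/m}$ in the variable (each summand is stretched out by $1/m$ as noted above), so $D_m\sum_{k=0}^{m-1}\mu(\theta_k(p)\otimes\varrho_k(x))=\sum_{k=0}^{m-1}\mu(\varrho_k(x))$ after re-identifying scales. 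Hence $\mu(A)\le\mu\bigl(\sum_{k=0}^{m-1}\varrho_k(x)\bigr)$ where the right side is computed inside $(\mathcal N_m,\tau_m)$.

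It remains to compare $\sum_{k=0}^{m-1}\varrho_k(x)$ inside $(\mathcal N_m,\tau_m)$ with $\mathscr L(x)$ inside $(\mathcal N_{\rm ext},\tau_{\rm ext})$. By construction $\mathscr L(x)=0\oplus\bigoplus_{m\ge1}\bigl(\sum_{k=0}^{m-1}\varrho_k(x)\bigr)$, where the $m$-th block carries the weight $\tfrac1{e\cdot m!}$. The $m$-th block alone, regarded as a random variable on a space of total mass $\tfrac1{e\cdot m!}$, is equimeasurable with $D_{\frac1{e\cdot m!}}\mu\bigl(\sum_{k=0}^{m-1}\varrho_k(x)\bigr)$, and since a direct sum dominates any single summand in $\mu$, and the first block sits at the top of the spectrum of $\mathscr L(x)$ up to scale, one gets $\mu(t,\mathscr L(x))\ge\mu(e\cdot m!\cdot t,\sum_{k=0}^{m-1}\varrho_k(x))$ on the relevant range; choosing $m$'s contribution and applying $D_e$ absorbs the single worst factor, yielding $\mu(\sum_{k=0}^{m-1}\varrho_k(x))\le D_e\mu(\mathscr L(x))$. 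Combining this with the previous display gives the claim. The main obstacle I anticipate is the careful bookkeeping of the three different trace normalizations ($\sigma_m$ on $\mathcal M_m$, $\tau_m$ on $\mathcal N_m$, $\tau_{\rm ext}$ on $\mathcal N_{\rm ext}$) and making sure the dilation factors $D_m$, $D_{1/m}$, $D_{e\cdot m!}$, $D_e$ cancel exactly as claimed rather than leaving a stray constant; everything else is the orthogonality observation plus \eqref{kps dilate ineq}.
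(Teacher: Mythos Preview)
Your proof has a fatal error at the ``key observation.'' The projections $\theta_0(p),\dots,\theta_{m-1}(p)$ are \emph{not} mutually orthogonal. Indeed, for $j<k$,
\[
\theta_j(p)\theta_k(p)=1^{\otimes j}\otimes p\otimes 1^{\otimes(k-j-1)}\otimes p\otimes 1^{\otimes(m-1-k)},
\]
which is a nonzero projection of trace $1/m^2$. Your justification confuses $\mathbbm 1$ with $\mathbbm 1-e_{00}$: in the $j$-th leg $\theta_k(p)$ carries the identity, and $e_{00}\cdot\mathbbm 1=e_{00}\neq 0$. The $\theta_k(p)$ are commuting, \emph{independent} projections (in the sense of Subsection~\ref{indep subsect}), not disjoint ones, so $A$ is not an orthogonal direct sum of the $\theta_k(p)\otimes\varrho_k(x)$ and everything downstream collapses.

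The paper's argument confronts exactly this overlap: it introduces the $2^m$ pairwise orthogonal projections $p_{\mathcal A}=\prod_{k\in\mathcal A}\theta_k(p)\prod_{k\notin\mathcal A}(1-\theta_k(p))$, rewrites $A=\sum_{\mathcal A}p_{\mathcal A}\otimes\bigl(\sum_{k\in\mathcal A}\varrho_k(x)\bigr)$, uses Lemma~\ref{sum equi} to replace each $\sum_{k\in\mathcal A}\varrho_k(x)$ by $\sum_{k=0}^{|\mathcal A|-1}\varrho_k(x)$, and then groups by $l=|\mathcal A|$. The factor $D_e$ arises from the uniform binomial bound $\binom{m}{l}m^{-l}(1-1/m)^{m-l}\le 1/l!$, which compares the trace of $p_l$ to the weight $1/(e\cdot l!)$ of the $l$-th block of $\mathscr L(x)$, simultaneously for all $l$. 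Your final paragraph, which tries to extract $D_e$ by looking at the single $m$-th block of $\mathscr L(x)$, would instead produce a factor $D_{e\cdot m!}$ and cannot give the uniform bound.
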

\begin{proof} Consider the family $\{\theta_k(p)\}_{k=0}^{m-1}$ of commuting projections with trace $\frac1m$. Define the family of $2^m$ pairwise orthogonal projections
$$p_{\mathcal{A}}=\Big(\prod_{k\in\mathcal{A}}\theta_k(p)\Big)\cdot\Big(\prod_{k\notin\mathcal{A}}(1-\theta_k(p))\Big),\quad \mathcal{A}\subset\{0,1,\cdots,m-1\}.$$
A moment's reflection shows that $\sum_{\mathcal{A}}p_{\mathcal{A}}=1.$ We have
$$p_{\mathcal{A}}\theta_k(p)=
\begin{cases}
p_{\mathcal{A}},\quad k\in\mathcal{A}\\
0,\quad k\notin\mathcal{A}.
\end{cases}
$$
Therefore,
$$\sum_{k=0}^{m-1}\theta_k(p)\otimes\varrho_k(x)=\sum_{\mathcal{A}}\sum_{k=0}^{m-1}p_{\mathcal{A}}\theta_k(p)\otimes\varrho_k(x)= \sum_{\mathcal{A}}p_{\mathcal{A}}\otimes\Big(\sum_{k\in\mathcal{A}}\varrho_k(x)\Big).$$
Since the projections $p_{\mathcal{A}},$ $\mathcal{A}\subset\{0,1,\cdots,m-1\},$ are pairwise orthogonal, it follows from Lemma \ref{sum equi} that
$$\mu\Big(\sum_{k=0}^{m-1}\theta_k(p)\otimes\varrho_k(x)\Big)=\mu\Big(\sum_{\mathcal{A}}p_{\mathcal{A}}\otimes\Big(\sum_{k=0}^{|\mathcal{A}|-1}\varrho_k(x)\Big)\Big).$$
Set
$$p_l=\sum_{|\mathcal{A}|=l}p_{\mathcal{A}},\quad 0\leq l\leq m.$$
These projections are pairwise orthogonal and $\sum_{l=0}^mp_l=1.$ We have
\begin{equation}\label{esqqq}
\mu\Big(\sum_{k=0}^{m-1}\theta_k(p)\otimes\varrho_k(x)\Big)=\mu\Big(\sum_{l=0}^mp_l\otimes\Big(\sum_{k=0}^{l-1}\varrho_k(x)\Big)\Big).
\end{equation}

Since the projections $\theta_k(p),$ $0\leq k<m,$ are independent (see Subsection \ref{indep subsect}), it is immediate from the definition of $p_{\mathcal{A}}$ that
$$\frac1{m^m}{\rm Tr}(p_{\mathcal{A}})=\Big(\frac1m\Big)^{|\mathcal{A}|}\Big(1-\frac1m\Big)^{m-|\mathcal{A}|},\quad\mathcal{A}\subset\{0,1,\cdots,m-1\}.$$
Therefore, we have
$$\frac1{m^m}{\rm Tr}(p_l)=\binom{m}{l}\Big(\frac1m\Big)^l\Big(1-\frac1m\Big)^{m-l}\leq\frac1{l!}=e\cdot\frac1{e\cdot l!},\quad 0\leq l\leq m.$$
The assertion follows by combining \eqref{esqqq} with the definition of $\mathscr{L}(x)$ and the definition of the dilation operator.
\end{proof}

\subsection{Proof of Theorem \ref{second main theorem}}
We are now ready to prove our second main result.

\begin{proof}[Proof of Theorem \ref{second main theorem}] The proof of the implication $\eqref{ji}\to\eqref{jii}$ is immediate. We now proceed with the proof of the implication $\eqref{jii}\to\eqref{ji}.$ Suppose that there exists an isomorphic embedding from the commutative core $I$ of the Banach ideal $\mathcal{I}$ to $\mathcal{L}_p(\mathcal{N},\tau)$ for some probability space $(\mathcal{N},\tau).$ Let $\{p_m\}_{m\geq0}$ be an increasing sequence such that ${\rm Tr}(p_m)=m.$ We identify $p_m\mathcal{L}(H)p_m=M_m(\mathbb{C}).$ Take an element $F\in\mathcal{L}_p(\mathcal{N}_1,\tau_1)$ constructed in Lemma \ref{prelim embedding lemma}. Define the mapping $T_m:\mathcal{C}_{00}\to\mathcal{L}_p(\mathcal{M}_m,\sigma_m)$ by setting
$$T_m(A)\stackrel{def}{=}\sum_{k=0}^{m-1}\theta_k(p_mAp_m)\otimes\varrho_k(F),\quad m\geq 0,$$
where $\mathcal{C}_{00}$ denotes the ideal of finite rank
operators in $\mathcal{L}(H).$ Set
$$T(A)\stackrel{def}{=}\{T_m(A)\}_{m\geq0}\in\Big(\mathcal{L}_p(\mathcal{M}_m,\sigma_m)\Big)_{m\geq0}^{\omega}.$$
If $A=\sum_{k=0}^{N-1}\lambda(k,A)q_k,$
where each $q_k,$ $0\leq k<N,$ is a rank one projection, then (see \eqref{kps dilate ineq})
$$\mu(T_m(A))=\mu\Big(\sum_{k=0}^{N-1}\lambda(k,A)T_m(q_k)\Big)\leq\|A\|_{\infty} N D_N\mu(T_m(q_0)).$$
Using Lemma \ref{bad to good}, we infer that
$$\mu(T_m(A))\leq N\|A\|_{\infty} D_{eN}\mu(\mathscr{L}(F)).$$
Since $\mathscr{L}(F)\in\mathcal{L}_p(\mathcal{N}_{{\rm ext}},\tau_{{\rm ext}}),$ it follows that the sequence $\{|T_m(A)|^p\}_{m\geq 0}$ is uniformly integrable. By Lemma \ref{equi2}, we have
$$T(A)\in\mathcal{L}_p(\Big(\mathcal{M}_m\Big)_{m\geq0}^{\omega},\Big(\sigma_m\Big)_{m\geq0}^{\omega}),\quad A\in\mathcal{C}_{00}.$$
It follows immediately from Lemma \ref{prelim embedding lemma} that
$$\|T(A)\|_p=\lim_{m\to\omega}\|T_m(A)\|_p\sim\lim_{m\to\omega}\|p_mAp_m\|_{\mathcal{I}}=\|A\|_{\mathcal{I}},\quad A\in\mathcal{C}_{00}.$$
Thus, $T$ extends to an isomorphic embedding
$$T:\mathcal{I}\to\mathcal{L}_p(\Big(\mathcal{M}_m\Big)_{m\geq0}^{\omega},\Big(\sigma_m\Big)_{m\geq0}^{\omega}).$$
\end{proof}

\appendix

\section{}
 The
following result shows that for the Orlicz ideal the notion of
$p$-convexity (resp., $q$-concavity) coincides with upper
$p$-estimate (resp., lower $q$-estimate).

 \begin{thm}\label{prop_O} Let $1\le p\le 2$ and $2\le q<\infty.$ The following conditions are equivalent
\begin{enumerate}[{\rm (i)}]
\item\label{pri} The Orlicz ideal $\mathcal L_M(H)$ is $p$-convex and $q$-concave.
\item \label{prii} The Orlicz ideal $\mathcal L_M(H)$ satisfies an upper $p$-estimate and a lower $q$-estimate.
\item \label{priii} The Orlicz space  $l_M$ is $p$-convex and $q$-concave.
    \item\label{priv} The Orlicz space  $l_M$  satisfies an upper $p$-estimate and a lower $q$-estimate.
    \item\label{prv} The Orlicz function $M$ on $(0,\infty)$ is equivalent on $[0,1]$ to some $p$-convex and $q$-concave Orlicz function.
\end{enumerate}
  \end{thm}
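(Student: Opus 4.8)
The plan is to reduce the whole statement to the commutative Orlicz sequence space $l_M$ together with a growth condition on $M$, using three ingredients: the transfer between a Banach ideal in $\mathcal{L}(H)$ and its commutative core, the implication ``convexity $\Rightarrow$ lattice estimate'', and Lemma \ref{as lemma}. Concretely, I would first dispose of the pairs $\eqref{pri}\Leftrightarrow\eqref{priii}$ and $\eqref{prii}\Leftrightarrow\eqref{priv}$ by invoking the results of \cite{dds} and \cite{AL}: the $p$-convexity and $q$-concavity (respectively, the upper $p$-estimate and the lower $q$-estimate) of the Orlicz ideal $\mathcal{L}_M(H)$ are equivalent, with comparable constants, to the same properties of its commutative core $l_M$. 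After this reduction it suffices to establish the cycle $\eqref{priii}\Rightarrow\eqref{priv}\Rightarrow\eqref{prv}\Rightarrow\eqref{priii}$ for the symmetric sequence space $l_M$ (the degenerate case $p=q=2$, where \eqref{pconv crit}--\eqref{2conc crit} force $M(t)\sim t^2$ and everything collapses to $l_2$, being handled directly).

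The implication $\eqref{priii}\Rightarrow\eqref{priv}$ is the standard fact (see \cite{LT2}) that a $p$-convex Banach lattice satisfies an upper $p$-estimate and a $q$-concave one a lower $q$-estimate, applied to $X=l_M$. For $\eqref{prv}\Rightarrow\eqref{priii}$, suppose $M$ is equivalent on $[0,1]$ to a $p$-convex and $q$-concave Orlicz function $\widetilde M$; then $l_M=l_{\widetilde M}$ with equivalent norms, so it is enough to see that $l_{\widetilde M}$ is $p$-convex and $q$-concave. Writing $N(t):=\widetilde M(t^{1/p})$, which is convex by $p$-convexity of $\widetilde M$, identifies $l_{\widetilde M}$ as the $p$-convexification $(l_N)^{(p)}$ of the Orlicz space $l_N$, hence $p$-convex; the $q$-concavity follows by Köthe duality from the $q'$-convexity of the complementary Orlicz space $l_{\widetilde M^{*}}$, using that Orlicz spaces have the Fatou property (Remark \ref{rem_Orl_mon_submaj}) and the classical duality between the convexity and concavity indices of an Orlicz function.

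The heart of the argument is $\eqref{priv}\Rightarrow\eqref{prv}$. Normalising so that $M(1)=1$, consider the fundamental function $\varphi_M(n):=\big\|\sum_{j=0}^{n-1}e_j\big\|_{l_M}=1/M^{-1}(1/n)$ (with the usual convention for the generalised inverse). Since $e_0,\dots,e_{nm-1}$ are disjointly supported and $l_M$ is symmetric, applying the upper $p$-estimate to the $m$ blocks $\sum_{j=kn}^{(k+1)n-1}e_j$, $0\le k<m$, gives $\varphi_M(nm)\le K\,m^{1/p}\varphi_M(n)$, while the lower $q$-estimate gives $\varphi_M(nm)\ge K^{-1}m^{1/q}\varphi_M(n)$. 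Setting $u_n:=M^{-1}(1/n)$ these read $u_n/u_{nm}\le Km^{1/p}$ and $u_n/u_{nm}\ge K^{-1}m^{1/q}$; in particular consecutive $u_n$ have uniformly bounded ratio, so for $0<s_1\le s_2\le 1$ one may sandwich $s_1,s_2$ between values of $(u_n)$ and, using monotonicity of $M$, deduce exactly the pointwise bounds \eqref{pconv crit} and \eqref{2conc crit}. Lemma \ref{as lemma} then produces an Orlicz function equivalent to $M$ on $[0,1]$ that is $p$-convex and $q$-concave. I expect the only genuinely delicate point to be this last conversion — passing from the multiplicative estimates on $\varphi_M$ at integer arguments to the continuous two-sided bounds on $M(s)/s^p$ and $M(s)/s^q$ that feed Lemma \ref{as lemma}; the remaining implications are essentially assembly of known results.
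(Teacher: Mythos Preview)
Your proposal is correct and follows essentially the same route as the paper: reduce to the commutative core via \cite{dds,AL}, then establish the cycle at the level of $l_M$, with the key step \eqref{priv}$\Rightarrow$\eqref{prv} carried out exactly as you describe via the fundamental function $\varphi_M(n)=1/M^{-1}(1/n)$ and Lemma~\ref{as lemma}. The only notable difference is in \eqref{prv}$\Rightarrow$\eqref{priii}: the paper handles $q$-concavity by the direct analogue of your $p$-convexity argument (set $M_2(t)=\widetilde M(t^{1/q})$, which is concave, and compute $\|\cdot\|_{l_M}$ through $\|\cdot\|_{l_{M_2}}$), which is shorter than the K\"othe-duality detour through $l_{\widetilde M^*}$ you propose.
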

  \begin{proof} The implications \eqref{pri}$\Rightarrow$\eqref{priii} and \eqref{prii}$\Rightarrow$\eqref{priv} are obvious.
  The equivalence of \eqref{priii} and \eqref{priv} is proved in \cite[p. 121,124]{Mal}.
  The implications  \eqref{priii}$\Rightarrow$\eqref{pri} and \eqref{priii}$\Rightarrow$\eqref{prii} are shown in \cite[Theorems 3.8 and 4.7]{dds} and
  \cite[Proposition 5.1]{dds}, respectively.

  \eqref{priv}$\Rightarrow$\eqref{prv}.
Let $l_M$ satisfies upper $p$-estimate and lower $q$-estimate.
Without loss of generality, we may assume that $M(1)=1.$ Applying
\eqref{eq__up_p_est} and \eqref{eq_low_q_est} to the sequence
$$\sum_{j=0}^{nm-1}e_j=\sum_{k=0}^{n-1}\sum_{j=(k-1)m}^{km-1}e_j\in
l_M,$$ where $\{e_j\}_{j=0}^\infty$ is the standard vector basis
in $l_M$ and observing
$$\Big\|\sum_{j=(k-1)m}^{km-1}e_j\Big\|_{l_M}=\Big\|\sum_{j=0}^{m-1}e_j\Big\|_{l_M}
\ \ \ \mbox{for any} \ \ \ 1\le k\le n,$$ we obtain that
\begin{equation}\label{eq_(ii)to(iii)1}{\rm const}\cdot n^{1/q}\Big\|\sum_{j=0}^{m-1}e_j\Big\|_{l_M}\leq\Big\|\sum_{j=0}^{nm-1}e_j\Big\|_{l_M}\leq{\rm const}\cdot n^{1/p}\Big\|\sum_{j=0}^{m-1}e_j\Big\|_{l_M}\end{equation}
for all $n,m\in\mathbb{N}.$

Let $0< s\le t\le 1.$ Set $u:=\frac1{M(s)}$ and $v:=\frac1{M(t)}.$
Then $u\ge v\geq 1.$ Let $m:=[v]$ and
$n:=\Big[\frac{u}{m}\Big]+1$. Observe that since $v<2m$ and $v\le
u$, we have
\begin{equation}\label{eq_(ii)to(iii)2}
n\le \frac{u}{m}+1< \frac{2u}{v}+\frac{u}{v}=\frac{3u}{v} \ \
\mbox{and} \ \ n=\Big[\frac{u}{m}\Big]+1\ge \frac{u}m.
\end{equation}
Observe also that (see e.g. \cite[I, 1.2, Example 9]{R-R})
\begin{equation}\label{eq_(ii)to(iii)3}\Big\|\sum_{j=0}^{m-1}e_j\Big\|_{l_M}=\frac1{M^{-1}\Big(\frac{1}{m}\Big)}
\ \ \ \mbox{for any} \ \ \ m\in\mathbb N.\end{equation} Now using
the fact that $M^{-1}$ is increasing and applying the right hand
side inequality of \eqref{eq_(ii)to(iii)1} together with
\eqref{eq_(ii)to(iii)3}, we obtain
$$t=M^{-1}\Big(\frac1v\Big)\le M^{-1}\Big(\frac1m\Big)\leq$$
$$\stackrel{\eqref{eq_(ii)to(iii)1}}{\le} {\rm const} \cdot n^{\frac1p} M^{-1}\Big(\frac1{mn}\Big)
\stackrel{\eqref{eq_(ii)to(iii)2}}{\le} {\rm const} \cdot
\Big(\frac{u}{v}\Big)^{\frac1p} M^{-1}\Big(\frac1u\Big)={\rm
const} \cdot\Big(\frac{M(t)}{M(s)}\Big)^{\frac1p} \cdot s.$$
 Raising to the power $p$ both sides of the inequality above, we obtain \eqref{pconv crit}.  The left hand side inequality of \eqref{eq_(ii)to(iii)1} similarly implies \eqref{2conc crit}. By Lemma \ref{as lemma}, there exists a $p$-convex and $q$-concave Orlicz function equivalent to $M.$

\eqref{prv}$\Rightarrow$\eqref{priii}. Suppose that an Orlicz
function $M$ satisfies \eqref{prv}.  Without loss of generality we
may assume that
 $M$ is $p$-convex. Define a convex Orlicz function $M_1$ by setting $$M_1(t):=M(t^{1/p}), \ \ \ t>0.$$
  Since for any finite
sequence $\{x_k\}_{k=1}^n\subseteq l_M,$
   \begin{equation*} \Big\Vert \Big(\sum_{k=1}^n\vert x_k\vert^p\Big)^{1/p}
\Big\Vert_{l_M}=\Big\Vert \sum_{k=1}^n\vert x_k\vert ^p
\Big\Vert_{l_{M_1}}^{1/p}\le \Big(\sum_{k=1}^n\Vert x_k\Vert
_{l_{M_1}}^p\Big)^{1/p} \leq \Big(\sum_{k=1}^n\Vert
x_k\Vert_{l_M}^p\Big)^{1/p},
  \end{equation*}
  it follows that $l_M$ is $p$-convex. Similarly, one can deduce that $l_M$ is $q$-concave from $q$-concavity of $M.$
 \end{proof}

\end{document}